\newlength{\minipagewidth}
\tikzset{->-/.style={decoration={markings,mark=at position #1 with {\color{black}\arrow{>}}},postaction={decorate,very thick}},
         -<-/.style={decoration={markings,mark=at position #1 with {\color{black}\arrow{<}}},postaction={decorate,very thick}},
         -x-/.style={decoration={markings,mark=at position #1 with {\node[auto,black,font=\large] {$\times$};}},postaction={decorate,very thick}},
         const/.style={rectangular,draw=black,dotted,thick,text height=0.8ex, text depth=-0.2ex,font=\scriptsize},
         vertex/.style={rectangular,rounded corners=7pt,draw=black,text height=1.0ex, text depth=0.0ex,font=\footnotesize},
         triple/.style={circle,draw=black,text height=-0.5ex, text depth=-1.5ex,font=\footnotesize},
         pair/.style={circle,inner sep=2pt,fill=black,text height=-0.3ex, text depth=-1.3ex,font=\footnotesize},
         patchbox/.style={rectangular,draw=black,minimum width=6.0ex,minimum height=3.0ex,text height=1.0ex, text depth=0.0ex,font=\footnotesize},
         invisiblebox/.style={rectangular,minimum width=5.0ex,minimum height=5.0ex}               }
\newcommand{\vertex}[2]{\node[vertex](#1){#2};}
\newcommand{\pair}[1]{\node[pair](#1){};}
\newcommand{\triple}[2]{\node[triple](#1){#2};}
\newcommand{\const}[2]{\node[const](#1){#2};}
\newcommand{\gbox}[2]{\node[patchbox](#1){\Xi_{#2}};}
\newcommand{\nobox}[1]{\node[invisiblebox](#1){};}
\tikzset{patchstyle/.style = {rectangular,rounded corners=7pt,draw,fill=#1}}
\NewDocumentCommand{\patch}{O{gray!20} m m}{\draw[patchstyle=#1] (#2.north west) rectangle (#3.south east);}
\DeclareSymbolFont{cmletters}{OML}{cmm}{m}{it}                                     %Replace and add symbols to the fonts
\DeclareSymbolFont{cmsymbols}{OMS}{cmsy}{m}{n}
\DeclareSymbolFont{cmlargesymbols}{OMX}{cmex}{m}{n}
\DeclareMathSymbol{\myjmath}{\mathord}{cmletters}{"7C}     \let\jmath\myjmath %Defining the missing commands: \jmath, \amalg and \coprod
\DeclareMathSymbol{\myamalg}{\mathbin}{cmsymbols}{"71}     \let\amalg\myamalg
\DeclareMathSymbol{\mycoprod}{\mathop}{cmlargesymbols}{"60}\let\coprod\mycoprod
\DeclareMathSymbol{\myalpha}{\mathord}{cmletters}{"0B}     \let\alpha\myalpha %Greek letters from Computer Modern (since the ones from mathptmx are too large)
\DeclareMathSymbol{\mybeta}{\mathord}{cmletters}{"0C}      \let\beta\mybeta
\DeclareMathSymbol{\mygamma}{\mathord}{cmletters}{"0D}     \let\gamma\mygamma
\DeclareMathSymbol{\mydelta}{\mathord}{cmletters}{"0E}     \let\delta\mydelta
\DeclareMathSymbol{\myepsilon}{\mathord}{cmletters}{"0F}   \let\epsilon\myepsilon
\DeclareMathSymbol{\myzeta}{\mathord}{cmletters}{"10}      \let\zeta\myzeta
\DeclareMathSymbol{\myeta}{\mathord}{cmletters}{"11}       \let\eta\myeta
\DeclareMathSymbol{\mytheta}{\mathord}{cmletters}{"12}     \let\theta\mytheta
\DeclareMathSymbol{\myiota}{\mathord}{cmletters}{"13}      \let\iota\myiota
\DeclareMathSymbol{\mykappa}{\mathord}{cmletters}{"14}     \let\kappa\mykappa
\DeclareMathSymbol{\mylambda}{\mathord}{cmletters}{"15}    \let\lambda\mylambda
\DeclareMathSymbol{\mymu}{\mathord}{cmletters}{"16}        \let\mu\mymu
\DeclareMathSymbol{\mynu}{\mathord}{cmletters}{"17}        \let\nu\mynu
\DeclareMathSymbol{\myxi}{\mathord}{cmletters}{"18}        \let\xi\myxi
\DeclareMathSymbol{\mypi}{\mathord}{cmletters}{"19}        \let\pi\mypi
\DeclareMathSymbol{\myrho}{\mathord}{cmletters}{"1A}       \let\rho\myrho
\DeclareMathSymbol{\mysigma}{\mathord}{cmletters}{"1B}     \let\sigma\mysigma
\DeclareMathSymbol{\mytau}{\mathord}{cmletters}{"1C}       \let\tau\mytau
\DeclareMathSymbol{\myupsilon}{\mathord}{cmletters}{"1D}   \let\upsilon\myupsilon
\DeclareMathSymbol{\myphi}{\mathord}{cmletters}{"1E}       \let\phi\myphi
\DeclareMathSymbol{\mychi}{\mathord}{cmletters}{"1F}       \let\chi\mychi
\DeclareMathSymbol{\mypsi}{\mathord}{cmletters}{"20}       \let\psi\mypsi
\DeclareMathSymbol{\myomega}{\mathord}{cmletters}{"21}     \let\omega\myomega
\DeclareMathSymbol{\myvarepsilon}{\mathord}{cmletters}{"22}\let\varepsilon\myvarepsilon
\DeclareMathSymbol{\myvartheta}{\mathord}{cmletters}{"23}  \let\vartheta\myvartheta
\DeclareMathSymbol{\myvarpi}{\mathord}{cmletters}{"24}     \let\varpi\myvarpi
\DeclareMathSymbol{\myvarrho}{\mathord}{cmletters}{"25}    \let\varrho\myvarrho
\DeclareMathSymbol{\myvarsigma}{\mathord}{cmletters}{"26}  \let\varsigma\myvarsigma
\DeclareMathSymbol{\myvarphi}{\mathord}{cmletters}{"27}    \let\varphi\myvarphi
\newcommand{\Ext}{\mathrm{Ext}} 
\newcommand{\ext}{\mathrm{ext}} 
\newcommand{\Rep}{\mathrm{Rep}}
\theoremstyle{plain}
\newtheorem{thm}{Theorem}[section]
\newtheorem{cor}[thm]{Corollary}
\newtheorem{lemma}[thm]{Lemma}
\newtheorem{prop}[thm]{Proposition}
\newtheorem*{prop*}{Proposition}
\newtheorem*{thmA}{Theorem A}
\theoremstyle{definition}
\newtheorem{df}[thm]{Definition}
\newtheorem{ex}[thm]{Example}
\newtheorem{rem}[thm]{Remark}
\newtheorem*{rem*}{Remark}
\def\SS{\S}
\def\A{{\mathbb A}}
\def\C{{\mathbb C}}
\def\P{{\mathbb P}}
\def\Q{{\mathbb Q}}
\def\Z{{\mathbb Z}}
\def\cA{{\mathcal A}}
\def\cB{{\mathcal B}}
\def\ue{{\underline{e}}}
\def\udim{{\underline{\dim}\, }}
\def\ev{{\textup{ev}}}
\def\Ev{{\textup{Ev}}}
\def\min{{\textup{min}}}
\def\max{{\textup{max}}}
\def\op{{\textup{op}}}
\newcommand{\ocirc}{\begin{picture}(6,6) \put(0,0){\color{gray}{$\bullet$}}
                                          \put(0,0){$\circ$}                   \end{picture} }
\DeclareMathOperator{\Gr}{Gr}
\DeclareMathOperator{\Mat}{Mat}
\DeclareMathOperator{\Rel}{Rel}
\DeclareMathOperator{\Vertex}{Vert}
\DeclareMathOperator{\Edge}{Edge}
\DeclareMathOperator{\Link}{Link}
\DeclareMathOperator{\coker}{coker}
\DeclareMathOperator{\id}{id}
\begin{document}

\title[Quiver Grassmannians of type $\widetilde{D}_n$, Part 1]{Quiver Grassmannians of type $\widetilde{D}_n$\\[10pt] Part 1: Schubert systems and decompositions into affine spaces}

\author{Oliver Lorscheid}
\address{Instituto Nacional de Matem\'atica Pura e Aplicada, Estrada Dona Castorina 110, Rio de Janeiro, Brazil}
\email{oliver@impa.br}

\author{Thorsten Weist}
\address{Bergische Universit\"at Wuppertal, Gau\ss str.\ 20, 42097 Wuppertal, Germany}
\email{weist@uni-wuppertal.de}

\begin{abstract}
 Let $Q$ be a quiver of extended Dynkin type $\widetilde D_n$. In this first of two papers, we show that the quiver Grassmannian $\Gr_\ue(M)$ has a decomposition into affine spaces for every dimension vector $\ue$ and every indecomposable representation $M$ of defect $-1$ and defect $0$, with exception of the non-Schurian representations in homogeneous tubes. We characterize the affine spaces in terms of the combinatorics of a fixed coefficient quiver for $M$. The method of proof is to exhibit explicit equations for the Schubert cells of $\Gr_\ue(M)$ and to solve this system of equations successively in linear terms. This leads to an intricate combinatorial problem, for whose solution we develop the theory of Schubert systems.
 
 In the sequel \cite{LW15}, we extend the result of this paper to all indecomposable representations $M$ of $Q$ and determine explicit formulae for their $F$-polynomials.
\end{abstract}

\maketitle

\begin{small}  
 \tableofcontents 
\end{small}

\normalem

%%%%%%%%%%%%%%%%%%%%%%%%%%%%%%%%%%%%%%%%%%%%%%%%%%%%%%%%%%%%%%%%%%%%%%%%%%%%%%%%%%%%%%%%%%%%%%%%%%%%%%%%%%%%%%%%%%%%%%%%%%%%%%%%%%%%%%%%%%%%%%%%%%%%%%%%%%%%%%%%%%%%%%%%%%%%%%%%%%%%%%%%%%%%%%%%%%%%%%
%%%%%%%%%%%%%%%%%%%%%%%%%%%%%%%%%%%%%%%%%%%%%%%%%%%%%%%%%%%%%%%%%%%%%%%%%%%%%%%%%%%%%%%%%%%%%%%%%%%%%%%%%%%%%%%%%%%%%%%%%%%%%%%%%%%%%%%%%%%%%%%%%%%%%%%%%%%%%%%%%%%%%%%%%%%%%%%%%%%%%%%%%%%%%%%%%%%%%%

\section*{Introduction}

\noindent
Quiver Grassmannians are varieties that parametrize subrepresentations of quiver representations. They first appeared in literature nearly 25 years ago (cf.\ \cite{sch}), but the large attention they received in recent years is due to their relevance for cluster algebras as introduced by Fomin and Zelevinsky in the series of papers \cite{Fomin-Zelevinsky02}, \cite{Fomin-Zelevinsky03}, \cite{Fomin-Zelevinsky05} and \cite{Fomin-Zelevinsky07}. To explain the relevance of the results of the present paper and its sequel \cite{LW15}, we review parts of the developments in the theory of cluster algebras before we describe the result of this paper.

\subsection*{Cluster algebras}
There are many well-written introductions to cluster algebras, e.g.\ see \cite{Fomin-Zelevinsky02}, \cite{Schiffler09} or \cite{Keller10}. To simplify matters, we restrict ourselves in this exposition to cluster algebras $\cA$ that have trivial coefficients and come from a quiver $Q$ with $n$ vertices. The cluster algebra $\cA$ of $Q$ is defined as a subring of the field $\Q(x_1,\dotsc,x_n)$ of rational functions that is generated by the so-called \emph{cluster variables}. The cluster variable are obtained from $x_1,\dotsc,x_n$ by an iterative procedure of so-called \emph{mutations}, which are prescribed but the quiver $Q$ and its mutations.

Though it turns out, a posteriori, that some well-understood cluster algebras bear simple descriptions in terms of generators and relations (\cite{Geiss-Leclerc-Schroer13}), the recursive definition in terms of mutations makes it, in general, a difficult problem to find such descriptions. Much work from the past ten years has been devoted to find bases and relations for approachable classes of cluster algebras.

Similar to the sharp distinction between the representation theory of tame and wild quivers, cluster algebras are divided into mutation finite and mutation infinite algebras. While there is little known for mutation infinite cluster algebras, many results have been established for mutation finite cluster algebras. To wit, all cluster algebras coming from tame quivers are mutation finite, and we divide them into types $A$, $D$ and $E$, according to the type of the tame quiver. The only wild quivers leading to mutation finite cluster algebras are the generalized Kronecker quivers $K(m)$ for $m>2$ and five other quivers with oriented cycles (\cite{Buan-Reiten06}, \cite{Fomin-Shapiro-Thurston08}, \cite{Felikson-Shapiro-Tumarkin12}). 

There have been two major approaches to study mutation finite cluster algebras. One of them applies to mutation finite cluster algebras of types $A$ and $D$ and makes use of triangulations of marked surfaces (\cite{Musiker-Schiffler-Williams11}, \cite{Musiker-Schiffler-Williams13}). For type $A$, mutations are understood by geometric means, which yields bases for these cluster algebras (\cite{Schiffler-Thomas09},\cite{Dupont-Thomas13}). There are some partial results for type $D$ (\cite{Qiu-Zhou13}, \cite{Gunawan-Musiker14}) and a general way to compute Euler characteristics in terms of triangulations (\cite{Musiker-Schiffler-Williams11}), but this has not been carried out yet for cluster algebras that come from quivers of extended Dynkin type $D$.

An alternative approach is due to cluster characters (\cite{Fomin-Zelevinsky07}, \cite{Palu08}, \cite{Dupont11}, \cite{Dupont12a}, \cite{Dupont12b}). A cluster character associates with each representation $M$ of $Q$ an element $X_M$ of the cluster algebra $\cA$. This defines a map from the indecomposable representations of $Q$ whose image is the set of cluster variables (\cite{Caldero-Keller06}).%In some cases, it is possible to reduce the generators of $\cA$ to a finite number of cluster variables thanks to multiplicative relations (\cite{Geiss-Leclerc-Schroer13}).

The cluster variable $X_M$ is given by an explicit expression in the Euler characteristics of the quiver Grassmannians associated with $M$ (\cite{cc}, \cite{dwz} and \cite{dwz2}). These Euler characteristics have been calculated for the indecomposable representations of quivers of (extended) Dynkin type $A$ in \cite{Cerulli11} and \cite{Haupt12} and for the Kronecker quiver $K(2)$ in \cite{cr} and \cite{Cerulli-Esposito11}. There have been partial results for representations of low rank for other types in \cite{Dupont10} and \cite{Cerulli-Dupont-Esposito13}.

\subsection*{Aim of this text} In this paper and its sequel \cite{LW15}, we will work out explicit formulae for the Euler characteristics of quiver Grassmannians of extended Dynkin type $D$, which yields a description of the cluster character for mutation finite cluster algebras of type $D$. Our technique is based on Schubert decompositions as introduced in \cite{L13} and \cite{L14}. In the following, we will explain this approach and our results in more detail.

%%%%%%%%%%%%%%%%%%%%%%%%%%%%%%%%%%%%%%%%%%%%%%%%%%%%%%%%%%%%%%%%%%%%%%%%%%%%%%%%%%%%%%%%%%%%%%%%%%%%%%%%%%%%%%%%%%%%%%%%%%%%%%%%%%%%%%%%%%%%%%%%%%%%%%%%%%%%%%%%%%%%%%%%%%%%%%%%%%%%%%%%%%%%%%%%%%%%%%

\subsection*{Schubert decompositions of quiver Grassmannians}

Let $Q$ be a quiver, $M$ a complex representation and $\ue=(e_p)$ a dimension vector for $Q$. The quiver Grassmannian $\Gr_\ue(M)$ is defined as the set of all subrepresentations $N$ of $M$ with dimension vector $\udim N=\ue$. An \emph{ordered basis for $M$} is the union $\cB=\bigcup_{p\in Q_0} \cB_p$ of bases $\cB_p$ of $M_p$ whose elements are linearly ordered. The choice of an ordered basis $\cB$ of $M$ identifies $\Gr_\ue(M)$ with a closed subvariety of the usual Grassmannian $\Gr(e,d)$ by considering $N$ as a subvector space of $M\simeq \C^d$, where $d=\dim M$ and $e=\sum e_p$. This endows $\Gr_\ue(M)$ with the structure of a complex variety, and this structure does not depend on the choice of the ordered basis $\cB$.

The Schubert decomposition $\Gr(e,d)=\coprod C_\beta(d)$ of the usual Grassmannian, where $\beta$ ranges through all subsets of $\cB$ of cardinality $e$, restricts to the Schubert decomposition 
\[
 \Gr_\ue(M) \ = \ \coprod \ C_\beta^M
\]
of the quiver Grassmannian where $C_\beta^M=C_\beta(d)\cap\Gr_\ue(M)$. Note that the Schubert cells $C_\beta^M$ are affine varieties, but that they are, in general, not affine spaces. In particular, a Schubert cell $C_\beta^M$ might be empty. See section \ref{subsection: schubert decompositions in detail} for more details.

We say that $\Gr_\ue(M) = \coprod \ C_\beta^M$ is a \emph{decomposition into affine spaces} if every Schubert cell $C_\beta^M$ is either an affine space or empty. In this case, the classes of the closures of the non-empty Schubert cells in $\Gr_\ue(M)$ form an additive basis for the singular cohomology ring of $\Gr_\ue(M)$, and the cohomology is concentrated in even degree, cf.\ \cite[Cor.\ 6.2]{L14}. In particular, we derive the following characterization of the Euler characteristic of $\Gr_\ue(M)$.

\begin{prop*}
 If $\Gr_\ue(M)=\coprod C_\beta^M$ is a decomposition into affine spaces, then the Euler characteristic of $\Gr_\ue(M)$ is equal to the number of non-empty cells $C_\beta^M$. 
\end{prop*}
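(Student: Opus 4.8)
The plan is to deduce the statement from the standard additivity of the compactly supported Euler characteristic over a finite stratification into locally closed pieces, together with the fact that each affine space contributes $1$ to that count.

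First I would observe that each Schubert cell $C_\beta^M=C_\beta(d)\cap\Gr_\ue(M)$ is locally closed in $\Gr_\ue(M)$, since the ambient cell $C_\beta(d)$ is locally closed in $\Gr(e,d)$; thus $\Gr_\ue(M)=\coprod_\beta C_\beta^M$ is a finite partition into locally closed subvarieties. By additivity of $\chi_c$ over such partitions,
\[
 \chi_c\bigl(\Gr_\ue(M)\bigr)\ =\ \sum_\beta \chi_c\bigl(C_\beta^M\bigr).
\]
Since $\Gr_\ue(M)$ is a closed subvariety of the projective variety $\Gr(e,d)$, it is complete, so $\chi_c(\Gr_\ue(M))$ coincides with the ordinary topological Euler characteristic $\chi(\Gr_\ue(M))$.

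It then remains to evaluate each summand. If $C_\beta^M$ is empty, then $\chi_c(C_\beta^M)=0$. If $C_\beta^M$ is an affine space $\A^{k_\beta}$, then using multiplicativity of $\chi_c$ under products and $\chi_c(\A^1)=\chi_c(\P^1)-\chi_c(\{\mathrm{pt}\})=2-1=1$ we get $\chi_c(\A^{k_\beta})=\chi_c(\A^1)^{k_\beta}=1$. Hence the sum counts exactly the non-empty cells, which is the claim. Alternatively, one may argue directly from the property recalled just before the proposition: since $\Gr_\ue(M)$ admits a cellular decomposition, its singular cohomology is concentrated in even degrees with a basis indexed by the non-empty cells, so $\chi(\Gr_\ue(M))=\sum_i \dim H^{2i}(\Gr_\ue(M))=\#\{\beta\mid C_\beta^M\neq\emptyset\}$ by \cite[Cor.\ 6.2]{L14}. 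I do not expect any genuine obstacle here; the only points requiring (routine) care are the well-known additivity of $\chi_c$ over locally closed stratifications and the identification $\chi_c=\chi$ for the complete variety $\Gr_\ue(M)$.
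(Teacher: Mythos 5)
Your proposal is correct. Your primary argument — additivity of the compactly supported Euler characteristic over the finite stratification by locally closed Schubert cells, $\chi_c(\A^{k_\beta})=1$, and the identification $\chi_c=\chi$ for the complete variety $\Gr_\ue(M)$ — is a valid, self-contained and slightly more elementary route than the one the paper takes: the paper derives the proposition directly from the statement (quoted just before it) that the closures of the non-empty cells give an additive basis of the singular cohomology concentrated in even degree, citing \cite[Cor.\ 6.2]{L14}, so that $\chi=\sum_i\dim H^{2i}$ is the number of non-empty cells. Your alternative argument at the end is exactly the paper's. The trade-off is the usual one: the $\chi_c$ argument needs nothing about closures of cells or the cohomological structure of the decomposition, only the motivic additivity of $\chi_c$, whereas the cohomology-basis route gives strictly more information (the Poincar\'e polynomial, not just $\chi$) at the cost of invoking the cellular-decomposition machinery of \cite{L14}. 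Either way the proof is complete; there is no gap.
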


%%%%%%%%%%%%%%%%%%%%%%%%%%%%%%%%%%%%%%%%%%%%%%%%%%%%%%%%%%%%%%%%%%%%%%%%%%%%%%%%%%%%%%%%%%%%%%%%%%%%%%%%%%%%%%%%%%%%%%%%%%%%%%%%%%%%%%%%%%%%%%%%%%%%%%%%%%%%%%%%%%%%%%%%%%%%%%%%%%%%%%%%%%%%%%%%%%%%%%

\subsection*{Representations of extended Dynkin type $D$}

Let $Q$ be a quiver of extended Dynkin type $\widetilde D_n$, i.e.\ of the form
\[
   \beginpgfgraphicnamed{fig58}
   \begin{tikzpicture}[>=latex]
  \matrix (m) [matrix of math nodes, row sep=-0.2em, column sep=2.5em, text height=1ex, text depth=0ex]
   {      & q_b &     &     &      &         &         &  q_c & \\
          &     & q_0 & q_1 & {\dotsb } & q_{n-5} & q_{n-4} &      & \\
      q_a &     &     &     &      &         &         &      &  q_d \\};
   \path[-,font=\scriptsize]
   (m-3-1) edge node[auto,swap] {$a$} (m-2-3)
   (m-1-2) edge node[auto] {$b$} (m-2-3)
   (m-2-3) edge node[auto] {$v_0$} (m-2-4)
   (m-2-6) edge node[auto] {$v_{n-5}$} (m-2-7)
   (m-2-7) edge node[auto] {$c$} (m-1-8)
   (m-2-7) edge node[auto,swap] {$d$} (m-3-9)
   (m-2-4) edge node[auto] {} (m-2-5)
   (m-2-5) edge node[auto] {} (m-2-6);
  \end{tikzpicture}
\endpgfgraphicnamed
\]
where the arrows are allowed to assume any orientation. The representation theory of $Q$ is well-understood; in particular, there are descriptions of all indecomposable representations of $Q$ in \cite{Crawley-Boevey92} or \cite{Dlab-Ringel76}. We summarize this theory in Appendix \ref{appendix: indecomposable representations of type D_n-tilde}.

With this knowledge about the representation theory of $Q$, it is easy to describe the matrix coefficients of the linear maps $M_v$ w.r.t.\ to a chosen basis $\cB$ for a representation $M$ of $Q$ where $v$ is an arrow of $Q$. For the purpose of this paper, we exhibit for every indecomposable representation $M$ a particular ordered basis $\cB$ and present the matrix coefficients in terms of the coefficient quiver $\Gamma$ of $M$ w.r.t.\ $\cB$. We explain the construction of these bases in Appendix \ref{appendix: bases for representations of type D_n-tilde} and restrict ourselves in section \ref{subsection: bases for indecomposables of small defect} to a short description of the cases that we treat in the present Part 1 of the paper.

%%%%%%%%%%%%%%%%%%%%%%%%%%%%%%%%%%%%%%%%%%%%%%%%%%%%%%%%%%%%%%%%%%%%%%%%%%%%%%%%%%%%%%%%%%%%%%%%%%%%%%%%%%%%%%%%%%%%%%%%%%%%%%%%%%%%%%%%%%%%%%%%%%%%%%%%%%%%%%%%%%%%%%%%%%%%%%%%%%%%%%%%%%%%%%%%%%%%%%

\subsection*{The main result}

Let $M$ be an indecomposable representation of $Q$ and $\cB$ the ordered basis as constructed in Appendix \ref{appendix: bases for representations of type D_n-tilde}. Then the following holds true.

\begin{thmA}\label{thmA}
 The Schubert decomposition $\Gr_\ue(M)=\coprod C_\beta^M$ is a decomposition into affine spaces for every dimension vector $\ue$.
\end{thmA}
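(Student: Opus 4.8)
The plan is to reduce Theorem~A to a purely combinatorial statement about the system of polynomial equations cutting out each Schubert cell, and then solve that system cell by cell. First I would make the Schubert cells explicit: fix the ordered basis $\cB$ and its coefficient quiver $\Gamma$, and for a subset $\beta\subseteq\cB$ of the correct dimension type $\ue$, write down the defining equations of $C_\beta^M$ inside the affine chart of $\Gr(e,d)$ determined by $\beta$. Because $Q$ is of type $\widetilde D_n$, the maps $M_v$ are sparse in the chosen basis, so these equations are polynomial relations among the matrix entries $T_{\gamma}$ (indexed by arrows $\gamma$ of $\Gamma$ not lying over $\beta$) whose shape is governed entirely by the combinatorics of $\Gamma$ relative to $\beta$. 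The aim is to show each such system can be solved \emph{successively in linear terms}: one repeatedly locates an equation that is linear in some not-yet-eliminated variable $T_\gamma$, with invertible (in fact constant) coefficient, uses it to eliminate $T_\gamma$, substitutes, and continues until either a contradiction appears (the cell is empty) or all constraints are exhausted (the remaining free variables parametrize an affine space).

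The key steps, in order: (1) set up the Schubert-cell equations from the coefficient quiver, as in \cite{L13}, \cite{L14}, making precise which arrow-variables occur and in which equations; (2) formalize the elimination procedure as the theory of \emph{Schubert systems} announced in the abstract --- a Schubert system being, roughly, the combinatorial data of variables, equations, and an admissible order of elimination, together with an invariant guaranteeing that after each linear elimination the resulting smaller system is again a Schubert system; (3) prove the structural lemma that for $\widetilde D_n$ and a defect $-1$ or defect $0$ indecomposable $M$ (excluding non-Schurian representations in homogeneous tubes, per the abstract's scope, though Theorem~A as stated is phrased for all indecomposables and the restriction is imposed on the class of $M$ treated in Part~1), every Schubert cell $C_\beta^M$ gives rise to a Schubert system in this sense; (4) run the induction: each elimination step either terminates with inconsistency (empty cell) or strictly decreases the number of variables while preserving the Schubert-system property, so the process halts and the output is $\A^m$ for some $m\ge 0$; (5) conclude via the Proposition above that the decomposition is into affine spaces for every $\ue$.

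The main obstacle is step (3) together with the well-definedness part of step (2): one must show that an appropriate elimination order \emph{always exists} and that linearity is preserved. A priori, after substituting a linear relation into the other equations, degrees can go up and previously-linear variables can become entangled; the subtlety is to find an ordering of the arrows of $\Gamma$ --- exploiting the tree-like shape of $\widetilde D_n$ away from the cycle and the precise form of the bases built in Appendices~\ref{appendix: indecomposable representations of type D_n-tilde} and \ref{appendix: bases for representations of type D_n-tilde} --- under which the equations are always triangular in the right sense. This is where the four-legged/cyclic geometry of $\widetilde D_n$ genuinely enters, and where the bookkeeping is heaviest: the branch vertices $q_a,q_b,q_c,q_d$ and the interaction of the two "ends" of the affine diagram force one to track how relations propagate along the central arm $v_0,\dots,v_{n-5}$. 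Once the Schubert-system formalism is in place and shown to be stable under linear elimination, the remaining work --- checking that the $\widetilde D_n$ cells fit the formalism and reading off the affine-space dimension from the surviving free variables --- is systematic, and the characterization of the affine spaces in terms of $\Gamma$ falls out of which arrow-variables remain free at the end.
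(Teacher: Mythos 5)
Your plan is, in outline, exactly the method the paper uses: write the equations $E(v,t,s)$ for the cells from the coefficient quiver, organize them into a combinatorial structure (the Schubert system with its $\beta$-states), and solve successively in linear terms, reading off emptiness or an affine space. The structural results you call step (2) are Lemma \ref{lemma: defining equations of a schubert cell}, Theorem \ref{thm: solvable beta-state implies affine schubert cell} and Corollary \ref{cor: decomposition into affine spaces if all beta-states are contradictory or solvable}. (One small reassurance: your worry about degrees growing under substitution does not materialize, since each $E(v,t,s)$ is at most quadratic and multilinear, so eliminating a variable never raises degree.)

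However, two concrete points in your execution plan would fail as written. First, step (4) tacitly assumes that one admissible elimination order can be fixed for the whole Schubert system and then restricted to every cell. For $\widetilde D_n$ this is false: the reduced Schubert system of a defect $-1$ indecomposable is in general \emph{not} solvable even though every non-contradictory $\beta$-state is (this is exactly the phenomenon discussed in section \ref{subsection: extremal edges} and visible already for Dynkin $D_n$ in section \ref{subsection: dynkin quivers}). The elimination order must be chosen separately for each class of $\beta$ (in the proof, the choice depends on local data such as the value $\ev_\beta(kn,kn+4)\in\{0,\pm1,\eta\}$), and you then need a gluing mechanism to assemble these $\beta$-dependent local orders into a single acyclic orientation; this is what the patchwork formalism of section \ref{subsection: patchwork solutions} and condition (PS) provide, and without it the induction in your step (4) has no well-defined global order to run over. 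Second, even granting $\beta$-dependent orders, there is a genuine obstruction in one orientation case (the patches $\Xi_{k,l}$ with $\ev_\beta(kn,kn+4)=\eta$ in the fourth orientation block of section \ref{section: proof of theorem 4.4}): the resulting $\beta$-state is not solvable at all in the coordinates $w_{i,j}$, i.e.\ no sequence of linear eliminations exhausts the equations. The paper gets past this only by an explicit linear change of variables ($\tilde w_{kn+2,ln+2}$, $\tilde w_{kn,ln+4}$) after which the system becomes triangular. So ``successive linear elimination in the arrow variables'' is not quite sufficient as a universal mechanism; your formalism must either build in such coordinate changes or treat these cells by a separate argument.
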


For indecomposable representations of defect $-1$ and $0$, with the exception of non-Schurian representations in a homogeneous tube, this is Theorem \ref{thm: the main theorem}. Moreover, Theorem \ref{thm: the main theorem} contains an explicit description of the empty Schubert cells in terms of the combinatorics of the coefficient quiver $\Gamma$ of $M$ w.r.t.\ $\cB$.

%%%%%%%%%%%%%%%%%%%%%%%%%%%%%%%%%%%%%%%%%%%%%%%%%%%%%%%%%%%%%%%%%%%%%%%%%%%%%%%%%%%%%%%%%%%%%%%%%%%%%%%%%%%%%%%%%%%%%%%%%%%%%%%%%%%%%%%%%%%%%%%%%%%%%%%%%%%%%%%%%%%%%%%%%%%%%%%%%%%%%%%%%%%%%%%%%%%%%%

\subsection*{Schubert systems}

The technique of the proof of Theorem A is to exhibit a particular system of equations for each non-empty Schubert cell $C_\beta^M$ and solve this system of equations successively in linear terms.

More precisely, the results of \cite{L13} and \cite{L14} provide us with certain polynomials $E(v,t,s)$ in $\C[w_{i,j}|i,j\in\cB]$. If we denote the vanishing set of these polynomials by $V(M,\cB)$, then the various Schubert cells $C_\beta^M$ result as the intersection of $V(M,\cB)$ with an appropriate affine subspace of $\{(w_{i,j})\}_{i,j\in\cB}$ that is defined by equations of the form $w_{i,j}=0$ and $w_{i,j}=1$.

The difficulty of the proof lies in the intricate combinatorics of the system of equations $E(v,t,s)$. We organize this combinatorial data in terms of the Schubert system $\Sigma$ of $M$ w.r.t.\ $\cB$, which is a graph whose vertices are the indices $(v,t,s)$ of the polynomials $E(v,t,s)$ and the indices $(i,j)$ of the variables $w_{i,j}$ and whose edges indicate that $w_{i,j}$ appears non-trivially in $E(v,t,s)$. The Schubert system comes with additional data that remembers the terms of an equation $E(v,t,s)$ and their coefficients.

For a given $\beta$, we can evaluate a part of the variables $w_{i,j}$ according to the equations $w_{i,j}=0$ and $w_{i,j}=1$, which leads us to the $\beta$-state $\Sigma_\beta$ of the Schubert system. Certain combinatorial conditions on $\Sigma_\beta$ imply that the Schubert cell is empty, and others that the Schubert cell is an affine space. The proof of Theorem A consists in a verification of these conditions for all subsets $\beta$ of $\cB$.

\begin{rem*}
 One might wonder if there is no shorter proof of Theorem A that uses arguments from representation theory instead of the lengthy combinatorial proof in terms of Schubert systems. We were not able to find such a proof for the cases considered in this paper. However, the reader will find representation theoretic arguments in the sequel \cite{LW15}, which use the results from this paper to complete the proof of Theorem A. Though in principle, all the methods in the proof of Theorem A extend to other types of quiver Grassmannians, shorter and more systematic arguments would be of advantage for a generalization of the present results.
\end{rem*}

%%%%%%%%%%%%%%%%%%%%%%%%%%%%%%%%%%%%%%%%%%%%%%%%%%%%%%%%%%%%%%%%%%%%%%%%%%%%%%%%%%%%%%%%%%%%%%%%%%%%%%%%%%%%%%%%%%%%%%%%%%%%%%%%%%%%%%%%%%%%%%%%%%%%%%%%%%%%%%%%%%%%%%%%%%%%%%%%%%%%%%%%%%%%%%%%%%%%%%

\subsection*{Side results}

As a first application of the theory of Schubert systems, we (re-)establish decompositions into affine spaces of the quiver Grassmannians $\Gr_\ue(M)$ for all exceptional representations of the Kronecker quiver and for all indecomposable representations of quivers of Dynkin types $A$ and $D$.

%%%%%%%%%%%%%%%%%%%%%%%%%%%%%%%%%%%%%%%%%%%%%%%%%%%%%%%%%%%%%%%%%%%%%%%%%%%%%%%%%%%%%%%%%%%%%%%%%%%%%%%%%%%%%%%%%%%%%%%%%%%%%%%%%%%%%%%%%%%%%%%%%%%%%%%%%%%%%%%%%%%%%%%%%%%%%%%%%%%%%%%%%%%%%%%%%%%%%%

\subsection*{Results from Part 2}

In the sequel \cite{LW15} to this paper, we complete the proof of Theorem A for all indecomposable representations $M$ of $Q$, together with a characterization of the non-empty Schubert cells. This reduces the calculation of the Euler characteristic $\chi_\ue(M)$ of $\Gr_\ue(M)$ to a counting problem of non-empty Schubert cells. The cluster variables for $Q$ are given by the relation
\[
 X_M \quad = \quad \prod_{q\in Q_0} x_q^{\sum_{p\in Q_0}a(p,q)\dim M_p-\dim M_q} \ \cdot \ F_M \ \Bigl(\bigl(\prod_{p\in Q_0}x_p^{a(q,p)-a(p,q)}\bigr)_{q\in Q_0}\Bigr) 
\]
where $a(p,q)$ is the number of arrow from $p$ to $q$ and $F_M=\sum_\ue \chi_\ue(M)\underline x^\ue$ is the $F$-polynomial of $M$. The explicit formulae for the $F$-polynomial for indecomposable representations are described below. Via $F_{M\oplus N}=F_MF_N$, we obtain the cluster variables for all representations of $Q$.

%In terms of the $F$-polynomial $F_M=\sum_\ue \chi_\ue(M)\underline x^\ue$ of $M$, we obtain the following formulae.

\medskip
\noindent\textit{Homogeneous tubes.} Let $\delta$ be the unique imaginary Schur root. The $F$-polynomial $F_\delta$ of a Schur representation in a homogeneous tube is easily determined from Theorem \ref{thm: the main theorem}. It depends on the orientation of $Q$, but not on the homogeneous tube. The $F$-polynomial $F_{r\delta}$ of an indecomposable representation with dimension vector $r\delta$ in a homogeneous tube is given by
\[
 F_{r\delta} \quad = \quad \frac{1}{2z}(\lambda_+^{r+1}-\lambda_-^{r+1}) \qquad \text{where} \qquad z=\frac{1}{2}\sqrt{F_{\delta}^2-4\underline x^{\delta}} \quad \text{and} \quad\lambda_{\pm}=\frac{F_{\delta}}{2}\pm z.
% F_{r\delta} \quad = \quad \sum_{l=0}^{\lfloor r/2 \rfloor} \ \sum_{k=0}^l \ \binom{r+1}{2l+1}\binom lk \ \left( -\underline x^{\delta}\right)^k \ \left(\frac{F_\delta}{2}\right)^{r-2k}.
\]

\medskip
\noindent\textit{Exceptional tubes.} Every indecomposable representation in an exceptional tube of rank $m$ is contained in a uniquely determined sequence
\[
 M_{0,0} \quad \longrightarrow \quad M_{0,1} \quad \longrightarrow \quad \dotsb \quad \longrightarrow \quad M_{0,m-1} \quad \longrightarrow \quad M_{1,0} \quad \longrightarrow \quad M_{1,1} \dotsb  
\]
of irreducible injective homomorphisms where $M_{0,0}=0$ is the trivial representation and all other representations $M_{i,j}$ in this sequence are indecomposables of the same tube. The dimension vector $\alpha(r,i)$ of $M_{r,i}$ is a real root for $i=1,\dotsc,m-1$ and $\alpha(r,0)=r\delta$ is an imaginary root. We denote the $F$-polynomial of $M_{r,i}$ by $F_{r,i}$. If $\alpha$ is a real root, then we denote the $F$-polynomial of the unique indecomposable representation $M$ with $\udim M=\alpha$ by $F_\alpha$. We define $F_\alpha=0$ if a component of $\alpha$ has a negative coefficient and set $\alpha(r,m)=(r+1)\delta$.

The polynomials $F_{0,i}$ are easily determined from Theorem \ref{thm: the main theorem}. The $F$-polynomials for $r\geq 1$ are given by
\[
 F_{r,i} \quad = \quad F_{0,i} F_{r\delta} \ + \ \underline x^{\alpha(0,i+1)} F_{\alpha(0,m-1)-\alpha(0,i+1)} F_{(r-1)\delta}.
\]

\medskip
\noindent\textit{Preprojective component.} Let $M$ be an indecomposable preprojective representation of defect $-1$ and $r\geq 0$ such that $0\leq \epsilon_M=\udim M-r\delta\leq\delta$. The $F$-polynomial $F_{\epsilon_M}$ of the real root $\epsilon_M$ is easily determined from Theorem \ref{thm: the main theorem}. Let $\tau$ be the Auslander-Reiten-translation. If $\delta-\epsilon_M$ is injective, then we have
\[
 F_M \quad = \quad F_{\epsilon_M}F_{r\delta} \ - \ \underline x^{\delta}F_{(r-1)\delta}.
\]
If $\delta-\epsilon_M$ is not injective, then we have
\[
 F_M \quad = \quad F_{\epsilon_M}F_{r\delta} \ - \ \underline x^{\tau^{-1}\epsilon_M}F_{\delta-\tau^{-1}\epsilon_M}F_{(r-1)\delta}.
\]
Every indecomposable representation $B$ of defect $-2$ is the extension of two particular indecomposable representations $M$ and $N$ of defect $-1$ such that
\[
 F_B \quad = \quad  F_NF_M \ - \ \underline x^{\udim\tau^{-1}M}F_{N/\tau^{-1}M}.
\]

\medskip
\noindent\textit{Preinjective component.} Every preinjective representation is the dual $M^\ast$ of a preprojective representation $M$ with $F$-polynomial $F_M=\sum c_\ue \underline x^\ue$. The $F$-polynomial of $M^\ast$ is 
\[
 F_{M^\ast} \quad = \quad \sum c^\ast_{\ue} \ \underline x^\ue \qquad \text{with} \qquad c^\ast_\ue=c_{\udim M-\ue}.
\]

%%%%%%%%%%%%%%%%%%%%%%%%%%%%%%%%%%%%%%%%%%%%%%%%%%%%%%%%%%%%%%%%%%%%%%%%%%%%%%%%%%%%%%%%%%%%%%%%%%%%%%%%%%%%%%%%%%%%%%%%%%%%%%%%%%%%%%%%%%%%%%%%%%%%%%%%%%%%%%%%%%%%%%%%%%%%%%%%%%%%%%%%%%%%%%%%%%%%%%

\subsection*{Content overview}

In section \ref{section: background}, we review some background material on coefficient quivers and Schubert cells. In section \ref{section: schubert systems}, we introduce Schubert systems and their $\beta$-states. We develop ways to compute $\beta$-states efficiently and describe combinatorial criteria under which the Schubert cell $C_\beta^M$ is empty or an affine space. In section \ref{section: first applications}, we apply the theory of Schubert systems to the Kronecker quiver and Dynkin quivers of types $A$ and $D$.

In section \ref{section: Schubert decompositions for type D_n}, we formulate the main result Theorem \ref{thm: the main theorem} of this paper. For this, we describe a basis for all indecomposable representations that are considered in Theorem \ref{thm: the main theorem}, and we characterize those $\beta$ for which the Schubert cell $C_\beta^M$ is empty in terms of the combinatorics of the coefficient quiver $\Gamma$. The proof of Theorem \ref{thm: the main theorem} is the content of section \ref{section: proof of theorem 4.4}.

In Appendix \ref{appendix: indecomposable representations of type D_n-tilde}, we summarize the representation theory of quivers of extended Dynkin type $D$. In Appendix \ref{appendix: bases for representations of type D_n-tilde}, we explain how to construct the coefficient quivers that we use in this text.

\subsection*{Acknowledgements}

We would like to thank Jan Schr\"oer for calling our attention to type $D$ cluster algebras and for his help with this project. We would like to thank Giovanni Cerulli Irelli for his explanations about cluster algebras. We would like to thank Markus Reineke and Christof Gei\ss\ for their interest and their assistance of this project.

%%%%%%%%%%%%%%%%%%%%%%%%%%%%%%%%%%%%%%%%%%%%%%%%%%%%%%%%%%%%%%%%%%%%%%%%%%%%%%%%%%%%%%%%%%%%%%%%%%%%%%%%%%%%%%%%%%%%%%%%%%%%%%%%%%%%%%%%%%%%%%%%%%%%%%%%%%%%%%%%%%%%%%%%%%%%%%%%%%%%%%%%%%%%%%%%%%%%%%
%%%%%%%%%%%%%%%%%%%%%%%%%%%%%%%%%%%%%%%%%%%%%%%%%%%%%%%%%%%%%%%%%%%%%%%%%%%%%%%%%%%%%%%%%%%%%%%%%%%%%%%%%%%%%%%%%%%%%%%%%%%%%%%%%%%%%%%%%%%%%%%%%%%%%%%%%%%%%%%%%%%%%%%%%%%%%%%%%%%%%%%%%%%%%%%%%%%%%%

\section{Background}
\label{section: background}

\noindent
Let $Q$ be a quiver with vertex set $Q_0$ and arrow set $Q_1$. A \emph{(complex finite dimensional) representation} of $Q$ is a collection of finite dimensional complex vector spaces $M_p$ indexed by the vertices $p$ of $Q$ together with a collection of linear maps $M_a:M_p\to M_q$ indexed by the arrows $v:p\to q$ of $Q$. A representation $M$ is \emph{thin} if $\dim M_p\leq 1$ for all $p\in Q_0$, and $M$ is \emph{sincere} if $\dim M_p\geq 1$ for all $p\in Q_0$.

\begin{rem}
 Though many results of this text are valid for arbitrary base rings $k$ instead of $\C$, we restrict ourselves to $k=\C$, which is sufficient for applications to cluster algebras. We note that the representation theory of extended Dynkin quiver is essentially the same for all algebraic closed fields $k$, but it differs for other rings $k$. However, the proof of Theorem \ref{thm: the main theorem} is valid for any base ring $k$.
\end{rem}

\subsection{Coefficient quiver}
\label{subsection: coefficient quiver}
Let $M$ be a representation of $Q$ with basis $\cB=\bigcup_{p\in Q_0}\cB_p$. Let $v:p\to q$ be an arrow of $Q$ and $i\in\cB_p$. Then we have the equations 
\[
 M_v(i) \quad = \quad \sum_{j\in\cB_q} \mu_{v,i,j} \, j 
\]
with uniquely determined coefficients $\mu_{v,i,j}\in\C$. The \emph{coefficient quiver of $M$ w.r.t.\ $\cB$} is the quiver $\Gamma=\Gamma(M,\cB)$ with vertex set $\Gamma_0=\cB$ and arrow set 
\[
 \Gamma_1 \quad = \quad \bigl\{ \ (v:p\to q,i,j)\in Q_1\times\cB\times\cB \ \bigl| \ i\in\cB_{p}, j\in\cB_{q}\text{ and }\mu_{v,i,j}\neq 0 \ \bigr\}.
\]
It comes together with a morphism $F:\Gamma\to Q$ that sends $i\in\cB_p$ to $p$ and $(v,i,j)$ to $v$, and with a thin sincere representation $N=N(M,\cB)$ of $\Gamma$ with basis $\cB$ and $1\times 1$-matrices $N_{(v,i,j)}=(\mu_{v,i,j})$. Note that $M$ is canonically isomorphic to the push-forward $F_\ast N$. Therefore the morphism $F:\Gamma\to Q$ of quivers together with the weights $\mu_{v,i,j}$ of the arrows of $\Gamma$ determine $M$ and $\cB$. This allows us to describe a basis of $M$ in terms of the coefficient quiver $\Gamma=\Gamma(M,\cB)$ and the weights $\mu_{v,i,j}$.

\subsection*{Remark on illustrations} Typically, we identify $\Gamma_0$ with a set of natural numbers. We will illustrate coefficient quivers $\Gamma=\Gamma(M,\cB)$ and the associated morphism $F:\Gamma\to Q$ in the following way: we label the arrows $(v,i,j)$ of $\Gamma$ with their image arrows $v$ in $Q$, together with the non-zero weight $\mu_{v,i,j}$ unless it is $1$. We draw the vertices $i$ and arrows $v:i\to j$ of $\Gamma$ above their respective image $F(i)$ and $v$ in $Q$. An example of a coefficient quiver of an indecomposable module is the following.
\[
   \beginpgfgraphicnamed{fig1}
   \begin{tikzpicture}[>=latex]
  \matrix (m) [matrix of math nodes, row sep=0em, column sep=3em, text height=1ex, text depth=0ex]
%column 1  2   3   4   5   6   7        9
   {   1 &   & 2 & 3 & 4 &   &               \\   % row 1
         & 6 &   &   &   & 8 &    && \Gamma  \\   % row 2
         &   &12 &11 &10 &   & 9             \\   % row 3
      \  &   &   &   &   &   &               \\   % row 6
         &q_b&   &   &   &q_c&               \\   % row 7
         &   &q_0&q_1&q_2&   &    && Q       \\   % row 8 
     q_a &   &   &   &   &   & q_d           \\}; % row 9 
   \path[->,font=\scriptsize]
   (m-2-2) edge node[below right=-2pt] {$b$} (m-1-3)
   (m-2-2) edge node[below=-1pt] {$b$} (m-3-3)
   (m-1-3) edge node[auto] {$v_0$} (m-1-4)
   (m-1-5) edge node[auto,swap] {$v_1$} (m-1-4)
   (m-3-7) edge node[auto] {$d$} (m-3-5)
   (m-1-5) edge node[above=-1pt] {c,$\mu_1$} (m-2-6)
   (m-3-5) edge node[above left=-2pt] {$c$,$\mu_2$} (m-2-6)
   (m-3-5) edge node[auto,swap] {$v_1$} (m-3-4)
   (m-3-3) edge node[auto] {$v_0$} (m-3-4)
   (m-1-1) edge node[auto] {$a$} (m-1-3)
   (m-7-1) edge node[auto,swap] {$a$} (m-6-3)
   (m-5-2) edge node[auto] {$b$} (m-6-3)
   (m-6-3) edge node[auto] {$v_0$} (m-6-4)
   (m-6-5) edge node[auto,swap] {$v_1$} (m-6-4)
   (m-6-5) edge node[auto] {$c$} (m-5-6)
   (m-7-7) edge node[auto] {$d$} (m-6-5)
   (m-2-9)  edge node[auto] {$F$} (m-6-9);
  \end{tikzpicture}
\endpgfgraphicnamed
\]
If there is no danger of confusion, we will omit the quiver $Q$ and the map $F:\Gamma\to Q$ from the illustration. Since it is often possible to renormalize the weights $\mu_{v,i,j}$ to $1$ without changing the isomorphism class of $M$, we are able to omit the weights for most arrows. To be precise, we can renormalize the weight $\mu_{v,i,j}$ to $1$ whenever the arrow $(v,i,j)$ is not contained in a cycle. This is achieved by scaling all the basis elements with a path to the tail of $(v,i,j)$ which  does not contain $(v,i,j)$ by the factor $\mu_{v,i,j}$.

%%%%%%%%%%%%%%%%%%%%%%%%%%%%%%%%%%%%%%%%%%%%%%%%%%%%%%%%%%%%%%%%%%%%%%%%%%%%%%%%%%%%%%%%%%%%%%%%%%%%%%%%%%%%%%%%%%%%%%%%%%%%%%%%%%%%%%%%%%%%%%%%%%%%%%%%%%%%%%%%%%%%%%%%%%%%%%%%%%%%%%%%%%%%%%%%%%%%%%

\subsection{Schubert decompositions}\label{subsection: schubert decompositions in detail}

We review the definition of the Schubert cells $C_\beta^M$ and the Schubert decomposition of $\Gr_\ue(M)$ in more detail. A point of the Grassmannian $\Gr(e,d)$ is an $e$-dimensional subspace $V$ of $\C^{d}$. Let $V$ be spanned by vectors $w_1,\dotsc,w_e\in\C^d$. We write $w=(w_{i,j})_{i=1\dotsc d,j=1\dotsc, e}$ for the matrix of all coordinates of $w_1,\dotsc,w_e$. The Pl\"ucker coordinates 
\[
 \Delta_\beta(V) \quad = \quad \det(w_{i,j})_{i\in \beta,j=1\dotsc, e}
\]
(where $\beta$ is a subset of $\{1,\dotsc,d\}$ of cardinality $e$) define a point $(\Delta_\beta(V))_\beta$ in $\P\bigl(\Lambda^{e}\C^{d}\bigr)$. For two ordered subsets $\beta=\{i_1,\dotsc,i_e\}$ and $\beta'=\{j_1,\dotsc,j_e\}$ of $\{1,\dotsc,d\}$, we define $\beta\leq \beta'$ if $i_l\leq j_l$ for all $l=1\dotsc,e$. The Schubert cell $C_\beta(d)$ of $\Gr(e,d)$ is defined as the locally closed subvariety of all subspaces $V$ such that $\Delta_\beta(V)\neq 0$ and $\Delta_{\beta'}(V)=0$ for all $\beta'>\beta$.

Let $M$ be a representation of $Q$ with ordered basis $\cB$, $d$ the dimension of $M$ as complex vector space, $\ue=(e_p)$ a dimension vector of $Q$ and $e=\sum e_p$ the sum of its coordinates. The ordered basis $\cB$ identifies $M$ with $\C^d$ and a subrepresentation $N$ with $\udim N=\ue$ with an $e$-dimensional subvector space of $\C^d$. This association identifies $\Gr_\ue(M)$ with a closed subvariety of $\Gr(e,d)$. We define the Schubert cell $C_\beta^M$ as the intersection $C_\beta(d)\cap\Gr_\ue(M)$.

A subset $\beta$ of $\cB$ is \emph{of type $\ue$} if $\beta_p=\beta\cap\cB_p$ is of cardinality $e_p$ for every vertex $p$ of $Q$. Since the embedding into $\Gr(e,d)$ factors into
\[
 \Gr_\ue(M) \quad \longrightarrow \quad \prod_{p\in Q_0} \ \Gr(e_p,d_p) \quad \longrightarrow \quad \Gr(e,d)
\]
where $d_p=\dim M_p$, we see that $C_\beta(d)\cap\Gr_\ue(M)$ is empty if $\beta\subset\cB$ is a subset of cardinality $e$ that is not of type $\ue$. Therefore the cells in the Schubert decomposition
\[
 \Gr_\ue(M) \quad = \quad \coprod_{\substack{\beta\subset\cB\\\text{of type }\ue}} \ C_\beta^M
\]
can be indexed by subsets $\beta\subset\cB$ of type $\ue$.

%%%%%%%%%%%%%%%%%%%%%%%%%%%%%%%%%%%%%%%%%%%%%%%%%%%%%%%%%%%%%%%%%%%%%%%%%%%%%%%%%%%%%%%%%%%%%%%%%%%%%%%%%%%%%%%%%%%%%%%%%%%%%%%%%%%%%%%%%%%%%%%%%%%%%%%%%%%%%%%%%%%%%%%%%%%%%%%%%%%%%%%%%%%%%%%%%%%%%%

\subsection{Representations of Schubert cells}\label{subsection: representation of schubert cells}

We describe explicit equations for the Schubert cells $C^M_\beta$, which realize them as closed subvarieties of an ambient affine space; cf.\ \cite{L13}.

For a subset $\beta$ of $\cB$, let $N$ be a point of $C_\beta^M$. If $\beta$ is of type $\ue$, then this means that $N_p$ is a $e_p$-dimensional subspace of $M_p$ for every $p\in Q_0$ such that $M_v(N_p)\subset N_q$ for every arrow $v:p\to q$ in $Q$. For every $p\in Q_0$, the vector space $N_p$ has a basis $(w_j)_{j\in\beta_p}$ where $w_j=(w_{i,j})_{i\in \cB_p}$ are column vectors in $M_p$ w.r.t.\ the coordinates given by $\cB_p$. If we define $w_{i,j}=0$ for $i,j\in\cB$ whenever $j\notin\beta$, or $i\in\cB_p$ and $j\in\cB_q$ with $p\neq q$, then we obtain a matrix $w=(w_{i,j})_{i,j\in\cB}$. We call such a matrix $w$ a \emph{matrix representation of $N$}. 

A matrix $w\in\Mat_{\cB\times\cB}$ is in \emph{$\beta$-normal form}, if it satisfies 
\begin{enumerate}
 \item[(NF1)]\label{NF1} $w_{i,i}=1$ for all $i\in\beta$,
 \item[(NF2)]\label{NF2} $w_{i,j}=0$ for all $i,j\in\beta$ with $j\neq i$,
 \item[(NF3)]\label{NF3} $w_{i,j}=0$ for all $i\in\cB$ and $j\in\beta$ with $j<i$,
 \item[(NF4)]\label{NF4} $w_{i,j}=0$ for all $i\in\cB$ and $j\in\cB-\beta$, and
 \item[(NF5)]\label{NF5} $w_{i,j}=0$ for all $i\in\cB_p$ and $j\in\beta_q$ with $p\neq q$.
\end{enumerate}

We say that $w_{i,j}$ is a \emph{constant coefficient (w.r.t.\ $\beta$)} if it appears in \textup{(NF1)}--\textup{(NF5)}; otherwise we say that $w_{i,j}$ is a \emph{free coefficient (w.r.t.\ $\beta$)}, which is the case if and only if there is a $p\in Q_0$ such that $i\in\cB_p-\beta_p$, $j\in\beta_p$ and $i<j$. 

\begin{lemma}[{\cite[Lemma 2.1]{L13}}] \label{lemma: normal form}
 Every $N\in C_\beta^M$ has a unique matrix representation $w=(w_{i,j})_{i,j\in\cB}$ in $\beta$-normal form.
\end{lemma}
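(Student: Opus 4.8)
The statement asserts existence and uniqueness of a matrix representation in $\beta$-normal form for every $N \in C_\beta^M$. The plan is to reduce this to a statement about the ambient Grassmannian $\Gr(e,d)$ and then observe that the additional block-structure (NF5) is automatic for points coming from $\Gr_\ue(M)$. First I would recall the classical fact about ordinary Schubert cells: for a subspace $V \in C_\beta(d)$, the pivot positions of the row-reduced echelon form of a spanning matrix $w$ are exactly the indices in $\beta$, because $\Delta_\beta(V) \neq 0$ is equivalent to the $e \times e$ minor on rows $\beta$ being invertible, and the condition $\Delta_{\beta'}(V) = 0$ for all $\beta' > \beta$ forces $\beta$ to be the lexicographically minimal such index set, i.e.\ the pivot set. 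Performing Gaussian column operations to bring this minor to the identity and to clear all entries to the left of and below each pivot yields a spanning matrix satisfying (NF1)--(NF4); the subspace $V$ does not change under column operations, so this is a legitimate choice of basis for $N_p$ inside each $M_p$.

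Next I would handle uniqueness, which is the standard argument that echelon form is canonical: if $w$ and $w'$ both satisfy (NF1)--(NF4) and span the same space $V$, then $w' = w \cdot g$ for some invertible $g \in \mathrm{GL}_e$; restricting to the rows indexed by $\beta$ and using (NF1)--(NF2) on both sides forces $g$ to be the identity, hence $w = w'$. The key point here is that (NF1)--(NF2) say precisely that the $\beta$-rows of $w$ (and of $w'$) form an identity matrix, so $g = (\text{$\beta$-rows of }w')\cdot(\text{$\beta$-rows of }w)^{-1} = \id$.

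Then I would address property (NF5), the genuinely quiver-theoretic ingredient: since $N \in C_\beta^M \subset \Gr_\ue(M)$, the subspace $N$ decomposes as $N = \bigoplus_{p \in Q_0} N_p$ with $N_p \subset M_p$, i.e.\ $N$ respects the vertex grading of $M \simeq \C^d = \bigoplus \C^{d_p}$. Concretely this uses that the embedding $\Gr_\ue(M) \hookrightarrow \Gr(e,d)$ factors through $\prod_p \Gr(e_p, d_p)$, as recalled in section \ref{subsection: schubert decompositions in detail}. Choosing the basis of each $N_p$ inside $M_p$ (rather than inside all of $M$) and assembling the columns gives a matrix $w$ with $w_{i,j} = 0$ whenever $i \in \cB_p$, $j \in \beta_q$, $p \neq q$ --- which is exactly (NF5). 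One must check that the echelon normalization carried out above is compatible with this block structure: since $\beta$ is of type $\ue$, the column operations needed to reduce the $\beta$-block to the identity and clear below/left of pivots act within each vertex block $\C^{d_p}$ separately, so (NF5) is preserved throughout, and conversely (NF5) guarantees that the global lex-minimality condition on $\beta$ is equivalent to the vertexwise one.

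\textbf{Main obstacle.} The linear algebra is entirely routine; the only point requiring care is the bookkeeping that ties together the three layers of structure --- (a) the lexicographic minimality defining $C_\beta(d)$, (b) the block-diagonal structure imposed by $\Gr_\ue(M)$, and (c) the echelon normalization --- and in particular verifying that "$\beta$ is the pivot set'' and "$\beta > \beta'$ implies $\Delta_{\beta'} = 0$'' interact correctly with the vertex grading, so that (NF3) and (NF5) are jointly consistent with the definition of the Schubert cell. Once one sets up notation so that the ordering on $\cB$ refines the partition into the $\cB_p$ (or at least is compatible with it in the way the paper's convention dictates), the argument is a direct transcription of the classical echelon-form uniqueness. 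I would therefore expect the write-up to lean on \cite[Lemma 2.1]{L13} itself — as the statement attributes — and mainly spell out how the five normal-form conditions translate into "reduced column-echelon form, block-diagonal by vertex.''
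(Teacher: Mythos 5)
Your proof is correct and is essentially the argument the paper relies on (the lemma is quoted verbatim from \cite[Lemma 2.1]{L13}, whose proof is exactly this column-echelon normalization, block-diagonal by vertex, with uniqueness via the $\mathrm{GL}_e$-change-of-basis restricted to the $\beta$-rows). One small slip in the bookkeeping you yourself flag: with the paper's conventions ($C_\beta(d)$ is cut out by $\Delta_\beta\neq 0$ and $\Delta_{\beta'}=0$ for all $\beta'>\beta$, and (NF3) kills the entries $w_{i,j}$ with $i>j$), the pivot set $\beta$ is the componentwise \emph{maximal} index set with nonvanishing Pl\"ucker coordinate rather than the lexicographically minimal one, so the echelon reduction runs in the opposite direction — but this affects only the orientation of the normalization, not the substance of the argument.
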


It follows from the proof of this lemma that associating with $N_p$ its $\beta$-normal form $w$ defines an injective morphism $\iota_\beta: C_\beta^M \to \Mat_{\cB\times\cB}$ from the Schubert cell into an affine matrix space. Its image can be described as follows. Let $\Gamma$ be the coefficient quiver of $M$ w.r.t.\ $\cB$ with matrix coefficients $\mu_{v,i,j}$ and let $F:\Gamma\to Q$ be the associated morphism. Let $V(M,\cB)$ be the vanishing set of the polynomials
\[         \label{key formula}
 E(v,t,s) \quad = \quad \sum_{(v,s',t')\in \Gamma_1} \mu_{v,s',t'}  w_{t,t'} w_{s',s} \quad - \quad \sum_{(v,s',t)\in \Gamma_1}   \mu_{v,s',t} w_{s',s} 
\]
for all arrows $v:p\to q$ in $Q_1$ and all vertices $s\in F^{-1}(p)$ and $t\in F^{-1}(q)$. Note that the variety $V(M,\cB)$ does not depend on $\beta$. 

The following result is proven in a more general situation in \cite[section 4.1]{L14}; also cf.\ \cite[Lemma 2.2]{L13} for the case of coefficients $\mu_{v,s,t}=1$.

\begin{lemma}\label{lemma: defining equations of a schubert cell}
 Let $\beta\subset\cB$. Then the image of $\iota_\beta:C_\beta^M\to \Mat_{\cB\times\cB}$ is the intersection of $V(M,\cB)$ with the solution set of \textup{(NF1)}--\textup{(NF5)}.
\end{lemma}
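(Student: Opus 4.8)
The plan is to unwind the definitions of a subrepresentation and of a matrix representation in $\beta$-normal form, and to show that the representation-theoretic condition $M_v(N_p)\subset N_q$ for every arrow $v$ is exactly captured by the vanishing of the polynomials $E(v,t,s)$, once (NF1)--(NF5) are imposed. Concretely, fix $\beta\subset\cB$ and a matrix $w=(w_{i,j})_{i,j\in\cB}$ satisfying (NF1)--(NF5). By Lemma \ref{lemma: normal form}, such a $w$ is precisely the $\beta$-normal form of a candidate collection of subspaces $N_p=\langle\, w_j \mid j\in\beta_p\,\rangle\subseteq M_p$ (the conditions (NF1)--(NF3) guarantee these columns are linearly independent and in fact in reduced column echelon form, so $\dim N_p=e_p$), and every point of $C_\beta^M$ arises this way. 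So the content of the lemma is the equivalence: $N=(N_p)$ is a subrepresentation of $M$ $\iff$ $w\in V(M,\cB)$.

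First I would make the inclusion condition explicit in coordinates. For an arrow $v:p\to q$ and a basis vector $s\in\cB_p=F^{-1}(p)$, the coefficient quiver encodes $M_v(s)=\sum_{(v,s,t)\in\Gamma_1}\mu_{v,s,t}\,t$. Hence for the $j$-th column with $j\in\beta_p$ we get
\[
 M_v(w_j)\;=\;\sum_{s\in\cB_p} w_{s,j}\,M_v(s)\;=\;\sum_{t\in\cB_q}\Bigl(\sum_{(v,s,t)\in\Gamma_1}\mu_{v,s,t}\,w_{s,j}\Bigr)\,t,
\]
so the $t$-coordinate of $M_v(w_j)$ is $\sum_{(v,s,t)\in\Gamma_1}\mu_{v,s,t}\,w_{s,j}$. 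The requirement $M_v(w_j)\in N_q$ means this vector lies in the span of $(w_{t'})_{t'\in\beta_q}$; since those columns are in reduced column echelon form with pivots at the rows $t'\in\beta_q$, membership in the span is equivalent to the identity, for every $t\in\cB_q$,
\[
 (M_v(w_j))_t\;=\;\sum_{t'\in\beta_q}(M_v(w_j))_{t'}\,w_{t,t'},
\]
i.e. the non-pivot coordinates are determined by the pivot coordinates via the matrix entries $w_{t,t'}$. Writing $c_{t}:=(M_v(w_j))_t=\sum_{(v,s,t)\in\Gamma_1}\mu_{v,s,t}\,w_{s,j}$ and substituting, the condition becomes $c_t-\sum_{t'\in\beta_q}c_{t'}w_{t,t'}=0$ for all $t$. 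Now I would observe that, after setting $s=j$, this is precisely $E(v,t,j)=0$ read off from the displayed formula for $E(v,t,s)$: the first sum $\sum_{(v,s',t')\in\Gamma_1}\mu_{v,s',t'}w_{t,t'}w_{s',s}$ reproduces $\sum_{t'}w_{t,t'}c_{t'}$ after collapsing the $w_{t,t'}$ for $t'\notin\beta_q$ — which vanish by (NF4) together with $w_{t,t'}=\delta_{t,t'}$ on $\beta_q$ from (NF1)--(NF2) — while the second sum $\sum_{(v,s',t)\in\Gamma_1}\mu_{v,s',t}w_{s',s}$ reproduces $c_t$. For $s\notin\beta_p$ the variables $w_{s',s}$ vanish by (NF4), so $E(v,t,s)=0$ holds automatically, and the cross-vertex entries drop out by (NF5); thus imposing all $E(v,t,s)=0$ for $s\in F^{-1}(p)$ is equivalent to imposing $M_v(N_p)\subset N_q$. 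Ranging over all arrows $v$ gives $N\in\Gr_\ue(M)$ $\iff$ $w\in V(M,\cB)$, which is the claim; and since $\iota_\beta$ is injective (Lemma \ref{lemma: normal form}), the image is exactly $V(M,\cB)$ intersected with the solution set of (NF1)--(NF5).

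The main obstacle, and the step to be careful about, is the bookkeeping that turns "$M_v(w_j)$ lies in the span of the echelon columns $(w_{t'})_{t'\in\beta_q}$" into the clean polynomial identity $E(v,t,s)=0$: one must check that the pivot structure forced by (NF1)--(NF3) makes the linear-algebra membership test equivalent to coordinatewise equations with coefficients $w_{t,t'}$ rather than something requiring a matrix inverse, and that every term appearing in the definition of $E(v,t,s)$ either survives with the right meaning or vanishes by one of (NF4)--(NF5). Since this is proven in the cited references (\cite[section 4.1]{L14}, and \cite[Lemma 2.2]{L13} in the special case $\mu_{v,s,t}=1$), I would present the argument in the form above and refer to those sources for the detailed verification, emphasizing only the substitution that identifies the echelon-membership equations with the polynomials $E(v,t,s)$.
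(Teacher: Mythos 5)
Your argument is correct, and it is worth noting that the paper itself gives no proof of Lemma \ref{lemma: defining equations of a schubert cell}: it defers entirely to \cite[section 4.1]{L14} and \cite[Lemma 2.2]{L13}. What you have written is precisely the verification that those citations stand in for, namely that for $j\in\beta_p$ the $t$-coordinate of $M_v(w_j)$ is $c_t=\sum_{(v,s',t)\in\Gamma_1}\mu_{v,s',t}w_{s',j}$, that membership of $M_v(w_j)$ in the span of the echelon columns $(w_{t'})_{t'\in\beta_q}$ is equivalent (because the pivot rows force the coefficients to be $\lambda_{t'}=c_{t'}$) to $c_t=\sum_{t'\in\beta_q}c_{t'}w_{t,t'}$ for all $t$, and that this identity is exactly $E(v,t,j)=0$ once \textup{(NF4)} kills the terms with $t'\notin\beta_q$; your observations that $E(v,t,s)$ vanishes identically for $s\notin\beta$ and that the conditions for $t\in\beta_q$ are automatic are also right. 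The only point you lean on without stating is the converse direction of the normal-form parametrization: Lemma \ref{lemma: normal form} as quoted only says that every $N\in C_\beta^M$ \emph{has} a unique normal form, whereas your argument also needs that every matrix satisfying \textup{(NF1)}--\textup{(NF5)} whose columns span subrepresentations actually lands in the cell $C_\beta(d)$ (i.e.\ $\Delta_\beta=1$ and $\Delta_{\beta'}=0$ for $\beta'>\beta$, which follows from \textup{(NF1)}--\textup{(NF3)} by the standard echelon-form description of Schubert cells); this is implicit in the proof of the normal-form lemma in \cite{L13} but deserves a sentence. With that caveat your proposal is a complete and self-contained proof where the paper offers only a reference.
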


%%%%%%%%%%%%%%%%%%%%%%%%%%%%%%%%%%%%%%%%%%%%%%%%%%%%%%%%%%%%%%%%%%%%%%%%%%%%%%%%%%%%%%%%%%%%%%%%%%%%%%%%%%%%%%%%%%%%%%%%%%%%%%%%%%%%%%%%%%%%%%%%%%%%%%%%%%%%%%%%%%%%%%%%%%%%%%%%%%%%%%%%%%%%%%%%%%%%%%
%%%%%%%%%%%%%%%%%%%%%%%%%%%%%%%%%%%%%%%%%%%%%%%%%%%%%%%%%%%%%%%%%%%%%%%%%%%%%%%%%%%%%%%%%%%%%%%%%%%%%%%%%%%%%%%%%%%%%%%%%%%%%%%%%%%%%%%%%%%%%%%%%%%%%%%%%%%%%%%%%%%%%%%%%%%%%%%%%%%%%%%%%%%%%%%%%%%%%%

\section{Schubert systems}
\label{section: schubert systems}

\noindent
In this section we introduce Schubert systems, which are the main tool for proving \mbox{Theorem A}. Let $M$ be a representation of a quiver $Q$ with ordered basis $\cB$. In a nutshell, the Schubert system $\Sigma=\Sigma(M,\cB)$ of $M$ w.r.t.\ $\cB$ is a graph with certain additional data that keeps track of the defining equations of the Schubert cells $C_\beta^M$. 

The essential information encoded by the Schubert system are the coordinates of a large affine space and the defining equations of a subvariety in this affine space whose intersection with certain affine subspaces yields the various Schubert cells $C_\beta^M$ of the Schubert decomposition $\Gr_\ue(M)=\coprod C_\beta^M$. The defining equations for these intersections can be visualized by certain subgraphs $\Sigma_\beta$ of $\Sigma$, called the $\beta$-states of the Schubert system.

Certain combinatorial conditions on $\Sigma_\beta$ imply that the Schubert cell $C_\beta^M$ is empty or, in other cases, an affine space. In the proof of Theorem \ref{thm: the main theorem}, we will verify one of these conditions for every $\beta$-state $\Sigma_\beta$ of the Schubert systems in question.

%%%%%%%%%%%%%%%%%%%%%%%%%%%%%%%%%%%%%%%%%%%%%%%%%%%%%%%%%%%%%%%%%%%%%%%%%%%%%%%%%%%%%%%%%%%%%%%%%%%%%%%%%%%%%%%%%%%%%%%%%%%%%%%%%%%%%%%%%%%%%%%%%%%%%%%%%%%%%%%%%%%%%%%%%%%%%%%%%%%%%%%%%%%%%%%%%%%%%%
\subsection{The complete Schubert system} \label{subsection: the complete schubert system}

Let $M$ be a representation of $Q$ with ordered basis $\cB$. Let $\Gamma$ be the coefficient quiver of $M$ w.r.t.\ $\cB$ with matrix coefficients $\mu_{v,i,j}$ and let $F:\Gamma\to Q$ be the associated morphism. 

A \emph{relevant pair} is an element of the set
\[
 \Rel^2 \quad = \quad \{ \ (i,j) \in \Gamma_0\times \Gamma_0 \ | \ F(i)=F(j)\text{ and }i\leq j \ \}
\]
of indices $(i,j)$ for which there is a subset $\beta$ of $\cB$ such that $w_{i,j}$ is not constant $0$. A \emph{relevant triple} is an element of the set
\[
 \Rel^3 \quad = \quad \left\{ \ (v,t,s) \in Q_1\times \Gamma_0\times \Gamma_0 \ \left|  \begin{array}{c} \text{There is an }(v,s',t')\in\Gamma_1\\ \text{with } s\geq s'\text{ and }t\leq t' \end{array} \right. \right\}.
\]
of triples $(v,t,s)$ for which there is a subset $\beta$ of $\cB$ with $t\notin\beta$ and $s\in\beta$ such that $E(v,t,s)$ is not the zero polynomial. 

In this text, a \emph{graph} is undirected and simple, i.e.\ without multiple edges or loops. Therefore, we can identify an edge of a graph with the set $\{p,q\}$ of its end vertices. A vertex $q$ is \emph{adjacent} to a vertex $p$ if $\{p,q\}$ is an edge. A \emph{link} is a pair $(p,S)$ of a vertex $p$ and a set $S$ of vertices adjacent to $p$. We call $p$ the \emph{tip}, an edge $\{p,q\}$ with $q\in S$ a \emph{leg} and $q$ a \emph{base vertex} of the link. If $S$ is empty, then we call the link \emph{constant}, if $S$ consists of one element, then we call the link \emph{linear}, and if $S$ consists of two elements, then we call the link \emph{quadratic}.

\begin{rem}
 Note that our definition of a link deviates from the usual concept of the link at a vertex of a simplicial complex. This difference could be resolved by adjoining higher-dimensional simplices to the graph such that the links with tip $p$ in our sense appear naturally as simplices of the link at $p$. For our purposes it is, however, more adequate to use graphs with additional information instead of higher-dimensional structures.
\end{rem}

\begin{df}
 The \emph{complete Schubert system of $M$ w.r.t.\ $\cB$} is the graph $\Sigma=\Sigma(M,\cB)$ with vertex set
 \[
  \Vertex \Sigma \ = \ \Rel^2 \ \amalg \ \Rel^3
 \]
 and edge set
 \[
  \Edge \Sigma \ = \ \bigl\{ \, \{(i,j),(v,t,s)\} \, \bigl| \, w_{i,j}\text{ appears in }E(v,t,s) \, \bigr\}
 \]
 together with the set $\Link\Sigma$ of links $\lambda=\bigl((v,t,s),S\bigr)$ with $(v,t,s)\in\Rel^3$ and $S\subset \Rel^2$ such that $E(v,t,s)$ contains the term $\mu_\lambda\prod_{(i,j)\in S}w_{i,j}$ with some coefficient $\mu_\lambda\in\C^\times$. We call $\mu_\lambda$ the \emph{weight of the link $\lambda$}.
\end{df}

\begin{rem}
 Whenever there is no danger of confusion with the reduced Schubert system (as defined in section \ref{subsection: the reduced schubert system}), we refer to the complete Schubert system as the \emph{Schubert system}. The Schubert system $\Sigma(M,\cB)$ encodes the defining equations $E(v,t,s)$ of the affine variety $V(M,\cB)$: for a relevant triple $(v,t,s)\in\Vertex\Sigma$, we have
 \[
  E(v,t,s) \ = \ \sum_{\lambda=\bigl((v,t,s),S\bigr)\in\Link\Sigma} \mu_\lambda \cdot \prod_{(i,j)\in S} w_{i,j}.
 \]
 Thus $\mu_\lambda=-\mu_{v,t,s'}$ if $\lambda=\bigl((v,t,s),\{(s',s)\}\bigr)$ is linear and $\mu_\lambda=\mu_{v,s',t'}$ if $\lambda=\bigl((v,t,s),\{(s',s),(t,t')\}\bigr)$ is quadratic.
\end{rem}

%%%%%%%%%%%%%%%%%%%%%%%%%%%%%%%%%%%%%%%%%%%%%%%%%%%%%%%%%%%%%%%%%%%%%%%%%%%%%%%%%%%%%%%%%%%%%%%%%%%%%%%%%%%%%%%%%%%%%%%%%%%%%%%%%%%%%%%%%%%%%%%%%%%%%%%%%%%%%%%%%%%%%%%%%%%%%%%%%%%%%%%%%%%%%%%%%%%%%%
\subsection*{Remark on illustrations}

For illustrations of Schubert systems we adopt the following conventions. To clearly indicate the links, we draw an edge multiple times if it is the leg of several links. If the link is linear, we annotate its weight next to its unique leg. If the link is quadratic, we draw a dotted arc between its two legs and annotate the weight of the link next to the arc. Often, we omit the weight if it is $1$.

In many situations a relevant triple $(v,t,s)$ is uniquely determined by its last two coefficients $t$ and $s$. Moreover, if $Q$ does not contain a loop, then $(t,s)$ does not occur as a relevant pair. In this situation, we often use the convention to place the vertices $(i,j)$ and $(v,t,s)$ in a matrix, according to their last two coefficients, and to suppress these last two coefficients. This means that we illustrate a relevant pair $(i,j)$ as a dot, and a relevant triple $(v,t,s)$ as $v$. The last two coefficients can be reconstructed from a coordinate system at the sides of the Schubert system.

\begin{ex}\label{ex: complete schubert system for A_3}
 Let $Q$ be an equioriented quiver of type $A_3$ and $M$ an indecomposable thin sincere representation. Then we can choose an ordered basis $\cB$ that corresponds to the following map of the associated coefficient quiver $\Gamma$ to $Q$:
 \[
 \beginpgfgraphicnamed{fig43}
  \begin{tikzpicture}[>=latex]
   \matrix (m) [matrix of math nodes, row sep=0.0em, column sep=3.0em, text height=1ex, text depth=0ex]
    { 1 & 2  & 3  &   & \Gamma  \\                 % row 1
        &    & \  &   &         \\                 % row 2
     q_1& q_2& q_3&   &   Q     \\ };              % row 3
    \path[->,font=\scriptsize]
    (m-1-1) edge node[auto] {$a$} (m-1-2)
    (m-1-2) edge node[auto] {$b$} (m-1-3)
    (m-3-1) edge node[auto] {$a$} (m-3-2)
    (m-3-2) edge node[auto] {$b$} (m-3-3)
    (m-1-5) edge node[auto] {$F$} (m-3-5);
  \end{tikzpicture}
 \endpgfgraphicnamed
 \]
 Then $\Rel^2=\{(1,1),(2,2),(3,3)\}$ and $\Rel^3=\{(a,2,1),(b,3,2)\}$. The corresponding polynomials are
 \begin{align*}
  E(a,2,1) \ &= \ w_{2,2}w_{1,1}-w_{1,1},    &  E(b,3,2) \ &= \ w_{3,3}w_{2,2}-w_{2,2}.    
 \end{align*}
 The Schubert system $\Sigma(M,\cB)$ looks as follows.
 \[
 \beginpgfgraphicnamed{fig44}
  \begin{tikzpicture}[>=latex]
   \matrix (m) [matrix of math nodes, row sep=-0.5em, column sep=0.5em, text height=1.5ex, text depth=0.5ex]
  % column       1          2           3           4            5           6           7            8            9        
  { \node[vertex]{1,1};   &\,&\node[vertex]{a,2,1};&\,&\node[vertex]{2,2};  &\,&\node[vertex]{b,3,2};&\,&\node[vertex]{3,3};   \\}; % row 1
    \path[-,font=\scriptsize]
    (m-1-3) edge[bend left=5] node[auto] {$-1$} (m-1-1)
    (m-1-3) edge[bend right=5] node[auto,swap] {} (m-1-1)
    (m-1-3) edge node[auto] {} (m-1-5)
    (m-1-7) edge[bend left=5] node[auto] {$-1$} (m-1-5)
    (m-1-7) edge[bend right=5] node[auto,swap] {} (m-1-5)
    (m-1-7) edge node[auto] {} (m-1-9);
    \path[dotted,font=\scriptsize]
    (m-1-4.center) edge[bend right=90] node[above] {$1$} (m-1-2.center)
    (m-1-8.center) edge[bend right=90] node[above] {$1$} (m-1-6.center);
  \end{tikzpicture}
 \endpgfgraphicnamed
 \]
\end{ex}

\begin{ex}\label{ex: complete schubert system for D_4}
 Let $Q$ be the quiver of type $D_4$ with subspace orientation and $M$ the representation with ordered basis $\cB$ that results from the map
 \[
 \beginpgfgraphicnamed{fig41}
  \begin{tikzpicture}[>=latex]
   \matrix (m) [matrix of math nodes, row sep=-0.5em, column sep=1.5em, text height=1ex, text depth=0ex]
    {   &   &   &   &   &   &   &   &   \\                  % row 1
        &   &   &   &   &   &   &   &   \\                  % row 2
      1 &   &   &   & 2 &   &   &   &    \\                 % row 3
        &   &   &   &   &   & 3 &   &   & \Gamma \\         % row 4
        &   & 5 &   & 4 &   &   &   &    \\                 % row 5
      \ &   &   &   &   &   &   &   &   \\                  % row 6
        &   &   &   &   &   &   &   &    \\                 % row 7
        &   &   &   &   &   &   &   & \ \\                  % row 8
     q_a&   &   &   &   &   &   &   &  \\                   % row 9
      \ &   &   &   &   &   &   &   &  \\                   % row 10
        &   &   &   &q_0&   &q_c&   &       & Q  \\         % row 11
      \ &   &   &   &   &   &   &   &   \\                  % row 12
        &   &q_b&   &   &   &   &   &   \\                  % row 13
        &   &   &   &   &   &   &   &   \\ };               % row 14
    \path[->,font=\scriptsize]
    (m-4-10) edge node[auto] {$F$} (m-11-10)
    (m-3-1) edge node[auto] {$a$} (m-3-5)
    (m-4-7) edge node[auto,swap] {$c$} (m-3-5)
    (m-4-7) edge node[auto] {$c$} (m-5-5)
    (m-5-3) edge node[auto,swap] {$b$} (m-5-5)
    (m-9-1) edge node[auto] {$a$} (m-11-5)
    (m-11-7) edge node[auto,swap] {$c$} (m-11-5)
    (m-13-3) edge node[auto,swap] {$b$} (m-11-5);
  \end{tikzpicture}
 \endpgfgraphicnamed
 \]
 from the coefficient quiver $\Gamma$ to $Q$. Then 
 \[
  \Rel^2=\{(1,1),(2,2),(2,4),(3,3),(4,4),(5,5)\},\]\[\Rel^3=\{(a,2,1),(c,2,3),(c,4,3),(b,2,5),(b,4,5)\},
 \]
 and
 \begin{align*}
  E(a,2,1) \ &= \ w_{2,2}w_{1,1}-w_{1,1},   \\
  E(c,2,3) \ &= \ w_{2,4}w_{3,3}+w_{2,2}w_{3,3}-w_{3,3},   &  E(c,4,3) \ &= \ w_{4,4}w_{3,3}-w_{3,3},   \\
  E(b,2,5) \ &= \ w_{2,4}w_{5,5},                          &  E(b,4,5) \ &= \ w_{4,4}w_{5,5}-w_{5,5}. 
 \end{align*}
 With the help of a coordinate system, we illustrate the Schubert system $\Sigma(M,\cB)$ as follows.
 \[
 \beginpgfgraphicnamed{fig42}
  \begin{tikzpicture}[>=latex]
   \matrix (m) [matrix of math nodes, row sep=-0.5em, column sep=0.5em, text height=1.5ex, text depth=0.5ex]
% column       1          2         3         4           5         6         7          8          9         10        11
    {                   &\,&                 &\,&                  &\,&                 &\,&  \node[pair]{}; &\,&       5         \\   % row  1 
              \!        &\!&       \!        &\!&        \!        &\!&       \!        &\!&       \!        &\!&                 \\   % row  2
                        &\!&                 &\!&\node[triple]{c}; &\!&   \node[pair]{};&\!&\node[triple]{b};&\!&       4         \\   % row  3
              \!        &\!&       \!        &\!&        \!        &\!&       \!        &\!&       \!        &\!&                 \\   % row  4
                        &\!&                 &\!& \node[pair]{};   &\!&                 &\!&                 &\!&       3         \\   % row  5
              \!        &\!&       \!        &\!&        \!        &\!&       \!        &\!&       \!        &\!&                 \\   % row  6
      \node[triple]{a}; &\!&  \node[pair]{}; &\!&\node[triple]{c}; &\!&   \node[pair]{};&\!&\node[triple]{b};&\!&       2         \\   % row  7
              \!        &\!&       \!        &\!&        \!        &\!&       \!        &\!&       \!        &\!&                 \\   % row  8
       \node[pair]{};   &\!&                 &\!&                  &\!&                 &\!&                 &\!&       1         \\   % row  9
              \!        &\!&       \!        &\!&        \!        &\!&       \!        &\!&       \!        &\!&                 \\   % row 10
               1        &  &        2        &  &         3        &  &       4         &  &        5        &\!&                 \\}; % row 11
    \path[-,font=\scriptsize]
    (m-7-1) edge[bend left=10] node[right] {} (m-9-1.center)
    (m-7-1) edge[bend right=10] node[left] {$-1$} (m-9-1.center)
    (m-7-1) edge node[auto] {} (m-7-3.center)
    (m-7-5) edge node[auto] {} (m-7-3.center)
    (m-7-5) edge node[auto] {} (m-7-7.center)
    (m-7-9) edge node[auto] {} (m-7-7.center)
    (m-7-5) edge[bend left=20] node[left] {} (m-5-5.center)
    (m-7-5) edge node[right=-1pt] {$-1$} (m-5-5.center)
    (m-7-5) edge[bend right=20] node[auto] {} (m-5-5.center)
    (m-3-5) edge[bend left=10] node[right] {} (m-5-5.center)
    (m-3-5) edge[bend right=10] node[left] {$-1$} (m-5-5.center)
    (m-3-5) edge node[auto] {} (m-3-7.center)
    (m-3-9) edge node[auto] {} (m-3-7.center)
    (m-3-9) edge[bend left=20] node[right] {} (m-1-9.center)
    (m-3-9) edge[bend left=0] node[right=1pt] {$-1$} (m-1-9.center)
    (m-7-9) edge[bend right=25] node[auto] {} (m-1-9.center);
    \path[dotted,font=\scriptsize]
    (m-8-1) edge[bend right=45] node[below right=-2pt] {$1$} (m-7-2.center)
    (m-6-5) edge[bend right=45] node[above left=-2pt] {$1$} (m-7-4.center)
    (m-6-5) edge[bend left=45] node[right=0pt] {$1$} (m-7-6.center)
    (m-6-9.east) edge[bend right=45] node[above left=-2pt] {$1$} (m-7-8.center)
    (m-4-5) edge[bend right=45] node[below right=-2pt] {$1$} (m-3-6.center)
    (m-2-9) edge[bend right=45] node[above left=-2pt] {$1$} (m-3-8.center);
  \end{tikzpicture}
 \endpgfgraphicnamed
 \]
\end{ex}

%%%%%%%%%%%%%%%%%%%%%%%%%%%%%%%%%%%%%%%%%%%%%%%%%%%%%%%%%%%%%%%%%%%%%%%%%%%%%%%%%%%%%%%%%%%%%%%%%%%%%%%%%%%%%%%%%%%%%%%%%%%%%%%%%%%%%%%%%%%%%%%%%%%%%%%%%%%%%%%%%%%%%%%%%%%%%%%%%%%%%%%%%%%%%%%%%%%%%%
\subsection{Partial Evaluations}\label{subsection: partial evaluations}
 
A \emph{partial evaluation of $\Sigma$} is a partial function $\ev:\Rel^2\dashrightarrow \C$ that satisfies
\begin{itemize}
 \item[(EV)] if $(k,l)$ is a neighbour of $(v,t,s)$ and if all other neighbours $(i,j)$ of $(v,t,s)$ are in the domain of $\ev$, then $(k,l)$ is in the domain of $\ev$ and
              \[
               \sum_{\lambda=\bigl((v,t,s),S\bigr)\in\Link\Sigma} \mu_\lambda \cdot \prod_{(i,j)\in S} \ev(i,j) \ = \ 0.
              \]
\end{itemize}
A partial valuation is \emph{total} if its domain  is $\Rel^2$.

We also write $\ev(i,j)\in\C$ to say that $(i,j)$ is in the domain of $\ev$ and $\ev(i,j)=\eta$ to say that $\ev$ is not defined in $(i,j)$. The symbol $\eta$ stays for a non-specified value of $w_{i,j}$, and it can be thought of as the generic point of the affine line over $\C$. The condition (EV) expresses the fact that the equation $E(v,t,s)=0$ has a unique solution in a variable $w_{i,j}$ if all the other variables are fixed. This is due to the property that $E(v,t,s)$ is linear in each of its variables.

The set of all partial evaluations of $\Sigma$ comes with a partial ordering. For two partial evaluations $\ev$ and $\ev'$, we define $\ev\leq\ev'$ if $\ev(i,j)\in\C$ implies $\ev'(i,j)=\ev(i,j)$. 

\begin{lemma}\label{lemma: minimal partial evaluation}
 Let $S$ be a subset of $\Rel^2$ and $f:S\to \C$ a function. Then the set
 \[
  \Ev(f) \ = \ \bigl\{ \, \ev:\Rel^2\dashrightarrow\C \, \bigl| \, \ev\text{ is a partial evaluation with }\ev(i,j)=f(i,j)\text{ for }(i,j)\in S \, \bigl\}
 \]
 has a unique minimal element unless it is empty.
\end{lemma}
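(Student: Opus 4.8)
The plan is to construct the minimal element of $\Ev(f)$ by a greedy closure process and then argue that this process is confluent, so that the result is independent of the order of choices. First I would define, for a given $f:S\to\C$, a sequence of partial functions $\ev_0\subset\ev_1\subset\dotsb$ as follows. Set $\ev_0$ to be the function with domain $S$ and values given by $f$ (if this already fails to be a partial evaluation in the sense that two incompatible constraints are forced, then $\Ev(f)$ will turn out to be empty, and this will be detected at a later stage). Given $\ev_k$, look for a relevant triple $(v,t,s)$ all but one of whose neighbours lie in the domain of $\ev_k$; condition (EV) then forces the value of $w_{k,l}$ at the remaining neighbour $(k,l)$, because $E(v,t,s)$ is linear in each variable, so solving $E(v,t,s)=0$ for $w_{k,l}$ with all other variables fixed has a unique solution \emph{provided} the coefficient of $w_{k,l}$ in that linear equation does not vanish. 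Define $\ev_{k+1}$ by adjoining this forced value. (If the coefficient of $w_{k,l}$ vanishes but the equation is not satisfied, then $\Ev(f)=\varnothing$ and we stop; if it vanishes and the equation is satisfied, we simply drop this triple from further consideration and continue.) Since $\Rel^2$ is finite, the process stabilises after finitely many steps; call the limit $\ev_\infty$.

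The two things to check are: (i) $\ev_\infty$ is a partial evaluation, i.e.\ it satisfies (EV); and (ii) $\ev_\infty$ is below every element of $\Ev(f)$, hence is the unique minimal element. For (i), observe that if $(v,t,s)$ has all neighbours but one in $\dom\ev_\infty$, then at the last stage at which this became true the process would have extended $\ev$ at that neighbour (or detected emptiness); so at termination no such triple with an undetermined neighbour remains, which is exactly (EV). For (ii), I would argue by induction on $k$ that $\ev_k\leq\ev'$ for every $\ev'\in\Ev(f)$: this holds for $k=0$ since $\ev'(i,j)=f(i,j)$ on $S$ by definition of $\Ev(f)$, and the inductive step is forced because $\ev'$, being a partial evaluation containing $\ev_k$, must satisfy (EV) at the triple $(v,t,s)$ used to pass from $\ev_k$ to $\ev_{k+1}$, and the value there is uniquely determined by the already-agreed values of the other neighbours. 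Passing to the limit gives $\ev_\infty\leq\ev'$. Minimality and uniqueness of the minimum follow: any two minimal elements are both $\geq$ some common lower bound construction and $\leq$ each other.

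The subtle point — and the one I expect to be the main obstacle — is handling the case where the coefficient of the to-be-determined variable $w_{k,l}$ in the linear equation $E(v,t,s)=0$ \emph{does} degenerate to $0$ after substituting the known values. In that situation (EV) as literally stated demands that $(k,l)$ still be placed in the domain of $\ev$, so one must be careful that the definition of ``partial evaluation'' is the one actually intended: re-reading (EV), it says that \emph{if} all other neighbours are determined \emph{then} $(k,l)$ is determined and the sum vanishes — so a genuine partial evaluation can never have such a degenerate-but-inconsistent configuration, and a degenerate-but-consistent configuration forces $w_{k,l}$ to an arbitrary value, which contradicts $(k,l)$ being in the domain unless one re-examines whether $(k,l)$ is forced through some \emph{other} triple. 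I would therefore phrase the closure process so that a triple is only ``used'' when it determines its last neighbour with a nonzero coefficient, and separately verify that at termination every configuration of a fully-but-one-determined triple either is already consistent with the remaining variable forced elsewhere, or witnesses $\Ev(f)=\varnothing$; establishing that this dichotomy is exhaustive — equivalently, that the naive closure cannot ``get stuck'' in a state that is neither a partial evaluation nor a proof of emptiness — is the crux, and it relies on the linearity of each $E(v,t,s)$ in each variable together with a careful bookkeeping of which relevant pairs are neighbours of which relevant triples in $\Sigma$.
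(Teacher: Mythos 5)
Your proposal follows essentially the same route as the paper: the paper's proof is precisely your greedy closure, setting $\ev_0=f$ and repeatedly extending at any relevant triple with exactly one undetermined neighbour, then observing that the terminal state either exhibits a violated equation (so $\Ev(f)=\emptyset$) or satisfies (EV) and is by construction below every element of $\Ev(f)$, hence is the unique minimal element. The degenerate case you single out as the crux---the coefficient of the last undetermined variable $w_{k,l}$ vanishing after substitution---is not treated in the paper at all: the paper simply invokes linearity of $E(v,t,s)$ in each variable to assert that the solution in $w_{k,l}$ is unique, so you are not missing an argument that the paper supplies. Your worry is legitimate as a critique of the statement (if both the coefficient and the residual constant vanish, (EV) still forces $(k,l)$ into the domain but leaves its value unconstrained by that triple, which in an adversarial example would produce incomparable minimal elements); for the purposes of matching the paper, though, your argument is at least as complete as the published one, and you need not resolve the dichotomy beyond what the paper does.
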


\begin{proof}
 This can be seen by executing the following procedure. Define $\ev_0=f$ as a partial function $\Rel^2\dashrightarrow \C$. For $n\geq0$, assume that we have defined $\ev_n:\Rel^2\dashrightarrow \C$. 

 If there is a relevant triple $(v,t,s)$ that has precisely one neighbour $(k,l)$ that is not in the domain of $\ev_n$, then we define $\ev_{n+1}(i,j)=\ev_n(i,j)$ for all $(i,j)$ in the domain of $\ev_n$, and we define $\ev_{n+1}(k,l)$ as the unique solution of $E(v,t,s)=0$ in $w_{k,l}$ if all other variables $w_{i,j}$ of $E(v,t,s)=0$ have been substituted by the value $\ev_{n+1}(i,j)$. Note that this definition is dictated by property (EV) of an evaluation.

 If there is no relevant triple such that precisely one variable $w_{k,l}$ of $E(v,t,s)$ is not in the domain of $\ev_n$, then we face the following two cases.
 \begin{enumerate}
  \item\label{item1} There is a relevant triple $(v,t,s)$ such that all its neighbours are in the domain of $\ev_n$ and such that $E(v,t,s)$ evaluated in $w_{i,j}=\ev_n(i,j)$ is not $0$, then we derive a contradiction to (EV), which means that $\Ev(f)$ is empty.
 \item\label{item2} If there is none such $(v,t,s)$ as in \eqref{item1}, then $\ev_n$ satisfies (EV) and $\ev_n$ is a partial evaluation. By construction, it is the unique minimal element of $\Ev(f)$.
 \end{enumerate}
 In both cases, the claim of the lemma follows.
\end{proof}

%%%%%%%%%%%%%%%%%%%%%%%%%%%%%%%%%%%%%%%%%%%%%%%%%%%%%%%%%%%%%%%%%%%%%%%%%%%%%%%%%%%%%%%%%%%%%%%%%%%%%%%%%%%%%%%%%%%%%%%%%%%%%%%%%%%%%%%%%%%%%%%%%%%%%%%%%%%%%%%%%%%%%%%%%%%%%%%%%%%%%%%%%%%%%%%%%%%%%%
\subsection{Contradictory $\beta$-states}\label{subsection: contradictory beta-states}

Let $\Sigma$ be the Schubert system of $M$ w.r.t.\ $\cB$. Let $\beta$ be a subset of $\cB$. Define the partial function $f_\beta:\Rel^2\dashrightarrow\C$ with
\[
 f_\beta(i,j) \ = \ \left\{\begin{array}{ll} 1 &\text{if }i=j\in\beta, \\ 0 &\text{if }i\in\beta\text{ and }i\neq j, \\ 0 & \text{if }j\notin\beta. \end{array} \right.
\]
If $\Ev(f_\beta)$ is empty, then we say that \emph{the $\beta$-state $\Sigma_\beta$ is contradictory}, without defining $\Sigma_\beta$. %If $\Ev(f_\beta)$ is not empty, then we say that \emph{the $\beta$-state $\Sigma_\beta$ exists}.

\begin{lemma}\label{lemma: contradictory beta have empty schubert cells}
 The points of the Schubert cell $C_\beta^M$ correspond to the total valuations in $\Ev(f_\beta)$. If $\Sigma_\beta$ is contradictory, then $C_\beta^M$ is empty. 
\end{lemma}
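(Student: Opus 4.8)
The plan is to unwind the definitions and identify the points of $C_\beta^M$ with certain total partial evaluations, and then read off the emptiness claim.

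First I would recall from Lemma \ref{lemma: normal form} that every $N\in C_\beta^M$ has a unique matrix representation $w=(w_{i,j})_{i,j\in\cB}$ in $\beta$-normal form, and from Lemma \ref{lemma: defining equations of a schubert cell} that such a $w$ lies in the image of $\iota_\beta$ precisely when it satisfies the defining equations $E(v,t,s)=0$ for all relevant triples $(v,t,s)$, in addition to the normal form conditions \textup{(NF1)}--\textup{(NF5)}. The conditions \textup{(NF1)}--\textup{(NF5)} are exactly the assertion that the constant coefficients $w_{i,j}$ take the prescribed values $f_\beta(i,j)$: indeed $w_{i,i}=1$ for $i\in\beta$, $w_{i,j}=0$ for $i\in\beta$ with $i\neq j$, and $w_{i,j}=0$ whenever $j\notin\beta$ (the remaining conditions \textup{(NF3)}, \textup{(NF5)} either concern pairs that are not relevant or are subsumed, since a pair $(i,j)$ is relevant only when there is a $\beta'$ for which $w_{i,j}$ is not constant $0$). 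So a $\beta$-normal form $w$ satisfying the equations corresponds exactly to an assignment of values in $\C$ to the relevant pairs that (a) restricts to $f_\beta$ on the pairs forced by \textup{(NF1)}--\textup{(NF5)}, and (b) satisfies every equation $E(v,t,s)=0$.

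Next I would match this with the notion of a total partial evaluation in $\Ev(f_\beta)$. A total partial evaluation $\ev$ assigns a value in $\C$ to every relevant pair; by the remark following the definition of the complete Schubert system, the equation $E(v,t,s)=0$ is literally $\sum_{\lambda=((v,t,s),S)}\mu_\lambda\prod_{(i,j)\in S}\ev(i,j)=0$, which is condition (EV) for a total evaluation (for a total $\ev$, the hypothesis of (EV) — that all but one neighbour lies in the domain — is always satisfied, and the unique-solvability is vacuous since all values are already fixed, so (EV) reduces to the equation holding outright). Thus a total $\ev\in\Ev(f_\beta)$ is exactly a family $(\ev(i,j))_{(i,j)\in\Rel^2}$ satisfying (a) and (b) above. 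This gives a bijection between the points of $C_\beta^M$ and the total valuations in $\Ev(f_\beta)$, as claimed.

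Finally, the emptiness statement is immediate: if $\Sigma_\beta$ is contradictory, meaning $\Ev(f_\beta)=\varnothing$, then in particular $\Ev(f_\beta)$ contains no total valuation, so by the bijection just established $C_\beta^M=\varnothing$. I do not expect any genuine obstacle here; the only point requiring care is the bookkeeping of which normal-form conditions correspond to the constraint "$\ev$ extends $f_\beta$" versus which merely reflect that certain pairs $(i,j)$ are not relevant (hence carry no variable at all), and checking that the definition of $\Rel^2$ as the set of pairs for which $w_{i,j}$ is non-constant-zero for \emph{some} $\beta$ is exactly what makes the correspondence clean for every fixed $\beta$.
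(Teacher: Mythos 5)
Your proposal is correct and follows essentially the same route as the paper: identify points of $C_\beta^M$ with common solutions of \textup{(NF1)}--\textup{(NF5)} and the equations $E(v,t,s)=0$ via Lemmas \ref{lemma: normal form} and \ref{lemma: defining equations of a schubert cell}, observe that these are exactly the total evaluations in $\Ev(f_\beta)$, and conclude emptiness when $\Ev(f_\beta)=\varnothing$. The paper's own proof is just a terser version of this; your extra care about which normal-form conditions correspond to $f_\beta$ and which concern non-relevant pairs is accurate.
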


\begin{proof}
 Assume that $\Ev(f_\beta)$ is not empty. The points of $C_\beta^M$ correspond to the common solutions $w=(w_{i,j})$ of \textup{(NF1)}--\textup{(NF5)} and $E(v,t,s)=0$ for all relevant triples $(v,t,s)$, which can be seen as total evaluations $(i,j)\mapsto w_{i,j}$. These total evaluations are precisely the total evaluations in $\Ev(f_\beta)$.

 If $\Sigma_\beta$ is contradictory, i.e.\ $\Ev(f_\beta)$ is empty, there exists no total evaluation in $\Ev(f_\beta)$. Thus $C_\beta^M$ is empty. 
\end{proof}

An arrow $(v,s,t)$ of $\Gamma$ is \emph{extremal} if for all arrows $(v,s',t')\in\Gamma_1$ either $s<s'$ or $t'<t$. A subset $\beta$ of $\cB=\Gamma_0$ is \emph{extremal successor closed} if for all extremal arrows $(v,s,t)\in\Gamma_1$, $s\in\beta$ implies $t\in\beta$.

\begin{lemma}\label{lemma: non-extremal successor closed beta have contradictory beta-states}
 If $\beta$ is not extremal successor closed, then $\Sigma_\beta$ is contradictory.
\end{lemma}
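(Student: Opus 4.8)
The goal is to show that if $\beta$ fails to be extremal successor closed, then $\operatorname{Ev}(f_\beta)$ is empty, which by Lemma~\ref{lemma: contradictory beta have empty schubert cells} forces $C_\beta^M=\emptyset$. The strategy is to locate a single relevant triple $(v,t,s)$ whose defining equation $E(v,t,s)=0$ cannot be satisfied by any total evaluation extending $f_\beta$, by exhibiting it as a contradiction of type~\eqref{item1} in the proof of Lemma~\ref{lemma: minimal partial evaluation}. First I would unwind the hypothesis: there is an extremal arrow $(v,s,t)\in\Gamma_1$ with $s\in\beta$ but $t\notin\beta$. I would like to use this arrow to produce the desired triple, and the natural candidate is $(v,t,s)$ itself; note $(v,t,s)\in\Rel^3$ since the witnessing arrow in the definition of $\Rel^3$ can be taken to be $(v,s,t)$ (with $s\geq s$ and $t\leq t$).

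\textbf{Key steps.}
Next I would analyze the polynomial
\[
 E(v,t,s) \ = \ \sum_{(v,s',t')\in\Gamma_1} \mu_{v,s',t'}\, w_{t,t'}\, w_{s',s} \ - \ \sum_{(v,s',t)\in\Gamma_1} \mu_{v,s',t}\, w_{s',s}
\]
and evaluate each variable under any $\ev\in\operatorname{Ev}(f_\beta)$. The point is to show that \emph{every} variable occurring in $E(v,t,s)$ is already a constant coefficient with respect to $\beta$, so that $E(v,t,s)$ has no free variables left and its value is forced. Extremality of $(v,s,t)$ is exactly what makes this work: for each term $\mu_{v,s',t'}w_{t,t'}w_{s',s}$ in the first sum, extremality gives $s<s'$ or $t'<t$. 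In the first case $(s',s)\in\Rel^2$ has $s'>s$, and since $s\in\beta$ the normal form condition (NF3) shows $w_{s',s}=0$, so $\ev(s',s)=0$ by the third clause of $f_\beta$ if $s'\notin\beta$ and by (NF2)/(NF1)-compatibility otherwise — in any case this term vanishes. In the second case $t'<t$, so $w_{t,t'}$ has column index $t'<t$; combined with... here one must check $t'\in\beta$ or $t'\notin\beta$. If $t'\notin\beta$ then $\ev(t,t')=f_\beta(t,t')=0$; if $t'\in\beta$, then since $t\notin\beta$ and $t'<t$, clause (NF3) gives $w_{t,t'}=0$ as well (as $t'<t$ with $t'\in\beta$), hence $\ev(t,t')=0$. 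So every term of the first sum vanishes. For the second sum, each term is $\mu_{v,s',t}w_{s',s}$ with $s'\leq s$ by the constraint that $(v,s',t)$ contributes and the orientation conventions; but also $t\notin\beta$ plays no direct role here — instead one uses that the arrow $(v,s,t)\in\Gamma_1$ itself contributes the term $\mu_{v,s,t}w_{s,s}$, and $\ev(s,s)=f_\beta(s,s)=1$ since $s\in\beta$. The remaining terms $\mu_{v,s',t}w_{s',s}$ with $s'\neq s$: if $s'<s$ then $w_{s',s}$ is a legitimate variable, and if $s'>s$ then (NF3) kills it as before; one has to argue that the $s'<s$ terms with $s'\notin\beta$ evaluate to $0$ via $f_\beta$, and the $s'\in\beta$, $s'<s$, $s'\neq s$ case via (NF2). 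The upshot is $E(v,t,s)$ evaluated at any $\ev\in\operatorname{Ev}(f_\beta)$ equals $-\mu_{v,s,t}\neq 0$. Thus no total evaluation can satisfy $E(v,t,s)=0$, so $\operatorname{Ev}(f_\beta)=\emptyset$ and $\Sigma_\beta$ is contradictory.

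\textbf{Main obstacle.}
The delicate point is the bookkeeping of which terms of $E(v,t,s)$ survive: one must carefully distinguish, for each arrow $(v,s',t')\in\Gamma_1$ in the support, the four cases according to whether $s'$ and $t'$ lie in $\beta$, and invoke the correct normal-form clause (NF1)--(NF5) or clause of $f_\beta$ in each. Extremality of $(v,s,t)$ is used precisely to guarantee that in every term of the quadratic sum at least one factor is forced to $0$, leaving only the single linear term $-\mu_{v,s,t}w_{s,s}$ whose value $-\mu_{v,s,t}$ is a nonzero constant. I expect the orientation conventions (the meaning of "$i\leq j$" in $\Rel^2$, the direction of the inequalities in $\Rel^3$) to require the most care to get exactly right, but no step should be mathematically deep once the case analysis is laid out cleanly.
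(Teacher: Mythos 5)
Your argument is correct and is essentially the paper's proof: one takes the extremal arrow $(v,s,t)$ witnessing the failure of extremal successor closedness and shows that every evaluation in $\Ev(f_\beta)$ forces $E(v,t,s)$ to the nonzero value $-\mu_{v,s,t}$. The paper reaches this more directly by noting that extremality makes every term of $E(v,t,s)$ other than $\mu_{v,s,t}w_{t,t}w_{s,s}-\mu_{v,s,t}w_{s,s}$ involve a pair $(i,j)$ with $i>j$, hence not in $\Rel^2$, after which $w_{s,s}=1$ and $w_{t,t}=0$ give the contradiction; in particular the $s'<s$ terms of the linear sum that worry you (and which genuinely would not be killed by $f_\beta$ when $s'\notin\beta$, being free coefficients) simply do not occur, precisely because $(v,s,t)$ is extremal.
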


\begin{proof}
 If $\beta$ is not extremal successor closed, then there exists an extremal arrow $(v,s,t)$ in $\Gamma$ such that $s\in\beta$ and $t\notin\beta$. Since $(v,t,s)$ is extremal, we have $E(v,t,s) = \mu_{v,s,t}w_{t,t}w_{s,s} - \mu_{v,s,t}w_{s,s}$. The equations $w_{s,s}=1$, by \textup{(NF1)}, and $w_{t,t}=0$, by \textup{(NF4)}, do not have a common solution with $E(v,t,s)=0$. Therefore (EV) cannot be satisfied by any partial function $\ev:\Rel^2\dashrightarrow\C$ with $\ev(s,s)=1$ and $\ev(t,t)=0$. This shows that $\Ev(f_\beta)$ is empty, i.e.\ $\Sigma_\beta$ is contradictory.
\end{proof}

%%%%%%%%%%%%%%%%%%%%%%%%%%%%%%%%%%%%%%%%%%%%%%%%%%%%%%%%%%%%%%%%%%%%%%%%%%%%%%%%%%%%%%%%%%%%%%%%%%%%%%%%%%%%%%%%%%%%%%%%%%%%%%%%%%%%%%%%%%%%%%%%%%%%%%%%%%%%%%%%%%%%%%%%%%%%%%%%%%%%%%%%%%%%%%%%%%%%%%
\subsection{Definition of $\beta$-states}\label{subsection: definition of beta-states}

Assume $\Ev(f_\beta)$ is not empty and let $\ev_\beta$ be its minimal partial evaluation. Roughly speaking, the $\beta$-state is obtained from the Schubert system $\Sigma$ as follows: we replace the variable $w_{i,j}$ by $\ev_\beta(i,j)$ in all equations $E(v,t,s)$ whenever $\ev_\beta$ is defined in $(i,j)$; we adjust the links and their weights correspondingly; we remove all relevant pairs that are in the domain of $\ev_\beta$ from $\Sigma$ and all adjacent edges; we further remove all relevant triples $(v,t,s)$ without a neighbour from $\Sigma$. The $\beta$-state $\Sigma_\beta$ is the leftover graph with links.

The precise definition of the $\beta$-state is as follows. For a set $S$ of relevant pairs, define 
\[
 S_{\beta,\C} \ = \ \{ \, (i,j)\in S \, | \, \ev_\beta(i,j)\in\C \, \} \qquad \text{and} \qquad S_{\beta,\eta} \ = \ \{ \, (i,j)\in S \, | \, \ev_\beta(i,j)=\eta \, \}.
\]
Let $\Link\Sigma_\beta$ be the set of links $\lambda=\bigr((v,t,s),S\bigl)$ such that
\[
 \mu_{\beta,\lambda} \ = \ \sum_{\substack{\lambda'=\bigl((v,t,s),S'\bigr)\in\Link\Sigma\\\text{such that }S=S'_{\beta,\eta}}} \mu_{\lambda'} \cdot \prod_{(i,j)\in S'_{\beta,\C}} \ev_\beta(i,j)
\]
is not zero. Note that $\mu_{\beta,\lambda}$ occurs as a coefficient in $E(v,t,s)$ if we substitute $w_{i,j}$ by $\ev_\beta(i,j)$ if $\ev_\beta$ is defined in $(i,j)$.

Note that a link $\lambda\in\Link\Sigma_\beta$ can be constant, linear or quadratic. If $\lambda$ is quadratic, then it is also a link of $\Sigma$ with the same weight $\mu_\lambda=\mu_{\beta,\lambda}$. If $\lambda$ is linear, then it might be a link of $\Sigma$ or not, and if it is, then the weights $\mu_\lambda$ and $\mu_{\beta,\lambda}$ might be different. Since all links of $\Sigma$ are linear or quadratic, a constant link of $\Sigma_\beta$ cannot be a link of $\Sigma$.

\begin{df}
 The \emph{$\beta$-state $\Sigma_\beta$ of $\Sigma$} is the subgraph with vertex set
 \begin{align*}
  \Vertex \Sigma_\beta \ &=     \ \{\, (i,j)\in\Rel^2 \,|\, \ev_\beta(i,j)=\eta \,\} \\ 
                         &\quad \ \amalg \ \{\, (v,t,s)\in\Rel^3 \,|\, (v,t,s)\text{ is the tip of a link }\lambda\in\Link\Sigma_\beta \,\}
 \end{align*}
 and edge set
 \[
    \Edge \Sigma_\beta \ = \ \{\, x\in\Edge\Sigma \,|\, x\text{ is the leg of a link }\lambda\in\Link\Sigma_\beta \,\}
 \]
 together with the set $\Link\Sigma_\beta$ of links $\lambda$ and their associated weights $\mu_{\beta,\lambda}$. We say that a vertex or an edge of $\Sigma$ is \emph{$\beta$-relevant} if it is contained in $\Sigma_\beta$. Otherwise, we say that it is \emph{$\beta$-trivial}. If all of these sets are empty, we call $\Sigma_\beta$ \emph{trivial}.
\end{df}

For a $\beta$-relevant triple $(v,t,s)$, we define the \emph{$\beta$-reduced form of $E(v,t,s)$} as
\begin{equation*}\label{eq: beta-reduced E}
 E_\beta(v,t,s) \quad = \quad \sum_{\lambda=\bigl((v,t,s),S\bigr)\in\Link\Sigma_\beta} \mu_{\beta,\lambda} \cdot \prod_{(i,j)\in S} w_{i,j}. 
\end{equation*}
Since the equations $E_\beta(v,t,s)$ are derived from $E(v,t,s)$ by evaluating free coefficient $w_{i,j}$ w.r.t.\ $\beta$ according to \textup{(NF1)}--\textup{(NF5)} and by substituting variables by its unique solution according to (EV), we see that the image of $\iota_\beta:C_\beta\to\Mat_{\cB\times\cB}$ equals the zero set of the equations $E_\beta(v,t,s)$ (with $t\notin\beta$, $s\in\beta$) inside the affine subspace of $\Mat_{\cB\times \cB}$ described by $w_{i,j}=\ev_\beta(i,j)$ for $\beta$-trivial $(i,j)$. In other words:

\begin{prop}
 The Schubert cell $C_\beta^M$ is isomorphic to the zero set of the polynomials $E_\beta(v,t,s)$ for $(v,t,s)\in\Vertex\Sigma_\beta$ inside the affine space spanned by $w_{i,j}$ with $(i,j)\in\Vertex\Sigma_\beta$.\qed
\end{prop}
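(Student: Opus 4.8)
The plan is to chase through the definitions laid out in the preceding subsections and assemble the statement from the two preceding lemmas (Lemma \ref{lemma: normal form} and Lemma \ref{lemma: defining equations of a schubert cell}) together with the explicit construction of the $\beta$-state. First I would recall that by Lemma \ref{lemma: normal form} every $N \in C_\beta^M$ has a unique matrix representation $w = (w_{i,j})_{i,j\in\cB}$ in $\beta$-normal form, so the map $\iota_\beta : C_\beta^M \to \Mat_{\cB\times\cB}$ is a closed embedding whose image, by Lemma \ref{lemma: defining equations of a schubert cell}, is exactly the intersection of $V(M,\cB)$ with the affine subspace cut out by \textup{(NF1)}--\textup{(NF5)}. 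The constant coefficients $w_{i,j}$ (those appearing in (NF1)--(NF5)) are fixed to $0$ or $1$, so this image already lives in the affine space spanned by the \emph{free} coefficients $w_{i,j}$, i.e.\ those with $i\in\cB_p-\beta_p$, $j\in\beta_p$, $i<j$; note these free coefficients are precisely the relevant pairs $(i,j)$ with $j\in\beta$, $i\notin\beta$, $i<j$.

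Next I would observe that the passage from the free coefficients to the $\beta$-relevant vertices of $\Sigma_\beta$ is exactly the process of solving as many of the defining equations $E(v,t,s)=0$ as possible in linear terms, which is what the minimal partial evaluation $\ev_\beta$ records. Concretely: start with $f_\beta$ (which encodes (NF1),(NF2),(NF4),(NF5) on the relevant pairs) and run the procedure in the proof of Lemma \ref{lemma: minimal partial evaluation}. Since $\Ev(f_\beta)$ is assumed non-empty, case \eqref{item1} never occurs and we obtain the minimal partial evaluation $\ev_\beta$; its domain consists of those pairs $(i,j)$ whose value is forced, and by the definition of $\Sigma_\beta$ these are precisely the $\beta$-trivial relevant pairs, while $\Vertex\Sigma_\beta \cap \Rel^2$ is the set of pairs with $\ev_\beta(i,j)=\eta$. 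So the free parameters on $C_\beta^M$, after imposing all linearly solvable equations, are indexed by $\Vertex\Sigma_\beta\cap\Rel^2$, which is exactly the ambient affine space claimed in the proposition. I would then invoke Lemma \ref{lemma: contradictory beta have empty schubert cells}, whose proof already identifies the points of $C_\beta^M$ with the total evaluations in $\Ev(f_\beta)$, hence with the points of $\Mat_{\cB\times\cB}$ satisfying $w_{i,j}=\ev_\beta(i,j)$ for $\beta$-trivial $(i,j)$ together with $E(v,t,s)=0$ for all relevant triples $(v,t,s)$.

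To finish I would argue that among the equations $E(v,t,s)=0$, after substituting $w_{i,j}=\ev_\beta(i,j)$ for the $\beta$-trivial pairs, those whose tip is $\beta$-trivial become trivially satisfied: a relevant triple drops out of $\Vertex\Sigma_\beta$ exactly when, after substitution, it has no remaining neighbour, i.e.\ the substituted polynomial is the constant $0$ (the non-emptiness of $\Ev(f_\beta)$ rules out a nonzero constant). For a $\beta$-relevant triple $(v,t,s)$, the substituted polynomial is by construction $E_\beta(v,t,s) = \sum_{\lambda\in\Link\Sigma_\beta}\mu_{\beta,\lambda}\prod_{(i,j)\in S}w_{i,j}$, since each surviving monomial of $E(v,t,s)$ contributes to the coefficient $\mu_{\beta,\lambda}$ of the reduced monomial indexed by $S = S'_{\beta,\eta}$, and a reduced monomial is omitted precisely when this coefficient vanishes. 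Hence the system $\{E(v,t,s)=0 : (v,t,s)\in\Rel^3\}$ restricted to the affine subspace $\{w_{i,j}=\ev_\beta(i,j)\text{ for }\beta\text{-trivial }(i,j)\}$ is equivalent to $\{E_\beta(v,t,s)=0 : (v,t,s)\in\Vertex\Sigma_\beta\}$ on the affine space spanned by $\{w_{i,j} : (i,j)\in\Vertex\Sigma_\beta\}$, which is the assertion.

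The main obstacle, if any, is the bookkeeping in the last step: one must be careful that substituting $\ev_\beta$ into $E(v,t,s)$ really produces $E_\beta(v,t,s)$ and not something with extra spurious terms or missing terms — this is the content of the displayed formula defining $\mu_{\beta,\lambda}$, and it needs the observation that distinct links $\lambda'$ of $\Sigma$ with the same tip can have $S'_{\beta,\eta}$ equal, so their contributions must be summed (and may cancel, which is exactly when the reduced link disappears). Everything else is a direct translation of the definitions of partial evaluation, $\beta$-state, and $\beta$-reduced form, combined with Lemmas \ref{lemma: normal form}, \ref{lemma: defining equations of a schubert cell}, \ref{lemma: minimal partial evaluation} and \ref{lemma: contradictory beta have empty schubert cells}; I would present the argument as a short chain of these identifications rather than a computation.
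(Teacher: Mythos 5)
Your argument is correct and is exactly the route the paper takes: the paper's justification is the single sentence preceding the proposition (the equations $E_\beta(v,t,s)$ arise from $E(v,t,s)$ by evaluating coefficients via (NF1)--(NF5) and substituting forced values via (EV), so the image of $\iota_\beta$ is the zero set of the $E_\beta(v,t,s)$ inside the subspace $w_{i,j}=\ev_\beta(i,j)$ for $\beta$-trivial $(i,j)$), and your proposal merely spells out that sentence via Lemmas \ref{lemma: normal form}, \ref{lemma: defining equations of a schubert cell}, \ref{lemma: minimal partial evaluation} and \ref{lemma: contradictory beta have empty schubert cells}. The bookkeeping in your last step, including the remark that several links $\lambda'$ with the same $S'_{\beta,\eta}$ must have their contributions summed, matches the displayed definition of $\mu_{\beta,\lambda}$ and closes the only non-trivial point.
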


\begin{rem}
 Note that the definition of the $\beta$-reduced form of $E(v,t,s)$ deviates from the definition used in \cite{L13}. Namely, we obtain the $\beta$-reduced forms $E_\beta(v,t,s)$ (in the sense of this text) from the system of $\beta$-reduced equations (in the sense of \cite{L13}) if we solve successively all linear equations in one variable that appear in this system.
\end{rem}

Since we have not yet developed the tools to calculate the $\beta$-states efficiently, we postpone examples to a later point of the paper. We encourage the reader, however, to have a look at Examples \ref{ex: beta-states for D_4} and \ref{ex: Schubert system with extremal solution}.

%%%%%%%%%%%%%%%%%%%%%%%%%%%%%%%%%%%%%%%%%%%%%%%%%%%%%%%%%%%%%%%%%%%%%%%%%%%%%%%%%%%%%%%%%%%%%%%%%%%%%%%%%%%%%%%%%%%%%%%%%%%%%%%%%%%%%%%%%%%%%%%%%%%%%%%%%%%%%%%%%%%%%%%%%%%%%%%%%%%%%%%%%%%%%%%%%%%%%%
\subsection{The reduced Schubert system}\label{subsection: the reduced schubert system}

For extremal successor closed subsets $\beta$ of $\cB$, it is possible to simplify the system of equations $E(v,t,s)$, which leads to the reduced Schubert system. These simplification are due to the substitution of $w_{i,i}$ by $1$ or $0$, depending on whether $i$ is in $\beta$ or not. We can apply this in the following two situations.

\begin{enumerate}
 \item\label{esc1} For an extremal arrow $(v,s,t)$ of $\Gamma$, the polynomial $E(v,t,s)=\mu_{v,s,t}w_{t,t}w_{s,s}-\mu_{v,s,t}w_{s,s}$ is constant zero if $\beta$ is extremal successor closed. 
 \item\label{esc2} Since $E_\beta(v,t,s)$ becomes trivial if $t\in\beta$ or $s\notin\beta$ (cf.\ \cite[Lemma 2.2]{L13}), we can assume that $t\notin\beta$ and $s\in\beta$. Substituting $w_{s,s}$ by $1$ in quadratic terms and $w_{t,t}$ by $0$ yields the \emph{reduced form}
       \begin{equation*}\label{eq: reduced form}
        \overline{E}(v,t,s) \quad = \hspace{-0pt} 
        \sum_{\substack{(v,s,t')\in \Gamma_1\\ t<t'}} \hspace{-5pt} \mu_{v,s,t'} w_{t,t'} \ \ + \hspace{-5pt}
        \sum_{\substack{(v,s',t')\in\Gamma_1\\ t<t'\text{ and }s'<s}}  \hspace{-5pt} \mu_{v,s',t'} w_{t,t'} w_{s',s} \ \ - \hspace{-5pt}
        \sum_{\substack{(v,s',t)\in \Gamma_1\\ s'< s}}   \mu_{v,s',t}w_{s',s} \ \ - \ \ 
        \mu_{v,s,t}
       \end{equation*}
       (where $\mu_{v,s,t}=0$ if $\Gamma$ does not contain the arrow $(v,s,t)$).
\end{enumerate}

By \eqref{esc1}, we can omit all relevant triples $(v,t,s)$ for which $(v,s,t)$ is an extremal arrow of $\Gamma$ from the Schubert system $\Sigma$. By \eqref{esc2}, we can omit all relevant pairs of the form $(i,i)$ from the Schubert system.

For a link $\lambda=\bigl((v,t,s),S\bigr)\in\Link\Sigma$, we define $\overline\lambda=\bigl((v,t,s),S-\{(s,s)\}\bigr)$.  Note that if $\overline\lambda=\overline{\lambda'}$, then $\lambda=\lambda'$. We have that either $\overline\lambda=\lambda$ or $\overline\lambda\notin\Link\Sigma$ since $E(v,t,s)$ contains neither a linear term in $w_{t,t'}$ nor a constant term. Therefore, we can use the same symbol $\mu$ for the weights of the links of the reduced Schubert system without causing ambiguity.

\begin{df}
 The \emph{reduced Schubert system of $M$ w.r.t.\ $\cB$} is the graph $\overline\Sigma=\overline\Sigma(M,\cB)$ with vertex set
 \begin{align*}
  \Vertex\overline\Sigma \ &= \     \bigl\{\, (i,j)\in\Rel^2 \,\bigl|\, i<j\,\bigl\}  \ \amalg \ \bigl\{\, (v,t,s)\in\Rel^3 \,\bigl|\, (v,s,t)\text{ is not an extremal arrow of }\Gamma \,\bigl\}
 \end{align*}
 and edge set
 \[
  \Edge\overline\Sigma \ = \ \bigl\{\, \{(v,t,s),(i,j)\}\in\Edge\Sigma \,\bigl|\, i<j \,\bigl\}
 \]
 together with the set
 \[
  \Link\overline\Sigma \ = \ \bigl\{\, \overline\lambda \,\bigl|\, \lambda\in\Link\Sigma\text{ with tip in }\Vertex\overline\Sigma \,\bigl\}
 \]
 of links $\overline\lambda$ with weights $\mu_{\overline\lambda}=\mu_{\lambda}$. 
\end{df}

In practice, we will always assume that $\beta$ is extremal successor closed, which allows us to work with the reduced Schubert system $\overline\Sigma(M,\cB)$.

The reduced Schubert system can be derived from the complete Schubert system by deleting
\begin{enumerate}
 \item all vertices of the form $(i,i)$, together with all connecting edges,
 \item all vertices of the form $(v,t,s)$ that correspond to an extremal edge $(v,s,t)$ of $\Gamma$, together with all connecting links,
 \item all legs from links with base vertices of the form $(i,i)$.
\end{enumerate}
Alternatively, the reduced Schubert system is determined by the reduced polynomials $\overline E(v,t,s)$ for relevant triples $(v,t,s)$ where $(v,s,t)$ is not an extremal arrow in $\Gamma$ in the same way as the complete Schubert system is derived from the polynomials $E(v,t,s)$ for all relevant triples $(v,t,s)$.

\begin{rem}
 The significance of the reduced Schubert system is that it carries less information, but still suffices to compute the Schubert cells $C_\beta^M$. We have
 \[
  \overline E(v,t,s) \ = \ \sum_{\lambda=\bigl((v,t,s),S\bigr)\in\Link\overline\Sigma} \mu_\lambda \cdot \prod_{(i,j)\in S} w_{i,j}.
 \]
 If $\beta$ is not extremal successor closed, then $C_\beta^M$ is empty. If $\beta$ is extremal successor closed, then $C_\beta^M$ is isomorphic to the subvariety of the affine space $\{(w_{i,j})|(i,j)\in\Vertex\overline\Sigma\}$ that is defined by the equations $\overline E(v,t,s)=0$ for $(v,t,s)\in\Vertex\overline\Sigma$ with $t\notin\beta$ and $s\in\beta$.
\end{rem}

\subsection*{Remark on illustrations} We illustrate constant links $\lambda=\bigl((v,t,s),\emptyset\bigr)$ as a dotted edge that connects the vertex $(v,t,s)$ to the weight $\mu_\lambda$, displayed inside a dotted box.

\begin{ex}\label{ex: reduced schubert system for D_4}
 Let $Q$, $M$, $\cB$ and $\Gamma$ be as in Example \ref{ex: complete schubert system for D_4}. The extremal edges of $\Gamma$ are $(a,1,2)$, $(c,3,4)$ and $(b,5,4)$. Therefore we have to consider only the reduced forms of the polynomials
 \begin{align*}
  E(c,2,3) \ &= \ w_{2,4}w_{3,3}+w_{2,2}w_{3,3}-w_{3,3},   &  E(b,2,5) \ &= \ w_{2,4}w_{5,5}.
 \end{align*}
 Substituting $w_{2,2}=0$ and $w_{3,3}=1$ in $E(c,2,3)$ and $w_{5,5}=1$ in $E(b,2,5)$ yields
 \begin{align*}
    \overline E(c,2,3) \ &= \ w_{2,4}-1,   &  \overline E(b,2,5) \ &= \ w_{2,4}.
 \end{align*}
 We conclude that $\overline\Sigma$ looks as follows.
 \[
 \beginpgfgraphicnamed{fig45}
  \begin{tikzpicture}[>=latex]
   \matrix (m) [matrix of math nodes, row sep=0.5em, column sep=1.0em, text height=1.5ex, text depth=0.5ex]
    {  \node[const](33){-1}; &\node[triple](c23){c}; &\node[pair](24){}; &\node[triple](b25){b}; &       2         \\   % row  1
                             &            3          &           4       &          5            &                 \\}; % row 2
    \path[-,font=\scriptsize]
    (c23) edge node[auto] {} (24)
    (b25) edge node[auto] {} (24);
    \path[dotted,-,thick,font=\scriptsize]
    (33) edge node[right=-1pt] {} (c23);
  \end{tikzpicture}
 \endpgfgraphicnamed
 \]
\end{ex}

\begin{ex}
 Let $Q$ be a quiver of type $A_2$ with arrow $a$. Let $M$ be the representation with dimension vector $(2,2)$ and matrix $M_a=\binom{\mu\ 1}{0\ \mu}$ where $\mu\in\C^\times$. Let $\cB$ be the basis with associated coefficient quiver $\Gamma$
 \[
 \beginpgfgraphicnamed{fig46}
  \begin{tikzpicture}[>=latex]
   \matrix (m) [matrix of math nodes, row sep=1.0em, column sep=8.0em, text height=1ex, text depth=0ex]
    { 1 & 2  \\                 % row 1
      3 & 4  \\   };            % row 2
    \path[->,font=\scriptsize]
    (m-1-1) edge node[above=-3pt] {$a,\mu$} (m-1-2)
    (m-2-1) edge node[above left=-4pt] {$a,1$} (m-1-2)
    (m-2-1) edge node[above=-3pt] {$a,\mu$} (m-2-2)
;
  \end{tikzpicture}
 \endpgfgraphicnamed
 \]
 with the respective weights $\mu$ and $1$. Then
 \[
  \Rel^2 \ = \ \big\{\, (1,1),(1,3),(3,3),(2,2),(2,4),(4,4) \,\bigr\}, \qquad \Rel^3 \ = \  \big\{\, (a,2,1), (a,4,3), (a,2,3) \,\bigr\}.
 \]
 The defining polynomials of $V(M,\cB)$ are
 \begin{align*}
  E(a,2,1) \ &= \ \mu w_{2,2}w_{1,1}-w_{1,1},   &\hspace{-4cm}   E(a,4,3) \ &= \ \mu w_{4,4}w_{3,3}-w_{3,3},   \\   
  E(a,2,3) \ &= \ \mu w_{2,2}w_{1,3}+w_{2,2}w_{3,3}+\mu w_{2,4}w_{3,3}-\mu w_{1,3}-w_{3,3}.
 \end{align*}
 The complete Schubert system can be deduced from these equations. Since $(a,1,2)$ and $(a,3,4)$ are extremal arrows in $\Gamma$, the reduced Schubert system corresponds to the reduced form 
 \[
  \overline E(a,2,3)=\mu w_{2,4}-\mu w_{1,3}-1.
 \]
 The complete and the reduced Schubert systems of $M$ w.r.t.\ $\cB$ are, respectively, as follows:
 \[
 \beginpgfgraphicnamed{fig47}
  \begin{tikzpicture}[>=latex]
   \matrix (m) [matrix of math nodes, row sep=-0.5em, column sep=0.5em, text height=1.5ex, text depth=0.5ex]
% column       1          2         3         4           5         6         7          8         9
    {         \Sigma    &\!&                 &\!&\node[triple]{a}; &\!&   \node[pair]{};&\!&       4         \\   % row  1
              \!        &\!&       \!        &\!&        \!        &\!&       \!        &\!&                 \\   % row  2
                        &\!&                 &\!& \node[pair]{};   &\!&                 &\!&       3         \\   % row  3
              \!        &\!&       \!        &\!&        \!        &\!&       \!        &\!&                 \\   % row  4
      \node[triple]{a}; &\!&  \node[pair]{}; &\!&\node[triple]{a}; &\!&   \node[pair]{};&\!&       2         \\   % row  5
              \!        &\!&       \!        &\!&        \!        &\!&       \!        &\!&                 \\   % row  6
       \node[pair]{};   &\!&                 &\!&  \node[pair]{};  &\!&                 &\!&       1         \\   % row  7
              \!        &\!&       \!        &\!&        \!        &\!&       \!        &\!&                 \\   % row  8
               1        &  &        2        &  &         3        &  &       4         &  &                 \\}; % row  9
    \path[-,font=\scriptsize]
    (m-5-1) edge[bend left=10] node[right] {} (m-7-1.center)
    (m-5-1) edge[bend right=10] node[left] {$-1$} (m-7-1.center)
    (m-5-1) edge node[auto] {} (m-5-3.center)
    (m-5-5) edge[bend left=10] node[auto] {} (m-5-3.center)
    (m-5-5) edge[bend right=10] node[auto] {} (m-5-3.center)
    (m-5-5) edge node[auto] {} (m-5-7.center)
    (m-5-5) edge[bend left=20] node[left] {} (m-3-5.center)
    (m-5-5) edge node[right=-1pt] {$-1$} (m-3-5.center)
    (m-5-5) edge[bend right=20] node[auto] {} (m-3-5.center)
    (m-5-5) edge[bend left=10] node[right] {$-\mu$} (m-7-5.center)
    (m-5-5) edge[bend right=10] node[left] {} (m-7-5.center)
    (m-1-5) edge[bend left=10] node[right] {} (m-3-5.center)
    (m-1-5) edge[bend right=10] node[left] {$-1$} (m-3-5.center)
    (m-1-5) edge node[auto] {} (m-1-7.center)
;
    \path[dotted,font=\scriptsize]
    (m-6-1) edge[bend right=45] node[below right=-2pt] {$\mu$} (m-5-2.center)
    (m-6-5) edge[bend left=45] node[below left=-2pt] {$\mu$} (m-5-4.center)
    (m-4-5) edge[bend right=45] node[above left=-2pt] {$1$} (m-5-4.center)
    (m-4-5) edge[bend left=45] node[right=0pt] {$\mu$} (m-5-6.center)
    (m-2-5) edge[bend right=45] node[below right=-2pt] {$\mu$} (m-1-6.center)
;
  \end{tikzpicture}
 \endpgfgraphicnamed
 \hspace{2cm}
 \beginpgfgraphicnamed{fig48}
  \begin{tikzpicture}[>=latex]
   \matrix (m) [matrix of math nodes, row sep=-0.5em, column sep=0.5em, text height=1.5ex, text depth=0.5ex]
% column       1          2         3         4           5         6         7          8         9
    { \overline\Sigma   &\!&                 &\!&                  &\!&                 &\!&       4         \\   % row  1
              \!        &\!&       \!        &\!&        \!        &\!&       \!        &\!&                 \\   % row  2
                        &\!&                 &\!&\node[const]{-1}; &\!&                 &\!&       3         \\   % row  3
              \!        &\!&       \!        &\!&        \!        &\!&       \!        &\!&                 \\   % row  4
                        &\!&                 &\!&\node[triple]{a}; &\!&   \node[pair]{};&\!&       2         \\   % row  5
              \!        &\!&       \!        &\!&        \!        &\!&       \!        &\!&                 \\   % row  6
                        &\!&                 &\!&  \node[pair]{};  &\!&                 &\!&       1         \\   % row  7
              \!        &\!&       \!        &\!&        \!        &\!&       \!        &\!&                 \\   % row  8
               1        &  &        2        &  &         3        &  &       4         &  &                 \\}; % row  9
    \path[-,font=\scriptsize]
    (m-5-5) edge node[auto] {$\mu$} (m-5-7.center)
    (m-5-5) edge node[auto] {$-\mu$} (m-7-5.center)
;
    \path[dotted,thick,-,font=\scriptsize]
    (m-5-5) edge node[auto] {} (m-3-5);
  \end{tikzpicture}
 \endpgfgraphicnamed
\] 
\end{ex}

%%%%%%%%%%%%%%%%%%%%%%%%%%%%%%%%%%%%%%%%%%%%%%%%%%%%%%%%%%%%%%%%%%%%%%%%%%%%%%%%%%%%%%%%%%%%%%%%%%%%%%%%%%%%%%%%%%%%%%%%%%%%%%%%%%%%%%%%%%%%%%%%%%%%%%%%%%%%%%%%%%%%%%%%%%%%%%%%%%%%%%%%%%%%%%%%%%%%%%
\subsection{Computing $\beta$-states}\label{subsection: computing beta-states}

If $\beta$ is not extremal successor closed, then $\Sigma_\beta$ is contradictory by Lemma \ref{lemma: non-extremal successor closed beta have contradictory beta-states}. Whether $\beta$ is extremal successor closed can be easily verified with the help of the coefficient quiver. For extremal successor closed $\beta$, we determine whether $\Sigma_\beta$ is contradictory, and if not, compute $\Sigma_\beta$ and $\ev_\beta$ with the following algorithm.

\subsection*{The initial steps.} Apply \eqref{step1}--\eqref{step6} to all vertices, edges and links of $\overline\Sigma$.
\begin{enumerate}
 \item\label{step1} For a link $\lambda\in\Link\overline\Sigma$, set $\mu_{\beta,\lambda}=\mu_\lambda$. For all other links $\lambda$, set $\mu_{\beta,\lambda}=0$. These values might change while proceeding with the algorithm.
 \item\label{step2} If $(i,j)$ is a vertex with $i=j$ or $i\in\beta$ or $j\notin\beta$, then mark it as $\beta$-trivial. If $i=j\in\beta$, then define $\ev_\beta(i,j)=1$; otherwise define $\ev_\beta(i,j)=0$.
 \item\label{step3} If $(v,t,s)$ is a vertex with $t\in\beta$ or $s\notin\beta$, then mark it as $\beta$-trivial.
 \item\label{step4} If the tip of a link $\lambda$ is $\beta$-trivial, then set $\mu_{\beta,\lambda}=0$.
 \item\label{step5} If a link $\lambda=\bigl((v,t,s),S\bigr)$ has a $\beta$-trivial base vertex, then set $S'=S-\{\beta\text{-trivial }(i,j)\}$ and $\lambda'=\bigl((v,t,s),S'\bigr)$, replace $\mu_{\beta,\lambda'}$ by 
                    \[
                     \mu_{\beta,\lambda'} \ + \ \mu_{\beta,\lambda}\prod_{(i,j)\in S-S'}\ev_\beta(i,j)
                    \]
                    and set $\mu_{\beta,\lambda}=0$.
 \item\label{step6} If $\{(v,t,s),(i,j)\}$ is not the leg of a link $\lambda$ with $\mu_{\beta,\lambda}\neq0$, then mark it as $\beta$-trivial.
\end{enumerate}

\subsection*{The loop} Repeat steps \eqref{step7} and \eqref{step8} in arbitrary order until $\Sigma_\beta$ is declared contradictory or none of \eqref{step7} and \eqref{step8} applies anymore.
\begin{enumerate}\setcounter{enumi}{6}
 \item\label{step7} If $(v,t,s)$ is not $\beta$-trivial and there is precisely one edge $\{(v,t,s),(i,j)\}$ that is not $\beta$-trivial and $\mu_{\beta,\lambda}\neq0$ for $\lambda=\bigl((v,t,s),\{(i,j)\}\bigr)$, then 
       \begin{enumerate}
        \item define $\ev_\beta(i,j)=-\mu_{\beta,\lambda_0}/\mu_{\beta,\lambda}$ where $\lambda_0=\bigl((v,t,s),\emptyset\bigr)$ is the constant link;
        \item set $\mu_{\beta,\lambda}=\mu_{\beta,\lambda_0}=0$;
        \item mark $(v,t,s)$ and $(i,j)$ as $\beta$-trivial;
        \item apply step \eqref{step5} to all links $\lambda$ with base vertex $(i,j)$ and $\mu_{\beta,\lambda}\neq0$;
        \item mark all edges with end vertex $(i,j)$ as $\beta$-trivial.
       \end{enumerate}
 \item\label{step8} If $(v,t,s)$ is not $\beta$-trivial and there is no edge $\{(v,t,s),(i,j)\}$ that is not $\beta$-trivial, then mark $(v,t,s)$ as $\beta$-trivial if $\mu_{\beta,\lambda_0}=0$ for $\lambda_0=\bigl((v,t,s),\emptyset\bigr)$; if $\mu_{\beta,\lambda_0}\neq0$, then declare $\Sigma_\beta$ as contradictory and stop the algorithm.
\end{enumerate}

\subsection*{The outcome} Once the algorithm stops, we either know that $\Sigma_\beta$ is contradictory or we have calculated $\ev_\beta$. In the latter case, we mark all vertices and edges of $\overline\Sigma$ that are not $\beta$-trivial as $\beta$-relevant and obtain
\begin{align*}
  \Vertex \Sigma_\beta \ &= \ \big\{\, \beta\text{-relevant vertices in }\Vertex\overline\Sigma \,\bigl\},     \\
  \Edge \Sigma_\beta   \ &= \ \big\{\, \beta\text{-relevant edges in }\Vertex\overline\Sigma \,\bigl\},        \\
  \Link \Sigma_\beta   \ &= \ \big\{\, \text{links }\lambda\text{ with weight }\mu_{\beta,\lambda}\neq 0 \,\}.
\end{align*}

\begin{rem}\label{rem: effect of the initial steps of the algorithm}
 The initial steps \eqref{step1}--\eqref{step6} reduce the information to calculate the $\beta$-state to the full subgraph of $\overline\Sigma$ with vertices $(i,j)$ and $(v,t,s)$ with $i,t\notin\beta$ and $j,s\in\beta$. However, the links of this subgraph differ in general from the links of $\overline\Sigma$ by the substitutions made in step \eqref{step5}.
\end{rem}

\begin{ex}\label{ex: beta-states for D_4}
 Let $Q$, $M$, $\cB$ and $\Gamma$ be as in Example \ref{ex: complete schubert system for D_4}. Recall from Example \ref{ex: reduced schubert system for D_4} that the reduced Schubert system $\overline\Sigma$ is as follows:
 \[
 \beginpgfgraphicnamed{fig49}
  \begin{tikzpicture}[>=latex]
   \matrix (m) [matrix of math nodes, row sep=0.5em, column sep=1.0em, text height=1.5ex, text depth=0.5ex]
    {  \node[const](33){-1}; &\node[triple](c23){c}; &\node[pair](24){}; &\node[triple](b25){b}; &       2         \\   % row  1
                             &            3          &           4       &          5            &                 \\}; % row 2
    \path[-,font=\scriptsize]
    (c23) edge node[auto] {} (24)
    (b25) edge node[auto] {} (24);
    \path[dotted,-,thick,font=\scriptsize]
    (33) edge node[right=-1pt] {} (c23);
  \end{tikzpicture}
 \endpgfgraphicnamed
 \]

We calculate the $\{3,4,5\}$-state. With step \eqref{step1}, we define the weights $\mu_{\beta,\lambda}$. None of the steps \eqref{step2}--\eqref{step6} applies. We proceed to apply step \eqref{step7} to $(c,2,3)$, which yields \\
\indent (a) $\ev_\beta(2,4)=-(-1)/1=1$; \\ 
\indent (b) $\mu_\lambda=\mu_{\lambda_0}=0$ where $\lambda=\bigl((c,2,3),(2,4)\bigr)$ and $\lambda_0=\bigl((c,2,3),\emptyset\bigr)$; \\ 
\indent (c) $(c,2,3)$ and $(2,4)$ are $\beta$-trivial; \\
\indent (d) $\mu_{\lambda'}=0+1\cdot 1=1$ and $\mu_\lambda=0$ where $\lambda=\bigl((b,2,5),(2,4)\bigr)$ and $\lambda'=\bigl((b,2,5),\emptyset\bigr)$; \\
\indent (e) $\{(c,2,3),(2,4)\}$ and $\{(b,2,5),(2,4)\}$ are $\beta$-trivial. \\
We can illustrate this as follows:
 \[
 \beginpgfgraphicnamed{fig51}
  \begin{tikzpicture}[>=latex]
   \matrix (m) [matrix of math nodes, row sep=0.5em, column sep=1.0em, text height=1.5ex, text depth=0.5ex]
    {  \node[const](33){-1}; &\node[triple](c23){c}; &\node[pair](24){}; &\node[triple](b25){b}; &       2   & \ &&&&& \ &\node[const](24'){1}; &\node[triple](b25'){b}; &       2    \\   % row  1
                             &            3          &           4       &          5            &           &   &&&&&   &         4           &           5            &            \\}; % row 2
    \path[-,font=\scriptsize]
    (c23) edge node[auto] {} (24)
    (b25) edge node[auto] {} (24);
    \path[dotted,-,thick,font=\scriptsize]
    (33) edge node[right=-1pt] {} (c23)
    (24') edge node[right=-1pt] {} (b25');
    \path[dashed,->,thick,font=\scriptsize]
    (m-1-6) edge node[above] {step \eqref{step7}} node[below] {to $(c,2,3)$} (m-1-11);
  \end{tikzpicture}
 \endpgfgraphicnamed
 \]
 Next, we apply step \eqref{step8} to $(b,2,5)$, which declares the $\{3,4,5\}$-state as contradictory since $\mu_{\beta,\lambda}=1$ for $\lambda=\bigl((b,2,5),\emptyset\bigr)$.

 We calculate the $\{3,4\}$-state. The initial steps reduce $\overline\Sigma$ to the full subgraph with vertices $(c,2,3)$ and $(2,4)$, without changing the weights of the links with tip $(c,2,3)$. We apply step \eqref{step7} to $(c,2,3)$
 \[
 \beginpgfgraphicnamed{fig53}
  \begin{tikzpicture}[>=latex]
   \matrix (m) [matrix of math nodes, row sep=0.5em, column sep=1.0em, text height=1.5ex, text depth=0.5ex]
    {  \node[const](33){-1}; &\node[triple](c23){c}; &\node[pair](24){}; &   &    2   & \ &&&&& \ &\emptyset    \\   % row  1
                             &            3          &           4       & 5 &        &   &&&&&   &         \\}; % row 2
    \path[-,font=\scriptsize]
    (c23) edge node[auto] {} (24);
    \path[dotted,-,thick,font=\scriptsize]
    (33) edge node[right=-1pt] {} (c23);
    \path[dashed,->,thick,font=\scriptsize]
    (m-1-6) edge node[above] {step \eqref{step7}} node[below] {to $(c,2,3)$} (m-1-11);
  \end{tikzpicture}
 \endpgfgraphicnamed
 \]
 and we see that the $\{3,4\}$-state is trivial.

 As a last example, we calculate the $\{4\}$-state. As explained in Remark \ref{rem: effect of the initial steps of the algorithm}, the initial steps reduce the information to calculate $\Sigma_{\{4\}}$ to the subgraph that consists of the vertex $(2,4)$. Since all triples are $\beta$-trivial, steps \eqref{step7} and \eqref{step8} do not apply anymore, and we see that $\Sigma_{\{4\}}=\{(2,4)\}$.

 For an example of a more complex $\beta$-state, see Example \ref{ex: Schubert system with extremal solution}.
\end{ex}

%%%%%%%%%%%%%%%%%%%%%%%%%%%%%%%%%%%%%%%%%%%%%%%%%%%%%%%%%%%%%%%%%%%%%%%%%%%%%%%%%%%%%%%%%%%%%%%%%%%%%%%%%%%%%%%%%%%%%%%%%%%%%%%%%%%%%%%%%%%%%%%%%%%%%%%%%%%%%%%%%%%%%%%%%%%%%%%%%%%%%%%%%%%%%%%%%%%%%%
\subsection{Solvable $\beta$-states}\label{subsection: solvable beta-states}

In this section, we formulate a combinatorial condition on a $\beta$-state that implies that the Schubert cell is an affine space. Since we will apply this condition also to subgraphs of a $\beta$-state, we will formulate the results in this section for a broader class of graphs with weighted links.

A \emph{system} is a bipartite graph $\Xi$ whose vertex colours are `pairs' $\pi$ and `triples' $\tau$, together with a set $\Link\Xi$ of links whose tips are triples and whose base vertices are pairs, and a weight function $\mu:\Link\Xi\to\C^\times$ that associates a weight $\mu_\lambda$ with each link $\lambda$. 

An edge $\{\tau,\pi\}$ of a system $\Xi$ is \emph{simply linked} if $(\tau,\{\pi\})$ is a link of $\Xi$ and if $\{\tau,\pi\}$ is not the leg of any other link of $\Xi$.

\begin{df}\label{df: solvable beta state}
 A \emph{solution for a system $\Xi$} is an orientation of the edges of $\Xi$ such that
 \begin{enumerate}
  \item[(S1)]\label{solv1} for every triple $\tau$, there exists precisely one edge $\{\tau,\pi\}$ that is oriented away from $\tau$ and this edge is simply linked;
  \item[(S2)]\label{solv2} for every pair $\pi$, there exists at most one edge that is oriented towards $\pi$;
  \item[(S3)]\label{solv3} $\Xi$ is without oriented cycles.
 \end{enumerate}
 The system $\Xi$ is \emph{solvable} if it has a solution.
\end{df}

Given a solution for $\Xi$, an edge $\{\tau,\pi\}$ that is oriented towards the pair $\pi$ is simple by (S1). We can relax (S1) by allowing several edges oriented away from a given triple $\tau$, one of them simply linked. If this orientation satisfies (S2) and (S3), then inverting the orientation of all edges pointing away from a triple $\tau$ except for one simply linked edge yields a solution for $\Xi$.

\begin{thm}\label{thm: solvable beta-state implies affine schubert cell}
 Let $\Sigma_\beta$ be a $\beta$-state that is not contradictory. If $\Sigma_\beta$ is solvable, then $C_\beta^M$ is an affine space of dimension
 \[
  \dim C_\beta^M \ = \ \#\{\,\beta\text{-relevant pairs}\,\} \ - \ \# \{\,\beta\text{-relevant triples}\,\}.
 \]
\end{thm}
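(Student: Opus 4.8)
The plan is to show that a solution for $\Sigma_\beta$ lets us solve the system of equations $\overline E(v,t,s)=0$ successively, each step eliminating one free coefficient in a linear fashion, so that the Schubert cell becomes the graph of a polynomial map over the affine space spanned by the remaining free coefficients. First I would recall from the last proposition of Section~\ref{subsection: definition of beta-states} that $C_\beta^M$ is isomorphic to the zero set of the polynomials $E_\beta(v,t,s)$ for $(v,t,s)\in\Vertex\Sigma_\beta$ inside the affine space with coordinates $w_{i,j}$ for $(i,j)\in\Vertex\Sigma_\beta$; so it suffices to analyse this reduced system. Fix a solution, i.e.\ an orientation of the edges of $\Sigma_\beta$ satisfying (S1)--(S3). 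By (S3) the orientation is acyclic, so I would pick a topological ordering $\tau_1,\dots,\tau_r$ of the $\beta$-relevant triples compatible with the orientation (a triple whose out-edge points to a pair that is itself the out-target of no triple comes first, etc.), and process the equations $E_\beta(\tau_k)=0$ in this order.

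The key step is the elimination. For each triple $\tau=(v,t,s)$, condition (S1) gives a unique edge $\{\tau,\pi_\tau\}$ oriented away from $\tau$, and it is simply linked: $(\tau,\{\pi_\tau\})$ is a link of $\Sigma_\beta$ and $\{\tau,\pi_\tau\}$ is not the leg of any other link. Hence in $E_\beta(v,t,s)$ the variable $w_{\pi_\tau}$ occurs only in the linear term $\mu_{\beta,\lambda}w_{\pi_\tau}$ with $\mu_{\beta,\lambda}\neq 0$ (where $\lambda=(\tau,\{\pi_\tau\})$), and nowhere else — in particular not in any quadratic term of $E_\beta(v,t,s)$. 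Therefore the equation $E_\beta(v,t,s)=0$ can be solved uniquely for $w_{\pi_\tau}$ as a polynomial in the other variables appearing in $E_\beta(v,t,s)$:
\[
 w_{\pi_\tau} \ = \ -\frac{1}{\mu_{\beta,\lambda}}\sum_{\substack{\lambda'=(\tau,S')\in\Link\Sigma_\beta\\ \lambda'\neq\lambda}}\mu_{\beta,\lambda'}\prod_{(i,j)\in S'}w_{i,j}.
\]
Since the orientation is processed in topological order and by (S1) each $\pi_\tau$ is the out-target of exactly one triple, the substitutions do not interfere: when we come to process $\tau_k$, the variables $w_{\pi_{\tau_1}},\dots,w_{\pi_{\tau_{k-1}}}$ that were already eliminated are replaced by polynomials in the not-yet-eliminated variables, so $w_{\pi_{\tau_k}}$ is again solved uniquely as a polynomial in the remaining coordinates. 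Condition (S2) guarantees that the $\pi_{\tau_k}$ are pairwise distinct, so we eliminate exactly one pair per triple. After processing all $r$ triples, every equation has been used to eliminate one coordinate, and no equations remain unused; thus the zero set is the graph of a polynomial map from the affine space spanned by the $\#\{\beta\text{-relevant pairs}\}-\#\{\beta\text{-relevant triples}\}$ surviving coordinates $w_{i,j}$, hence an affine space of that dimension. This gives the dimension formula.

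The main obstacle I anticipate is the bookkeeping of why the successive substitutions keep each $E_\beta(\tau_k)=0$ linear in $w_{\pi_{\tau_k}}$ with nonzero leading coefficient: one must check that substituting the earlier polynomial expressions for $w_{\pi_{\tau_1}},\dots,w_{\pi_{\tau_{k-1}}}$ into $E_\beta(\tau_k)$ cannot reintroduce a $w_{\pi_{\tau_k}}$-dependence beyond the original simply-linked linear term, and cannot kill that term. The first point follows because $w_{\pi_{\tau_k}}$ was already absent from all of $E_\beta(\tau_k)$ except its own simply-linked leg, and substitution only introduces variables $w_{\pi_{\tau_\ell}}$ with $\ell<k$, which by acyclicity of the orientation are all distinct from $w_{\pi_{\tau_k}}$; the second point is immediate since the substitutions replace variables other than $w_{\pi_{\tau_k}}$ and therefore leave the monomial $\mu_{\beta,\lambda}w_{\pi_{\tau_k}}$ untouched. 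A secondary subtlety is the relaxation remark preceding the theorem: if the given orientation only satisfies the weakened form of (S1) one first normalises it to a genuine solution by inverting all but one out-edge at each triple, after checking (S2) and (S3) are preserved; I would dispatch this at the outset so that the elimination argument applies to an honest solution.
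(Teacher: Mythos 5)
Your proposal is correct and follows essentially the same strategy as the paper's proof: both exploit (S1) (simple linkedness gives a nonzero linear occurrence of $w_{\pi_\tau}$ in $E_\beta(\tau)$ and nowhere else), (S2) (the eliminated pairs are pairwise distinct), and (S3) (acyclicity makes the successive elimination well-founded), so that each equation determines exactly one pair coordinate and the cell is the graph of a polynomial map over the remaining free coordinates. The only difference is presentational — the paper organizes the elimination as an induction over predecessor-closed subquivers whose sinks are pairs, whereas you run a forward substitution along a topological ordering of the triples — and both yield the same dimension count.
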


\begin{proof}
 Fix a solution for $\Sigma_\beta$. We can consider $\Sigma_\beta$ as a quiver and adopt the terminology of quivers. By (EV), every $\beta$-relevant triple connects to at least two $\beta$-relevant pairs. Since there is precisely one arrow pointing away from each $\beta$-relevant triple, all sinks and sources of $\Sigma_\beta$ are relevant pairs $(i,j)$. 

 Let $\Delta$ be the set of all full subquivers $\Gamma$ of $\Sigma_\beta$ whose sinks are relative pairs and that contain all \emph{predecessors}, i.e.\ all vertices of $\Sigma_\beta$ that are the start of an arrow in $\Sigma_\beta$ with target in $\Gamma$. The set $\Delta$ is partially ordered by inclusion.

 A subquiver $\Gamma$ in $\Delta$ satisfies the following properties. All sources of $\Gamma$ are also sources in $\Sigma_\beta$ and thus, in particular, relevant pairs. Every neighbour $(i,j)$ of a relevant triple $(v,t,s)$ in $\Gamma$ is also in $\Gamma$ since otherwise $(v,t,s)$ would be a sink or it would miss a predecessor. If all neighbours of a relevant triple $(v,t,s)$ are in $\Gamma$, then $(v,t,s)$ is the predecessor of some neighbour $(i,j)$ by (S1), which implies that $(v,t,s)$ is also in $\Gamma$. 

 For a non-empty $\Gamma\in\Delta$, we define $C(\Gamma)$ as the affine variety with coordinates $w_{i,j}$ for relevant pairs $(i,j)$ in $\Gamma$ and defining equations $E_\beta(v,t,s)=0$ for relevant triples $(v,t,s)$ in $\Gamma$. For the empty subgraph $\Gamma_0$, we define $C(\Gamma_0)$ as a point. Note that all variables of $E_\beta(v,t,s)$ are indexed by $\beta$-relevant pairs in $\Gamma$ because $\Gamma$ contains all neighbours of $(v,t,s)$.

 We will show by induction over $\Delta$ that $C(\Gamma)$ is an affine space of dimension
 \[
  \delta(\Gamma) \ := %\quad \sum_{(i,j)\in\Vertex\Gamma} \ \delta_\Gamma(i,j) \quad = 
   \ \#\{\,\beta\text{-relevant pairs in }\Gamma\,\} \ - \ \#\{\,\beta\text{-relevant triples in }\Gamma\,\}.
 \]
 Since $\Sigma_\beta$ is the maximal element of $\Delta$ and $C_\beta^M=C(\Sigma_\beta)$, this induction implies the claim of the theorem. 

 The minimal element of $\Delta$ is empty subquiver $\Gamma_0$ for which our claim is trivially satisfied. This establishes the base case of the induction.

 Let $\Gamma$ be a non-empty subquiver in $\Delta$. Since $\Sigma_\beta$ does not contain any oriented cycle, $\Gamma$ must have a sink $(i,j)$. Let $\Gamma'$ be the maximal subgraph of $\Gamma$ in $\Delta$ that does not contain $(i,j)$. By the inductive hypothesis, $C(\Gamma')$ is an affine space of dimension $\delta(\Gamma')$.

 If $(i,j)$ does not have a neighbour in $\Gamma$, then $\Vertex\Gamma=\Vertex\Gamma'\cup\{(i,j)\}$ and $w_{i,j}$ occurs in none of the polynomials $E_\beta(v,t,s)$ with $(v,t,s)$ in $\Gamma$. Therefore $C(\Gamma)\simeq C(\Gamma')\times\A^1$ is an affine space of dimension
 \[
  \dim C(\Gamma) \ = \ \dim C(\Gamma') +1 \ = \ \delta(\Gamma')+1 \ = \ \delta(\Gamma),
 \]
 which establishes the induction step in case that $(i,j)$ is an isolated vertex in $\Gamma$.

 If $(i,j)$ has a neighbour $(v,t,s)$ in $\Gamma$, which is a predecessor of $(i,j)$ by our assumption that $(i,j)$ is a sink, then this neighbour is unique by (S2) of Definition \ref{df: solvable beta state}. Therefore $\Vertex\Gamma=\Vertex\Gamma'\cup\{(i,j),(v,t,s)\}$. This means that $w_{i,j}$ is uniquely determined by $E_\beta(v,t,s)=0$ and the values of all $w_{k,l}$ with $(k,l)$ in $\Gamma'$. Therefore $C(\Gamma)=C(\Gamma')$ is an affine space of dimension $\delta(\Gamma')$. Since $\Gamma$ differs from $\Gamma'$ by exactly one pair and one triple, we have $\delta(\Gamma)=\delta(\Gamma')$.

 This establishes the induction step in case that $(i,j)$ has a neighbour in $\Gamma$ and finishes the proof of the theorem.
\end{proof}

The important consequence for Schubert decompositions is the following.

\begin{cor} \label{cor: decomposition into affine spaces if all beta-states are contradictory or solvable}
 Let $\ue$ be a dimension vector. If for all $\beta\subset\cB$ of type $\ue$, $\Sigma_\beta$ is either contradictory or solvable, then $\Gr_\ue(M)=\coprod C_\beta^M$ is a decomposition into affine spaces, and $C_\beta^M$ is empty if and only if $\Sigma_\beta$ is contradictory.
\end{cor}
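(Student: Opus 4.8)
The plan is to combine the two main structural results that precede this corollary, namely Lemma~\ref{lemma: contradictory beta have empty schubert cells} and Theorem~\ref{thm: solvable beta-state implies affine schubert cell}, and to package them via the Schubert decomposition. First I would recall that for a fixed dimension vector $\ue$, the Schubert decomposition reads $\Gr_\ue(M)=\coprod_{\beta} C_\beta^M$ where $\beta$ ranges over subsets of $\cB$ of type $\ue$. This is exactly the setup from section~\ref{subsection: schubert decompositions in detail}, so no new input is needed there; it merely provides the partition whose pieces we must understand.

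Next I would run the case distinction dictated by the hypothesis. Fix $\beta\subset\cB$ of type $\ue$. If $\Sigma_\beta$ is contradictory, then $\Ev(f_\beta)=\emptyset$ and Lemma~\ref{lemma: contradictory beta have empty schubert cells} immediately gives $C_\beta^M=\emptyset$. If, on the other hand, $\Sigma_\beta$ is solvable, then in particular it is not contradictory, so Theorem~\ref{thm: solvable beta-state implies affine schubert cell} applies and tells us that $C_\beta^M$ is an affine space of dimension $\#\{\beta\text{-relevant pairs}\}-\#\{\beta\text{-relevant triples}\}$. Under the hypothesis of the corollary these are the only two possibilities for every $\beta$ of type $\ue$, so every Schubert cell is either empty or an affine space, which is precisely the definition of a decomposition into affine spaces given in the introduction.

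For the final ``if and only if'' statement I would argue both directions. The forward direction is the content of Lemma~\ref{lemma: contradictory beta have empty schubert cells} again: contradictory $\Sigma_\beta$ forces $C_\beta^M=\emptyset$. For the converse, suppose $C_\beta^M=\emptyset$ but $\Sigma_\beta$ is not contradictory; then by hypothesis $\Sigma_\beta$ is solvable, and Theorem~\ref{thm: solvable beta-state implies affine schubert cell} produces an affine space for $C_\beta^M$, which is non-empty (an affine space $\A^k$ is non-empty even for $k=0$, being a point). This contradiction shows that an empty cell must have a contradictory $\beta$-state, completing the equivalence.

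I do not expect any serious obstacle here: the corollary is a bookkeeping consequence of the two theorems it cites, and the only point that deserves a word of care is that a solvable $\beta$-state is automatically non-contradictory, so the dichotomy ``contradictory or solvable'' in the hypothesis is genuinely a dichotomy into the two mutually exclusive outcomes ``empty'' and ``affine space'' — in particular there is no $\beta$ that is simultaneously both, and the classification of non-empty cells by non-contradictory $\beta$-states is unambiguous. Everything else is a direct citation of results already established earlier in the excerpt.
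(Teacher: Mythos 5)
Your proof is correct and is essentially the paper's own argument: the corollary is deduced directly from Lemma \ref{lemma: contradictory beta have empty schubert cells} and Theorem \ref{thm: solvable beta-state implies affine schubert cell}, with the case distinction and the observation that a solvable $\beta$-state is by construction non-contradictory. Nothing further is needed.
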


\begin{proof}
 This follows immediately from Lemma \ref{lemma: contradictory beta have empty schubert cells} and Theorem \ref{thm: solvable beta-state implies affine schubert cell}.
\end{proof}

\begin{cor}\label{cor: trivial beta-states are solvable and have one point schubert cells}
 If $\Sigma_\beta$ is trivial, then it is solvable and $\C_\beta^M$ is a point.
\end{cor}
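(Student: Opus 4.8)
The plan is to unwind the definitions. Recall that $\Sigma_\beta$ being \emph{trivial} means that all three sets $\Vertex\Sigma_\beta$, $\Edge\Sigma_\beta$ and $\Link\Sigma_\beta$ are empty; in particular $\Sigma_\beta$ contains no triples, no pairs and no edges. Triviality presupposes that $\Ev(f_\beta)$ is non-empty, since otherwise $\Sigma_\beta$ would not even be defined; hence a trivial $\beta$-state is automatically not contradictory, and Theorem \ref{thm: solvable beta-state implies affine schubert cell} will be applicable once solvability has been established.

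First I would verify solvability directly from Definition \ref{df: solvable beta state}. A solution is an orientation of $\Edge\Sigma_\beta$ subject to (S1)--(S3). Since $\Edge\Sigma_\beta=\emptyset$, the empty orientation is the only candidate, and it is a solution: (S1) is a statement about the triples of $\Sigma_\beta$, of which there are none, hence it holds vacuously; (S2) holds because for each pair $\pi$ there are certainly at most one—in fact zero—edges oriented towards $\pi$; and (S3) holds because a graph without edges has no oriented cycles. Thus $\Sigma_\beta$ is solvable.

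For the second assertion I would invoke the Proposition stating that $C_\beta^M$ is isomorphic to the zero set of the polynomials $E_\beta(v,t,s)$ with $(v,t,s)\in\Vertex\Sigma_\beta$ inside the affine space spanned by the $w_{i,j}$ with $(i,j)\in\Vertex\Sigma_\beta$. When $\Sigma_\beta$ is trivial this affine space has no coordinates, i.e.\ it is $\A^0$, a single point, and there are no equations to impose; hence $C_\beta^M$ is a point. Alternatively, now that solvability is in hand, one may apply Theorem \ref{thm: solvable beta-state implies affine schubert cell} to conclude directly that $C_\beta^M$ is an affine space of dimension $\#\{\beta\text{-relevant pairs}\}-\#\{\beta\text{-relevant triples}\}=0-0=0$, i.e.\ a point.

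There is essentially no obstacle: the statement is a formal consequence of the definitions together with the preceding Proposition and Theorem, and the only points requiring care are the vacuous verification of (S1)--(S3) and the observation that the ambient affine space of a trivial $\beta$-state is $\A^0$. One may also note that this corollary is precisely the degenerate base case at the bottom of the induction in the proof of Theorem \ref{thm: solvable beta-state implies affine schubert cell} (the empty subquiver $\Gamma_0$), so no new idea is needed.
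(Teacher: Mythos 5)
Your proposal is correct and follows the paper's own argument: the paper likewise observes that a trivial $\beta$-state is (vacuously) solvable and then applies Theorem \ref{thm: solvable beta-state implies affine schubert cell} to get an affine space of dimension $0-0=0$. Your extra detail on the vacuous verification of (S1)--(S3) and the alternative via the Proposition is a fine elaboration of the same route.
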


\begin{proof}
 A trivial $\beta$-state is obviously solvable. Theorem \ref{thm: solvable beta-state implies affine schubert cell} yields that $C_\beta^M$ is an affine space of dimension $0$, i.e.\ a point.
\end{proof}

\begin{cor}\label{cor: schubert decompositions for thin representations}
 If $M$ is a thin representation of $Q$, then the reduced Schubert system is trivial. Thus $\Gr_\ue(M)=\coprod C_\beta^M$ is a decomposition into affine spaces for every dimension vector $\ue$. More precisely, $\Gr_\ue(M)$ is a point if there exists an extremal successor closed $\beta$ of type $\ue$, and $\Gr_\ue(M)$ is empty otherwise.
\end{cor}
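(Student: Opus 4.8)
The plan is to show that when $M$ is thin, the reduced Schubert system $\overline\Sigma(M,\cB)$ has no vertices at all, hence is trivial, and then to invoke the machinery already built. First I would unpack what ``thin'' means at the level of the coefficient quiver: since $\dim M_p \le 1$ for every vertex $p\in Q_0$, each fiber $F^{-1}(p)$ of the morphism $F\colon\Gamma\to Q$ has at most one element. Consequently, for two basis elements $i,j\in\cB$ with $F(i)=F(j)$ we must have $i=j$; this immediately forces $\Rel^2 = \{(i,i)\mid i\in\cB\}$, i.e.\ the only relevant pairs are diagonal. But the reduced Schubert system explicitly deletes all vertices of the form $(i,i)$ (this is precisely reduction step \eqref{esc2} / the first deletion rule in the list defining $\overline\Sigma$). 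So $\Vertex\overline\Sigma$ contains no relevant pairs.

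Next I would argue that $\overline\Sigma$ contains no relevant triples either. A relevant triple $(v,t,s)$ requires an arrow $(v,s',t')\in\Gamma_1$ with $s\ge s'$ and $t\le t'$; since $M$ is thin, the fibers $F^{-1}(p)$ and $F^{-1}(q)$ for $v\colon p\to q$ are each at most a point, so any such arrow $(v,s',t')$ has $s'$ and $t'$ uniquely determined, whence the only candidate triple is $(v,t',s')$ itself, and moreover this arrow is \emph{extremal} in the sense of section \ref{subsection: contradictory beta-states}: the extremality condition ``for all arrows $(v,s'',t'')\in\Gamma_1$ either $s'<s''$ or $t''<t'$'' holds vacuously because there is only one arrow over $v$ with this source/target fiber constraint (and $\Gamma$ is simple, so no repeats). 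By the second deletion rule in the construction of $\overline\Sigma$, all vertices $(v,t,s)$ coming from extremal arrows are removed. Hence $\Vertex\overline\Sigma=\emptyset$, and with no vertices there are no edges and no links, so $\overline\Sigma$ is trivial.

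Once $\overline\Sigma$ is trivial, every $\beta$-state $\Sigma_\beta$ is a subgraph of it, hence either contradictory (when $\beta$ is not extremal successor closed, by Lemma \ref{lemma: non-extremal successor closed beta have contradictory beta-states}) or trivial (when $\beta$ is extremal successor closed — the algorithm of section \ref{subsection: computing beta-states} terminates at the outset because there is nothing to process). By Corollary \ref{cor: trivial beta-states are solvable and have one point schubert cells}, a trivial $\Sigma_\beta$ is solvable with $C_\beta^M$ a point, and a contradictory one has $C_\beta^M=\emptyset$ by Lemma \ref{lemma: contradictory beta have empty schubert cells}. Corollary \ref{cor: decomposition into affine spaces if all beta-states are contradictory or solvable} then gives that $\Gr_\ue(M)=\coprod C_\beta^M$ is a decomposition into affine spaces for every $\ue$. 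Finally, to get the sharper statement: a subset $\beta$ of type $\ue$ must have $|\beta\cap\cB_p|=e_p\le 1$, and since each $\cB_p$ has at most one element, $\beta$ is uniquely determined by $\ue$ (namely $\beta=\{i\in\cB\mid e_{F(i)}=1\}$) when it exists at all, which requires $e_p\le\dim M_p$ for all $p$. So there is at most one $\beta$ of type $\ue$; if it is extremal successor closed then $\Gr_\ue(M)$ is a single point, and otherwise (or if no $\beta$ of type $\ue$ exists) it is empty.

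The only mild subtlety — the ``main obstacle'' such as it is — is verifying cleanly that every arrow of a thin coefficient quiver is extremal and that no non-diagonal relevant pairs or non-extremal relevant triples can sneak in; this is entirely a matter of chasing the fiber-cardinality bound $|F^{-1}(p)|\le 1$ through the definitions of $\Rel^2$, $\Rel^3$, and ``extremal arrow,'' and once that bookkeeping is done the result drops out of the already-established Corollaries \ref{cor: decomposition into affine spaces if all beta-states are contradictory or solvable} and \ref{cor: trivial beta-states are solvable and have one point schubert cells}.
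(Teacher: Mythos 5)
Your proof is correct and follows essentially the same route as the paper's: all arrows of a thin coefficient quiver are extremal and all relevant pairs are diagonal, so $\overline\Sigma$ is empty, every $\beta$-state is trivial or contradictory, and uniqueness of the subset $\beta$ of a given type gives the point/empty dichotomy. Your version merely spells out the fiber-cardinality bookkeeping that the paper leaves implicit.
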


\begin{proof}
 If $M$ is thin, then $(v,s,t)$ is an extremal arrow for every relevant triple $(v,t,s)$. Therefore the reduced Schubert system is empty. If $\beta$ is extremal successor closed, then $\Sigma_\beta$ is trivial and, in particular, not contradictory. This means that $\Gr_\ue(M)=\coprod C_\beta^M$ is a decomposition into affine spaces of dimension $0$. Since there is a unique subset $\beta$ of $\cB$ for every type $\ue$, the last claim follows.
\end{proof}

\begin{ex}\label{ex: solvable beta-states for  D_4}
 Let $Q$, $M$, $\cB$ and $\Gamma$ be as in Example \ref{ex: complete schubert system for D_4}. In Example \ref{ex: beta-states for D_4}, we calculated some $\beta$-states of the Schubert system. The $\{4\}$-state consists of the relevant pair $(2,4)$ and no edges. This means that the $\{4\}$ is solvable and $C_{\{4\}}^M\simeq\A^1$. The $\{3,4\}$-state is trivial and thus solvable with $C^M_{\{3,4\}}=\A^0$.
\end{ex}

%%%%%%%%%%%%%%%%%%%%%%%%%%%%%%%%%%%%%%%%%%%%%%%%%%%%%%%%%%%%%%%%%%%%%%%%%%%%%%%%%%%%%%%%%%%%%%%%%%%%%%%%%%%%%%%%%%%%%%%%%%%%%%%%%%%%%%%%%%%%%%%%%%%%%%%%%%%%%%%%%%%%%%%%%%%%%%%%%%%%%%%%%%%%%%%%%%%%%%
\subsection{Extremal edges}\label{subsection: extremal edges}

We say that $\overline\Sigma$ is \emph{totally solvable} if every $\beta$-state is either contradictory or solvable. By Corollary \ref{cor: decomposition into affine spaces if all beta-states are contradictory or solvable}, this implies that the Schubert decomposition $\Gr_\ue(M)=\coprod C_\beta^M$ is a decomposition into affine spaces for every dimension vector $\ue$, and that $C_\beta^M$ is empty if and only if $\Sigma_\beta$ is contradictory.

In general, a solution for the (reduced) Schubert system does not restrict to a solution for all of its non-contradictory $\beta$-states. This means that the (reduced) Schubert system can be solvable, but not totally solvable. It also happens that the (reduced) Schubert system is not solvable, but totally solvable. This is, for instance, the case for preprojective representations of type $\widetilde D_n$, see section \ref{section: Schubert decompositions for type D_n}.

In this section, we present a condition that implies that a solution of the (reduced) Schubert system yields a solution for every non-contradictory $\beta$-state.

\begin{df}
 An edge $\{(v,t,s),(i,j)\}$ of $\overline\Sigma$ is \emph{extremal} if it is simply linked and if either $j=s$ and $(v,i,t)$ is an extremal arrow of $\Gamma$ or $i=t$ and $(v,s,j)$ is an extremal arrow of $\Gamma$.
\end{df}
 
The following pictures illustrate the two situation of an extremal edge $\{(v,t,s),(i,j)\}$ of $\overline\Sigma$ where we draw, as usual, vertices in the same fibre of $F:\Gamma\to Q$ on top of each other:
 \[
 \beginpgfgraphicnamed{fig55}
  \begin{tikzpicture}[>=latex,baseline=(current bounding box.center)]
   \matrix (m) [matrix of math nodes, row sep=1.0em, column sep=4.0em, text height=1ex, text depth=0ex]
    { i & t  \\                 % row 1
     j=s&    \\   };            % row 2
    \path[->,font=\scriptsize]
    (m-1-1) edge node[above] {$v$} node[below] {extremal} (m-1-2);
  \end{tikzpicture}
 \endpgfgraphicnamed
\hspace{2cm} \text{or} \hspace{2cm}
 \beginpgfgraphicnamed{fig56}
  \begin{tikzpicture}[>=latex,baseline=(current bounding box.center)]
   \matrix (m) [matrix of math nodes, row sep=1.0em, column sep=4.0em, text height=1ex, text depth=0ex]
    {   & t=i  \\                 % row 1
      s &  j   \\   };            % row 2
    \path[->,font=\scriptsize]
    (m-2-1) edge node[above] {$v$} node[below] {extremal} (m-2-2);
  \end{tikzpicture}
 \endpgfgraphicnamed
 \] 
\begin{lemma}\label{lemma: properties of extremal edges}
 The extremal edges of $\overline\Sigma$ satisfy the following properties.
 \begin{enumerate}
  \item\label{extr1} Any vertex $(v,t,s)$ of $\overline\Sigma$ is the end vertex of at most two extremal edges.
  \item\label{extr2} Assume that $\Sigma_\beta$ is not contradictory. If $\{(v,t,s),(i,j)\}$ is an extremal edge of $\overline\Sigma$ and $(v,t,s)$ is $\beta$-relevant, then $i\notin\beta$ and $j\in\beta$.
 \end{enumerate}
\end{lemma}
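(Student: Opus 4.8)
The statement has two parts. For part \eqref{extr1}, I would argue directly from the definition. An extremal edge $\{(v,t,s),(i,j)\}$ ending at $(v,t,s)$ comes in exactly the two shapes depicted: either $j=s$ with $(v,i,t)$ an extremal arrow of $\Gamma$, or $i=t$ with $(v,s,j)$ an extremal arrow of $\Gamma$. In the first case the pair $(i,j)=(i,s)$ is forced to be the unique relevant pair with second coordinate $s$ whose first coordinate $i$ is the source of an extremal arrow of $\Gamma$ with target $t$; since an extremal arrow $(v,i,t)$ is uniquely determined by $v$ and $t$ (by the definition of extremal: for any other arrow $(v,i',t')\in\Gamma_1$ one has $i<i'$ or $t'<t$, so $t$ pins down $i$ and hence the arrow), there is at most one such edge. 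Symmetrically in the second case. Thus $(v,t,s)$ is the end vertex of at most one extremal edge of each of the two shapes, giving at most two in total.

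For part \eqref{extr2}, assume $\Sigma_\beta$ is not contradictory, let $\{(v,t,s),(i,j)\}$ be an extremal edge, and suppose $(v,t,s)$ is $\beta$-relevant. The key input is that the initial steps of the algorithm (steps \eqref{step2}, \eqref{step3}) mark $(v,t,s)$ as $\beta$-trivial whenever $t\in\beta$ or $s\notin\beta$; since $(v,t,s)$ survives as $\beta$-relevant, we must have $t\notin\beta$ and $s\in\beta$. Now consider the two shapes. If $i=t$, then $i=t\notin\beta$ is immediate, and $j$ satisfies that $(v,s,j)$ is an extremal arrow with $s\in\beta$; if $j\notin\beta$ then $(v,s,j)$ would be an extremal arrow of $\Gamma$ witnessing that $\beta$ is not extremal successor closed, hence $\Sigma_\beta$ contradictory by Lemma \ref{lemma: non-extremal successor closed beta have contradictory beta-states}, contrary to assumption; so $j\in\beta$. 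If instead $j=s$, then $j=s\in\beta$ is immediate, and $(v,i,t)$ is an extremal arrow with target $t\notin\beta$; if $i\in\beta$ then again $\beta$ fails to be extremal successor closed (the extremal arrow $(v,i,t)$ has source in $\beta$ but target outside $\beta$), so $\Sigma_\beta$ would be contradictory — contradiction. Hence $i\notin\beta$. In both shapes we conclude $i\notin\beta$ and $j\in\beta$.

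I expect part \eqref{extr1} to be the only place requiring genuine care, because one must make precise the claim that an extremal arrow of $\Gamma$ is determined by its image $v$ together with the outer endpoint ($t$ in the first shape, $s$ in the second). This follows from unwinding the definition of \emph{extremal arrow} — that for every other arrow $(v,s',t')\in\Gamma_1$ one has $s<s'$ or $t'<t$ — which forces uniqueness once one endpoint and the label $v$ are fixed; one should also note that the edge of $\overline\Sigma$ being \emph{simply linked} is part of the definition of extremal edge and plays no role in the counting beyond ruling out parallel links. Part \eqref{extr2} is then a routine translation of "$\beta$-relevant triple $\Rightarrow$ $t\notin\beta$, $s\in\beta$" together with a direct appeal to Lemma \ref{lemma: non-extremal successor closed beta have contradictory beta-states}, so it presents no real obstacle.
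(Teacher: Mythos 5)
Your proof is correct and follows essentially the same route as the paper: part \eqref{extr1} by observing that an extremal arrow of $\Gamma$ with label $v$ is determined by its target $t$ (resp.\ source $s$), and part \eqref{extr2} by combining ``$\beta$-relevant $\Rightarrow t\notin\beta,\ s\in\beta$'' with extremal successor closedness of $\beta$. The only difference is that you spell out the uniqueness argument for extremal arrows, which the paper states without detail.
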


\begin{proof}
 For a relevant triple $(v,t,s)$, there is at most one $i\in\Gamma_0$ such that $(v,i,t)$ is an extremal arrow of $\Gamma$, and there is at most one $j\in\Gamma_0$ such that $(v,s,j)$ is an extremal arrow in $\Gamma$. Hence claim \eqref{extr1} of the lemma.

 If $\Sigma_\beta$ is not contradictory, then $\beta$ is extremal successor closed. If $(v,t,s)$ is $\beta$-relevant, then $t\notin\beta$ and $s\in\beta$. In either case, $j=s$ and $(v,i,t)$ is extremal or $i=t$ and $(v,s,j)$ is extremal, we have $i\notin\beta$ and $j\in\beta$. Hence claim \eqref{extr2} of the lemma.
\end{proof}

\begin{rem}

 It is in general not true that $(i,j)$ is $\beta$-relevant if $(v,t,s)$ is $\beta$-relevant and $\{(v,t,s),(i,j)\}$ is an extremal edge. In Example \ref{ex: Schubert system with extremal solution}, we provide a counterexample.
\end{rem}

\begin{df}
 Let $\Gamma$ be a full subsystem of $\overline\Sigma$. An \emph{extremal solution for $\Gamma$} is an orientation of the edges of $\Gamma$ such that 
 \begin{enumerate}
  \item[(ES1)] for every relevant triple $(v,t,s)$ in $\Gamma$, there exists precisely one edge $\{(v,t,s),(i,j)\}$ in $\Gamma$ that is oriented away from $(v,t,s)$ and this edge is extremal;
  \item[(ES2)] for every relevant pair $(i,j)$ in $\Gamma$, there exists at most one edge $\{(v,t,s),(i,j)\}$ in $\Gamma$ that is oriented towards $(i,j)$;
  \item[(ES3)] $\Gamma$ is without oriented cycles.
 \end{enumerate}
\end{df}

\begin{prop}\label{prop: extremal solutions of the reduced schubert system}
 If $\overline\Sigma$ has an extremal solution, then $\overline\Sigma$ is totally solvable.
\end{prop}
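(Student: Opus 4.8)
The plan is to fix one extremal solution $\mathcal{O}$ of $\overline\Sigma$ and transport it to every non-contradictory $\beta$-state, repairing the one axiom that is not inherited for free. First the reductions: let $\beta\subset\cB$ be of arbitrary type. If $\beta$ is not extremal successor closed, then $\Sigma_\beta$ is contradictory by Lemma \ref{lemma: non-extremal successor closed beta have contradictory beta-states} and there is nothing to prove; so assume $\beta$ is extremal successor closed, and assume moreover that $\Sigma_\beta$ is not contradictory, so that $\ev_\beta$ and $\Sigma_\beta$ are defined. Since $\Edge\Sigma_\beta\subset\Edge\overline\Sigma$, the orientation $\mathcal{O}$ restricts to an orientation of $\Sigma_\beta$, and conditions (S2) and (S3) are preserved under this restriction: an oriented cycle in $\Sigma_\beta$ would be one in $\overline\Sigma$, and a pair with two incoming edges in $\Sigma_\beta$ would have two in $\overline\Sigma$. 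Hence the entire problem is to arrange (S1) for $\Sigma_\beta$.

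Next I analyse (S1) at a $\beta$-relevant triple $\tau=(v,t,s)$. By (ES1) there is a unique edge $e_\tau=\{\tau,(i,j)\}$ of $\overline\Sigma$ oriented away from $\tau$, and it is extremal, hence simply linked; by Lemma \ref{lemma: properties of extremal edges}\eqref{extr2} we have $i\notin\beta$ and $j\in\beta$, so in particular $f_\beta$ is undefined at $(i,j)$. \emph{Good case:} if $(i,j)$ is $\beta$-relevant, then since $e_\tau$ is the leg of the unique link $\bigl((v,t,s),\{(i,j)\}\bigr)$ of $\overline\Sigma$, no cancellation or merging of links occurs at $e_\tau$ when forming $\Sigma_\beta$; thus $e_\tau\in\Edge\Sigma_\beta$, it is the unique outgoing edge of $\tau$ in the restricted orientation, and it is again simply linked in $\Sigma_\beta$ with nonzero weight, so (S1) holds at $\tau$. \emph{Bad case:} if $(i,j)$ is $\beta$-trivial, then $e_\tau$ is deleted and, since in $\mathcal{O}$ every other edge at $\tau$ points towards $\tau$, the triple $\tau$ has no outgoing edge in the restricted orientation; this case genuinely occurs, as the remark preceding the definition of extremal solution and Example \ref{ex: Schubert system with extremal solution} show.

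To handle the bad case I plan to reroute. As in the proof of Theorem \ref{thm: solvable beta-state implies affine schubert cell}, property (EV) forces every $\beta$-relevant triple to have at least two $\beta$-relevant neighbours, so $\tau$ still has $\beta$-relevant neighbours available. For each bad-case triple $\tau$ I want to select a $\beta$-relevant neighbour $(k,l)$ with $\{\tau,(k,l)\}$ simply linked in $\Sigma_\beta$, reverse that edge so that it points away from $\tau$, and perform these reversals compatibly over all bad-case triples so that (S2) and (S3) still hold. Once this is done, every triple of $\Sigma_\beta$ has at least one outgoing edge, one of which is simply linked, and by the relaxation of (S1) recorded after Definition \ref{df: solvable beta state} such an orientation can be converted into a genuine solution. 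This shows $\Sigma_\beta$ is solvable and, $\beta$ being arbitrary, that $\overline\Sigma$ is totally solvable.

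The main obstacle is the coherence of this rerouting: a reversed edge could create an oriented cycle or hand some pair a second incoming edge. The structural input I would use is the way $(i,j)$ became $\beta$-trivial: it is evaluated at some application of step \eqref{step7} of the $\beta$-state algorithm through a $\beta$-trivial triple $\tau_1$, and then in $\mathcal{O}$ one has $\tau\to(i,j)\to\tau_1$ (the edge $\{\tau_1,(i,j)\}$ must point away from $(i,j)$ by (S2) at $(i,j)$, since $e_\tau$ already points into it). Iterating this produces chains of $\beta$-trivial triples threaded along deleted edges, and the reorientation should push $\tau$ through such a chain to a surviving $\beta$-relevant pair at its far end, using the acyclicity of $\mathcal{O}$ together with Lemma \ref{lemma: properties of extremal edges}\eqref{extr1} (each triple bounds at most two extremal edges) to keep the choices conflict-free. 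Verifying that the rerouted orientation still satisfies (S2) and (S3) is the step I expect to require the most care.
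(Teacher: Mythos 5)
Your reduction to (S1) and your treatment of the good case agree with the paper's proof, but the pivot to a ``rerouting'' procedure opens a genuine gap, in two respects.

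First, the bad case you set out to repair does not occur, and the evidence you cite for it is a misreading. The remark before the definition of an extremal solution and Example \ref{ex: Schubert system with extremal solution} only exhibit an extremal edge $\{(b,2,6),(1,6)\}$ that is $\beta$-trivial while its triple is $\beta$-relevant; in the extremal solution of that example this edge is oriented \emph{towards} the triple, so it is an incoming edge and its disappearance is harmless for (S1). What must be shown --- and what the paper's proof shows --- is that the unique \emph{outgoing} edge $e_\tau=\{\tau,\pi_\tau\}$ of a $\beta$-relevant triple $\tau$ always survives into $\Sigma_\beta$. The reason is precisely the observation you make in your last paragraph but do not exploit: by (ES2), $\tau$ is the \emph{only} predecessor of $\pi_\tau$. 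The pair $\pi_\tau$ can become $\beta$-trivial only in step \eqref{step2}, which is excluded by Lemma \ref{lemma: properties of extremal edges}~\eqref{extr2}, or in step \eqref{step7} applied to a triple $\tau_1$ whose unique undetermined neighbour is $\pi_\tau$; an induction along the run of the algorithm shows that such an edge $\{\tau_1,\pi_\tau\}$ must be the outgoing extremal edge of $\tau_1$ (otherwise the target of $\tau_1$'s own outgoing edge would have been evaluated earlier, necessarily through $\tau_1$ itself, making $\tau_1$ already trivial), whence $\tau_1=\tau$ by (ES2) --- but step \eqref{step7} kills $\tau_1$ together with $\pi_\tau$, contradicting the $\beta$-relevance of $\tau$. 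So the restriction of the extremal solution already satisfies (S1), and no rerouting is needed.

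Second, even granting your premise, the proposal does not prove the proposition: the rerouting is announced (``I plan to reroute'', ``the reorientation should push $\tau$ through such a chain'') but its compatibility with (S2) and (S3) --- which you correctly identify as the crux --- is left unverified. Reversing an edge at a pair that already receives an edge breaks (S2), and reversing edges along chains can close oriented cycles; nothing in the proposal rules either out. Since one must in any case run the algorithmic induction above to control which pairs get evaluated and by which triples, the detour through rerouting buys nothing: the direct argument both closes the gap and makes the rerouting superfluous.
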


\begin{proof}
 This proof follows a similar strategy as the proof of Theorem \ref{thm: solvable beta-state implies affine schubert cell}. We will make use of the same conclusions without repeating them in detail.
 Fix an extremal solution of $\overline\Sigma$. Then all sinks and sources of $\overline\Sigma$ are relevant pairs.

 Let $\beta$ be a subset of $\cB$ such that $\Sigma_\beta$ is not contradictory. Let $\Delta$ be the partially ordered set of all full subgraphs $\Gamma$ of $\Sigma_\beta$ that are closed under predecessors and whose sinks are relevant pairs. We will prove by induction on $\Delta$ that every $\Gamma\in\Delta$ satisfies the properties (S1)--(S3) of a solvable $\beta$-state w.r.t.\ the orientation coming from $\overline\Sigma$.

 The empty subgraph is the minimal element of $\Delta$ and trivially satisfies (S1)--(S3).

 Let $\Gamma\in\Delta$ be a non-empty subgraph of $\Sigma_\beta$. Properties (S2) and (S3) clearly hold for $\Gamma$. To verify (S1), observe that $\Gamma$ contains a sink $(i,j)$ by (S3). Let $\Gamma'$ be a maximal element of $\Delta$ that is contained in $\Gamma-\{(i,j)\}$. By the induction hypothesis, $\Gamma'$ satisfies (S1). If $\{i,j\}$ is an isolated vertex of $\Gamma$, then $\Gamma'=\Gamma-\{(i,j)\}$ and (S1) is satisfied by $\Gamma$ as well. 

 If $\{i,j\}$ is not an isolated vertex of $\Gamma$, then there exists a unique edge $\{(v,t,s),(i,j)\}$ with end vertex $(i,j)$ in $\Gamma$ since $(i,j)$ is a sink and by (S2). By our assumptions, $\{(v,t,s),(i,j)\}$ is the unique edge pointing away from $(v,t,s)$, which is an extremal edge. This means that $(v,t,s)$ is not the predecessor of any vertex other than $(i,j)$. Therefore $\Gamma'$ is the full subgraph on the vertex set $\Vertex\Gamma-\{(v,t,s),(i,j)\}$. Also in this case, it follows that $\Gamma$ satisfies (S1).

 Thus every element $\Gamma$ of $\Delta$ satisfies (S1)--(S3). Once we have shown that $\Sigma_\beta$ is an element of $\Delta$, the proposition follows. Since $\Sigma_\beta$ is closed under predecessors, we are left with showing that all sinks of $\Sigma_\beta$ are relevant pairs.
 
 This can be verified along the algorithm computing $\Sigma_\beta$, cf.\ section \ref{subsection: computing beta-states}. The basic observation is that one can apply property (EV) only if there is a relevant triple $(v,t,s)$ with a unique neighbour $(i,j)$ for which $\ev_\beta(i,j)$ is not yet defined. With the help of Lemma \ref{lemma: properties of extremal edges} \eqref{extr2} and an inductive argument, it can be seen that the edge $\{(v,t,s),(i,j)\}$ is an extremal edge oriented towards $(i,j)$. We forgo to explain this induction in detail.

 Given such an edge, $(i,j)$ is declared $\beta$-trivial at some point of the algorithm (by step \eqref{step2} or step \eqref{step7}), only if the triple $(v,t,s)$ is also declared $\beta$-trivial (by step \eqref{step3} or step \eqref{step7}, respectively). This shows that we encounter at no time of the algorithm a sink that is a triple. Therefore all sinks of $\Sigma_\beta$ are relevant pairs, which concludes the proof.
\end{proof}

\begin{rem}
 With these results at hand, we can understand the approach of \cite{L13} as follows. Under certain conditions on the coefficient quiver, one can define a function that associates with every relevant triple a relevant pair that is adjacent to this triple, and this functions indicates a way to solve the defining equations of the Schubert system successively in linear terms. 

 It turns out that for the chosen function in \cite{L13}, every edge between a relevant triple and the associated relevant pair is extremal. The orientation that is given by orientating these extremal edges from the relevant triple to its associated relevant pair, and all other edges towards the relevant triple is an extremal solution for $\overline\Sigma$. Therefore Proposition \ref{prop: extremal solutions of the reduced schubert system} reproduces the main result of \cite{L13}.
\end{rem}

\begin{ex}\label{ex: Schubert system with extremal solution}
 The following is an example of a reduced Schubert system with an extremal solution. Let $Q$ be the equioriented quiver $\bullet\stackrel{a}\to \bullet \stackrel{b}\to \bullet$ of type $A_3$, and $M$ and $\cB$ be given by the coefficient quiver
\[
 \beginpgfgraphicnamed{fig59}
  \begin{tikzpicture}[>=latex,baseline=(current bounding box.center)]
   \matrix (m) [matrix of math nodes, row sep=-0.5em, column sep=4.0em, text height=1ex, text depth=0ex]
% column  1     2     3     4     
    {        &  1  &  2    \\    % row 1
             &  \  &       \\    % row 2
             &  3  &  4    \\    % row 2
          5  &     &       \\    % row 2
             &  6  &  7    \\    % row 3
};
    \path[->,font=\scriptsize]
    (m-4-1) edge node[above] {$a$} (m-3-2);
    \path[->,very thick,gray,font=\scriptsize]
    (m-4-1) edge node[below,black] {$a$} (m-5-2)
    (m-1-2) edge node[above,black] {$b$} (m-1-3)
    (m-3-2) edge node[above,black] {$b$} (m-3-3)
    (m-5-2) edge node[above,black] {$b$} (m-5-3);
  \end{tikzpicture}
 \endpgfgraphicnamed
\]
whose extremal edges are illustrated bold and grey. The defining polynomials of the reduced Schubert system are
\begin{align*}
 \overline E(a,1,5) \ &= \ w_{1,3}+w_{1,6},   & \overline E(a,3,5) \ &= \ w_{3,6}-1,                       \\
 \overline E(b,2,3) \ &= \ w_{1,3}-w_{2,4},   & \overline E(b,2,6) \ &= \ w_{2,7}-w_{1,6}+w_{2,4}w_{3,6},  \\
 \overline E(b,4,6) \ &= \ w_{4,7}-w_{3,6}.  \\
\end{align*}
With the only exception of the non-extremal edge $\{(a,1,5),(1,3)\}$, the extremal edges of the reduced Schubert system correspond to the linear terms in the above polynomials, i.e.\ if $w_{i,j}$ appears as a linear term in $\overline E(v,t,s)$, then $\{(v,t,s),(i,j)\}$ is an extremal edge of $\overline\Sigma$. 

In the following illustrations of the reduced Schubert system $\overline\Sigma$, we draw extremal edges bold and grey. On the right hand side, we indicate an extremal solution by arrow symbols attached to the edges.
\[
 \beginpgfgraphicnamed{fig57}
  \begin{tikzpicture}[>=latex]
   \matrix (m) [matrix of math nodes, row sep=0.5em, column sep=1.0em, text height=1.5ex, text depth=0.5ex]
% column       1                     2                       3                      4                       5
    {   \overline\Sigma     &        \              &                      &\node[triple](b46){b};&\node[pair](47){};    &  4  \\   % row  1
             \              &\node[const](55){-1};  &\node[triple](a35){a};&\node[pair](36){};    &       \              &  3  \\   % row  2
      \node[triple](b23){b};&\node[pair](24){};     &       \              &\node[triple](b26){b};&\node[pair](27){};    &  2  \\   % row  3
      \node[pair](13){};    &        \              &\node[triple](a15){a};&\node[pair](16){};    &       \              &  1  \\   % row  4
             3              &        4              &       5              &       6              &       7              &     \\   % row  5
};
    \path[-,very thick,font=\scriptsize,gray]
    (b23) edge node[auto,black] {$-1$} (13)
    (b23) edge (24)
    (a35) edge (36)
    (b46) edge node[auto,black] {$-1$} (36)
    (b46) edge (47)
    (a15) edge (16)
    (b26) edge node[auto,black] {$-1$} (16)
    (b26) edge (27)
;
    \path[-,font=\scriptsize]
    (a15) edge node[auto] {} node[pos=0.2] (b26-36) {} (13)
    (b26) edge node[auto] {} node[pos=0.2] (b26-36) {} (36)
    (b26) edge node[auto] {} node[pos=0.1] (b26-24) {} (24)
;
    \path[dotted,thick,font=\scriptsize]
    (b26-36.center) edge[bend right=45] node[above left=-2pt] {} (b26-24.center)
;
    \path[dotted,-,thick,font=\scriptsize]
    (55) edge node[right=-1pt] {} (a35)
;
  \end{tikzpicture}
 \endpgfgraphicnamed
\hspace{2cm}
 \beginpgfgraphicnamed{fig62}
  \begin{tikzpicture}[>=latex]
   \matrix (m) [matrix of math nodes, row sep=0.5em, column sep=1.0em, text height=1.5ex, text depth=0.5ex]
% column       1                     2                       3                      4                       5
    {   \overline\Sigma     &        \              &                      &\node[triple](b46){b};&\node[pair](47){};    &  4  \\   % row  1
             \              &\node[const](55){-1};  &\node[triple](a35){a};&\node[pair](36){};    &       \              &  3  \\   % row  2
      \node[triple](b23){b};&\node[pair](24){};     &       \              &\node[triple](b26){b};&\node[pair](27){};    &  2  \\   % row  3
      \node[pair](13){};    &        \              &\node[triple](a15){a};&\node[pair](16){};    &       \              &  1  \\   % row  4
             3              &        4              &       5              &       6              &       7              &     \\   % row  5
};
    \path[-,very thick,font=\scriptsize,gray]
    (b23) edge[-<-=.7] node[auto,black] {$-1$} (13)
    (b23) edge[->-=.7] (24)
    (a35) edge[->-=.7] (36)
    (b46) edge[-<-=.7] node[auto,black] {$-1$} (36)
    (b46) edge[->-=.7] (47)
    (a15) edge[->-=.7] (16)
    (b26) edge[-<-=.7] node[auto,black] {$-1$} (16)
    (b26) edge[->-=.7] (27)
;
    \path[-,font=\scriptsize]
    (a15) edge[-<-=.5] node[auto] {} node[pos=0.2] (b26-36) {} (13)
    (b26) edge[-<-=.7] node[auto] {} node[pos=0.2] (b26-36) {} (36)
    (b26) edge[-<-=.5] node[auto] {} node[pos=0.1] (b26-24) {} (24)
;
    \path[dotted,thick,font=\scriptsize]
    (b26-36.center) edge[bend right=45] node[above left=-2pt] {} (b26-24.center)
;
    \path[dotted,-,thick,font=\scriptsize]
    (55) edge node[right=-1pt] {} (a35)
;
  \end{tikzpicture}
 \endpgfgraphicnamed
\]

We compute $\Sigma_\beta$ for $\beta=\{4,5,6,7\}$. After applying the initial steps \eqref{step1}--\eqref{step6} from section \ref{subsection: computing beta-states}, we are left with the full subsystem of $\overline\Sigma$ with horizontal coordinates in $\beta$ and vertical coordinates in $\cB-\beta$. This subsystem is illustrated below on the left hand side. After applying step \eqref{step7} to the triples $(a,1,5)$ and $(a,3,5)$, we obtain $\Sigma_\beta$, as illustrated on the right hand side.
\[
 \beginpgfgraphicnamed{fig60}
  \begin{tikzpicture}[>=latex]
   \matrix (m) [matrix of math nodes, row sep=0.5em, column sep=1.0em, text height=1.5ex, text depth=0.5ex]
% column       1                     2                       3                      4                       5     
    { \node[const](55){-1};  &\node[triple](a35){a};&\node[pair](36){};    &       \              &  3  \\   % row  1
      \node[pair](24){};     &       \              &\node[triple](b26){b};&\node[pair](27){};    &  2  \\   % row  2
              \              &\node[triple](a15){a};&\node[pair](16){};    &       \              &  1  \\   % row  3
              4              &       5              &       6              &       7              &     \\   % row  4
};
    \path[-,very thick,font=\scriptsize,gray]
    (a35) edge[->-=.7] (36)
    (a15) edge[->-=.7] (16)
    (b26) edge[-<-=.7] node[auto,black] {$-1$} (16)
    (b26) edge[->-=.7] (27)
;
    \path[-,font=\scriptsize]
    (b26) edge[-<-=.7] node[auto] {} node[pos=0.2] (b26-36) {} (36)
    (b26) edge[-<-=.5] node[auto] {} node[pos=0.1] (b26-24) {} (24)
;
    \path[dotted,thick,font=\scriptsize]
    (b26-36.center) edge[bend right=45] node[above left=-2pt] {} (b26-24.center)
;
    \path[dotted,-,thick,font=\scriptsize]
    (55) edge node[right=-1pt] {} (a35)
;
  \end{tikzpicture}
 \endpgfgraphicnamed
\hspace{3cm}
 \beginpgfgraphicnamed{fig61}
  \begin{tikzpicture}[>=latex]
   \matrix (m) [matrix of math nodes, row sep=0.5em, column sep=1.0em, text height=1.5ex, text depth=0.5ex]
% column       1                     2                       3                      4                       5                     6                      7
    {     \Sigma_\beta       &        \             &        \             &       \              &  3  \\   % row  1
      \node[pair](24){};     &       \              &\node[triple](b26){b};& \node[pair](27){};   &  2  \\   % row  2
              \              &        \             &        \             &       \              &  1  \\   % row  3
              4              &       5              &       6              &       7              &     \\   % row  4
};
    \path[-,very thick,font=\scriptsize,gray]
    (b26) edge[->-=.7] (27)
;
    \path[-,font=\scriptsize]
    (b26) edge[-<-=.5] node[auto] {} node[pos=0.1] (b26-24) {} (24)
;
  \end{tikzpicture}
 \endpgfgraphicnamed
\]
 In accordance with Proposition \ref{prop: extremal solutions of the reduced schubert system}, the restriction of the extremal solution to $\Sigma_\beta$ yields a solution for $\Sigma_\beta$, which shows that $C_\beta^M\simeq\A^1$. Note, however, that the extremal edge $\{(b,2,6),(1,6)\}$ is $\beta$-trivial though $(b,2,6)$ is $\beta$-relevant.
\end{ex}

%%%%%%%%%%%%%%%%%%%%%%%%%%%%%%%%%%%%%%%%%%%%%%%%%%%%%%%%%%%%%%%%%%%%%%%%%%%%%%%%%%%%%%%%%%%%%%%%%%%%%%%%%%%%%%%%%%%%%%%%%%%%%%%%%%%%%%%%%%%%%%%%%%%%%%%%%%%%%%%%%%%%%%%%%%%%%%%%%%%%%%%%%%%%%%%%%%%%%%
\subsection{Patchwork solutions}\label{subsection: patchwork solutions}

In this section, we present a simple, but effective method to reduce the complexity of the Schubert system and its $\beta$-states to smaller parts, or patches, in order to find a solution.

Let $\Xi$ be a system and $\lambda=\{\tau,S\}$ a link of $\Xi$. The \emph{support of $\lambda$} is the subgraph of $\Xi$ with vertex set $S\cup\{\tau\}$ and edge set $\{\{\tau,\pi\}|\pi\in S\}$. A \emph{subsystem of $\Xi$} is a subgraph $\Gamma$ together with a subset of links with support in $\Gamma$. A link $\lambda$ of the subsystem has the same weight $\mu_\lambda$ as in $\Xi$. A subsystem $\Gamma$ of $\Xi$ is \emph{full} if it contains all edges with end vertices in $\Gamma$ and all links supported in $\Gamma$. Note that a full subsystem of $\Xi$ is determined by its vertex set.

\begin{df}
 A \emph{patch of $\Xi$} is a full subsystem $\Xi'$ of $\Xi$ that contains the base vertices of all links of $\Xi$ with a leg in $\Xi'$. A \emph{patchwork for $\Xi$} is a family $\{\Xi_k\}_{k\in I}$ of patches $\Xi_k$ of $\Xi$, indexed by a partially ordered set $I$, that satisfies the following properties:
 \begin{enumerate}
  \item[(P1)]\label{patch1} $\Vertex \Xi=\bigcup_{k\in I} \Vertex \Xi_k$;
  \item[(P2)]\label{patch2} $\Vertex\Xi_k\cap\Vertex \Xi_l$ does not contain any triple for $k\neq l$;
  \item[(P3)]\label{patch3} if $\tau$ is a triple in $\Xi_l$ and $\{\tau,\pi\}$ is an edge of $\Xi$, then there exists a $k\leq l$ such that $\pi$ is a pair in $\Xi_k$.
 \end{enumerate}
 A \emph{patchwork solution for $\Xi$} is a patchwork $\{\Xi_k\}$ for $\Xi$ together with a solution for each patch $\Xi_k$ that satisfies the following condition.
 \begin{enumerate}
  \item[(PS)]\label{patchworksolution} If $\pi$ is a pair in $\Xi_k\cap\Xi_l$ for some $l\in I$ and $\{\tau,\pi\}$ is an edge in $\Xi_k$ that is oriented towards $\pi$, then $k\leq l$.
 \end{enumerate}
\end{df}

In many cases, we can work with patches with empty intersections, i.e.\ $\Vertex\Xi=\coprod \Vertex\Xi_k$. In the proof of Theorem \ref{thm: the main theorem}, we make, however, use of patches that have certain relevant pairs in common (see Example \ref{ex: reduced schubert system of type d_n-tilde and its patchwork} and Figure \ref{fig34} for a concrete example). Note that two distinct patches do never have any edge in common since the intersection of their vertex sets contains only pairs. In other words, the canonical map $\coprod\Edge\Xi_k\to\Edge\Xi$ is an inclusion. 

\begin{prop}\label{prop: patchwork solutions}
 If $\Xi$ has a patchwork solution, then it is solvable.
\end{prop}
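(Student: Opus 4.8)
The plan is to combine the local solutions of the patches into a single orientation of the edges of $\Xi$ and then verify the three properties (S1)--(S3) of a solution for $\Xi$. First I would fix a patchwork solution $\{\Xi_k\}_{k\in I}$, and orient every edge $\{\tau,\pi\}$ of $\Xi$ exactly as it is oriented in the unique patch $\Xi_k$ that contains $\tau$ (such a patch exists by (P1), and it is unique among patches containing $\tau$ by (P2)). Note that this makes sense: by the definition of a patch, if $\tau\in\Xi_k$, then $\Xi_k$ contains the base vertices of all links of $\Xi$ with a leg at $\tau$, so every edge incident to $\tau$ lies in $\Xi_k$ and carries an orientation there. Since distinct patches share no edges (their vertex intersections contain only pairs), this procedure is well-defined and orients every edge of $\Xi$ exactly once.

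Next I would check (S1). Let $\tau$ be a triple of $\Xi$, lying in the patch $\Xi_k$. By (S1) for the solution of $\Xi_k$, there is precisely one edge $\{\tau,\pi\}$ of $\Xi_k$ oriented away from $\tau$, and it is simply linked in $\Xi_k$. Since $\Xi_k$ is a full subsystem containing all edges incident to $\tau$ and all links with tip $\tau$, being simply linked in $\Xi_k$ is the same as being simply linked in $\Xi$, and the edges incident to $\tau$ in $\Xi$ are exactly those in $\Xi_k$; so $\tau$ has exactly one outgoing edge in the global orientation and it is simply linked. For (S2), let $\pi$ be a pair and suppose $\{\tau,\pi\}$ and $\{\tau',\pi\}$ are two distinct edges oriented towards $\pi$, with $\tau\in\Xi_k$ and $\tau'\in\Xi_l$. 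If $k=l$, this contradicts (S2) for the solution of $\Xi_k$; if $k\neq l$, then by (PS) applied to the edge $\{\tau,\pi\}$ in $\Xi_k$ oriented towards $\pi$ (noting $\pi\in\Xi_k\cap\Xi_l$ by (P3), since $\tau'\in\Xi_l$ and $\{\tau',\pi\}$ is an edge of $\Xi$) we get $k\leq l$, and symmetrically $l\leq k$, so $k=l$, a contradiction. Hence at most one edge points towards $\pi$.

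Finally, (S3). Suppose there is an oriented cycle $v_0\to v_1\to\cdots\to v_m=v_0$ in $\Xi$. Since $\Xi$ is bipartite, the cycle alternates between pairs and triples. By (S1), each triple on the cycle has exactly one outgoing edge, and by (S2) each pair has at most one incoming edge; so the cycle is of the form $\tau_0\to\pi_0\to\tau_1\to\pi_1\to\cdots$, where each $\pi_i$ receives its unique incoming edge from $\tau_i$ and sends an edge to $\tau_{i+1}$. For each $i$, let $k(i)$ be the index with $\tau_i\in\Xi_{k(i)}$. The edge $\pi_i\to\tau_{i+1}$ is oriented in $\Xi_{k(i+1)}$, so $\pi_i$ is a pair of $\Xi_{k(i+1)}$ and also (via the edge $\tau_i\to\pi_i$) a pair of $\Xi_{k(i)}$; applying (PS) to the edge $\tau_i\to\pi_i$ in $\Xi_{k(i)}$ gives $k(i)\leq k(i+1)$. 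Running around the cycle yields $k(0)\leq k(1)\leq\cdots\leq k(0)$, hence all $k(i)$ are equal to a single value $k$, so the entire cycle lies in $\Xi_k$, contradicting (S3) for the solution of $\Xi_k$. Therefore $\Xi$ has no oriented cycle, and the global orientation is a solution for $\Xi$, proving that $\Xi$ is solvable.

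\textbf{Main obstacle.} The only delicate point is the well-definedness and consistency of gluing: one must be careful that every edge of $\Xi$ is covered by exactly one patch containing its triple endpoint, and that "simply linked" and the set of incident edges are genuinely local notions (which follows from fullness of patches and from (P2)/(P3)). Once the orientation is seen to be well-defined, properties (S1) and (S3) are immediate, while (S2) is the step where condition (PS) does the real work; verifying (S3) is the most intricate, as it requires chasing the indices $k(i)$ around a putative cycle using (PS) at each step.
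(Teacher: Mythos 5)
Your construction breaks at the very first step, where you assert that ``by the definition of a patch, if $\tau\in\Xi_k$, then \dots every edge incident to $\tau$ lies in $\Xi_k$.'' This is not what the definition gives you. A patch $\Xi'$ is only required to contain the base vertices of those links of $\Xi$ that already have a leg \emph{in} $\Xi'$, i.e.\ a leg that is an edge of $\Xi'$ (both endpoints in $\Xi'$). An edge $\{\tau,\pi\}$ with $\tau\in\Xi_k$ and $\pi\notin\Xi_k$ need not be forced into the patch: if the only links carrying this edge have no other leg inside $\Xi_k$, nothing in the definition puts $\pi$ into $\Xi_k$. Indeed, if your reading were correct, axiom (P3) would be vacuous (one could always take $k=l$), and the patchworks actually used in Section 5 do have edges of $\overline\Sigma$ contained in no patch at all — Example \ref{ex: reduced schubert system of type d_n-tilde and its patchwork} explicitly says that such edges are omitted from Figure \ref{fig34}. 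As a result your orientation is simply undefined on those edges; your verification of (S1) assumes the edges at $\tau$ in $\Xi$ coincide with those in $\Xi_k$; your (S2) argument invokes ``$\pi\in\Xi_k\cap\Xi_l$ by (P3)'', which (P3) does not assert (it only gives some $k'\le l$ with $\pi\in\Xi_{k'}$); and your (S3) index chase assumes every edge $\pi_i\to\tau_{i+1}$ of the cycle lies in the patch of $\tau_{i+1}$.

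The missing idea, which is how the paper proceeds, is to orient every edge of $\Xi$ that is contained in \emph{no} patch towards its triple endpoint. This keeps (S1) intact, since such edges add no outgoing edges at any triple. For (S2) one then observes that the combination of (PS) with this convention yields: whenever an edge $\{\tau,\pi\}$ is oriented towards $\pi$ with $\tau\in\Xi_k$ and $\pi\in\Xi_l$, one has $k\le l$; hence at most one patch can contribute an edge into $\pi$, and (S2) for that patch finishes the argument. For (S3), a putative oriented cycle cannot lie in a single patch, so it contains an edge in no patch, oriented towards its triple; (P3) then produces a strictly smaller index for the patch containing the pair endpoint, and running around the cycle gives an impossible strictly decreasing cyclic chain of indices. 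Your index-chasing idea for (S3) is in the right spirit, but it only works once the leftover edges have been given the orientation above and (P3) is used to pass between patches.
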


\begin{proof}
 Given a patchwork $\{\Xi_k\}$ for $\Xi$ together with solutions for each patch $\Xi_k$, we extend the orientation of the edges in $\coprod\Edge\Xi_k$ to $\Edge\Xi$ as follows. If $\{\tau,\pi\}$ is an edge of $\Xi$ that is not contained in any patch, then we orientate this edge towards the triple $\tau$.
 
 Since every triple is contained in some patch $\Xi_k$, property (S1) for the solution of $\Xi_k$ implies (S1) for the orientation of $\Xi$. 
 
 Axiom (PS) and the chosen orientation for all edges that are not contained in some patch imply the following property: if an edge $\{\tau,\pi\}$ of $\Xi$ is oriented towards $\pi$ where $\tau$ is in $\Xi_k$ and $\pi$ is in $\Xi_l$, then $k\leq l$. Therefore, there is only one patch $\Xi_k$ for a given pair $\pi$ that contains an edge oriented towards $\pi$. Since (S2) holds for each patch, (S2) holds for $\Xi$.
  
 We show property (S3) by contradiction. Assume $\Xi$ has an oriented cycle. By (S3) for the solution of $\Xi_k$, this cycle cannot be contained in a patch $\Xi_k$. Therefore the cycle contains an edge $\{\tau,\pi\}$ that is not contained in any patch. By the definition of the orientation, this edge is oriented towards the triple $\tau$, which is contained in some patch $\Xi_{k_0}$. By property (P3) of a patchwork, there is a $k_1\leq k_0$ such that $\pi$ is a vertex in $\Xi_{k_1}$. Since the edge in question is not contained in a $\Xi_{k_0}$, we see that $k_1<k_0$. Using this argument for every edge of the cycle that is not contained in any patch, we yield a sequence $k_0<k_n<\dotsb<k_1<k_0$, which is not possible.

 This shows that the defined orientation satisfies property (S3) of a solution and that $\Xi$ is solvable.
\end{proof}

\begin{cor} \label{cor: patchwork solution implies affine space}
 Assume $\Sigma_\beta$ is not contradictory. If $\Sigma_\beta$ has a patchwork solution, then $C_\beta^M$ is an affine space.
\end{cor}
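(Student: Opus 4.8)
The plan is to chain together the two results that immediately precede this corollary, so that the proof is essentially a two-line deduction. First I would invoke Proposition~\ref{prop: patchwork solutions}: since by hypothesis $\Sigma_\beta$ admits a patchwork solution, that proposition yields at once that $\Sigma_\beta$ is solvable. All the genuine combinatorial work — gluing the local orientations on the patches $\Xi_k$ into a single global orientation of $\Sigma_\beta$ satisfying (S1)--(S3), using axioms (P1)--(P3) and (PS) to rule out oriented cycles and double in-edges — has already been carried out there, so nothing further is required at this step.

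Second, with solvability of $\Sigma_\beta$ established, and recalling the standing assumption that $\Sigma_\beta$ is not contradictory, I would apply Theorem~\ref{thm: solvable beta-state implies affine schubert cell}. That theorem states precisely that a non-contradictory, solvable $\beta$-state forces $C_\beta^M$ to be an affine space (indeed of dimension equal to the number of $\beta$-relevant pairs minus the number of $\beta$-relevant triples), which is exactly the assertion of the corollary. If one wishes, the dimension formula can be carried along verbatim, since the patchwork structure does not change the counts of $\beta$-relevant pairs and triples.

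Since both ingredients are already in place, there is no real obstacle here; the only point worth an explicit sentence is that the hypothesis ``$\Sigma_\beta$ is not contradictory'' is genuinely needed in order to invoke Theorem~\ref{thm: solvable beta-state implies affine schubert cell}: that hypothesis is what guarantees that $\Ev(f_\beta)$ is nonempty and hence that $\ev_\beta$, and with it $\Sigma_\beta$, is defined in the first place. Thus the proof reads simply: $\Sigma_\beta$ is solvable by Proposition~\ref{prop: patchwork solutions}, and then $C_\beta^M$ is an affine space by Theorem~\ref{thm: solvable beta-state implies affine schubert cell}.
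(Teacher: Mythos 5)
Your proof is correct and is exactly the paper's argument: Proposition \ref{prop: patchwork solutions} gives solvability of $\Sigma_\beta$, and Theorem \ref{thm: solvable beta-state implies affine schubert cell} (applicable since $\Sigma_\beta$ is not contradictory) then yields that $C_\beta^M$ is an affine space. Nothing further is needed.
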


\begin{proof}
 This follows immediately from Proposition \ref{prop: patchwork solutions} and Theorem \ref{thm: solvable beta-state implies affine schubert cell}.
\end{proof}

\begin{cor}\label{cor: extremal patchwork solutions for the reduced schubert system}
 Let $\{\Xi_k\}$ be a patchwork for $\overline\Sigma$. If every patch $\Xi_k$ has an extremal solution, then $\overline\Sigma$ is totally solvable.
\end{cor}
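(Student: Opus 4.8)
The plan is to unwind what ``totally solvable'' means and reduce to a claim about one $\beta$-state at a time. By the definition in section \ref{subsection: extremal edges}, $\overline\Sigma$ is totally solvable exactly when, for every $\beta\subset\cB$, the $\beta$-state $\Sigma_\beta$ is contradictory or solvable; so I would fix a $\beta$ with $\Sigma_\beta$ \emph{not} contradictory and produce a solution of $\Sigma_\beta$. The idea is to transport the patchwork $\{\Xi_k\}$ together with its extremal solutions from $\overline\Sigma$ down to $\Sigma_\beta$ and to assemble a patchwork solution of $\Sigma_\beta$; then Proposition \ref{prop: patchwork solutions}, applied to the system $\Sigma_\beta$, gives that $\Sigma_\beta$ is solvable, and varying $\beta$ finishes the proof. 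For the transport, let $\Xi_k^\beta$ be the full subsystem of $\Sigma_\beta$ on the vertex set $\Vertex\Xi_k\cap\Vertex\Sigma_\beta$, with $I$ keeping its partial order. First I would check that $\{\Xi_k^\beta\}_{k\in I}$ is a patchwork for $\Sigma_\beta$: every link of $\Sigma_\beta$ comes from a link of $\overline\Sigma$ by the evaluations of section \ref{subsection: definition of beta-states}, so its legs and its surviving base vertices are legs and base vertices of a link of $\overline\Sigma$, whence $\Xi_k^\beta$ is a patch of $\Sigma_\beta$ because $\Xi_k$ is a patch of $\overline\Sigma$; properties (P1)--(P3) for $\{\Xi_k^\beta\}$ then follow from (P1)--(P3) for $\{\Xi_k\}$ by intersecting everything with $\Vertex\Sigma_\beta$.

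The second step is to restrict the edge orientation of the extremal solution of $\Xi_k$ to $\Xi_k^\beta$ and verify that the result is still a solution of $\Xi_k^\beta$ (in fact an extremal one). I would model this on the proof of Proposition \ref{prop: extremal solutions of the reduced schubert system}, carried out patch by patch: its two essential ingredients are Lemma \ref{lemma: properties of extremal edges}\eqref{extr2} --- an extremal edge at a $\beta$-relevant triple $(v,t,s)$ meets a pair $(i,j)$ with $i\notin\beta$ and $j\in\beta$ --- and the observation, visible from the algorithm of section \ref{subsection: computing beta-states}, that during the computation of $\Sigma_\beta$ a relevant pair is declared $\beta$-trivial only together with the triple whose distinguished extremal out-edge points to it. Hence the distinguished out-edge of every $\beta$-relevant triple survives in $\Sigma_\beta$, and (ES1)--(ES3) for $\Xi_k$ pass to (ES1)--(ES3) for $\Xi_k^\beta$; in particular each $\Xi_k^\beta$ acquires a solution.

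The step I expect to be the main obstacle is verifying condition (PS) for this family of restricted solutions, i.e.\ that if a surviving pair $\pi$ lies in $\Xi_k^\beta\cap\Xi_l^\beta$ and an edge of $\Xi_k^\beta$ is oriented towards $\pi$, then $k\le l$. In an extremal solution an edge oriented towards a pair is simply linked and extremal, so it is attached to a triple $\tau$ lying in the unique patch that contains $\tau$; combined with property (P3) of the patchwork, which ties the index of a patch containing a triple to the indices of patches containing its neighbouring pairs, this should force $k$ to be minimal among the indices of patches containing $\pi$ --- precisely (PS). Controlling how pairs that are shared by two patches behave under passage to the $\beta$-state, so that (P3) and the extremality in Lemma \ref{lemma: properties of extremal edges} can be applied together, is the delicate bookkeeping here. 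With (PS) established, $\{\Xi_k^\beta\}$ together with the restricted solutions is a patchwork solution of $\Sigma_\beta$, so $\Sigma_\beta$ is solvable by Proposition \ref{prop: patchwork solutions}; since this works for every $\beta$ with a non-contradictory $\beta$-state, $\overline\Sigma$ is totally solvable. A more direct-looking alternative --- glue the patch extremal solutions into a single extremal solution of $\overline\Sigma$ and quote Proposition \ref{prop: extremal solutions of the reduced schubert system} --- runs into the same compatibility check, now for (S2) of the glued orientation across different patches, so I do not expect it to shorten the argument.
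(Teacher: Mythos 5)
Your overall reduction is sound, but you have chosen the longer of the two available routes, and the route you dismiss in your last sentence is in fact the paper's proof. The paper argues in two lines: take the extremal solution on each patch and orient every edge of $\overline\Sigma$ not contained in any patch towards its triple; the same argument as in Proposition \ref{prop: patchwork solutions} shows this is an extremal solution of all of $\overline\Sigma$ (here (ES1) is inherited from the patches because extremality of an edge is a property of $\overline\Sigma$ and of the coefficient quiver, not of the patch, and the (ES2), (ES3) checks are exactly the (S2), (S3) checks of that proposition); then Proposition \ref{prop: extremal solutions of the reduced schubert system} is quoted once to pass to all $\beta$-states. This genuinely shortens the argument relative to yours: your steps 1 and 2 --- transporting the patchwork to $\Sigma_\beta$ and re-proving that every $\beta$-relevant triple keeps its distinguished extremal out-edge --- amount to redoing the induction in the proof of Proposition \ref{prop: extremal solutions of the reduced schubert system} separately inside each patch and for each $\beta$, whereas the paper performs that induction once, globally.

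The concrete gap is your derivation of (PS). Property (P3) only says that every pair adjacent to a triple of $\Xi_l$ lies in \emph{some} patch $\Xi_k$ with $k\leq l$; it does not say that $l$ is minimal among the indices of patches containing that pair. So if $\pi\in\Xi_k\cap\Xi_l$ with $k\not\leq l$ and the extremal solution of $\Xi_k$ happens to orient an edge of $\Xi_k$ towards $\pi$, nothing you invoke rules this out, and (PS) --- equivalently (S2) for the glued orientation --- fails. That extremal solutions of the patches are compatible in this sense is an extra condition on the \emph{chosen} solutions, namely axiom (PS) itself, which Proposition \ref{prop: patchwork solutions} takes as a hypothesis rather than proves; the paper's corollary tacitly treats the given extremal solutions as forming a patchwork solution, and in every application either the patches are pairwise vertex-disjoint (so the condition is vacuous, as for the Kronecker quiver) or the orientations are chosen explicitly so that (PS) holds (as with the paths $\Pi_{k,l}$ in Section \ref{section: proof of theorem 4.4}, whose sources are deliberately placed to point away from the lower-indexed patches). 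To make your version airtight, add (PS) to the hypotheses or restrict to disjoint patches; it cannot be extracted from (P3).
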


\begin{proof}
 The same proof as for Proposition \ref{prop: patchwork solutions} shows that $\overline\Sigma$ has an extremal solution. By Proposition \ref{prop: extremal solutions of the reduced schubert system}, $\overline\Sigma$ is totally solvable.
\end{proof}

%%%%%%%%%%%%%%%%%%%%%%%%%%%%%%%%%%%%%%%%%%%%%%%%%%%%%%%%%%%%%%%%%%%%%%%%%%%%%%%%%%%%%%%%%%%%%%%%%%%%%%%%%%%%%%%%%%%%%%%%%%%%%%%%%%%%%%%%%%%%%%%%%%%%%%%%%%%%%%%%%%%%%%%%%%%%%%%%%%%%%%%%%%%%%%%%%%%%%%
\subsection{Extremal paths}\label{subsection: extremal paths}

Let $\Xi$ be a system.

\begin{df}
 An \emph{extremal path in $\Xi$} is a patch $\Pi$ in $\Xi$ of the form
 \[
   \beginpgfgraphicnamed{fig65}
  \begin{tikzpicture}[>=latex]
   \matrix (m) [matrix of math nodes, row sep=0.5em, column sep=2.0em, text height=1.5ex, text depth=0.5ex]
% column       1                     2                       3                      4                       5                     6                      7
    { \node[triple](p0){\pi_0}; &\node[triple](t1){\tau_1}; & \node[triple](p1){\pi_1};   & \node(dots1){\dotsb};  & \node(dots2){\dotsb};  & \node[triple](tn){\tau_n}; & \node[triple](pn){\pi_n};   \\   % row  2
};
    \path[-,very thick,font=\scriptsize,gray]
    (p0) edge (t1)
    (p1) edge (t1)
    (p1) edge (dots1)
    (tn) edge (dots2)
    (tn) edge (pn)
;
  \end{tikzpicture}
 \endpgfgraphicnamed
 \]
 and satisfies the following properties.
 \begin{enumerate}
  \item[(EP1)] The vertices $\pi_0,\dotsc,\pi_n$ are pairs and $\tau_1,\dotsc,\tau_n$ are triples.
  \item[(EP2)] All edges $\{\pi_i,\tau_j\}$ of $\Pi$ are extremal.
%  \item[(EP3)] All edges of $\Xi$ that connect to one of the vertices $\tau_1,\dotsc,\tau_n$ are those contained in $\Pi$.
 \end{enumerate}
 An extremal path $\Pi$ in $\Xi$ is \emph{pure} if it satisfies the following additional property.
 \begin{enumerate}
  \item[(EP3)] All edges of $\Xi$ that connect to one of the vertices $\tau_1,\dotsc,\tau_n$ are those contained in $\Pi$.
 \end{enumerate}
 The \emph{contraction $\Xi/\Pi$ of $\Xi$ along an extremal path $\Pi$} is the system that results from identifying the vertices $\pi_0,\dotsc\pi_n$ and removing $\tau_1,\dotsc,\tau_n$ and all edges and links of $\Pi$ from $\Xi$. In particular, we identify $\pi_0,\dotsc,\pi_n$ in all the leftover edges and links of $\Xi$ for $i=1,\dotsc,n$.
\end{df}

Let $M$ be a representation of $Q$ with ordered basis $\cB$ and $\overline\Sigma$ its reduced Schubert system. Let $\Xi$ be a patch of $\overline\Sigma$. Note that an extremal path $\Pi$ in $\Xi$ is also an extremal path in $\overline\Sigma$, though $\Pi$ might be pure in $\Xi$, but not pure in $\overline\Sigma$.

For $\beta\subset\cB$, we define the \emph{$\beta$-state $\Xi_{\beta}$ of $\Xi$} to be the full subsystem of the $\beta$-state $\Sigma_\beta$ of $\overline \Sigma$ whose vertex set consists of all $\beta$-relevant vertices of $\Xi$. 

The \emph{contraction $\Xi_{\beta}/\Pi$ of $\Xi_{\beta}$ along $\Pi$} is the system that results from removing the $\beta$-relevant vertices among $\pi_1,\dotsc\pi_n,\tau_1,\dotsc,\tau_n$ and all $\beta$-relevant edges and links of $\Pi$ from $\Xi_{\beta}$ and replacing $\pi_i$ by $\pi_0$ in all the leftover edges and links of $\Xi_\beta$ for $i=1,\dotsc,n$. 

Let $\{\Xi_k\}$ be a patchwork for $\overline\Sigma$. Then $\{\Xi_{k,\beta}\}$ is a patchwork for $\Sigma_\beta$. For an index $l$ and an extremal path $\Pi$ in $\Xi_l$, we define $\Xi_k/\Pi$ as the contraction $\Xi_l/\Pi$ if $k=l$, and as $\Xi_k$ with all pairs in $\Xi_k\cap\Pi$ identified if $k\neq l$. Then $\{\Xi_k/\Pi\}$ is a patchwork for $\overline\Sigma/\Pi$.
 
\begin{prop}\label{prop:contracting extremal paths}
 Let $\{\Xi_k\}$ be a patchwork for $\overline\Sigma$ and $\Pi$ a pure extremal path in $\Xi_l$ for some index $l$.
 \begin{enumerate}
  \item\label{extpath1} The $\beta$-state $\Xi_{k,\beta}$ is solvable if $\Xi_{k,\beta}/\Pi$ is solvable. 
  \item\label{extpath2} The patch $\Xi_k$ has an extremal solution if $\Xi_k/\Pi$ has an extremal solution.
  \item\label{extpath3} The patch $\Xi_k$ is totally solvable if $\Xi_k/\Pi$ is totally solvable.
  \item\label{extpath4} If $\{\Xi_k/\Pi\}$ \big(or $\{\Xi_{k,\beta}/\Pi\}$\big) has a patchwork solution, then $\{\Xi_k\}$ \big(or $\{\Xi_{k,\beta}\}$, respectively\big) has a patchwork solution.
 \end{enumerate}
\end{prop}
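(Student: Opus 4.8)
The common mechanism behind all four statements is that a solution of the contracted system lifts to a solution of the original one by orienting the edges of the extremal path $\Pi$ in a controlled fashion. I would first record the rigid structure of $\Pi$ that purity forces, then prove the lifting lemma, and finally deduce \eqref{extpath1}--\eqref{extpath4} from it.

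Because $\Pi$ is pure, property (EP3) says that the only edges of $\Xi_l$ at the triples $\tau_1,\dotsc,\tau_n$ are the edges of $\Pi$; hence each $\tau_j$ has degree exactly two, namely the extremal — and therefore simply linked — edges $\{\tau_j,\pi_{j-1}\}$ and $\{\tau_j,\pi_j\}$. Using Lemma~\ref{lemma: properties of extremal edges}\eqref{extr2} together with the algorithm of section~\ref{subsection: computing beta-states}, one checks that passing to a non-contradictory $\beta$-state only truncates $\Pi$ to a disjoint union of sub-paths of the same shape, that the triples and pairs of $\Pi$ surviving in $\Xi_{k,\beta}$ are exactly the $\beta$-relevant ones, and that — since solving the linear equations carried by the (simply linked) edges of $\Pi$ never produces a contradiction — a $\beta$-state $\Xi_{k,\beta}$ is contradictory if and only if $\Xi_{k,\beta}/\Pi$ is.

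Now the lifting lemma. Suppose we are given an orientation of $\Xi_{k,\beta}/\Pi$ satisfying (S2). By (S2) the contracted pair-vertex $\pi_0$ has at most one incoming edge; if it has one, that edge came from a unique original pair $\pi_{j_0}$ among $\pi_0,\dotsc,\pi_n$, and otherwise we set $j_0=0$. Orient the edges of $\Pi$ so that it becomes the union of the two directed paths $\pi_{j_0}\to\tau_{j_0}\to\pi_{j_0-1}\to\cdots\to\pi_0$ and $\pi_{j_0}\to\tau_{j_0+1}\to\pi_{j_0+1}\to\cdots\to\pi_n$ (when $j_0=0$ this is just the directed path $\pi_0\to\tau_1\to\cdots\to\pi_n$), and transport every remaining orientation from $\Xi_{k,\beta}/\Pi$ back to whichever vertex $\pi_i$ the edge is incident to in $\Xi_{k,\beta}$. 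Then each $\tau_j$ has exactly one outgoing edge, which is extremal, so (S1) holds along $\Pi$; each $\pi_i$ with $i\neq j_0$ receives its single incoming path edge and, by (S2) for the contraction, no other incoming edge, while $\pi_{j_0}$ receives only its one incoming outside edge, so (S2) holds; and since $\Pi$ is oriented as two trees rooted at $\pi_{j_0}$, contracting an oriented cycle of $\Xi_{k,\beta}$ would yield an oriented cycle of $\Xi_{k,\beta}/\Pi$, so (S3) is inherited. This proves \eqref{extpath1}, and the identical construction applied to an extremal solution of $\Xi_k/\Pi$ produces an extremal solution of $\Xi_k$, since the new outgoing edges along $\Pi$ are extremal by (EP2) and (ES1)--(ES3) are the counterparts of (S1)--(S3); this is \eqref{extpath2}. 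Statement \eqref{extpath3} is then immediate from \eqref{extpath1} together with the contradictory-equivalence of the previous paragraph: if $\Xi_k/\Pi$ is totally solvable, then for every $\beta$ the state $\Xi_{k,\beta}/\Pi$ is contradictory or solvable, hence so is $\Xi_{k,\beta}$. Finally, for \eqref{extpath4} one uses that $\{\Xi_k/\Pi\}$ is a patchwork for $\overline\Sigma/\Pi$ (respectively $\{\Xi_{k,\beta}/\Pi\}$ a patchwork for the corresponding contraction of $\Sigma_\beta$) and that lifting the solution on $\Xi_l/\Pi$ by the path construction above, while splitting the identified pairs apart on the remaining patches, preserves (PS): the only edges oriented towards a pair that are newly created are path edges pointing into some $\pi_i\in\Xi_l$, and $\pi_i$ is identified with $\pi_0$, which by hypothesis lies below every patch containing it. Thus $\{\Xi_k\}$ (resp.\ $\{\Xi_{k,\beta}\}$) inherits a patchwork solution.

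The step I expect to be the main obstacle is the second paragraph: making precise how the $\beta$-state construction interacts with $\Pi$ — that $\beta$-states of a pure extremal path are again disjoint unions of extremal paths, that contraction commutes with passage to $\beta$-states up to this truncation, and in particular the equivalence ``$\Xi_{k,\beta}$ contradictory $\iff$ $\Xi_{k,\beta}/\Pi$ contradictory''. This requires a careful run through the algorithm of section~\ref{subsection: computing beta-states} of the same kind the authors ``forgo to explain in detail'' in the proof of Proposition~\ref{prop: extremal solutions of the reduced schubert system}, and it is precisely where the hypotheses (EP1)--(EP3) and Lemma~\ref{lemma: properties of extremal edges} are genuinely needed.
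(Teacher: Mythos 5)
Your proof is correct and follows essentially the same route as the paper's: the central step in both is to lift a solution of the contracted system by orienting the edges of $\Pi$ away from the unique pair $\pi_{j_0}$ that receives the (at most one) incoming outside edge (or an arbitrary pair if there is none), and then to verify (S1)--(S3), respectively (ES1)--(ES3) and (PS), exactly as you do. The structural analysis of the $\beta$-states of a pure extremal path in your second paragraph is additional detail that the paper omits but does not alter the argument.
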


\begin{proof}
 For the proof of \eqref{extpath1}--\eqref{extpath3}, we can assume that $k=l$ since the claim is trivial otherwise. A solution of $\Xi_{l,\beta}/\Pi$ can be extended to a solution of $\Xi_{l,\beta}$ in the following way. Assume there is an edge in $\Xi_{l,\beta}/\Pi$ that is oriented towards $\pi_0$ and this edge comes from an edge connecting to $\pi_i$ in $\Xi_{l,\beta}$. Then we orientate all $\beta$-relevant edges of $\Pi$ away from $\pi_i$. If there is no edge in $\Xi_{l,\beta}/\Pi$ that is oriented towards $\pi_0$, then we can choose an arbitrary vertex $\pi_i$ of $\Pi$ as a source and orientate all the other arrows away from $\pi_i$. Note that this orientation does not have cycles thanks to axiom (EP3) of a pure extremal path. Therefore this defines a solution for $\Xi_{l,\beta}$, which verifies \eqref{extpath1}.
 
 Since all edges of $\Pi$ are extremal, an extremal solution of $\Xi_{l}/\Pi$ extends to an extremal solution of $\Xi_l$ by the above argument, which shows \eqref{extpath2}. The same argument verifies \eqref{extpath3}. 
 
 A patchwork solution $\{\Xi_k/\Pi\}$ satisfies (PS) for every patch. If thus an edge $\{\tau,\pi_0\}$ in $\Xi_k$ is oriented towards $\pi_0$, then by (PS), $k\leq k'$ for $k'$ such that $\pi_0$ is in $\Xi_{k'}$. This means that the extension of the solution for $\Xi_l/\Pi$ to $\Xi_l$ satisfies (PS) as well (for all $k$ and $l$). The proof for $\{\Xi_{k,\beta}\}$ is analogous. This shows \eqref{extpath4}.
\end{proof}

\begin{cor} \label{cor: extremal solution for extremal paths}
 If a patch $\Xi$ is an extremal path, then it has an extremal solution.
\end{cor}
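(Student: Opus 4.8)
The plan is to write down an explicit orientation of the (very few) edges of $\Xi$ and to verify the three axioms (ES1)--(ES3) of an extremal solution directly. Since $\Xi$ is an extremal path, by (EP1) its underlying graph is the path
\[
 \pi_0 \,-\, \tau_1 \,-\, \pi_1 \,-\, \tau_2 \,-\, \dotsb \,-\, \tau_n \,-\, \pi_n
\]
in which $\pi_0,\dotsc,\pi_n$ are pairs and $\tau_1,\dotsc,\tau_n$ are triples, and by (EP2) every edge $\{\tau_j,\pi_{j-1}\}$ and $\{\tau_j,\pi_j\}$ is extremal, hence in particular simply linked; so $\Xi$ carries no quadratic link and no edge outside this path. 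I would orient the path ``monotonically'', declaring $\pi_0$ to be the unique source: orient $\pi_{j-1}\to\tau_j$ and $\tau_j\to\pi_j$ for every $j=1,\dotsc,n$. If $n=0$, then $\Xi$ is a single pair and there is nothing to orient.

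Next I would check the axioms. For (ES1): the only edges at a triple $\tau_j$ are $\{\pi_{j-1},\tau_j\}$, oriented towards $\tau_j$, and $\{\tau_j,\pi_j\}$, oriented away from $\tau_j$; so precisely one edge points away from $\tau_j$, and that edge is extremal by (EP2). For (ES2): $\pi_0$ receives no edge, $\pi_n$ receives only $\{\tau_n,\pi_n\}$, and an intermediate pair $\pi_j$ with $1\le j\le n-1$ has edges $\{\tau_j,\pi_j\}$ (incoming) and $\{\pi_j,\tau_{j+1}\}$ (outgoing), hence at most one incoming edge. For (ES3): every oriented edge runs from a vertex earlier in the path to one later in the path, so the orientation is acyclic. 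Thus this orientation is an extremal solution, and $\Xi$ is (totally) solvable via an extremal solution.

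Alternatively, the statement drops out of Proposition \ref{prop:contracting extremal paths}\eqref{extpath2}: take the trivial patchwork $\{\Xi\}$ and the pure extremal path $\Pi=\Xi$ (purity (EP3) is automatic because $\Xi=\Pi$); then the contraction $\Xi/\Pi$ consists of the single identified pair $\pi_0$, with no triples and no edges, and the empty orientation is an extremal solution for it, so $\Xi$ has one as well. I do not expect any genuine obstacle here: the only point needing care is that (EP1)--(EP2) really do pin $\Xi$ down to be exactly the displayed alternating path with all edges extremal, which is precisely what legitimizes both the case analysis above and the triviality of the contraction; everything else is a routine check.
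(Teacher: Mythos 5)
Your proposal is correct. Your second, ``alternative'' argument is verbatim the paper's own proof: take $\Pi=\Xi$, observe that the contraction $\Xi/\Xi$ is a single pair with no edges, give it the empty orientation, and invoke Proposition \ref{prop:contracting extremal paths}\,\eqref{extpath2} to lift this to an extremal solution of $\Xi$. Your primary argument --- orienting the path monotonically away from $\pi_0$ and checking (ES1)--(ES3) by hand --- is a valid and more self-contained alternative: it avoids the contraction machinery entirely, and the checks are exactly as routine as you claim, since (EP2) forces every edge to be simply linked (so no quadratic links can occur in the patch, and the unique outgoing edge at each $\tau_j$ is automatically extremal). What the paper's route buys is uniformity: Proposition \ref{prop:contracting extremal paths} is needed anyway for contracting extremal paths inside larger patches (as in the rank-$2$ tube case), so the corollary comes for free as the degenerate case $\Pi=\Xi$; your direct orientation would have to be re-embedded into that framework when the extremal path is only a piece of a bigger patch. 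Incidentally, the orientation your direct argument produces (source at $\pi_0$) is precisely one of the orientations that the proof of Proposition \ref{prop:contracting extremal paths}\,\eqref{extpath1} constructs when extending a solution of the contraction, so the two arguments agree on the resulting solution.
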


\begin{proof}
 If $\Xi$ is an extremal path, the contraction $\Xi/\Xi$ is relevant pair and does not have any edges. Therefore the empty orientation is an extremal solution for $\Xi/\Xi$, and by Proposition \ref{prop:contracting extremal paths} \eqref{extpath2} this extends to an extremal solution of $\Xi$.
\end{proof}

%%%%%%%%%%%%%%%%%%%%%%%%%%%%%%%%%%%%%%%%%%%%%%%%%%%%%%%%%%%%%%%%%%%%%%%%%%%%%%%%%%%%%%%%%%%%%%%%%%%%%%%%%%%%%%%%%%%%%%%%%%%%%%%%%%%%%%%%%%%%%%%%%%%%%%%%%%%%%%%%%%%%%%%%%%%%%%%%%%%%%%%%%%%%%%%%%%%%%%
%%%%%%%%%%%%%%%%%%%%%%%%%%%%%%%%%%%%%%%%%%%%%%%%%%%%%%%%%%%%%%%%%%%%%%%%%%%%%%%%%%%%%%%%%%%%%%%%%%%%%%%%%%%%%%%%%%%%%%%%%%%%%%%%%%%%%%%%%%%%%%%%%%%%%%%%%%%%%%%%%%%%%%%%%%%%%%%%%%%%%%%%%%%%%%%%%%%%%%

\section{First applications}
\label{section: first applications}

\noindent
In this section, we demonstrate the methods developed in section \ref{section: schubert systems} for the Kronecker quiver and quivers of Dynkin types $A_n$ and $D_n$.

%%%%%%%%%%%%%%%%%%%%%%%%%%%%%%%%%%%%%%%%%%%%%%%%%%%%%%%%%%%%%%%%%%%%%%%%%%%%%%%%%%%%%%%%%%%%%%%%%%%%%%%%%%%%%%%%%%%%%%%%%%%%%%%%%%%%%%%%%%%%%%%%%%%%%%%%%%%%%%%%%%%%%%%%%%%%%%%%%%%%%%%%%%%%%%%%%%%%%%
\subsection{The Kronecker quiver}\label{subsection: kronecker quiver}

In this section, we reprove the following known result for the Kronecker quiver. For alternative proofs, see \cite{cr} and \cite{L13}.

\begin{prop}\label{prop: schubert decompositions for the kronecker quiver}
 Every exceptional representation $M$ of the Kronecker quiver $Q$ has an ordered basis $\cB$ such that the associated Schubert decomposition $\Gr_\ue(M)=\coprod C_\beta^M$ is a decomposition into affine spaces. 
\end{prop}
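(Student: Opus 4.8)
The plan is to exhibit, for each exceptional representation $M$, an ordered basis $\cB$ such that the reduced Schubert system $\overline\Sigma(M,\cB)$ admits an extremal solution. By Proposition~\ref{prop: extremal solutions of the reduced schubert system} this makes $\overline\Sigma(M,\cB)$ totally solvable, so that every $\beta$-state is contradictory or solvable, and then Corollary~\ref{cor: decomposition into affine spaces if all beta-states are contradictory or solvable} yields the statement for every dimension vector $\ue$. The exceptional representations of the Kronecker quiver $Q$ (taken with both arrows $a,b\colon 1\to 2$) are exactly the preprojectives $P_n$, of dimension vector $(n,n+1)$, and the preinjectives; and since every preinjective is the dual $M^\ast$ of a preprojective $M$, while dualizing identifies $\Gr_\ue(M)$ with $\Gr_{\udim M-\ue}(M^\ast)$ compatibly with the Schubert decompositions attached to dual bases (cf.\ \cite{L13}), it suffices to treat $M=P_n$. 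For $P_n$ I would choose the basis whose coefficient quiver $\Gamma$ is the zigzag with vertices $x_1,\dots,x_n$ over vertex~$1$ and $y_1,\dots,y_{n+1}$ over vertex~$2$, arrows $x_i\xrightarrow{a}y_i$ and $x_i\xrightarrow{b}y_{i+1}$ for $1\le i\le n$, and the interleaved order $y_1<x_1<y_2<x_2<\dots<x_n<y_{n+1}$.

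First I would write down $\overline\Sigma$. Each arrow of $\Gamma$ is extremal, hence the relevant triples of $\overline\Sigma$ are precisely $(a,y_j,x_i)$ with $j<i$ and $(b,y_j,x_i)$ with $j\le i$, and the relevant pairs are $(x_j,x_i)$ and $(y_j,y_i)$ with $j<i$. The reduced forms (section~\ref{subsection: the reduced schubert system}) come out to be
\[
 \overline E(a,y_j,x_i)\;=\;w_{y_j,y_i}-w_{x_j,x_i}+\sum_{j<k<i}w_{y_j,y_k}\,w_{x_k,x_i}
\]
and
\[
 \overline E(b,y_j,x_i)\;=\;w_{y_j,y_{i+1}}-w_{x_{j-1},x_i}+\sum_{j\le k<i}w_{y_j,y_{k+1}}\,w_{x_k,x_i},
\]
where the term $w_{x_{j-1},x_i}$ is present only when $j\ge 2$. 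One checks directly that the edges $\{(a,y_j,x_i),(x_j,x_i)\}$, $\{(a,y_j,x_i),(y_j,y_i)\}$, $\{(b,y_j,x_i),(y_j,y_{i+1})\}$ and, for $j\ge 2$, $\{(b,y_j,x_i),(x_{j-1},x_i)\}$ are extremal edges of $\overline\Sigma$ (they are the simply linked legs attached to the linear terms above, and every arrow of $\Gamma$ is extremal), while the legs of the quadratic links are not.

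Next I would orient the edges of $\overline\Sigma$: orient $\{(a,y_j,x_i),(x_j,x_i)\}$ and $\{(b,y_j,x_i),(y_j,y_{i+1})\}$ away from the triple, and every other edge towards its triple. The rule $(a,y_j,x_i)\mapsto(x_j,x_i)$, $(b,y_j,x_i)\mapsto(y_j,y_{i+1})$ is a bijection from the relevant triples onto the relevant pairs (the $a$-triples onto the pairs of the form $(x_j,x_i)$, the $b$-triples onto those of the form $(y_j,y_i)$), so (ES1) — one outgoing edge at each triple, and it is extremal — and (ES2) — at most one incoming edge at each pair — hold. The substantial step is acyclicity (ES3). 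An oriented cycle alternates out-edges and in-edges of triples, hence produces a cyclic sequence $\tau_1,\dots,\tau_m$ of relevant triples in which the owner pair of $\tau_\ell$ is joined to $\tau_{\ell+1}$ by an edge other than the owner edge of $\tau_{\ell+1}$. Reading this off the polynomials above, each passage $\tau_\ell\to\tau_{\ell+1}$ is of one of three kinds: (i) it keeps the triple of type $a$, leaves its $x$-index fixed and strictly lowers its $y$-index; (ii) it changes the type from $a$ to $b$ and leaves the $x$-index fixed; (iii) it starts from a triple of type $b$ and strictly raises the $x$-index. Since the $x$-index must return to its initial value around the cycle, there is no passage of kind (iii); hence no $\tau_\ell$ is of type $b$; hence no passage of kind (ii); hence every passage is of kind (i), and the $y$-index strictly decreases around the cycle — which is impossible. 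Thus the orientation is an extremal solution for $\overline\Sigma$, and Proposition~\ref{prop: extremal solutions of the reduced schubert system} together with Corollary~\ref{cor: decomposition into affine spaces if all beta-states are contradictory or solvable} settles the preprojective case; the preinjective case follows by the duality recalled above (or, if one prefers, by the verbatim argument applied to the dual zigzag). The \textbf{main obstacle} is the acyclicity verification (ES3): it depends on choosing the basis so that all arrows of $\Gamma$ are extremal — which keeps $\overline\Sigma$ small and makes the owner assignment a bijection — and then on the monotonicity bookkeeping in the invariants (type, $x$-index, $y$-index) just sketched; the remaining checks are routine once the coefficient quiver is fixed this way.
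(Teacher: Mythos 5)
Your proof is correct, and it starts from the same data as the paper: the same interleaved zigzag basis, the same reduced equations $\overline E(a,y_j,x_i)$ and $\overline E(b,y_j,x_i)$, the same reduction to showing that $\overline\Sigma$ is totally solvable, and the same exit via Proposition \ref{prop: extremal solutions of the reduced schubert system} and Corollary \ref{cor: decomposition into affine spaces if all beta-states are contradictory or solvable}. Where you genuinely diverge is in how the extremal solution is produced. The paper partitions $\overline\Sigma$ into the patches $\Xi_k$ consisting of all vertices along a fixed ``diagonal'' (constant difference of the two indices); each such patch is a single extremal path, so Corollary \ref{cor: extremal solution for extremal paths} hands over an extremal solution of every patch with no acyclicity check at all, and Corollary \ref{cor: extremal patchwork solutions for the reduced schubert system} assembles these into total solvability, the partial order on the patch index set absorbing the inter-patch acyclicity. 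You instead write down one global orientation via an explicit owner bijection from triples onto pairs and verify (ES1)--(ES3) by hand; the only substantive step is (ES3), and your monotonicity bookkeeping (passages out of a $b$-triple strictly raise the $x$-index, passages out of an $a$-triple fix it and either lower the $y$-index or convert to type $b$) checks out against the displayed reduced forms. What each approach buys: yours is self-contained and avoids the patchwork formalism entirely, at the cost of an explicit acyclicity argument; the paper's is essentially mechanical once the diagonal patches are identified, and it is the version that scales to Section \ref{section: proof of theorem 4.4}, where $\overline\Sigma$ is totally solvable but admits no single orientation solving all $\beta$-states at once, so a patch-by-patch, $\beta$-dependent treatment is unavoidable there. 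Your duality reduction for the preinjective case is also fine (the paper simply declares the two cases analogous).
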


\begin{proof}
 An exceptional representation $M$ of the Kronecker quiver $Q$ is either preprojective or preinjective. Since both cases can be proven analogously, we restrict ourselves to a demonstration for preprojective representations.

 Let $a$ and $b$ be the arrows of $Q$. Then $M$ has an ordered basis $\cB=\{1,\dotsc,2n+1\}$ such that the associated coefficient quiver $\Gamma$ is 
\[
 \beginpgfgraphicnamed{fig63}
  \begin{tikzpicture}[>=latex,baseline=(current bounding box.center)]
   \matrix (m) [matrix of math nodes, row sep=-0.2em, column sep=4.0em, text height=1ex, text depth=0ex]
% column  1     2
    {        &  1   \\    % row 1
          2  &      \\    % row 2
             &  3   \\    % row 3
          4  &      \\    % row 4
          \  &  5   \\    % row 5
          \  &      \\    % row 6
             &      \\    % row 7
          \  &      \\    % row 8
          \  & 2n-1 \\    % row 9
         2n  &      \\    % row10
             & 2n+1 \\    % row11
};
    \path[->,font=\scriptsize]
    (m-2-1) edge node[above left=-2pt] {$a$} (m-1-2)
    (m-4-1) edge node[above left=-2pt] {$a$} (m-3-2)
    (m-10-1) edge node[above left=-2pt] {$a$} (m-9-2)
    (m-2-1) edge node[above right=-2pt] {$b$} (m-3-2)
    (m-4-1) edge node[above right=-2pt] {$b$} (m-5-2)
    (m-10-1) edge node[above right=-2pt] {$b$} (m-11-2)
;
    \path[dotted,thick]
    (m-5-1) edge (m-9-1)
    (m-5-2) edge (m-9-2)
;
  \end{tikzpicture}
 \endpgfgraphicnamed
\]
 The defining equations for the reduced Schubert system $\overline\Sigma$ are of the form
\begin{align*}
 \overline E(a,t,s) \ &= \ w_{t,s-1}-w_{t+1,s}+\sum_{\substack{i=t+3,\dotsc,s-2\text{ even}}} w_{t,i-1}w_{i,s} && (s\geq t+3)  \\
 \overline E(b,t,s) \ &= \ w_{t,s+1}\hspace{41pt}          +\sum_{\substack{i=t+1,\dotsc,s-2\text{ even}}} w_{t,i+1}w_{i,s} && (s\geq t+1\text{ and }t=1)  \\
 \overline E(b,t,s) \ &= \ w_{t,s+1}-w_{t-1,s}+\sum_{\substack{i=t+1,\dotsc,s-2\text{ even}}} w_{t,i+1}w_{i,s} && (s\geq t+1\text{ and }t\geq 3)
\end{align*}
 for $t\in\cB$ odd and $s\in\cB$ even. Note that all arrows of $\Gamma$ are extremal and therefore all linear terms that occur in the above equations correspond to an extremal edge of the Schubert system. For $I=\{2,4,\dotsc,2n\}$, we define the patchwork $\{\Xi_k\}_{k\in I}$ where $\Xi_k$ is the patch
\[
 \beginpgfgraphicnamed{fig64}
  \begin{tikzpicture}[>=latex]
   \matrix (m) [matrix of math nodes, row sep=1em, column sep=2em, text height=1.5ex, text depth=0.5ex]
    { \node[vertex](b-1-k){b,1,k}; & \node[vertex](1-k+1){1,k+1}; & \node[vertex](a-1-k+2){a,1,k+2}; & \node[vertex](2-k+2){2,k+2}; & \node[vertex](b-3-k+2){b,3,k+2}; & \dotsb & \node[vertex](2n+1-k-2n+1){2n+1-k,2n+1}; 
\\ };
    \path[-,very thick,font=\scriptsize,gray]
    (b-1-k) edge (1-k+1)
    (a-1-k+2) edge (1-k+1)
    (a-1-k+2) edge (2-k+2)
    (b-3-k+2) edge (2-k+2)
    (b-3-k+2) edge (m-1-6)
    (m-1-6) edge (2n+1-k-2n+1)
;
    \path[dotted,very thick,font=\scriptsize,gray]
;
  \end{tikzpicture}
 \endpgfgraphicnamed
\]

 The whole patch $\Xi_k$ is an extremal path, which we can contract to a point $\Xi_k/\Xi_k$. By Corollary \ref{cor: extremal solution for extremal paths}, the patches $\Xi_k$ have extremal solutions, and by Corollary \ref{cor: extremal patchwork solutions for the reduced schubert system}, the reduced Schubert system $\overline\Sigma$ is totally solvable. This shows that the Schubert decomposition $\Gr_\ue(M)=\coprod C_\beta^M$ is a decomposition into affine spaces for all $\ue$.
\end{proof}

%%%%%%%%%%%%%%%%%%%%%%%%%%%%%%%%%%%%%%%%%%%%%%%%%%%%%%%%%%%%%%%%%%%%%%%%%%%%%%%%%%%%%%%%%%%%%%%%%%%%%%%%%%%%%%%%%%%%%%%%%%%%%%%%%%%%%%%%%%%%%%%%%%%%%%%%%%%%%%%%%%%%%%%%%%%%%%%%%%%%%%%%%%%%%%%%%%%%%%
\subsection{Dynkin quivers}\label{subsection: dynkin quivers}

Every indecomposable representation of a Dynkin quiver $Q$ of type $A_n$ is thin. Therefore Corollary \ref{cor: schubert decompositions for thin representations} implies that $\Gr_\ue(M)=\coprod C_\beta^M$ is a decomposition into affine spaces for any indecomposable representation $M$ of $Q$, any ordered basis $\cB$ of $M$ and any dimension vector $\ue$.

Let $Q$ be a Dynkin quiver of type $D_n$ and $M$ an exceptional representation of $Q$. If $M$ is thin, then we can apply Corollary \ref{cor: schubert decompositions for thin representations} to establish a Schubert decomposition of $\Gr_\ue(M)$. If $M$ is not thin, then it has an ordered basis $\cB$ such that the coefficient quiver, together with the canonical map $F:\Gamma\to Q$, looks like
 \[
   \beginpgfgraphicnamed{fig66}
   \begin{tikzpicture}[>=latex]
  \matrix (m) [matrix of math nodes, row sep=0em, column sep=2em, text height=1ex, text depth=0ex]
%Column 1    2     3     4      5      
   {   l &    & i_0 &\dotsb& i_r     \\
          & k   &     &      &     &       \\
       l &    & j_0 &\dotsb& j_r &  \dotsc & j_s  &        &         &  \Gamma \\
       \ & \\
       \ & \\
       q_a &  &     &        &     &        &     &        &         &  Q\\
         &    & q_0 & \dotsc & q_r & \dotsc & q_s & \dotsc & q_{n-3}  \\
         & q_b \\
       };
         \path[-,font=\scriptsize]
         (m-2-2) edge node[auto] {} (m-1-3)
         (m-2-2) edge node[auto] {$b$} (m-3-3)
         (m-1-3) edge node[auto] {$v_0$} (m-1-4)
         (m-3-3) edge node[auto] {$v_0$} (m-3-4)
         (m-1-4) edge node[auto] {$v_{r-1}$} (m-1-5)
         (m-3-4) edge node[auto] {$v_{r-1}$} (m-3-5)
         (m-3-5) edge node[auto] {$v_{r}$} (m-3-6)
         (m-3-6) edge node[auto] {$v_{s-1}$} (m-3-7)
         (m-6-1) edge node[auto] {$a$} (m-7-3)
         (m-8-2) edge node[auto,swap] {$b$} (m-7-3)
         (m-7-3) edge node[auto] {$v_0$} (m-7-4)
         (m-7-4) edge node[auto] {$v_{r-1}$} (m-7-5)
         (m-7-5) edge node[auto] {$v_{r}$} (m-7-6)
         (m-7-6) edge node[auto] {$v_{s-1}$} (m-7-7)
         (m-7-7) edge node[auto] {$v_{s}$} (m-7-8)
         (m-7-8) edge node[auto] {$v_{n-4}$} (m-7-9)
         ;
         \path[-,dotted,font=\scriptsize]
         (m-1-1) edge node[auto] {$a$} (m-1-3)
         (m-3-1) edge node[auto,swap] {$a$} (m-3-3)
         ;
         \path[->,font=\scriptsize]
         (m-3-10) edge node[auto] {$F$} (m-6-10)
         ;
        \end{tikzpicture}
        \endpgfgraphicnamed
 \]
 where $i_k<j_k$ for all $k=0,\dotsc,r$. The arrow of $\Gamma$ with label $a$ can either connect to $i_0$ or $j_0$, and each arrow of $Q$ can have either orientation. Note that not every coefficient quiver of this form defines an exceptional representation. For instance, the representation is decomposable if $r=s$. Moreover, indecomposability depends on the orientation of the edges. The reduced Schubert system $\overline \Sigma$ of $M$ w.r.t.\ $\cB$ is of the form
 \[
   \beginpgfgraphicnamed{fig67}
  \begin{tikzpicture}[>=latex]
   \matrix (m) [matrix of math nodes, row sep=1em, column sep=2em, text height=1.5ex, text depth=0.5ex]
    {                         &                      & \node[vertex](a){a}; \\ 
     \node[const](const){\pm 1}; & \node[vertex](b){b}; & \node[pair](p0){}; & \node[vertex](v0){v_0}; & \node[pair](p1){}; & \dotsb & \dotsb & \node[pair](pk){}; & \node[vertex](vr){v_{r}}; 
\\ };
    \path[-,very thick,font=\scriptsize,gray]
    (b) edge (p0)
    (v0) edge (p0)
    (v0) edge (p0)
    (v0) edge (p1)
    (m-2-6) edge (p1)
    (m-2-7) edge (pk)
;
    \path[dotted,very thick,font=\scriptsize,gray]
    (pk) edge (vr)
    (a) edge (p0)
    (const) edge (b)
;
  \end{tikzpicture}
 \endpgfgraphicnamed
 \]
 where we omit the coordinates and the weights $\pm 1$ of the edges, which both depend on the orientation of the arrows. Note that $\overline\Sigma$ contains the triples with labels $a$ and $v_{r}$ depending on the orientation of the arrows $a$ and $v_r$ in $Q$, and depending on whether the arrow of $\Gamma$ with label $a$ connects to $i_0$ or $j_0$. 
 
 We will show that each Schubert cell $C_\beta^M$ is either an affine space or empty. Since $\Sigma_\beta$ is contradictory if $\beta$ is not extremal successor closed, we can assume that $\beta$ is closed under extremal successors. If $\Sigma_\beta$ contains 
 \[
   \beginpgfgraphicnamed{fig68}
  \begin{tikzpicture}[>=latex]
   \matrix (m) [matrix of math nodes, row sep=1em, column sep=1.5em, text height=1.5ex, text depth=0.5ex]
    {  \node[const](const){\pm 1}; & \node[vertex](b){b}; & \node[pair](p0){}; & \node[vertex](a){a}; \\
};
    \path[-,very thick,font=\scriptsize,gray]
    (b) edge (p0)
    (a) edge (p0)
;
    \path[dotted,very thick,font=\scriptsize,gray]
    (const) edge (b)
;
  \end{tikzpicture}
 \endpgfgraphicnamed
 \qquad \text{or} \qquad
   \beginpgfgraphicnamed{fig69}
  \begin{tikzpicture}[>=latex]
   \matrix (m) [matrix of math nodes, row sep=1em, column sep=1.5em, text height=1.5ex, text depth=0.5ex]
    {\node[const](const){\pm 1}; & \node[vertex](b){b}; & \node[pair](p0){}; & \node[vertex](v0){v_0}; & \node[pair](p1){}; & \dotsb & \dotsb & \node[pair](pk){}; & \node[vertex](vr){v_{r}}; \\
 };
    \path[-,very thick,font=\scriptsize,gray]
    (b) edge (p0)
    (v0) edge (p0)
    (v0) edge (p0)
    (v0) edge (p1)
    (m-1-6) edge (p1)
    (m-1-7) edge (pk)
    (pk) edge (vr)
;
    \path[dotted,very thick,font=\scriptsize,gray]
    (const) edge (b)
;
  \end{tikzpicture}
 \endpgfgraphicnamed
\] 
 as a subsystem (or both), then we can apply step \eqref{step8} of the algorithm to compute $\Sigma_\beta$ to each $\beta$-relevant triple and see that $\Sigma_\beta$ is contradictory. If $\Sigma_\beta$ equals
\[ 
 \beginpgfgraphicnamed{fig70}
  \begin{tikzpicture}[>=latex]
   \matrix (m) [matrix of math nodes, row sep=1em, column sep=1.5em, text height=1.5ex, text depth=0.5ex]
    {   \node[vertex](a){a}; & \node[pair](p0){}; & \node[vertex](v0){v_0}; & \node[pair](p1){}; & \dotsb & \dotsb & \node[pair](pk){}; & \node[vertex](vr){v_{r}}; \\
 };
    \path[-,very thick,font=\scriptsize,gray]
    (a) edge (p0)
    (v0) edge (p0)
    (v0) edge (p0)
    (v0) edge (p1)
    (m-1-5) edge (p1)
    (m-1-6) edge (pk)
    (pk) edge (vr)
;
  \end{tikzpicture}
 \endpgfgraphicnamed
\] 
 then the corresponding Schubert cell $C_\beta^M$ consists of one point with coordinates $w_{i_0,j_0}=\dotsb=w_{i_r,j_r}=0$. Any other $\beta$-state $\Sigma_\beta$ does not contain a path connecting two of the outer triples $b$, $a$ and $v_r$. Since all edges of $\overline \Sigma$ are extremal, $\Sigma_\beta$ is solvable along extremal edges. This shows that $\Gr_\ue(M)=\coprod C_\beta^M$ is a decomposition into affine spaces.

\begin{rem}
 Note that in the cases of contradictory $\beta$-states (as illustrated above), $\beta$ is \emph{contradictory of the second kind}, cf.\ section \ref{subsection: contradictory beta}. In the last illustration of a $\beta$-state $\Sigma_\beta$, we see a first example of a $\beta$-state that is not solvable, but whose Schubert cell is an affine space. In the situation of this example, an inversion of the ordering (i.e. $i_k>j_k$ for $k=0,\dotsc,r$) makes this situation disappear, and $\overline \Sigma$ becomes totally solvable. Note that for representations of a quiver of extended Dynkin type $\widetilde D_n$, we will face similar situations, which we cannot avoid by a re-ordering of the basis elements.
\end{rem}

%%%%%%%%%%%%%%%%%%%%%%%%%%%%%%%%%%%%%%%%%%%%%%%%%%%%%%%%%%%%%%%%%%%%%%%%%%%%%%%%%%%%%%%%%%%%%%%%%%%%%%%%%%%%%%%%%%%%%%%%%%%%%%%%%%%%%%%%%%%%%%%%%%%%%%%%%%%%%%%%%%%%%%%%%%%%%%%%%%%%%%%%%%%%%%%%%%%%%%
%%%%%%%%%%%%%%%%%%%%%%%%%%%%%%%%%%%%%%%%%%%%%%%%%%%%%%%%%%%%%%%%%%%%%%%%%%%%%%%%%%%%%%%%%%%%%%%%%%%%%%%%%%%%%%%%%%%%%%%%%%%%%%%%%%%%%%%%%%%%%%%%%%%%%%%%%%%%%%%%%%%%%%%%%%%%%%%%%%%%%%%%%%%%%%%%%%%%%%

\section{Schubert decompositions for type $\widetilde D_n$}
\label{section: Schubert decompositions for type D_n}

\noindent
Let $Q$ be a quiver of extended Dynkin type $\widetilde D_n$ and $M$ an indecomposable representation that is of defect $-1$ or defect $0$, with exclusion of the non-Schurian representations in the homogeneous tubes. In this section, we will exhibit a certain ordered basis $\cB$ for which the Schubert decompositions $\Gr_\ue(M)=\coprod C_\beta^M$ is a decomposition into affine spaces, and we give a precise characterisation of the empty cells $C_\beta^M$ in terms of the combinatorics of the coefficient quiver $\Gamma$ of $M$ w.r.t.\ $\cB$.

%%%%%%%%%%%%%%%%%%%%%%%%%%%%%%%%%%%%%%%%%%%%%%%%%%%%%%%%%%%%%%%%%%%%%%%%%%%%%%%%%%%%%%%%%%%%%%%%%%%%%%%%%%%%%%%%%%%%%%%%%%%%%%%%%%%%%%%%%%%%%%%%%%%%%%%%%%%%%%%%%%%%%%%%%%%%%%%%%%%%%%%%%%%%%%%%%%%%%%

\subsection{Contradictory $\beta$ of the first and of the second kind}\label{subsection: contradictory beta}
Let $M$ be a representation of $Q$ with ordered basis $\cB$. Let $F:\Gamma\to Q$ be the associated map from the coefficient quiver $\Gamma=\Gamma(M,\cB)$ to $Q$. We say that $\beta$ is \emph{contradictory of the first kind} if it is not extremal successor closed.

The subset $\beta$ of $\Gamma_0=\cB$ is \emph{contradictory of the second kind} if it satisfies the following conditions.
\begin{enumerate}
 \item\label{2nd1} $\beta$ is not contradictory of the first kind.
 \item\label{2nd2} There is a subgraph $\Gamma'$ of $\Gamma$ of the form
       \[
   \beginpgfgraphicnamed{fig71}
   \begin{tikzpicture}[>=latex]
  \matrix (m) [matrix of math nodes, row sep=0em, column sep=4em, text height=1ex, text depth=0ex]
%Column 1    2     3     4      5      
   {      & i_0 &\dotsb& i_s &       \\
       k  &     &      &     &  l    \\
          & j_0 &\dotsb& j_s &       \\};
         \path[-,font=\scriptsize]
         (m-2-1) edge node[auto] {$x,\mu_0$} (m-1-2)
         (m-2-1) edge node[auto,swap] {$x,\mu_1$} (m-3-2)
         (m-1-2) edge node[auto] {$z_0$} (m-1-3)
         (m-3-2) edge node[auto] {$z_0$} (m-3-3)
         (m-1-3) edge node[auto] {$z_{s-1}$} (m-1-4)
         (m-3-3) edge node[auto] {$z_{s-1}$} (m-3-4)
         (m-2-5) edge node[auto,swap] {$y,\nu_0$} (m-1-4)
         (m-2-5) edge node[auto] {$y,\nu_1$} (m-3-4);
        \end{tikzpicture}
        \endpgfgraphicnamed
       \]
       where $i_e<j_e$ for $e=0,\dotsc,s$, the arrows $x,y,z_0,\dotsc,z_{s-1}\in Q_1$ are pairwise distinct and of arbitrary orientation, one of the weights $\mu_0,\mu_1,\nu_0,\nu_1\in\C$ is allowed to be zero, which means that the corresponding arrow is not part of $\Gamma'$.
 \item\label{2nd3} If both $k$ and $l$ are sinks or both are sources of $\Gamma'$, then $\mu_0\nu_1\neq\mu_1\nu_0$. If one of $k$ and $l$ is a sink and the other vertex is a source, then $\mu_0\nu_0\neq-\mu_1\nu_1$.
 \item\label{2nd4} $i_0,\dotsc,i_s\notin\beta$, $j_0,\dotsc,j_s\in\beta$; we have $k\in\beta$ if and only if $k$ is a source in $\Gamma'$; we have $l\in\beta$ if and only if $l$ is a source in $\Gamma'$.
 \item\label{2nd5} If $(v,s,t)$ is an arrow of $\Gamma$ that is not contained in $\Gamma'$ with $v\in F(\Gamma')_1$, $F(s)=F(j)$ and $F(t)=F(i)$ where $j\in\{k,j_0,\dotsc,j_r,l\}$ and $i\in\{k,i_0,\dotsc,i_r,l\}$, then $s<j$ or $i<t$.
\end{enumerate}

\begin{rem}\label{rem: second contradictory beta}
 Since $\beta$ is not contradictory of the first kind, $\mu_0\neq 0$ if $k$ is a source and $\mu_1\neq 0$ if $k$ is a sink. Similarly, $\nu_0\neq 0$ if $l$ is a source and $\nu_1\neq 0$ if $l$ is a sink. 
 
 With our convention of drawing the coefficient quiver according to the ordering of its vertices, \eqref{2nd5} says that there is no arrow of $\Gamma$ that lies between the arrows of $\Gamma'$. This ensures that the defining equations of the Schubert cell $C_\beta^M$ that come from $\Gamma'$ are completely determined by $\Gamma'$. 
\end{rem}

We encourage the reader to have a look into section \ref{subsection: coefficient quiver} where we describe the second kind contradictory subsets $\beta$ of $\cB$ in some examples.

\begin{prop}\label{prop: contradictory beta of first and second kind have contradictory beta-states}
 Let $\Sigma$ be the Schubert system of $M$ w.r.t.\ $\cB$. If $\beta$ is contradictory of the first or of the second kind, then the $\beta$-state $\Sigma_\beta$ is contradictory.
\end{prop}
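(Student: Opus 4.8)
For $\beta$ contradictory of the first kind the statement is immediate: by definition $\beta$ is not extremal successor closed, so Lemma~\ref{lemma: non-extremal successor closed beta have contradictory beta-states} already shows that $\Sigma_\beta$ is contradictory. Hence assume from now on that $\beta$ is contradictory of the second kind, with associated subgraph $\Gamma'$ of $\Gamma$ as in \eqref{2nd2}. The plan is to show that $\Ev(f_\beta)$ is empty by running the algorithm of section~\ref{subsection: computing beta-states} on the reduced Schubert system and reaching step~\eqref{step8} with a nonzero constant link; this is legitimate because $\beta$, not being first-kind contradictory, is extremal successor closed.

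The first step is to read off the relevant triples attached to the arrows of $\Gamma'$ together with their $\beta$-reduced polynomials. By \eqref{2nd5} (cf.\ Remark~\ref{rem: second contradictory beta}) the reduced polynomials of these triples are completely determined by $\Gamma'$, and using \eqref{2nd4} to decide which of $i_e,j_e,k,l$ lie in $\beta$ one computes directly (substituting $w_{i,i}=0$ and $w_{j,j}=1$ via \textup{(NF1)}/\textup{(NF4)}, valid for extremal successor closed $\beta$): for each $e=0,\dots,s-1$ a triple $\tau_e$ whose only base vertices are $(i_e,j_e)$ and $(i_{e+1},j_{e+1})$, with $\overline E(\tau_e)=\pm\bigl(w_{i_{e+1},j_{e+1}}-w_{i_e,j_e}\bigr)$; a triple $\tau_x$ whose only base vertex is $(i_0,j_0)$, with $\overline E(\tau_x)=\mu_1 w_{i_0,j_0}-\mu_0$ if $k$ is a source of $\Gamma'$ and $\overline E(\tau_x)=-\mu_0 w_{i_0,j_0}-\mu_1$ if $k$ is a sink; and a triple $\tau_y$ whose only base vertex is $(i_s,j_s)$, with $\overline E(\tau_y)=\nu_1 w_{i_s,j_s}-\nu_0$ if $l$ is a source and $\overline E(\tau_y)=-\nu_0 w_{i_s,j_s}-\nu_1$ if $l$ is a sink. (If $s=0$, there are no $\tau_e$ and $\tau_x,\tau_y$ share their single base vertex; the argument below still applies verbatim.)

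Next one disposes of the degenerate subcase. By Remark~\ref{rem: second contradictory beta} we already know $\mu_0\neq0$ if $k$ is a source and $\mu_1\neq0$ if $k$ is a sink, and similarly for $\nu_0,\nu_1$ and $l$. If in addition the \emph{other} weight at $x$ vanishes, then $\overline E(\tau_x)$ is a nonzero constant, $\tau_x$ survives the initial steps and has no $\beta$-relevant edge, so step~\eqref{step8} applied to $\tau_x$ declares $\Sigma_\beta$ contradictory; the same argument works at $y$. Otherwise all of $\mu_0,\mu_1,\nu_0,\nu_1$ are nonzero. After the initial steps~\eqref{step2}--\eqref{step3}, apply step~\eqref{step7} to $\tau_x$: it has the single $\beta$-relevant neighbour $(i_0,j_0)$, so $\ev_\beta(i_0,j_0)=c$ where $c=\mu_0/\mu_1$ (if $k$ is a source) or $c=-\mu_1/\mu_0$ (if $k$ is a sink). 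Applying step~\eqref{step7} successively to $\tau_0,\dots,\tau_{s-1}$, each of which after the preceding substitution \eqref{step5} becomes linear with unit coefficient, forces $\ev_\beta(i_{e+1},j_{e+1})=\ev_\beta(i_e,j_e)$, hence $\ev_\beta(i_s,j_s)=c$. Now $\tau_y$ has no $\beta$-relevant neighbour left, and after the substitution \eqref{step5} its constant link has weight $\nu_1 c-\nu_0$ (if $l$ is a source) or $-\nu_0 c-\nu_1$ (if $l$ is a sink). In each of the four sink/source configurations of $k$ and $l$, plugging in the value of $c$ shows that this weight equals, up to a nonzero factor $1/\mu_1$ or $1/\mu_0$, exactly the expression $\mu_0\nu_1-\mu_1\nu_0$ or $\mu_0\nu_0+\mu_1\nu_1$ that \eqref{2nd3} forbids to vanish. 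Thus step~\eqref{step8} applied to $\tau_y$ declares $\Sigma_\beta$ contradictory, and since by Lemma~\ref{lemma: minimal partial evaluation} the outcome of the algorithm is independent of the order of execution, $\Sigma_\beta$ is contradictory.

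The routine calculations are the two in the second paragraph (the explicit reduced polynomials, where one must track the signs arising from the orientations of $x$, $y$ and $z_0,\dots,z_{s-1}$ in $Q$) and the four-way match against \eqref{2nd3} in the last paragraph. The only genuinely delicate point — which I expect to be the main obstacle — is the first one: one has to argue from \eqref{2nd5} that no further arrow of $\Gamma$ contributes an extra term to $\overline E(\tau_e)$, $\overline E(\tau_x)$ or $\overline E(\tau_y)$, so that these polynomials really are ``completely determined by $\Gamma'$'' as asserted in Remark~\ref{rem: second contradictory beta}; once this is in place the rest of the proof is bookkeeping.
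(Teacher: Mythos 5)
Your argument is correct and follows the paper's proof essentially verbatim: the first-kind case is the cited lemma, and for the second kind you extract the same reduced equations from $\Gamma'$ (using \eqref{2nd5} to rule out interference from arrows outside $\Gamma'$), propagate the value of $w_{i_0,j_0}$ along the chain of equalities, and verify in all four sink/source configurations of $k$ and $l$ that the resulting constant is exactly the expression forbidden by \eqref{2nd3}. The only cosmetic differences are that you phrase the contradiction through steps \eqref{step7}--\eqref{step8} of the state-computing algorithm rather than directly as the emptiness of $\Ev(f_\beta)$, and that you isolate the degenerate case of a vanishing weight, which the paper's computation absorbs silently.
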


\begin{proof}
 The claim for contradictory $\beta$ of the first kind is Lemma \ref{lemma: non-extremal successor closed beta have contradictory beta-states}.

 Assume that $\beta$ is contradictory of the second kind. Then there is a subgraph $\Gamma'$ of $\Gamma$ satisfying properties \eqref{2nd2}--\eqref{2nd5} above. For a relevant triple $(v,t,s)$ such that $(v,s,t)$ is not an extremal arrow of $\Gamma$ and such that $t\notin\beta$ and $s\in\beta$, we have
 \[
  \overline{E}(v,t,s) \quad = \hspace{-0pt} 
        \sum_{\substack{(v,s,t')\in \Gamma_1\\ t<t'}} \hspace{-5pt} \mu_{v,s,t'} w_{t,t'} \ \ + \hspace{-5pt}
        \sum_{\substack{(v,s',t')\in\Gamma_1\\ t<t'\text{ and }s'<s}}  \hspace{-5pt} \mu_{v,s',t'} w_{t,t'} w_{s',s} \ \ - \hspace{-5pt}
        \sum_{\substack{(v,s',t)\in \Gamma_1\\ s'< s}}   \mu_{v,s',t}w_{s',s} \ \ - \ \ 
        \mu_{v,s,t}
 \]
 where $\mu_{v,s,t}=0$ if $\Gamma$ does not contain the arrow $(v,s,t)$. If $v$ is an arrow in $F(\Gamma')$ and $s,t,$ are vertices of $\Gamma'$, then property \eqref{2nd5} guarantees that no quadratic term occurs in $\overline E(v,t,s)$ and that all indices of the non-trivial variables $w_{i,j}$ of $\overline E(v,t,s)$ are vertices of $\Gamma'$.
 
 Property \eqref{2nd4} determines which vertices of $\Gamma'$ are in $\beta$ and which not. This yields
 \begin{align*}
   \overline E(x,k,j_0)           \ &= \ -\mu_0 w_{i_0,j_0}-\mu_1,                  & \overline E(x,i_0,k)         \ &= \ \mu_1w_{i_0,j_0}-\mu_0,                                       \\
   \overline E(z_r,i_{r},j_{r+1}) \ &= \ w_{i_r,j_r} - w_{i_{r+1},j_{r+1}},         & \overline E(z_r,i_{r+1},j_r) \ &= \ w_{i_{r+1},j_{r+1}}-w_{i_r,j_r},       \\
   \overline E(y,i_s,l)           \ &= \ \nu_1 w_{i_s,j_s} -\nu_0,                  & \overline E(y,l,j_s)         \ &= \ -\nu_0 w_{i_s,j_s} -\nu_1                            
 \end{align*}
 (for $r=0,\dotsc,s-1$) where an equation in the left column (right column) holds if the corresponding arrow is oriented to the left (right).

 We lead the assumption that there is a partial evaluation $\ev_\beta$ in $\Ev(f_\beta)$ to a contradiction. If $k$ is a sink, then $\mu_0\neq 0$ (cf.\ Remark \ref{rem: second contradictory beta}) and $\overline E(x,k,j_0)=0$, which implies $w_{i_0,j_0}=-\mu_1/\mu_0$. The equations $\overline E(z_r,\dotsc)=0$ imply that $w_{i_s,j_s}=\dotsb=w_{i_0,j_0}=-\mu_1/\mu_0$. If $l$ is a sink, then $\overline E(y,l,j_s)=0$ can only be satisfied if $\mu_0\nu_1=\mu_1\nu_0$, and if $l$ is a source, then $\overline E(y,i_s,l)=0$ can only be satisfied if $\mu_0\nu_0=-\mu_1\nu_1$. Neither of these cases holds true by property \eqref{2nd3}, which shows that $\Sigma_\beta$ is contradictory if $k$ is a sink.
 
 If $l$ is a source, then we conclude similar to the above case that $\mu_1\neq 0$ and that $w_{i_s,j_s}=\dotsb=w_{i_0,j_0}=\mu_0/\mu_1$. If $l$ is a sink, then $\overline E(y,l,j_s)=0$ can only be satisfied if $\mu_0\nu_0=-\mu_1\nu_1$, and if $l$ is a source, then $\overline E(y,i_s,l)=0$ can only be satisfied if $\mu_0\nu_1=\mu_1\nu_0$. Neither of these cases holds true by property \eqref{2nd3}, which shows that $\Sigma_\beta$ is contradictory if $k$ is a source. This concludes the proof of the proposition.
\end{proof}

%%%%%%%%%%%%%%%%%%%%%%%%%%%%%%%%%%%%%%%%%%%%%%%%%%%%%%%%%%%%%%%%%%%%%%%%%%%%%%%%%%%%%%%%%%%%%%%%%%%%%%%%%%%%%%%%%%%%%%%%%%%%%%%%%%%%%%%%%%%%%%%%%%%%%%%%%%%%%%%%%%%%%%%%%%%%%%%%%%%%%%%%%%%%%%%%%%%%%%

\subsection{Automorphisms of the Dynkin diagram}
\label{subsection: automorphisms of the dynkin diagram}

In this section, we investigate the effect of an automorphism of the underlying Dynkin diagram on the quiver Grassmannian $\Gr_\ue(M)$.

Let $\omega$ be the orientation of $Q$. Let $M$ be a representation with basis $\cB$ and dimension vector $\alpha$. Let $\Gamma=\Gamma(M,\cB)$ be the coefficient quiver of $M$ w.r.t.\ $\cB$ and $F:\Gamma\to Q$ the associated morphism of quivers. An automorphism $\sigma:\widetilde D_n\to \widetilde D_n$ of the Dynkin diagram $\widetilde D_n$ of $Q$ permutes the labels of the vertices and arrows of $Q$ and, similarly, of $M$, $\cB$ and $\Gamma$. This yields the quiver $Q'=\sigma Q$, the representation $M'=\sigma M$ with basis $\cB'=\sigma\cB$ and coefficient quiver $\Gamma'=\sigma\Gamma$. It comes together with a morphism $\sigma F:\Gamma'\to Q'$. The orientation $\omega'=\sigma(\omega)$ of $Q'$ and the dimension vector $\alpha'=\sigma(\alpha)$ of $M'$ result from permuting the respective coefficients of $\omega$ and $\alpha$.

Since an automorphism $\sigma$ of $\widetilde D_n$ permutes the arrows $\{a,b,c,d\}$ and is determined by this permutation, we will identify $\sigma$ with the corresponding element of the permutation group of $\{a,b,c,d\}$. In case $n=4$, the automorphism group of $\widetilde D_n$ is the full permutation group, in case $n\geq 5$, the automorphism group of $\widetilde D_n$ is the dihedral group
\[
 \{\id, (acbd), (ab)(cd), (adbc), (ab), (cd), (ac)(bd), (ad)(bc)\}.
\]

It is clear that an automorphism of the Dynkin diagram induces a change of coordinates for the quiver Grassmannian $\Gr_\ue(M)$, and therefore preserves the isomorphism type of the quiver Grassmannian and its Schubert decomposition. More precisely, we have the following.

\begin{prop}\label{prop: automorphism of the Dynkin diagram}
 Let $M$ be a representation of $Q$ with ordered basis $B$. Let $\sigma$ be an automorphism the underlying Dynkin diagram of $Q$. Then the association $N\mapsto \sigma N$ defines an isomorphism $\Gr_\ue(M)\to \Gr_{\sigma(\ue)}(\sigma M)$ that identifies the Schubert cell $C_\beta^M$ with $C_{\sigma(\beta)}^{\sigma M}$. In particular, $\Gr_{\sigma(\ue)}(\sigma M)=\coprod C_{\sigma(\beta)}^{\sigma M}$ is a decomposition into affine spaces if and only if $\Gr_\ue(M)=\coprod C_\beta^M$ is a decomposition into affine spaces. \qed
\end{prop}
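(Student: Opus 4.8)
The plan is to treat $\sigma$ as a pure relabelling of indices and to check that, once $\sigma M$ is equipped with the natural ordered basis, the map $N\mapsto\sigma N$ becomes essentially the identity. Since $\sigma$ permutes $Q_0$ and $Q_1$, the representation $\sigma M$ of $Q'=\sigma Q$ is given by $(\sigma M)_{\sigma(p)}=M_p$ and $(\sigma M)_{\sigma(v)}=M_v$; a subrepresentation $N$ of $M$ yields a subrepresentation $\sigma N$ of $\sigma M$ with $(\sigma N)_{\sigma(p)}=N_p$, and plainly $\udim\sigma N=\sigma(\udim N)$. First I would record that $N\mapsto\sigma N$ is therefore a bijection between the subrepresentations of $M$ of dimension vector $\ue$ and the subrepresentations of $\sigma M$ of dimension vector $\sigma(\ue)$, i.e.\ a bijection $\Gr_\ue(M)\to\Gr_{\sigma(\ue)}(\sigma M)$.

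Next I would fix on $\sigma M$ the ordered basis $\cB'=\sigma\cB$ whose linear order is transported from that of $\cB$ along the bijection $\sigma\colon\cB\to\cB'$, so that $\sigma$ becomes an order isomorphism of ordered sets. (Any other ordered basis of $\sigma M$ differs from this one by a change-of-coordinates isomorphism of $\Gr(e,d)$, so nothing is lost.) With this choice the coordinate subspace $M_p\subset\C^d$ attached to $\cB$ coincides with the coordinate subspace $(\sigma M)_{\sigma(p)}\subset\C^d$ attached to $\cB'$, and $M_v$ coincides with $(\sigma M)_{\sigma(v)}$ as a linear map; hence for a subrepresentation $N$ the subspaces $N\subset\C^d$ and $\sigma N\subset\C^d$ are literally equal. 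Therefore $N\mapsto\sigma N$ is the restriction of the identity map of $\Gr(e,d)$ — here $e$ and $d$ are unchanged since $\sigma(\ue)$ is a permutation of $\ue$ and $\dim M=\dim\sigma M$ — and in particular a regular isomorphism $\Gr_\ue(M)\xrightarrow{\ \sim\ }\Gr_{\sigma(\ue)}(\sigma M)$.

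It then remains to match up the Schubert cells. Under the order isomorphism $\sigma$, a subset $\beta\subset\cB$ of type $\ue$ corresponds to the subset $\sigma(\beta)\subset\cB'$ of type $\sigma(\ue)$ occupying exactly the same positions in $\{1,\dotsc,d\}$. Since the partial order $\beta\leq\beta'$ and the Pl\"ucker coordinates $\Delta_\beta$ are defined solely in terms of these positions (and of the subspace), we get $\beta<\beta'$ if and only if $\sigma(\beta)<\sigma(\beta')$, and $\Delta_\beta(N)=\Delta_{\sigma(\beta)}(\sigma N)$ for every $N$. Hence $N\in C_\beta(d)$ if and only if $\sigma N\in C_{\sigma(\beta)}(d)$, and intersecting with the quiver Grassmannians gives $C_\beta^M\xrightarrow{\ \sim\ }C_{\sigma(\beta)}^{\sigma M}$ under $N\mapsto\sigma N$. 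Finally, an isomorphism of varieties carrying $C_\beta^M$ onto $C_{\sigma(\beta)}^{\sigma M}$ for every $\beta$ sends affine spaces to affine spaces and empty cells to empty cells, which yields the last assertion.

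The only point that requires care is the bookkeeping of the linear orders in the second step: one must arrange that $\sigma$ is an order isomorphism of the two ordered bases, so that the comparison of subsets used to define Schubert cells is preserved and the two ambient identifications with $\C^d$ agree on the nose. Everything else is a formality, which is why the statement is essentially immediate once this normalisation is in place.
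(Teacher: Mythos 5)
Your argument is correct and is exactly the change-of-coordinates observation the paper relies on (the paper states the proposition with no written proof beyond the remark that $\sigma$ induces a change of coordinates). Your care with transporting the linear order of $\cB$ to $\sigma\cB$ so that $\sigma$ is an order isomorphism is precisely the one bookkeeping point needed to make the identification of Schubert cells $C_\beta^M\simeq C_{\sigma(\beta)}^{\sigma M}$ literal, and it is consistent with the paper's convention that the decomposition depends only on the ordering within each fibre $\cB_p$.
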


%%%%%%%%%%%%%%%%%%%%%%%%%%%%%%%%%%%%%%%%%%%%%%%%%%%%%%%%%%%%%%%%%%%%%%%%%%%%%%%%%%%%%%%%%%%%%%%%%%%%%%%%%%%%%%%%%%%%%%%%%%%%%%%%%%%%%%%%%%%%%%%%%%%%%%%%%%%%%%%%%%%%%%%%%%%%%%%%%%%%%%%%%%%%%%%%%%%%%%

\subsection{Bases for some indecomposable representations}\label{subsection: bases for indecomposables of small defect}

In this section, we describe bases for those indecomposable representations of $Q$ for which we will establish a decomposition of the associated quiver Grassmannians into affine spaces. Up to an automorphism of the underlying Dynkin diagram, this will exhaust all isomorphism classes of indecomposable representations of defect $-1$ and $0$, with exception of the non-Schurian representations in homogeneous tubes. See Appendix \ref{appendix: bases for representations of type D_n-tilde} for a construction of these bases.

\subsection*{Defect $-1$} Let $M$ be an indecomposable representation of $Q$ of defect $-1$. Up to an automorphism of $Q$, the representation $M$ has an ordered basis such that the coefficient quiver $\Gamma=\Gamma(M,\cB)$ is
\[
   \beginpgfgraphicnamed{fig30}
   \begin{tikzpicture}[>=latex]
  \matrix (m) [matrix of math nodes, row sep=0em, column sep=1.3em, text height=1ex, text depth=0ex]
%column  1     2     3     4     5      6     7     8     9  
   {        &  2  &     &     &      &     &     &     &     \\   % row 1
         3  &     &  4  &  5  &\dotsb& n-1 &  n  &     & n+1 \\   % row 2
            &     &     &     &      &     &     & n+2 &     \\   % row 3
       2n+1 &     & 2n  & 2n-1&\dotsb& n+5 & n+4 &     & n+3 \\   % row 4
            & 2n+2&     &     &      &     &     &     &     \\   % row 5
       2n+3 &     & 2n+4&2n+5 &\dotsb&3n-1 & 3n  &     &3n+1 \\   % row 6
            &     &     &     &      &     &     &3n+2 &     \\   % row 7
            &     &     &     &\dotsb&3n+5 &3n+4 &     &3n+3 \\   % row 8 
};
   \path[-,font=\scriptsize]
   (m-2-3) edge node[auto] {$v_{0}$} (m-2-4)
   (m-2-6) edge node[auto] {$v_{n-5}$} (m-2-7)
   (m-2-7) edge node[auto,swap] {} (m-3-8)
   (m-3-8) edge node[auto,swap] {$c$} (m-4-7)
   (m-4-3) edge node[auto] {$v_0$} (m-4-4)
   (m-4-6) edge node[auto] {$v_{n-5}$} (m-4-7)
   (m-4-3) edge node[auto] {} (m-5-2)
   (m-5-2) edge node[auto] {$b$} (m-6-3)
   (m-6-3) edge node[auto] {$v_0$} (m-6-4)
   (m-6-6) edge node[auto] {$v_{n-5}$} (m-6-7)
   (m-6-7) edge node[auto,swap] {} (m-7-8)
   (m-7-8) edge node[auto,swap] {$c$} (m-8-7)
   (m-8-6) edge node[auto] {$v_{n-5}$} (m-8-7);
   \path[->,dashed,font=\scriptsize]
   (m-2-3) edge node[auto,swap] {$b$} (m-1-2)
   (m-2-1) edge node[auto,swap] {$a$} (m-2-3)
   (m-2-7) edge node[auto] {$d$} (m-2-9)
   (m-4-9) edge node[auto] {$d$} (m-4-7)
   (m-4-3) edge node[auto,swap] {$a$} (m-4-1)
   (m-6-1) edge node[auto,swap] {$a$} (m-6-3)
   (m-6-7) edge node[auto] {$d$} (m-6-9)
   (m-8-9) edge node[auto] {$d$} (m-8-7);
   \path[-,font=\scriptsize]
   (m-2-4) edge node[auto] {} (m-2-5)
   (m-2-5) edge node[auto,swap] {} (m-2-6)
   (m-4-4) edge node[auto] {} (m-4-5)
   (m-4-5) edge node[auto] {} (m-4-6)
   (m-6-4) edge node[auto,swap] {} (m-6-5)
   (m-6-5) edge node[auto] {} (m-6-6)
   (m-8-5) edge node[auto] {} (m-8-6);
  \end{tikzpicture}
\endpgfgraphicnamed
\]
where a dashed arrow, together with its isolated end vertex, is contained in $\Gamma$ if and only if the corresponding arrow of $Q$ is oriented in the indicated direction. Moreover, the following condition is satisfied:
\begin{enumerate}
 \item[(max)] Let $i=i_\max$ be the largest vertex of $\Gamma$ such that $F(i)\in\{q_0,\dotsc, q_{n-4}\}$. Let $v$ be an arrow of $Q$ that connects to $F(i)$. Then there is an arrow $(v,s,t)$ in $\Gamma$ that connects to $i$ if and only if $(v,s,t)$ connects to a vertex $j<i$ or if $v$ is oriented away from $F(i)$.
\end{enumerate}
 More explicitly, this means the following. If $F(i)=q_x$ with $x\in\{1,\dotsc,n-5\}$, then there is precisely one arrow $v$ in $Q$ that does not have a preimage in $\Gamma$ with end vertex $i$. This arrow $v$ has to be oriented towards $F(i)$, i.e.\ we have one of the following situations:
\[
   \beginpgfgraphicnamed{fig31}
   \begin{tikzpicture}[>=latex]
  \matrix (m) [matrix of math nodes, row sep=0em, column sep=2.5em, text height=1ex, text depth=0ex]
%column  1     2     3        4            5     6     7     
   {        &     &     &               &     &     &     \\   % row 1
            &  i  &     &               &    & i   &    \\   % row 2
            &     &     &               &     &     &     \\   % row 3
            &  \  &  \  &     \text{or} &  \  &  \  &     \\   % row 4
            &     &     &               &     &     &     \\   % row 5
            & q_x &q_{x+1}&            &q_{x-1}&q_{x}&            \\   % row 6 
            &     &     &               &     &     &     \\}; % row 7 
   \path[->,font=\scriptsize]
   (m-6-3) edge node[auto,swap] {$v$} (m-6-2)
   (m-6-5) edge node[auto] {$v$} (m-6-6);
  \end{tikzpicture}
\endpgfgraphicnamed
\]
 If $F(i)=q_0$, then the arrows $a$ and $b$ have preimages in $\Gamma$ with end vertex $i$ if and only if they are oriented away from $q_0$. The analogous statement holds true if $F(i)=q_{n-4}$ with $a$ and $b$ replaced by $c$ and $d$. These situations can be illustrated as 
\[
   \beginpgfgraphicnamed{fig115}
   \begin{tikzpicture}[>=latex]
  \matrix (m) [matrix of math nodes, row sep=0em, column sep=2.5em, text height=1ex, text depth=0ex]
%column  1     2     3        4            5     6     7     
   {    i+1 &     &  i  &               &  i  &     & i+1 \\   % row 1
            & i+2 &     &   \text{or}   &     & i+2 &     \\   % row 2
            &  \  &  \  &               &  \  &  \  &     \\   % row 3
   };
   \path[->,dashed,font=\scriptsize]
   (m-1-3) edge node[auto,swap] {$a$} (m-1-1)
   (m-1-3) edge node[auto] {$b$} (m-2-2)
   (m-1-5) edge node[auto] {$d$} (m-1-7)
   (m-1-5) edge node[auto,swap] {$c$} (m-2-6)
   ;
  \end{tikzpicture}
\endpgfgraphicnamed
\]
 where a dashed arrow, together with its isolated end vertex, is contained in $\Gamma$ if and only if the corresponding arrow of $Q$ is oriented in the indicated direction.

 A subset $\beta$ of this basis is contradictory of the second kind if and only if $\beta$ is not contradictory of the first kind and if there is a $k\in\{1,\dotsc,r\}$ such that 
\begin{align*}
 & kn\notin\beta, && kn+1\notin\beta \ (\text{in case }kn+1\in\Gamma_0), && kn+2\notin\beta \text{ if it is a sink of $\Gamma$}, \\
 & kn+4\in\beta   && kn+3\in\beta    \ (\text{in case }kn+3\in\Gamma_0), && kn+2\in\beta \text{ if it is a source of $\Gamma$}.
\end{align*}
Note that all of the vertices in question are contained in a subgraph of $\Gamma$ of the form
\[
  \beginpgfgraphicnamed{fig90}
   \begin{tikzpicture}[>=latex]
  \matrix (m) [matrix of math nodes, row sep=0em, column sep=1.3em, text height=1ex, text depth=0ex]
%column  1     2     3    4        5      6     7      8     9
   {   kn+1 &     & kn  & \ &           & \ & kn   &      & kn+1 \\   % row 1
            & kn+2&     & \ & \text{or} & \ &      & kn+2 &      \\   % row 2
       kn+3 &     & kn+4& \ &           & \ & kn+4 &      & kn+3 \\   % row 3
};
   \path[-,font=\scriptsize]
   (m-2-8) edge node[auto,swap] {} (m-1-7)
   (m-2-8) edge node[auto,swap] {$c$} (m-3-7)
   (m-2-2) edge node[auto] {} (m-1-3)
   (m-2-2) edge node[auto] {$b$} (m-3-3)
;
   \path[->,dashed,font=\scriptsize]
    (m-1-7) edge node[auto] {$d$} (m-1-9)
    (m-3-9) edge node[auto] {$d$} (m-3-7)
    (m-1-3) edge node[auto,swap] {$a$} (m-1-1)
    (m-3-1) edge node[auto,swap] {$a$} (m-3-3)
;
  \end{tikzpicture}
\endpgfgraphicnamed
\]

\subsection*{Tube of rank $n-2$}
In the following, we will describe an ordered basis for indecomposable representations $M$ of defect $0$. Up to an automorphism of the underlying Dynkin diagram, $M$ has an ordered basis $\cB$ such that the coefficient quiver $\Gamma$ takes the following shape.
\[
   \beginpgfgraphicnamed{fig50}
   \begin{tikzpicture}[>=latex]
  \matrix (m) [matrix of math nodes, row sep=0em, column sep=1.3em, text height=1ex, text depth=0ex]
%column  1     2     3     4     5      6     7     8     9  
   {        &     &     &     &      &     &     &     &     \\   % row 1
            &     &     &     &\dotsb& n-1 &  n  &     & n+1 \\   % row 2
            &     &     &     &      &     &     & n+2 &     \\   % row 3
       2n+1 &     & 2n  & 2n-1&\dotsb& n+5 & n+4 &     & n+3 \\   % row 4
            & 2n+2&     &     &      &     &     &     &     \\   % row 5
       2n+3 &     & 2n+4&2n+5 &\dotsb&     &     &     &     \\   % row 6 
};
   \path[-,font=\scriptsize]
   (m-2-5) edge node[auto] {} (m-2-6)
   (m-2-6) edge node[auto] {$v_{n-5}$} (m-2-7)
   (m-2-7) edge node[auto,swap] {} (m-3-8)
   (m-3-8) edge node[auto,swap] {$c$} (m-4-7)
   (m-4-3) edge node[auto] {$v_0$} (m-4-4)
   (m-4-6) edge node[auto] {$v_{n-5}$} (m-4-7)
   (m-4-3) edge node[auto] {} (m-5-2)
   (m-5-2) edge node[auto] {$b$} (m-6-3)
   (m-6-3) edge node[auto] {$v_0$} (m-6-4)
;
   \path[->,dashed,font=\scriptsize]
   (m-2-7) edge node[auto] {$d$} (m-2-9)
   (m-4-9) edge node[auto] {$d$} (m-4-7)
   (m-4-3) edge node[auto,swap] {$a$} (m-4-1)
   (m-6-1) edge node[auto,swap] {$a$} (m-6-3)
   ;
   \path[-,font=\scriptsize]
   (m-4-4) edge node[auto] {} (m-4-5)
   (m-4-5) edge node[auto] {} (m-4-6)
   (m-6-4) edge node[auto,swap] {} (m-6-5)
   ;
  \end{tikzpicture}
\endpgfgraphicnamed
\]
where a dashed arrow, together with its isolated end vertex, is contained in $\Gamma$ if and only if the corresponding arrow of $Q$ is oriented in the indicated direction. Moreover, the following conditions are satisfied:
\begin{enumerate}
 \item[(min)] Let $i=i_\min$ be the smallest vertex of $\Gamma$ such that $F(i)\in\{q_0,\dotsc, q_{n-4}\}$. Let $v$ be an arrow of $Q$ that connects to $F(i)$. Then there is an arrow $(v,s,t)$ in $\Gamma$ that connects to $i$ if and only if $(v,s,t)$ connects to $i+1$ or if $v$ is oriented towards $F(i)$.
 \item[(max)] Let $i=i_\max$ be the largest vertex of $\Gamma$ such that $F(i)\in\{q_0,\dotsc, q_{n-4}\}$. Let $v$ be an arrow of $Q$ that connects to $F(i)$. Then there is an arrow $(v,s,t)$ in $\Gamma$ that connects to $i$ if and only if $(v,s,t)$ connects to $i-1$ or if $v$ is away from $F(i)$.
\end{enumerate}

We assume that $4\leq i_\min\leq n$. The second kind contradictory $\beta$ are characterized exactly as in the defect $-1$-case.

\subsection*{Tubes of rank $2$}
Let $M$ be an indecomposable representation of defect $0$ that is contained in a tube of rank $2$ of the Auslander-Reiten quiver. Up to an automorphism of $Q$, $M$ has an ordered basis $\cB$ such that the coefficient quiver $\Gamma$ is
\[
   \beginpgfgraphicnamed{fig52}
   \begin{tikzpicture}[>=latex]
  \matrix (m) [matrix of math nodes, row sep=0.3em, column sep=1.3em, text height=1ex, text depth=0ex]
%column  1     2     3     4     5      6     7     8     9  
   {        & n+1 &     &     &      &     &     &     &     \\   % row 1
         3  &     &  4  &  5  &\dotsb& n-1 &  n  &     &  2  \\   % row 2
            & n+3 &     &     &      &     &     & n+2 &     \\   % row 3
       2n+1 &     & n+4 & n+5 &\dotsb&2n-1 & 2n  &     &     \\   % row 4
            &3n+1 &     &     &      &     &     &     & 2n+2\\   % row 5
       2n+3 &     &2n+4 &2n+5 &\dotsb&3n-1 & 3n  &     &     \\   % row 6
            &3n+3 &     &     &  \   &     &     &3n+2 &     \\   % row 7
       4n+1 &     &3n+4 &3n+5 &\dotsb&4n-1 & 4n  &     &     \\   % row 8
            &     &     &     &      &     &     &     & 4n+2\\   % row 9
            &     &     &     &      &     &\dotsb&    &     \\   % row 10 
};
   \path[-,font=\scriptsize]
   (m-2-3) edge node[auto] {$v_{0}$} (m-2-4)
   (m-2-6) edge node[auto] {$v_{n-5}$} (m-2-7)
   (m-2-7) edge node[auto,swap] {} (m-3-8)
   (m-3-8) edge node[above=3pt] {$c$} (m-4-7)
   (m-4-3) edge node[auto] {$v_0$} (m-4-4)
   (m-4-6) edge node[auto] {$v_{n-5}$} (m-4-7)
   (m-4-7) edge node[auto,swap] {} (m-5-9)
   (m-5-9) edge node[above=3pt] {$d$} (m-6-7)
   (m-6-3) edge node[auto] {$v_0$} (m-6-4)
   (m-6-6) edge node[auto] {$v_{n-5}$} (m-6-7)
   (m-6-7) edge node[auto,swap] {} (m-7-8)
   (m-7-8) edge node[above=3pt] {$c$} (m-8-7)
   (m-8-3) edge node[auto] {$v_0$} (m-8-4)
   (m-8-6) edge node[auto] {$v_{n-5}$} (m-8-7)
   (m-8-7) edge node[auto,swap] {} (m-9-9)
   (m-9-9) edge node[above=3pt] {$d$} (m-10-7)
;
   \path[->,dashed,font=\scriptsize]
   (m-2-3) edge node[auto,swap] {$b$} (m-1-2)
   (m-2-1) edge node[auto] {$a$} (m-2-3)
   (m-2-9) edge node[auto,swap] {$d$} (m-2-7)
   (m-4-3) edge node[auto,swap] {$a$} (m-4-1)
   (m-3-2) edge node[auto] {$b$} (m-4-3)
   (m-6-1) edge node[auto] {$a$} (m-6-3)
   (m-6-3) edge node[auto,swap] {$b$} (m-5-2)
   (m-8-3) edge node[auto,swap] {$a$} (m-8-1)
   (m-7-2) edge node[auto] {$b$} (m-8-3)
;
   \path[-,font=\scriptsize]
   (m-2-4) edge node[auto] {} (m-2-5)
   (m-2-5) edge node[auto,swap] {} (m-2-6)
   (m-4-4) edge node[auto] {} (m-4-5)
   (m-4-5) edge node[auto] {} (m-4-6)
   (m-6-4) edge node[auto] {} (m-6-5)
   (m-6-5) edge node[auto] {} (m-6-6)
   (m-8-4) edge node[auto] {} (m-8-5)
   (m-8-5) edge node[auto] {} (m-8-6)
;
  \end{tikzpicture}
\endpgfgraphicnamed
\]
whose lower end is 
\[
   \beginpgfgraphicnamed{fig91}
   \begin{tikzpicture}[>=latex]
  \matrix (m) [matrix of math nodes, row sep=0.0em, column sep=1.3em, text height=1ex, text depth=0ex]
%column  1     2     3      4        5       6       7     
   {        &     &     &       & \dotsb &      &     \\   % row 1
            &rn+3 &     &   \   &        & 3n+2 &     \\   % row 2
    (r+1)n+1&     &rn+4 &\dotsb & (r+1)n &      & (r+1)n+2 \\   % row 3
};
   \path[-,font=\scriptsize]
   (m-1-5) edge node[auto,swap] {} (m-2-6)
   (m-2-6) edge node[auto,swap] {$c$} (m-3-5)
;
   \path[->,dashed,font=\scriptsize]
   (m-2-2) edge node[auto] {$b$} (m-3-3)
   (m-3-3) edge node[auto] {$a$} (m-3-1)
   (m-3-5) edge node[auto,swap] {$d$} (m-3-7)
;
   \path[-,font=\scriptsize]
   (m-3-3) edge node[auto] {} (m-3-4)
   (m-3-4) edge node[auto,swap] {} (m-3-5)
;
  \end{tikzpicture}
\endpgfgraphicnamed
\]
if $r$ is odd, and 
\[
   \beginpgfgraphicnamed{fig92}
   \begin{tikzpicture}[>=latex]
  \matrix (m) [matrix of math nodes, row sep=0.2em, column sep=1.3em, text height=1ex, text depth=0ex]
%column  1     2     3      4        5       6       7     
   {        &     &     &       & \dotsb &      &      \\   % row 1
       & (r+1)n+3 &     &   \   &        &      & 3n+2 \\   % row 2
       rn+1 &     &rn+4 &\dotsb & (r+1)n & (r+1)n+2 \\   % row 3
};
   \path[-,font=\scriptsize]
   (m-1-5) edge node[auto,swap] {} (m-2-7)
   (m-2-7.west) edge node[auto,swap] {$d$} (m-3-5)
;
   \path[->,dashed,font=\scriptsize]
   (m-3-3) edge node[auto,swap] {$b$} (m-2-2)
   (m-3-1) edge node[auto,swap] {$a$} (m-3-3)
   (m-3-5) edge node[auto,swap] {$c$} (m-3-6)
;
   \path[-,font=\scriptsize]
   (m-3-3) edge node[auto] {} (m-3-4)
   (m-3-4) edge node[auto,swap] {} (m-3-5)
;
  \end{tikzpicture}
\endpgfgraphicnamed
\]
if $r$ is even. As usual, the dashed arrows together with their isolated end vertices are part of $\Gamma$ if and only if the corresponding arrow of $Q$ is oriented in the indicated direction. 

A subset $\beta$ of this basis $\cB$ is contradictory of the second kind if and only if $\beta$ is not contradictory of the first kind and if there is a $k\in\{1,\dotsc,r\}$ such that 
\begin{align*}
 & (k-1)n+4,\dotsc,kn\notin\beta, && kn+1\notin\beta \ (\text{in case }kn+1\in\Gamma_0), && kn+2\notin\beta \text{ if it is a sink of $\Gamma$}, \\
 & kn+4,\dotsc,(k+1)n\in\beta     && kn+3\in\beta    \ (\text{in case }kn+3\in\Gamma_0), && kn+2\in\beta \text{ if it is a source of $\Gamma$}.
\end{align*}
Note that all of the vertices in question are contained in a subgraph of $\Gamma$ of the form
\[
   \beginpgfgraphicnamed{fig97}
   \begin{tikzpicture}[>=latex]
  \matrix (m) [matrix of math nodes, row sep=0em, column sep=1.3em, text height=1ex, text depth=0ex]
%column  1      2         3        4        5      6     7     
   {        &      &          &        &        &     &     \\   % row 1
        \   & kn+1 & (k-1)n+4 & \dotsb & kn     &     &     \\   % row 2
\hspace{1.6cm}&    &          &        &   &\hspace{1.6cm}& kn+2\\   % row 3
        \   & kn+3 & kn+4     & \dotsb & (k+1)n &     &     \\   % row 4
};
   \path[-,font=\scriptsize]
   (m-2-5) edge node[auto,swap] {} (m-3-7)
   (m-3-7) edge node[auto,swap] {$d$} (m-4-5)
;
   \path[->,dashed,font=\scriptsize]
   (m-2-3) edge node[auto,swap] {$b$} (m-2-2)
   (m-4-2) edge node[auto] {$b$} (m-4-3)
;
   \path[-,font=\scriptsize]
   (m-2-3) edge node[auto] {} (m-2-4)
   (m-2-4) edge node[auto,swap] {} (m-2-5)
   (m-4-3) edge node[auto] {} (m-4-4)
   (m-4-4) edge node[auto,swap] {} (m-4-5)
;
  \end{tikzpicture}
\endpgfgraphicnamed
\]
if $k$ is even, and
\[
   \beginpgfgraphicnamed{fig98}
   \begin{tikzpicture}[>=latex]
  \matrix (m) [matrix of math nodes, row sep=0em, column sep=1.3em, text height=1ex, text depth=0ex]
%column  1      2         3        4        5      6     7     
   {        &      &          &        &        &     &     \\   % row 1
       kn+1 &   \  & (k-1)n+4 & \dotsb & kn     &     &     \\   % row 2
       &\hspace{1.6cm}&       &        &        & kn+2& \hspace{1.6cm} \\   % row 3
       kn+3 &   \  & kn+4     & \dotsb & (k+1)n &     &     \\   % row 4
};
   \path[-,font=\scriptsize]
   (m-2-5) edge node[auto,swap] {} (m-3-6)
   (m-3-6) edge node[auto,swap] {$c$} (m-4-5)
;
   \path[->,dashed,font=\scriptsize]
   (m-2-3) edge node[auto,swap] {$a$} (m-2-1)
   (m-4-1) edge node[auto] {$a$} (m-4-3)
;
   \path[-,font=\scriptsize]
   (m-2-3) edge node[auto] {} (m-2-4)
   (m-2-4) edge node[auto,swap] {} (m-2-5)
   (m-4-3) edge node[auto] {} (m-4-4)
   (m-4-4) edge node[auto,swap] {} (m-4-5)
;
  \end{tikzpicture}
\endpgfgraphicnamed
\]
if $k$ is odd.

\subsection*{Homogeneous tubes}
Let $M$ be a Schurian representation of defect $0$ whose isomorphism class is contained in a homogeneous tube of of the Auslander-Reiten quiver. Then there exists an ordered basis $\cB$ of $M$ such that the coefficient quiver $\Gamma$ is
\[
   \beginpgfgraphicnamed{fig93}
   \begin{tikzpicture}[>=latex]
  \matrix (m) [matrix of math nodes, row sep=0em, column sep=3em, text height=1ex, text depth=0ex]
%column  1         2         3         4         5          6         7     
   {     1   &         &  2      & \dotsb  &  n-2    &         & n-1     \\   % row 1
             &   n     &         &         &         & n+2     &         \\   % row 2
       2n+1  &         & 2n      & \dotsb  & n+4     &         & n+3     \\   % row 3
};
   \path[-,font=\tiny]
   (m-1-3) edge node[auto] {} (m-1-4)
   (m-1-4) edge node[auto,swap] {} (m-1-5)
   (m-3-3) edge node[auto] {} (m-3-4)
   (m-3-4) edge node[auto,swap] {} (m-3-5)
   (m-1-5) edge node[below left=-4pt] {$c,\mu_0$} (m-2-6)
   (m-2-6) edge node[above left=-4pt] {$c,\mu_1$} (m-3-5)
   (m-2-2) edge node[auto,swap] {$b$} (m-1-3)
   (m-2-2) edge node[auto] {} (m-3-3)
   ;
   \path[->,dashed,font=\tiny]
   (m-1-1) edge node[auto] {$a$} (m-1-3)
   (m-1-5) edge node[auto] {$d$} (m-1-7)
   (m-3-3) edge node[auto] {$a$} (m-3-1)
   (m-3-7) edge node[auto] {$d$} (m-3-5)
   ;
  \end{tikzpicture}
\endpgfgraphicnamed
\]
where a dashed arrow, together with its isolated end vertex, is contained in $\Gamma$ if and only if the corresponding arrow of $Q$ is oriented in the indicated direction. The weights $\mu_0,\mu_1\in\C^\times$ satisfy that $\mu_0\neq\mu_1$ if both $q_b$ and $q_c$ are a sink or both are a source of $Q$, and $\mu_0\neq-\mu_1$ otherwise.

For this coefficient quiver, we encounter second kind contradictory $\beta$-stated for the following subquivers of $\Gamma$.
\[ \hspace{2.8cm}
  \beginpgfgraphicnamed{fig94}
   \begin{tikzpicture}[>=latex]
  \matrix (m) [matrix of math nodes, row sep=0em, column sep=3em, text height=1ex, text depth=0ex]
%column  1           2      3         4         5          6           7     
   {              & 2  & \dotsb  &  n-2  &              & n-1  \\   % row 1
              n   &    &         &       &\hspace{0.8cm}&           \\   % row 2
                  & 2n & \dotsb  &  n+4  &              & n+3  \\   % row 3     
};
   \path[-,font=\tiny]
   (m-2-1) edge node[auto,swap] {$b$} (m-1-2)
   (m-2-1) edge node[auto] {} (m-3-2)
;
   \path[->,dashed,font=\tiny]
   (m-1-4) edge node[auto] {$d$} (m-1-6)
   (m-3-6) edge node[auto] {$d$} (m-3-4)
;
   \path[-,font=\tiny]
   (m-1-2) edge node[auto] {} (m-1-3)
   (m-1-3) edge node[auto,swap] {} (m-1-4)
   (m-3-2) edge node[auto] {} (m-3-3)
   (m-3-3) edge node[auto] {} (m-3-4)
;
  \end{tikzpicture}
\endpgfgraphicnamed
\]
\[ \hspace{2.8cm}
   \beginpgfgraphicnamed{fig95}
   \begin{tikzpicture}[>=latex]
  \matrix (m) [matrix of math nodes, row sep=0em, column sep=3em, text height=1ex, text depth=0ex]
%column  1         2         3         4         5          6           7     
   {              & 2   & \dotsb  &  n-2  &     & \hspace{0.8cm}  \\   % row 1
              n   &     &         &       & n+2 &  \\   % row 2
                  & 2n  & \dotsb  &  n+4  &     &  \\   % row 3     
};
   \path[-,font=\tiny]
   (m-1-4) edge node[below left=-4pt] {$c,\mu_0$} (m-2-5)
   (m-2-5) edge node[above left=-4pt] {$c,\mu_1$} (m-3-4)
   (m-2-1) edge node[auto,swap] {$b$} (m-1-2)
   (m-2-1) edge node[auto] {} (m-3-2)
;
   \path[-,font=\tiny]
   (m-1-2) edge node[auto] {} (m-1-3)
   (m-1-3) edge node[auto,swap] {} (m-1-4)
   (m-3-2) edge node[auto] {} (m-3-3)
   (m-3-3) edge node[auto] {} (m-3-4)
;
  \end{tikzpicture}
  \endpgfgraphicnamed
\]
\[\hspace{8.65cm}
  \beginpgfgraphicnamed{fig96}
   \begin{tikzpicture}[>=latex]
  \matrix (m) [matrix of math nodes, row sep=0em, column sep=3em, text height=1ex, text depth=0ex]
%column  1         2         3         4         5          6           7     
   {          n-2  &     & n-1  \\   % row 1
                   & n+2 &           \\   % row 2
              n+4  &     & n+3  \\   % row 3     
};
   \path[-,font=\tiny]
   (m-1-1) edge node[below left=-4pt] {$c,\mu_0$} (m-2-2)
   (m-2-2) edge node[above left=-4pt] {$c,\mu_1$} (m-3-1)
;
   \path[->,dashed,font=\tiny]
   (m-1-1) edge node[auto] {$d$} (m-1-3)
   (m-3-3) edge node[auto] {$d$} (m-3-1)
;
  \end{tikzpicture}
\endpgfgraphicnamed
\]
The set $\beta$ is contradictory if there is a subquiver of the above shape such that the following conditions are satisfied for the vertices of this subquiver.
\begin{enumerate}
 \item $2,\dotsc,n-2\notin\beta$;
 \item $n+3,\dotsc,2n\in\beta$;
 \item $n\in\beta$ if and only if it is a source of $\Gamma$;
 \item $n+2\in\beta$ if and only if it is a source of $\Gamma$.
\end{enumerate}

%%%%%%%%%%%%%%%%%%%%%%%%%%%%%%%%%%%%%%%%%%%%%%%%%%%%%%%%%%%%%%%%%%%%%%%%%%%%%%%%%%%%%%%%%%%%%%%%%%%%%%%%%%%%%%%%%%%%%%%%%%%%%%%%%%%%%%%%%%%%%%%%%%%%%%%%%%%%%%%%%%%%

\subsection{The main theorem}\label{subsection: the main theorem}
Let $M$ be one indecomposable representation of $Q$ that we considered in the previous section, and $\cB$ its ordered basis.

\begin{thm}\label{thm: the main theorem}
 Let $\ue$ be a dimension vector for $Q$. Then the Schubert decomposition
 \[
  \Gr_\ue(M) \quad = \quad \coprod_{\substack{\beta\subset\cB\\\text{of type }\ue}} \ C_\beta^M
 \]
 w.r.t.\ $\cB$ is a decomposition into affine spaces. A Schubert cell $C_\beta^M$ is empty if and only if $\beta$ is contradictory of the first or of the second kind.
\end{thm}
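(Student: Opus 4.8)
The plan is to verify, for each of the four families of indecomposable representations $M$ listed in section \ref{subsection: bases for indecomposables of small defect} (defect $-1$, the tube of rank $n-2$, the tubes of rank $2$, and the Schurian representations in homogeneous tubes), the hypothesis of Corollary \ref{cor: decomposition into affine spaces if all beta-states are contradictory or solvable}: that for every $\beta\subset\cB$ of type $\ue$, the $\beta$-state $\Sigma_\beta$ is either contradictory or solvable. Combined with Proposition \ref{prop: contradictory beta of first and second kind have contradictory beta-states} — which already tells us that $\beta$ contradictory of the first or second kind forces $\Sigma_\beta$ contradictory — the theorem then follows once we show the \emph{converse}: every $\beta$ whose $\beta$-state is contradictory is contradictory of the first or second kind, and every remaining $\beta$ (the ones we must show give an affine-space Schubert cell) has a solvable $\beta$-state.

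First I would set up the reduced Schubert system $\overline\Sigma$ for each family explicitly from the coefficient quivers drawn in section \ref{subsection: bases for indecomposables of small defect}, using the reduced polynomials $\overline E(v,t,s)$. The key structural observation, already foreshadowed in the type-$D_n$ discussion of section \ref{subsection: dynkin quivers}, is that for these coefficient quivers almost every arrow of $\Gamma$ is extremal, so almost every edge of $\overline\Sigma$ is an extremal edge corresponding to a linear term; the few non-extremal edges are localized (near the vertices labelled $a$ and $v_r$, resp.\ near the splitting points where the weights $\mu_0,\mu_1$ or $\nu_0,\nu_1$ live). I would then exhibit a patchwork $\{\Xi_k\}$ for $\overline\Sigma$ in the spirit of the Kronecker-quiver proof (Proposition \ref{prop: schubert decompositions for the kronecker quiver}): the patches index the "layers" $kn+1,\dots,(k+1)n$ of the coefficient quiver, each patch being (up to contraction of pure extremal paths, cf.\ Proposition \ref{prop:contracting extremal paths} and Corollary \ref{cor: extremal solution for extremal paths}) an extremal path or a small explicitly-analyzable system. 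The partial order on the index set $I$ should be the natural linear order on layers, chosen so that axioms (P1)--(P3) hold.

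Next, for a fixed $\beta$ I would run the algorithm of section \ref{subsection: computing beta-states} to compute $\Sigma_\beta$, patch by patch. The crucial case analysis is: within each layer/patch $\Xi_{k,\beta}$, the subquiver obtained after the initial steps is one of a small list of shapes (analogous to figures fig68--fig70 in section \ref{subsection: dynkin quivers}), governed by which of the vertices $kn,kn+1,kn+2,kn+3,kn+4$ lie in $\beta$. If the shape is one that forces step \eqref{step8} to fire with a nonzero constant link — precisely the configuration where the "upper" and "lower" branches are both closed off by $\beta$ in the way encoded by conditions \eqref{2nd4}--\eqref{2nd5} of contradictory-of-the-second-kind — then $\Sigma_\beta$ is contradictory and $\beta$ is contradictory of the second kind (here I would cite the exact characterizations of second-kind-contradictory $\beta$ given at the end of section \ref{subsection: bases for indecomposables of small defect} for each family, matching them against the subgraphs fig90, fig97, fig98, fig94--fig96). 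Otherwise, I claim the patch contracts to an extremal path (or a disjoint union of them) after deleting $\beta$-trivial vertices, hence has an extremal solution; applying Corollary \ref{cor: extremal patchwork solutions for the reduced schubert system} (via Proposition \ref{prop:contracting extremal paths}\eqref{extpath4} to reassemble the patchwork solution) gives that $\Sigma_\beta$ is solvable. Either way the dichotomy of Corollary \ref{cor: decomposition into affine spaces if all beta-states are contradictory or solvable} is established.

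The main obstacle I anticipate is the bookkeeping in the "otherwise" branch: showing that whenever $\beta$ is not contradictory of either kind, the layer-subsystems genuinely contract to extremal paths and the patchwork solution satisfies the compatibility axiom (PS). Two points are delicate. First, as the Remark after Lemma \ref{lemma: properties of extremal edges} warns, an extremal edge incident to a $\beta$-relevant triple need not have a $\beta$-relevant other endpoint, so I cannot blindly transport the global extremal solution of $\overline\Sigma$ to $\Sigma_\beta$ — instead I must re-solve each $\Sigma_\beta$, which is why the patchwork/extremal-path machinery of section \ref{subsection: patchwork solutions} and section \ref{subsection: extremal paths} is needed rather than just Proposition \ref{prop: extremal solutions of the reduced schubert system}. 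Second, the quadratic links at the splitting vertices (weights $\mu_0,\mu_1,\nu_0,\nu_1$, and the inequalities $\mu_0\nu_1\neq\mu_1\nu_0$ etc.\ from \eqref{2nd3}) must be tracked carefully through step \eqref{step5} and step \eqref{step7}(d): it is exactly the non-degeneracy of these inequalities that guarantees that when the configuration is \emph{not} of the contradictory shape, the quadratic link either disappears or reduces to a nonzero linear/extremal one rather than producing a new obstruction. Handling the homogeneous-tube case (the only one with genuinely non-unit weights surviving in a cycle) and the alternation between $r$ odd / $r$ even in the rank-$2$ tubes are the places where this requires the most care; the argument is uniform in $n$ once these local pictures are pinned down.
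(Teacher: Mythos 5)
Your overall architecture matches the paper's: a patchwork of the reduced Schubert system, a layer-by-layer case analysis of which basis vectors lie in $\beta$, extremal paths and contractions to handle the bulk of the system, and the identification of the contradictory configurations with the first- and second-kind conditions. However, there is one genuine gap. You claim that whenever $\beta$ is not contradictory of either kind, the patch subsystems ``contract to extremal paths'' and hence every such $\beta$-state is solvable, so that Corollary \ref{cor: decomposition into affine spaces if all beta-states are contradictory or solvable} applies. This is false: for one of the four orientations of the arrows $a$ and $b$ (the case $q_a$ a source, $q_b$ a sink of $Q$ at $q_0$), the patch $\Xi_{k,l}$ with $\ev_\beta(kn,kn+4)=\eta$ produces a $\beta$-state that is \emph{not solvable} in the sense of Definition \ref{df: solvable beta state} --- the two triples $(b,kn,ln+2)$ and $(a,kn,ln+3)$ compete for the same pairs and no orientation satisfies (S1)--(S3) --- yet the Schubert cell is still an affine space. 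The paper resolves this by an explicit variable transformation ($w_{kn+2,ln+2}\mapsto w_{kn+2,ln+2}-w_{kn+2,kn+2}$ and $w_{kn,ln+4}\mapsto w_{kn,ln+4}+w_{kn,kn+4}w_{kn+3,ln+3}$) after which the modified patch does admit a solution. Section \ref{subsection: dynkin quivers} already warns that such non-solvable-but-affine states occur for $\widetilde D_n$ and ``cannot be avoided by a re-ordering of the basis elements,'' so your plan needs an extra device beyond Theorem \ref{thm: solvable beta-state implies affine schubert cell} at exactly this point.

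A secondary structural issue: your patchwork is indexed by single layers $\{kn+1,\dotsc,(k+1)n\}$, but a relevant pair $(i,j)$ or triple $(v,t,s)$ has its two coordinates in two \emph{different} layers, so the patches must be indexed by pairs $(k,l)$ of layers (as the paper's $\Xi_{k,l}$ and $\Pi_{k,l}$ are), with a partial order on pairs chosen so that (P3) and (PS) hold; a single-layer indexing cannot satisfy (P1). Relatedly, the solution of a patch $\Xi_{k,l}$ genuinely depends on $\beta$ through the value $\ev_\beta(kn,kn+4)\in\{0,\pm1,\eta\}$ computed in the diagonal patch $\Xi_{k,k}$, which is why the paper cannot give a single extremal solution of $\overline\Sigma$ and must instead re-orient each patch per class of $\beta$; your proposal gestures at this but the bookkeeping of propagating $\ev_\beta(kn,kn+4)$ from $\Xi_{k,k}$ into $\Xi_{k,l}$ is the actual content of the proof.
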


This has the following immediate consequence.

\begin{cor}
The Euler characteristic of $\Gr_\ue(M)$ is
 \[
  \chi\bigl(\,\Gr_\ue(M)\,\bigr) \ = \ \# \, \biggl\{ \, \beta\subset\cB  \, \biggl| \, \begin{array}{c} \beta\text{ of type }\ue\text{ and not contradictory}\\ \text{of the first or of the second kind} \end{array}\, \biggr\}.
 \]
\end{cor}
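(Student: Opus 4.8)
The plan is to read this corollary off directly from Theorem~\ref{thm: the main theorem} together with the Proposition recorded in the introduction. The first step is to apply Theorem~\ref{thm: the main theorem} to the given dimension vector $\ue$: it asserts that $\Gr_\ue(M)=\coprod_{\beta\ \text{of type}\ \ue} C_\beta^M$ is a decomposition into affine spaces, so every $C_\beta^M$ is either empty or isomorphic to an affine space $\A^{d_\beta}$, and it furthermore identifies the empty cells as exactly those indexed by a $\beta$ that is contradictory of the first or of the second kind.

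The second step is to compute the Euler characteristic from this stratification. Since $\chi(\A^d)=1$ for all $d\ge 0$ and $\chi(\emptyset)=0$, and since for a decomposition into affine spaces the Euler characteristic is the sum of the Euler characteristics of the strata — this is precisely the Proposition stated in the introduction, which rests on \cite[Cor.\ 6.2]{L14} (the cohomology is concentrated in even degree, so there is no cancellation) — I obtain
\[
 \chi\bigl(\Gr_\ue(M)\bigr) \;=\; \#\{\,\beta\subset\cB\ \text{of type}\ \ue \mid C_\beta^M\neq\emptyset\,\}.
\]
The third and last step is purely bookkeeping: substitute the emptiness criterion of Theorem~\ref{thm: the main theorem} into the right-hand side, so that the condition $C_\beta^M\neq\emptyset$ (for $\beta$ of type $\ue$) is replaced by the condition that $\beta$ be not contradictory of the first kind and not contradictory of the second kind. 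This yields the stated formula.

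There is no genuine obstacle here; the corollary is a bookkeeping consequence of the main theorem, and the only subtlety worth a remark is the index set. One might expect a count over all $\beta\subset\cB$ of cardinality $e=\sum e_p$, but the cells with $\beta$ not of type $\ue$ are already empty — by the factorization $\Gr_\ue(M)\to\prod_{p}\Gr(e_p,d_p)$ of section~\ref{subsection: schubert decompositions in detail} — and such $\beta$ are not regarded as contradictory; restricting the count to $\beta$ of type $\ue$ is therefore both correct and consistent with the wording of the statement. One must also read "not contradictory" in the corollary as the conjunction "not contradictory of the first kind and not contradictory of the second kind", which is exactly the emptiness condition supplied by Theorem~\ref{thm: the main theorem}.
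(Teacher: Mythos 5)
Your proposal is correct and follows exactly the argument the paper intends: the corollary is stated as an immediate consequence of Theorem \ref{thm: the main theorem} combined with the Proposition from the introduction (resting on \cite[Cor.\ 6.2]{L14}), which is precisely your two-step reduction plus the bookkeeping substitution of the emptiness criterion. Your remark on restricting the index set to $\beta$ of type $\ue$ is consistent with the paper's conventions and requires no further justification.
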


\begin{rem}
 In the sequel \cite{LW15} of this paper, we will extend Theorem \ref{thm: the main theorem} to all indecomposable representations of $Q$. This is done by other means than our treatment of the present cases in terms of Schubert systems. 
 
 Tentative calculations indicate that Schubert systems can also be used to handle the other cases, but these calculations also show that the combinatorics of the Schubert system gets too rich to present such a proof in a reasonable way. 
 
 Furthermore, it seems to us that for other representations $M$ than considered in Theorem \ref{thm: the main theorem}, there exists no ordered basis $\cB$ such that the empty Schubert cells $C_\beta^M$ are characterized by contradictory subsets $\beta$ of the first or second kind. For all the bases that we consider in Appendix \ref{appendix: bases for representations of type D_n-tilde}, additional $\beta$ with empty cells $C_\beta^M$ occur.
\end{rem}

%%%%%%%%%%%%%%%%%%%%%%%%%%%%%%%%%%%%%%%%%%%%%%%%%%%%%%%%%%%%%%%%%%%%%%%%%%%%%%%%%%%%%%%%%%%%%%%%%%%%%%%%%%%%%%%%%%%%%%%%%%%%%%%%%%%%%%%%%%%%%%%%%%%%%%%%%%%%%%%%%%%%
%%%%%%%%%%%%%%%%%%%%%%%%%%%%%%%%%%%%%%%%%%%%%%%%%%%%%%%%%%%%%%%%%%%%%%%%%%%%%%%%%%%%%%%%%%%%%%%%%%%%%%%%%%%%%%%%%%%%%%%%%%%%%%%%%%%%%%%%%%%%%%%%%%%%%%%%%%%%%%%%%%%%

\section{Proof of Theorem \ref{thm: the main theorem}}
\label{section: proof of theorem 4.4}

\subsection{Defect $-1$} We begin the proof with a preprojective representation $M$ of defect $-1$ with basis $\cB$ and associated coefficient quiver $\Gamma$ of the form
\[
   \beginpgfgraphicnamed{fig32}
   \begin{tikzpicture}[>=latex]
  \matrix (m) [matrix of math nodes, row sep=0em, column sep=1.3em, text height=1ex, text depth=0ex]
%column  1     2     3     4     5      6     7     8     9  
   {        &  2  &     &     &      &     &     &     &     \\   % row 1
         3  &     &  4  &  5  &\dotsb& n-1 &  n  &     & n+1 \\   % row 2
            &     &     &     &      &     &     & n+2 &     \\   % row 3
       2n+1 &     & 2n  & 2n-1&\dotsb& n+5 & n+4 &     & n+3 \\   % row 4
            & 2n+2&     &     &      &     &     &     &     \\   % row 5
       2n+3 &     & 2n+4&2n+5 &\dotsb&3n-1 & 3n  &     &3n+1 \\   % row 6
            &     &     &     &      &     &     &3n+2 &     \\   % row 7
            &     &     &     &\dotsb&3n+5 &3n+4 &     &3n+3 \\   % row 8
   };
   \path[-,font=\scriptsize]
   (m-2-3) edge node[auto] {$v_{0}$} (m-2-4)
   (m-2-6) edge node[auto] {$v_{n-5}$} (m-2-7)
   (m-2-7) edge node[auto,swap] {} (m-3-8)
   (m-3-8) edge node[auto,swap] {$c$} (m-4-7)
   (m-4-3) edge node[auto] {$v_0$} (m-4-4)
   (m-4-6) edge node[auto] {$v_{n-5}$} (m-4-7)
   (m-4-3) edge node[auto] {} (m-5-2)
   (m-5-2) edge node[auto] {$b$} (m-6-3)
   (m-6-3) edge node[auto] {$v_0$} (m-6-4)
   (m-6-6) edge node[auto] {$v_{n-5}$} (m-6-7)
   (m-6-7) edge node[auto,swap] {} (m-7-8)
   (m-7-8) edge node[auto,swap] {$c$} (m-8-7)
   (m-8-6) edge node[auto] {$v_{n-5}$} (m-8-7);
   \path[->,dashed,font=\scriptsize]
   (m-2-3) edge node[auto,swap] {$b$} (m-1-2)
   (m-2-1) edge node[auto,swap] {$a$} (m-2-3)
   (m-2-7) edge node[auto] {$d$} (m-2-9)
   (m-4-9) edge node[auto] {$d$} (m-4-7)
   (m-4-3) edge node[auto,swap] {$a$} (m-4-1)
   (m-6-1) edge node[auto,swap] {$a$} (m-6-3)
   (m-6-7) edge node[auto] {$d$} (m-6-9)
   (m-8-9) edge node[auto] {$d$} (m-8-7);
   \path[-,font=\scriptsize]
   (m-2-4) edge node[auto] {} (m-2-5)
   (m-2-5) edge node[auto,swap] {} (m-2-6)
   (m-4-4) edge node[auto] {} (m-4-5)
   (m-4-5) edge node[auto] {} (m-4-6)
   (m-6-4) edge node[auto,swap] {} (m-6-5)
   (m-6-5) edge node[auto] {} (m-6-6)
   (m-8-5) edge node[auto] {} (m-8-6);
  \end{tikzpicture}
\endpgfgraphicnamed
\]
where the dashed arrows and their isolated ends are contained in $\Gamma$ if and only if the corresponding arrow of $Q$ is oriented in the indicated direction. Recall further from section \ref{subsection: bases for indecomposables of small defect} that the largest vertex of $\Gamma$ satisfies property (max), which will be important in the proof and to which we refer at the appropriate place in the proof.

Let $r$ be the number of non-extremal edges of $\Gamma$. Then the largest vertex $i$ of $\Gamma$ is between $rn+4$ and $(r+1)n+2$ unless $i=2$ and $r=0$.

\subsection*{The patchwork} We define the following patchwork for $\overline\Sigma$. The index set $I$ consists of all triples $(k,l,\epsilon)$ where $k,l\in\Z$ and $\epsilon\in\{+,-\}$ with the further restrictions that $0\leq k \leq l \leq r+1$ and $l-k$ is even if $\epsilon=+$, and $1\leq k < l \leq r+1$ if $\epsilon=-$. We order $I$ by the rule 
\[
 (k,l,\epsilon)<(k',l',\epsilon') \qquad \text{if and only if} \qquad \left\{ \begin{array}{l}
                                                                               l-k < l'-k', \text{ or} \\
                                                                               l-k = l'-k'\text{ and }l<l', \text{ or} \\
                                                                               l-k = l'-k', l=l', \epsilon=-\text{ and }\epsilon'=+. \\
                                                                              \end{array}\right.
\]

For $(k,l,-)\in I$, we define the patch $\Pi_{k,l}=\Xi_{(k,l,-)}$ as the full subsystem of $\overline\Sigma$ whose vertices are the relevant pairs $(i,j)$ and relevant triples $(v,t,s)$ with $i,t\in\{(k-1)n+4,\dotsc,kn\}$ and $j,s\in\{(l-1)n+4,\dotsc,ln\}$.

For $(k,l,+)\in I$, we define the patch $\Xi_{k,l}=\Xi_{(k,l,+)}$ as the full subsystem of $\overline\Sigma$ whose vertices are the relevant pairs $(i,j)$ and relevant triples $(v,t,s)$ with $i,t\in\{kn,\dotsc,kn+4\}$ and $j,s\in\{ln,\dotsc,ln+4\}$, together with the relevant pairs $(kn,kn+4)$ and $(ln,ln+4)$, with the exception of the following cases: if $k=0$ or if $(kn,kn+4)$ would be an isolated vertex of $\Xi_{k,l}$, we omit $(kn,kn+4)$ from $\Xi_{k,l}$; if $l=0$ or if $(ln,ln+4)$ would be an isolated vertex of $\Xi_{k,l}$, we omit $(ln,ln+4)$ from $\Xi_{k,l}$. In other words, $(kn,kn+4)$ is contained in $\Xi_{k,l}$ if and only if it is the base vector of a quadratic link whose other base vector and whose tip are contained in $\Xi_{k,l}$; and analogous for $(ln,ln+4)$.

Note that the patches $\Xi_{0,0}$ and $\Xi_{r+1,r+1}$ are always empty, and the patches $\Xi_{k,r+1}$ are empty unless $\Gamma$ contains $(kn,(r+1)n)$. It is easily verified that $\{\Xi_{(k,l,\epsilon)}\}$ defines indeed a patchwork for $\overline\Sigma$, but we forgo to spell out the details. We encourage the reader to convince himself along the following example.

\begin{figure}[tb]
\[
 \beginpgfgraphicnamed{fig34}
  \begin{tikzpicture}[>=latex]
   \matrix (m) [matrix of math nodes, row sep=0.5em, column sep=0.6em, text height=1.5ex, text depth=0.5ex]
% column 10        &        11        &        12        &        14        &        15        &        16        &       17         &       18         &       19         &        20        &        22        &        23        &   
{           &        \          &         \        &         \        &         \        &         \        &         \        &        \         &        \         &        \         & \const{2020}{-1} &\triple{2022}{c}  &         \        &  20 \\
            &        \          &         \        &         \        &         \        &         \        &         \        &        \         &        \         &        \         &         \        &\triple{1922}{d}  &         \        &  19 \\
            &        \          &         \        &         \        &         \        &         \        &         \        &        \         &        \         &        \         &         \        &  \pair{1822}     &\vertex{1823}{v_1}&  18 \\
            &        \          &         \        &         \        & \const{1714}{-1} &         \        &         \        &        \         &        \         &        \         &         \        &         \        &  \pair{1723}     &  17 \\
            &        \          &         \        &         \        &\triple{1214}{b}  &\triple{1215}{a}  &  \pair{1216}     &        \         &        \         &        \         &         \        &         \        &         \        &  12 \\
            &        \          &         \        &         \        &         \        &         \        &\vertex{1116}{v_0}&  \pair{1117}     &        \         &        \         &         \        &         \        &  \pair{1123}     &  11 \\
            &        \          &         \        &         \        &         \        &         \        &         \        &\vertex{1017}{v_1}&  \pair{1018}     &        \         &         \        &  \pair{1022}     &\vertex{1023}{v_1}&  10 \\
\const{1010}{-1}&\triple{0810}{c}&        \        &         \        &         \        &         \        &         \        &        \         &\triple{0818}{c}  &        \         &  \pair{0820}     &\triple{0822}{c}  &         \        &   8 \\
            & \triple{0710}{d}  &         \        &         \        &         \        &         \        &         \        &        \         &\triple{0718}{d}  &  \pair{0719}     &         \        &\triple{0722}{d}  &         \        &   7 \\
            &   \pair{0610}     &\vertex{0611}{v_1}&         \        &         \        &         \        &         \        &\vertex{0617}{v_1}&  \pair{0618}     &        \         &         \        &  \pair{0622}     &\vertex{0623}{v_1}&   6 \\
            &        \          &  \pair{0511}     &\vertex{0512}{v_0}&         \        &         \        &\vertex{0516}{v_0}&  \pair{0517}     &        \         &        \         &         \        &         \        &  \pair{0523}     &   5 \\
            &        \          &         \        &  \pair{0412}     &\triple{0414}{b}  &\triple{0415}{a}  &  \pair{0416}     &        \         &        \         &        \         &         \        &         \        &         \        &   4 \\
            &        \          &         \        &         \        &         \        &  \pair{0315}     &         \        &        \         &        \         &        \         &         \        &         \        &         \        &   3 \\
            &         10        &        11        &        12        &        14        &        15        &        16        &       17         &       18         &       19         &        20        &        22        &        23        &     \\
};
     \path[-,very thick,font=\scriptsize,gray]
      (0810) edge[bend left=35] (0610)
      (0710) edge (0610)
      (0610) edge (0611)
      (0511) edge (0611)
      (0511) edge (0512)
      (0412) edge (0512)
      (0414) edge[bend left=35] (0416)
      (0415) edge (0315)
      (0415) edge (0416)
      (0416) edge (0516)
      (0516) edge (0517)
      (0517) edge (0617)
      (0617) edge (0618)
      (0718) edge (0618)
      (0718) edge (0719)
      (0818) edge (0820)
      (0818) edge[bend right=35] (0618)
      (1214) edge[bend right=35] (1216)
      (1215) edge (1216)
      (1116) edge (1216)
      (1116) edge (1117)
      (1017) edge (1117)
      (1017) edge (1018)
      (0822) edge[bend left=35] (0622)
      (0722) edge (0622)
      (0622) edge (0623)
      (0523) edge (0623)
      (1022) edge (1023)
      (1123) edge (1023)
      (2022) edge[bend left=35] (1822)
      (1922) edge (1822)
      (1822) edge (1823)
      (1723) edge (1823)
;
     \path[-,font=\scriptsize]
      (0412) edge (0414)
      (0818) edge (1018)
      (0822) edge (1022)
%      (0822) edge (0820)
      (0822) edge[bend left=10] (0820)
      (0822) edge[bend right=10] node[pos=0.2] (0822-0820) {} (0820)
      (0822) edge[bend left=30] node[pos=0.03] (0822-1822) {} (1822)
      (0722) edge node[pos=0.2] (0722-0719) {} (0719)
      (0722) edge[bend left=30] node[pos=0.05] (0722-1822) {} (1822)
;
    \path[dotted,-,thick,font=\scriptsize]
      (0810) edge (1010)
      (1214) edge (1714)
      (2022) edge (2020)
      (0722-0719.center) edge[bend left=45] node[above left=-2pt] {} (0722-1822.center)
      (0822-0820.center) edge[bend left=45] node[above left=-2pt] {} (0822-1822.center)
;
\begin{pgfonlayer}{patchwork}
 \filldraw[fill=gray!40,rounded corners] 
   (-5.7,-1.8) rectangle (-2.9,-4.3) %Pi 1,2
   (1.9,0.1) rectangle (-0.9,2.6)    %Pi 2,3
   (4.0,4.3) rectangle (5.7,2.7)     %Pi 3,4
   (1.9,-4.3) rectangle (-0.9,-1.8)  %Pi 1,3
   (4.0,1.7) rectangle (5.7,0.1)     %Pi 2,4
   (4.0,-1.8) rectangle (5.7,-3.4)   %Pi 1,4
;
 \filldraw[fill=gray!20,rounded corners] 
   (1.0,0.8) -- (1.0,-2.5) -- (4.8,-2.5) -- (4.8,0.8) -- (4.1,0.8) -- (4.1,3.6) -- (4.8,3.6) -- (4.8,4.2) -- (3.4,4.2) -- (3.4,0.8) -- cycle % Gamma 1,3
;
 \filldraw[fill=gray!20,rounded corners] 
   (-6.8,0.0) rectangle (-4.8,-2.5)  % Gamma 1,1
   (-0.1,1.7) rectangle (-2.9,3.5)   % Gamma 2,2
   (2.8,6.1) rectangle (4.9,3.5)     % Gamma 3,3
   (-3.7,-3.5) rectangle (-0.1,-5.1) % Gamma 0,2
;
 \draw[dashed,rounded corners] 
   (1.0,0.8) -- (1.0,-2.5) -- (4.8,-2.5) -- (4.8,0.8) -- (4.1,0.8) -- (4.1,3.6) -- (4.8,3.6) -- (4.8,4.2) -- (3.4,4.2) -- (3.4,0.8) -- cycle % Gamma 1,3
   (-5.7,-1.8) rectangle (-2.9,-4.3) 
   (1.9,0.1) rectangle (-0.9,2.6) 
   (1.9,-4.3) rectangle (-0.9,-1.8) 
   (4.0,4.3) rectangle (5.7,2.7) 
   (4.0,-1.8) rectangle (5.7,-3.4) 
   (4.0,1.7) rectangle (5.7,0.1) 
;
  \node at (-4.8,-3.7) {$\Pi_{1,2}$};
  \node at (1.0,1.9) {$\Pi_{2,3}$};
  \node at (4.7,3.1) {$\Pi_{3,4}$};
  \node at (1.0,-3.7) {$\Pi_{1,3}$};
  \node at (4.7,1.2) {$\Pi_{2,4}$};
  \node at (4.7,-3.0) {$\Pi_{1,4}$};
  \node at (-6.1,-1.3) {$\Xi_{1,1}$};
  \node at (-1.4,3.0) {$\Xi_{2,2}$};
  \node at (3.5,4.8) {$\Xi_{3,3}$};
  \node at (-2.3,-4.7) {$\Xi_{0,2}$};
  \node at (2.7,0.2) {$\Xi_{1,3}$};
 \end{pgfonlayer}
\end{tikzpicture}
 \endpgfgraphicnamed
\] 
 \caption{$\overline\Sigma$ together with its patchwork for $M$ and $\cB$ as in Example \ref{ex: reduced schubert system of type d_n-tilde and its patchwork}}
 \label{fig34}
\end{figure}

\begin{ex}\label{ex: reduced schubert system of type d_n-tilde and its patchwork}
 To get an idea of how the reduced Schubert system $\overline\Sigma$ of a defect $-1$ indecomposable and its patchwork looks like, consider the following example. Let $\Gamma$ be the coefficient quiver
\[
\beginpgfgraphicnamed{fig33}
   \begin{tikzpicture}[>=latex]
  \matrix (m) [matrix of math nodes, row sep=0em, column sep=2em, text height=1ex, text depth=0ex]
%column  1     2     3     4     5      6     7     8     9  
   {     3  &     &  4  &  5  &  6  &     &  7  \\   % row 1
            &     &     &     &     &  8  &     \\   % row 2
            &     & 12  & 11  & 10  &     &     \\   % row 3
            & 14  &     &     &     &     &     \\   % row 4
        15  &     & 16  & 17  & 18  &     & 19  \\   % row 5
            &     &     &     &     & 20  &     \\   % row 6
            &     &     & 23  & 22  &     &     \\   % row 7 
};
   \path[->,very thick,font=\scriptsize,gray]
   (m-1-1) edge node[auto,black] {$a$} (m-1-3)
   (m-1-3) edge node[auto,black] {$v_{0}$} (m-1-4)
   (m-1-4) edge node[auto,black] {$v_{1}$} (m-1-5)
   (m-1-5) edge node[auto,black] {} (m-2-6)
   (m-1-5) edge node[auto,black] {$d$} (m-1-7)
   (m-3-3) edge node[auto,black] {$v_0$} (m-3-4)
   (m-3-4) edge node[auto,black] {$v_{1}$} (m-3-5)
   (m-5-1) edge node[auto,black,swap] {$a$} (m-5-3)
   (m-4-2) edge node[auto,black] {$b$} (m-5-3)
   (m-5-3) edge node[auto,black] {$v_0$} (m-5-4)
   (m-5-4) edge node[auto,black] {$v_{1}$} (m-5-5)
   (m-5-5) edge node[auto,black] {} (m-6-6)
   (m-5-5) edge node[auto,black] {$d$} (m-5-7)
   (m-7-4) edge node[auto,black] {$v_{1}$} (m-7-5)
   ;
   \path[->,font=\scriptsize]
   (m-3-5) edge node[auto] {$c$} (m-2-6)
   (m-4-2) edge node[auto] {} (m-3-3)
   (m-7-5) edge node[auto] {$c$} (m-6-6)
   ;
  \end{tikzpicture}
\endpgfgraphicnamed
\]
 whose extremal arrows are illustrated bold and grey. The number of non-extremal arrows is $r=3$.

 The reduced Schubert system of $M$ w.r.t.\ to $\cB$ and the patchwork $\{\Xi_{(k,l,\epsilon)}\}=\{\Xi_{k,l}\}\cup\{\Pi_{k,l}\}$ is illustrated in Figure \ref{fig34}. Note that we omit the edges and links of $\overline\Sigma$ that are not contained in any patch in this illustration. As usual, we draw extremal edges grey and thick.

 The ambitious reader might verify that $\overline\Sigma$ is totally solvable, but that there is no orientation for $\overline\Sigma$ that restricts to a solution of each non-contradictory $\beta$-state. Namely, the possible solutions of the patch $\Xi_{1,3}$ depend on which of the vertices $(6,22)$, $(d,7,22)$, $(c,8,22)$ and $(10,22)$ are in $\Sigma_\beta$. This effect is studied systematically in the following proof and occurs for patches of the form $\Xi_{k,l}$ with $0<k<l<r+1$.
\end{ex}

\subsection*{Orientation of the patches $\Pi_{k,l}$} The patches $\Pi_{k,l}$ are extremal paths and therefore have an extremal solution. In order to satisfy (PS) for all patches, all arrows of $\Pi_{k,l}$ have to be oriented away from the patch $\Xi_{k',l'}$ with $(k',l',+)<(k,l,-)$. In Figure \ref{fig35}, we indicate this orientation with arrows where the patches $\Pi_{k,l}$ are symbolized as lines and the patches $\Xi_{k,l}$ are symbolized as boxes. 

By property (max) of the coefficient quiver $\Gamma$ from section \ref{subsection: bases for indecomposables of small defect}, the paths $\Pi_{k,r+1}$ either connect non-trivially to the patch $\Xi_{k',r+1}$ (where $k'=k$ or $k'=k-1$, depending on the parity of $r-k$) or the last vertex of $\Pi_{k,r+1}$ is a relevant pair. This means that the chosen orientation is indeed a solution for $\Pi_{k,r+1}$.

Note that the shape of the lower right corner depends on the parity of $r$. The illustration is adequate if r is odd. If $r$ is even, there is no patch in the lower right corner, but one patch $\Xi_{0,r}$ slightly to the left and one patch $\Xi_{1,r+1}$ slightly on top of the corner.

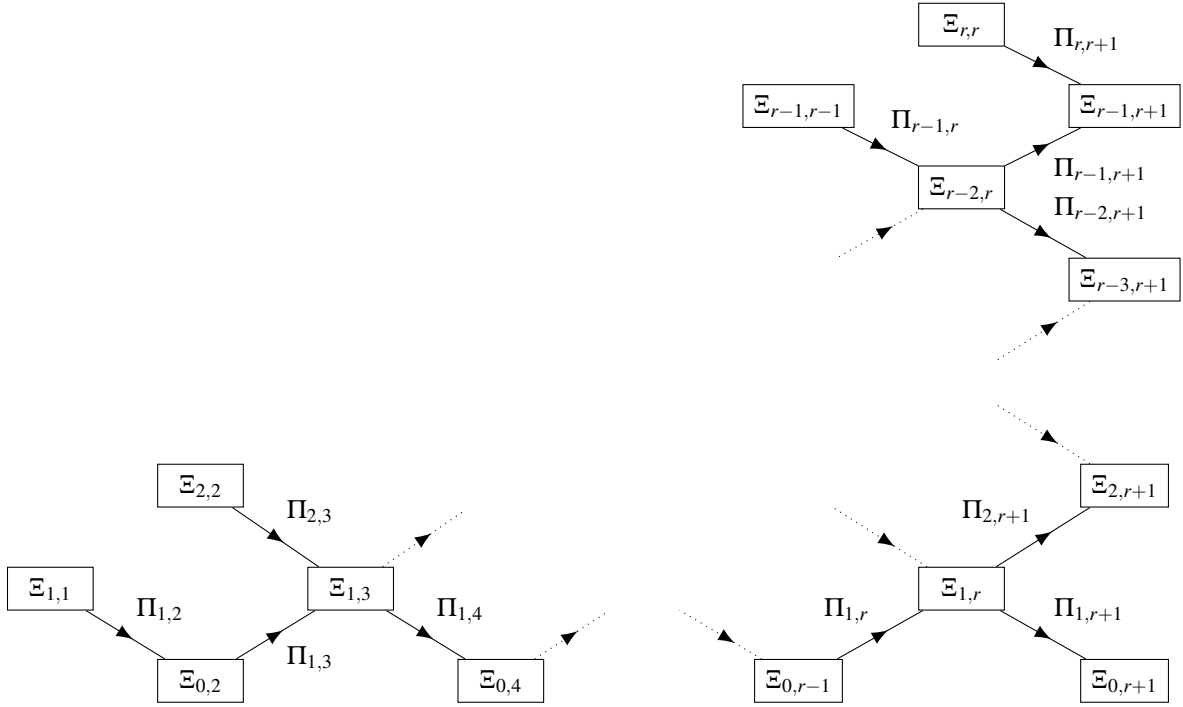
\begin{figure}[tb]
\[
 \beginpgfgraphicnamed{fig35}
  \begin{tikzpicture}[>=latex]
   \matrix (m) [matrix of math nodes, row sep=1.0em, column sep=2em, text height=1.5ex, text depth=0.5ex]
% column 1       &        2        &         3       &         4       &   &        r-1              &        r            &       r+1       
{                &        \        &         \       &         \       & \ &         \               & \gbox{r-r}{r,r}     &        \          \\  % r
                 &        \        &         \       &         \       & \ & \gbox{r-1-r-1}{r-1,r-1} &         \           & \gbox{r-1-r+1}{r-1,r+1}   \\  % r-1
                 &        \        &         \       &         \       & \ &         \               & \gbox{r-2-r}{r-2,r} &        \          \\  % r-2
                 &        \        &         \       &         \       & \ &    \nobox{r-3-r-1}      &         \           & \gbox{r-3-r+1}{r-3,r+1}   \\  % r-3
                 &        \        &         \       &         \       & \ &         \               & \nobox{-r}          &        \          \\  %  
                 & \gbox{2-2}{2,2} &         \       & \nobox{2-4}     & \ &    \nobox{2-r-1}        &         \           & \gbox{2-r+1}{2,r+1}   \\  % 2
 \gbox{1-1}{1,1} &        \        & \gbox{1-3}{1,3} &         \    &\nobox{1-}&     \               & \gbox{1-r}{1,r}     &        \          \\  % 1
                 & \gbox{0-2}{0,2} &         \       & \gbox{0-4}{0,4} & \ & \gbox{0-r-1}{0,r-1}     &         \           & \gbox{0-r+1}{0,r+1}   \\  % 0
};
     \path[-,font=\footnotesize]
      (1-1)     edge[->-=0.6] node[auto] {$\Pi_{1,2}$} (0-2)
      (2-2)     edge[->-=0.6] node[auto] {$\Pi_{2,3}$} (1-3)
      (0-2)     edge[->-=0.6] node[auto,swap] {$\Pi_{1,3}$} (1-3)
      (1-3)     edge[->-=0.6] node[auto] {$\Pi_{1,4}$} (0-4)
      (r-1-r-1) edge[->-=0.6] node[auto] {$\Pi_{r-1,r}$} (r-2-r)
      (r-r)     edge[->-=0.6] node[auto] {$\Pi_{r,r+1}$} (r-1-r+1)
      (r-2-r)   edge[->-=0.6] node[auto,swap] {$\Pi_{r-1,r+1}$} (r-1-r+1)
      (r-2-r)   edge[->-=0.6] node[auto] {$\Pi_{r-2,r+1}$} (r-3-r+1)
      (0-r-1)   edge[->-=0.6] node[auto] {$\Pi_{1,r}$} (1-r)
      (1-r)     edge[->-=0.6] node[auto] {$\Pi_{2,r+1}$} (2-r+1)
      (1-r)     edge[->-=0.6] node[auto] {$\Pi_{1,r+1}$} (0-r+1)
;
     \path[dotted,font=\footnotesize]
      (1-3)     edge[->-=0.6] (2-4)
      (0-4)     edge[->-=0.6] (1-)
      (0-r-1)   edge[-<-=0.5] (1-)
      (1-r)     edge[-<-=0.5] (2-r-1)
      (2-r+1)   edge[-<-=0.5] (-r)
      (r-3-r+1) edge[-<-=0.5] (-r)
      (r-2-r)   edge[-<-=0.5] (r-3-r-1)
;
\end{tikzpicture}
 \endpgfgraphicnamed
\] 
 \caption{The patchwork of $\overline\Sigma$ with an orientation for $\Pi_{k,l}$}
 \label{fig35}
\end{figure}

\subsection*{Strategy of the proof}

In the following, we will investigate the patches $\Xi_{k,l}$, which is a case by case study. Given a non-contradictory $\beta$, we will show that the $\beta$-state $\Xi_{k,l,\beta}$ of $\Xi_{k,l}$ has a solution. Similarly, the chosen extremal orientation of $\Pi_{k,l}$ induces a solution of the $\beta$-states $\Pi_{k,l,\beta}$. All these solutions together form a patchwork solution for $\overline\Sigma$, which implies that $C_\beta^M$ is an affine space (cf.\ Corollary \ref{cor: patchwork solution implies affine space}). We will also see that if $\Sigma_\beta$ is contradictory, then $\beta$ is contradictory of the first or second kind. This will establish the Theorem \ref{thm: the main theorem} for defect $-1$. 

In fact, we will consider for a given patch $\Xi_{k,l}$ a class $C$ of subsets $\beta$ of $\cB$ at once, and---after a suitable variable transformation, which is necessary in a certain case---, we will describe an orientation for $\Xi_{k,l}$ that restricts to a solution of the $\beta$-state $\Xi_{k,l,\beta}$ for each $\beta$ in the class $C$. Unless $k=0$ or $l=r+1$ where we can use the same orientation of $\Xi_{k,l}$ for all $\beta$, we will make use of the following arguments.
\begin{enumerate}
 \item\label{strategy1} We apply certain steps of the algorithm in section \ref{subsection: computing beta-states} that apply to all $\beta$ in $C$. In particular, we will apply the initial steps to identify $\beta$-trivial vertices, edges and links, which simplifies the patch $\Xi_{k,l}$ to a system $\Xi_{k,l,C}$.
 \item\label{strategy2}  A solution of a patch $\Xi_{k,l,\beta}$ satisfies (PS) if and only if there is no edge oriented towards $(kn,ln)$ or $(kn+4,ln)$---provided these pairs are vertices of $\Xi_{k,l,\beta}$. This fact is apparent from the illustrations for each case below.
 \item\label{strategy3} We describe an orientation of $\Xi_{k,l,C}$ with the property that for every $\beta$ in $C$ and for every edge in $\Xi_{k,l,C}$ that is oriented from a triple to a pair, this edge is $\beta$-relevant if the triple is so. Most of the edges in question will be extremal, thus they satisfy this property since $\beta$ is not contradictory of the first kind. For the other edges, we will reason this property in detail.
\end{enumerate}
The argument described in \eqref{strategy3} is rigorous for the following reason. Property (PS) is satisfied for the solution of each $\beta$-state of each patch. This implies that for none of the edges in $\Xi_{k,l}$ that are oriented from a triple to a pair, the pair is contained in a patch $\Xi_{k',l',\epsilon}$ with $(k',l',\epsilon)<(k,l,+)$. Therefore, a $\beta$-state $\Sigma_\beta$ can be computed ``patch-wise'', i.e.\ we can compute $\Xi_{k,l,\epsilon,\beta}$ recursively over the index set $I$. This implies that step \eqref{step7} applied to patches $\Xi_{k',l',\epsilon}$ with $(k',l',\epsilon)>(k,l,+)$ does not have an effect on the edges of $\Xi_{k,l}$.

\subsection*{A remark on the notation}
A patch $\Xi_{k,l}$ contains only triples whose first coordinate is either in $\{a,b\}$ (if $k$ and $l$ are even) of in $\{c,d\}$ (if $k$ and $l$ are odd). Since these two cases behave symmetrically, we investigate $\Xi_{k,l}$ for even $k$ and $l$. The proof for odd $k$ and $l$ is literally the same if $a$ is replaced by $d$ and $b$ is replaced by $c$. There are four possible orientations of the arrows $a$ and $b$, which we will study one by one.

We refer to the last coordinate of a vertex as its \emph{horizontal coordinate} and to the one but the last coordinate as its \emph{vertical coordinate}, which refers to our way of illustrating the Schubert system.

\bigskip

\setlength\intextsep{0pt}
\begin{wrapfigure}{l}{3.0cm}
\vspace{-0,9cm}  
 \beginpgfgraphicnamed{fig36}
  \begin{tikzpicture}[>=latex]
   \matrix (m) [matrix of math nodes, row sep=-1.0em, column sep=1em, text height=2.0ex, text depth=1.0ex]
{ q_a &     &  \   \\
      &     & q_0  \\
   \  & q_b &  \quad   \\
};
     \path[->,font=\footnotesize]
      (m-2-3) edge node[auto,swap] {$a$} (m-1-1)
      (m-2-3) edge node[auto] {$b$} (m-3-2)
;
     \draw[inner sep=10pt,ultra thick,rounded corners]
     (m-1-1.north west) rectangle (m-3-3.south east);
 \end{tikzpicture}
 \endpgfgraphicnamed
\vspace{0,0cm}  
\end{wrapfigure}
\noindent
\begin{minipage}{\minipagewidth}
 \textbf{The patches $\Xi_{k,k}$.} Since the last two coordinates of every vertex in $\Xi_{k,k}$ vary by definition through the same set of vertices $\{kn,\dotsc,kn+4\}$, the patches $\Xi_{k,k}$ are the reduced Schubert systems of the full subgraph of $\Gamma$ with the same set of vertices. This subgraph and $\Xi_{k,k}$ take the following shape.
\end{minipage}
\[
 \beginpgfgraphicnamed{fig37}
  \begin{tikzpicture}[>=latex]
   \matrix (m) [matrix of math nodes, row sep=0em, column sep=2em, text height=1ex, text depth=0ex]
{  kn+1 &      & kn    \\
        & kn+2 &     \\
        &      & kn+4    \\
};
     \path[->,font=\footnotesize]
      (m-3-3) edge node[auto,swap] {$b$} (m-2-2)
;
    \path[->,very thick,font=\scriptsize,gray]
      (m-1-3) edge node[auto,swap,black] {$a$} (m-1-1)
      (m-1-3) edge node[auto] {} (m-2-2)
;
\node at (0,-2.0) {};
 \end{tikzpicture}
 \endpgfgraphicnamed
\hspace{3cm} 
  \beginpgfgraphicnamed{fig38}
  \begin{tikzpicture}[>=latex]
   \matrix (m) [matrix of math nodes, row sep=0.5em, column sep=1em, text height=1.5ex, text depth=0.5ex]
    {   \node[const](22){-1}; & \node[triple](b24){b}; & kn+2  \\
                              & \node[triple](a14){a}; & kn+1  \\
                              & \node[pair](04){};     & kn    \\
                              &       kn+4             &       \\
 };
    \path[-,very thick,font=\scriptsize,gray]
    (a14) edge (04)
    (b24) edge[bend left=35] node[auto,black] {$-1$} (04)
;
    \path[dotted,very thick]
    (b24) edge (22)
;
   \end{tikzpicture}
 \endpgfgraphicnamed
\]

From this it is visible that $\beta$ is contradictory of the first kind if $kn\in\beta$ and $kn+1\notin\beta$, or if $kn\in\beta$ and $kn+2\notin\beta$; $\beta$ is contradictory of the second kind if it is not contradictory of the first kind and if $kn,kn+1,kn+2\notin\beta$ and $kn+4\in\beta$.

If $\Sigma_\beta$ is not contradictory and if $kn\in\beta$ (and thus $kn+1,kn+2\in\beta$), or if $kn+4\notin\beta$, then $\Xi_{k,k,\beta}$ is empty and $\ev_\beta(kn,kn+4)=0$. 

The same is true if $\Sigma_\beta$ is not contradictory and if $kn+2,kn+4\in\beta$ and $kn,kn+1\notin\beta$ or if $kn+1,kn+4\in\beta$ and $kn,kn+2\notin\beta$, as we can apply step \eqref{step7} from section \ref{subsection: computing beta-states} to $(a,kn+1,kn+4)$ in the former case and to $(b,kn+2,kn+4)$ in the latter case. In all cases, the trivial solution for $\Xi_{k,k,\beta}$ satisfies (PS) since (PS) is an empty condition if $\Xi_{k,k,\beta}$ does not contain any edge.

If $\Sigma_\beta$ is not contradictory and if $kn+1,kn+2,kn+4\in\beta$ and $kn\notin\beta$, then $\Xi_{k,k,\beta}$ consists of the vertex $(kn,kn+4)$. Thus the trivial solution for $\Xi_{k,k,\beta}$ satisfies (PS). This exhausts all possibilities for $\{kn,\dotsc,kn+4\}\cap\beta$. 

We will see that the above cases of contradictory $\beta$-states are (up to a different orientation of $a$ and $b$) the only cases of contradictory $\beta$-states that occur. Therefore $\Sigma_\beta$ is contradictory if and only if $\beta$ is contradictory of the first or second kind.

\subsection*{The patches $\Xi_{k,l}$} We turn to the general case $0<k<l<r+1$. In this situation, the two last coordinates of the vertices of $\Xi_{k,l}$ vary in $\{kn,\dotsc,kn+4\}\times\{ln,\dotsc,ln+4\}$, and $\Xi_{k,l}$ looks as follows. Note that the vertex $(kn,kn+4)$ is not the base vertex of a quadratic link with tip and the other base vertex in $\Xi_{k,l}$; therefore $(kn,kn+4)$ is not a vertex of $\Xi_{k,l}$.
\[
  \beginpgfgraphicnamed{fig39}
  \begin{tikzpicture}[>=latex]
   \matrix (m) [matrix of math nodes, row sep=0.5em, column sep=1em, text height=1.5ex, text depth=0.5ex]
    {                         &                        &                        & \node[pair](ll){};     & ln   \\
                              &                        &                        &                        & \vdots   \\
       \node[pair](40){};     &                        &                        & \node[pair](44){};     & kn+4 \\
       \node[triple](b20){b}; &                        & \node[pair](22){};     & \node[triple](b24){b}; & kn+2 \\
       \node[triple](a10){a}; & \node[pair](11){};     &                        & \node[triple](a14){a}; & kn+1 \\
       \node[pair](00){};     &                        &                        & \node[pair](04){};     & kn   \\
               ln             &       ln+1             &        ln+2            &        ln+4            &      \\
 };
    \path[-,very thick,font=\scriptsize,gray]
    (a10) edge node[auto,black] {$-1$} (00)
    (a10) edge (11)
    (b20) edge[bend right=35] node[auto,swap,black] {$-1$} (00)
    (b20) edge (22)
    (a14) edge node[auto,black,swap] {$-1$} (04)
    (b24) edge[bend left=35] node[auto,black] {$-1$} (04)
;
   \path[-,font=\scriptsize]
    (b20) edge node[auto,black] {$-1$} (40)
    (b24) edge node[auto,black,swap] {$-1$} (44)
    (b24) edge[bend left=10] node[auto] {} (22)
    (b24) edge[bend right=10] node[pos=0.15] (b24-22) {} (22)
    (b24) edge[bend left=30]  node[pos=0.1] (b24-ll) {} (ll)
    (a14) edge node[pos=0.1] (a14-11) {} (11)
    (a14) edge[bend left=40] node[pos=0.1] (a14-ll) {} (ll)
;
    \path[dotted,-,thick,font=\scriptsize]
    (b24-22.center) edge[bend left=45] node[above left=-2pt] {} (b24-ll.center)
    (a14-11.center) edge[bend left=45] node[above left=-2pt] {} (a14-ll.center)
;
   \end{tikzpicture}
 \endpgfgraphicnamed
\]

We consider an arbitrary subset $\beta$ of $\cB$ such that $\Sigma_\beta$ is not contradictory and inspect solutions for the patch $\Xi_{k,l}$, depending on $\{kn,\dotsc,kn+4\}\cap\beta$. Since we will orientate in all cases the edges $\bigr\{(a,kn+1,ln+4),(kn+1,ln+1)\bigl\}$ and $\bigr\{(b,kn+2,ln+4),(kn+2,ln+2)\bigl\}$ towards the triples $(a,kn+1,ln+4)$ and $(b,kn+2,ln+4)$---as far as they are $\beta$-relevant---, we can disregard the quadratic links and the vertex $(ln,ln+4)$ of $\Xi_{k,l}$; for simplicity, we will omit them from the following illustrations.

Assume that $kn+1\in\beta$. Then all vertices of $\Xi_{k,l}$ with vertical coordinate $kn+1$ are $\beta$-trivial, and the following extremal solution of the resulting subsystem of $\Xi_{k,l}$ restricts to a solution of $\Sigma_\beta$
\[
  \beginpgfgraphicnamed{fig73}
  \begin{tikzpicture}[>=latex]
   \matrix (m) [matrix of math nodes, row sep=0.5em, column sep=1em, text height=1.5ex, text depth=0.5ex]
    {  \node[pair](40){};     &                        &                        & \node[pair](44){};     & kn+4 \\
       \node[triple](b20){b}; &                        & \node[pair](22){};     & \node[triple](b24){b}; & kn+2 \\
                              &                        &                        &                        & kn+1 \\
       \node[pair](00){};     &                        &                        & \node[pair](04){};     & kn   \\
               ln             &       ln+1             &        ln+2            &        ln+4            &      \\
 };
    \path[-,very thick,font=\scriptsize,gray]
    (b20) edge[-<-=0.5] node[auto,swap,black] {$-1$} (00)
    (b20) edge[->-=0.55] (22)
    (b24) edge[->-=0.6] node[auto,black] {$-1$} (04)
;
    \path[-,font=\scriptsize]
    (b20) edge[-<-=0.7] node[auto,black] {$-1$} (40)
    (b24) edge[-<-=0.7] node[auto,black,swap] {$-1$} (44)
    (b24) edge[-<-=0.5] (22)
;
   \end{tikzpicture}
 \endpgfgraphicnamed
\]

Assume that $kn+2\in\beta$. Then all vertices of $\Xi_{k,l}$ with vertical coordinate $kn+2$ are $\beta$-trivial, and the following extremal solution of the resulting subsystem of $\Xi_{k,l}$ restricts to a solution of $\Sigma_\beta$.
\[
  \beginpgfgraphicnamed{fig72}
  \begin{tikzpicture}[>=latex]
   \matrix (m) [matrix of math nodes, row sep=0.5em, column sep=1em, text height=1.5ex, text depth=0.5ex]
    {  \node[pair](40){};     &                        &                        & \node[pair](44){};     & kn+4 \\
                              &                        &                        &                        & kn+2 \\
       \node[triple](a10){a}; & \node[pair](11){};     &                        & \node[triple](a14){a}; & kn+1 \\
       \node[pair](00){};     &                        &                        & \node[pair](04){};     & kn   \\
               ln             &       ln+1             &        ln+2            &        ln+4            &      \\
 };
    \path[-,very thick,font=\scriptsize,gray]
    (a10) edge[-<-=0.6] node[auto,black] {$-1$} (00)
    (a10) edge[->-=0.6] (11)
    (a14) edge[->-=0.8] node[auto,black,swap] {$-1$} (04)
;
    \path[-,font=\scriptsize]
    (a14) edge[-<-=0.5] (11)
;
   \end{tikzpicture}
 \endpgfgraphicnamed
\]

Assume that $kn+1,kn+2\notin\beta$. From our study of the patch $\Xi_{k,k}$, we know that since $\beta$ is not contradictory of the first kind, $kn\notin\beta$, and since $\beta$ is not contradictory of the second kind, $kn+4\notin\beta$. Thus the following orientation for $\Xi_{k,l}$ restricts to a solution of $\Sigma_\beta$ for any value of $\{ln,\dotsc,ln+4\}\cap\beta$.
\[
  \beginpgfgraphicnamed{fig74}
  \begin{tikzpicture}[>=latex]
   \matrix (m) [matrix of math nodes, row sep=0.5em, column sep=1em, text height=1.5ex, text depth=0.5ex]
    {  \node[pair](40){};     &                        &                        & \node[pair](44){};     & kn+4 \\
       \node[triple](b20){b}; &                        & \node[pair](22){};     & \node[triple](b24){b}; & kn+2 \\
       \node[triple](a10){a}; & \node[pair](11){};     &                        & \node[triple](a14){a}; & kn+1 \\
       \node[pair](00){};     &                        &                        & \node[pair](04){};     & kn   \\
               ln             &       ln+1             &        ln+2            &        ln+4            &      \\
 };
    \path[-,very thick,font=\scriptsize,gray]
    (a10) edge[-<-=0.6] node[auto,black] {$-1$} (00)
    (a10) edge[->-=0.6] (11)
    (b20) edge[-<-=0.5,bend right=35] node[auto,swap,black] {$-1$} (00)
    (b20) edge[->-=0.55] (22)
    (a14) edge[->-=0.7] node[auto,black,swap] {$-1$} (04)
    (b24) edge[-<-=0.5,bend left=35] node[auto,black] {$-1$} (04)
;
    \path[-,font=\scriptsize]
    (b20) edge[-<-=0.6] node[auto,black] {$-1$} (40)
    (b24) edge[->-=0.7] node[auto,black,swap] {$-1$} (44)
    (b24) edge[-<-=0.55] (22)
    (a14) edge[-<-=0.5] (11)
;
   \end{tikzpicture}
 \endpgfgraphicnamed
\]

In all cases, we oriented the edges connecting to $(ln,kn)$ and $(ln,kn+4)$ away from these relevant pairs. Therefore the constructed solution of $\Xi_{k,l}$ satisfies in all cases (PS).

\subsection*{The patches $\Xi_{k,r+1}$} We assume that $k>0$. The patch $\Xi_{k,r+1}$ is non-empty if and only if $(r+1)n\in\beta$. In this case, also $(r+1)n+1,(r+1)n+2\in\beta$ by property (max) from section \ref{subsection: bases for indecomposables of small defect}. However, $(r+1)n+4\notin\beta$ since $F:\Gamma\to Q$ does not ramify in $(r+1)n+2$. Therefore $\Xi_{k,r+1}$ has the following extremal solution that satisfies (PS).
\[
  \beginpgfgraphicnamed{fig75}
  \begin{tikzpicture}[>=latex]
   \matrix (m) [matrix of math nodes, row sep=0.5em, column sep=1em, text height=1.5ex, text depth=0.5ex]
    {  \node[pair](40){};     &                        &                        & kn+4 \\
       \node[triple](b20){b}; &                        & \node[pair](22){};     & kn+2 \\
       \node[triple](a10){a}; & \node[pair](11){};     &                        & kn+1 \\
       \node[pair](00){};     &                        &                        & kn   \\
             (r+1)n           &     (r+1)n+1           &      (r+1)n+2          &      \\
 };
    \path[-,very thick,font=\scriptsize,gray]
    (a10) edge[-<-=0.6] node[auto,black] {$-1$} (00)
    (a10) edge[->-=0.6] (11)
    (b20) edge[-<-=0.6,bend right=35] node[auto,swap,black] {$-1$} (00)
    (b20) edge[->-=0.6] (22)
;
    \path[-,font=\scriptsize]
    (b20) edge[-<-=0.6] node[auto,black] {$-1$} (40)
;
   \end{tikzpicture}
 \endpgfgraphicnamed
\]

\subsection*{The patches $\Xi_{0,l}$} We assume that $l\leq r$. Note that in this case $l$ is even, i.e.\ this case does not occur for $a$ and $b$ replaced by $d$ and $c$. From the description of $\Gamma$ in section \ref{subsection: bases for indecomposables of small defect}, it is visible that $2,4\in\beta$, but $0,1\notin\beta$. Therefore $\Xi_{0,l}$ has the following extremal solution that satisfies (PS).
\[
  \beginpgfgraphicnamed{fig76}
  \begin{tikzpicture}[>=latex]
   \matrix (m) [matrix of math nodes, row sep=0.5em, column sep=1em, text height=1.5ex, text depth=0.5ex]
    {                         &                        &                        & \node[pair](ll){};     & ln   \\
                              &                        &                        &                        & \vdots   \\
       \node[pair](40){};     &                        &                        & \node[pair](44){};     & 4 \\
       \node[triple](b20){b}; &                        & \node[pair](22){};     & \node[triple](b24){b}; & 2 \\
               ln             &       \qquad           &        ln+2            &        ln+4            &   \\
 };
    \path[-,very thick,font=\scriptsize,gray]
    (b20) edge[->-=0.55] (22)
    (b20) edge[-<-=0.6] node[auto,black] {$-1$} (40)
    (b24) edge[->-=0.7] node[auto,black,swap] {$-1$} (44)
;
    \path[-,font=\scriptsize]
    (b24) edge[bend left=10,-<-=0.5] (22)
    (b24) edge[bend right=10,-<-=0.5] node[pos=0.15] (b24-22) {} (22)
    (b24) edge[bend left=30,-<-=0.5]  node[pos=0.1] (b24-ll) {} (ll)
;
    \path[dotted,-,thick,font=\scriptsize]
    (b24-22.center) edge[bend left=45] node[above left=-2pt] {} (b24-ll.center)
;
   \end{tikzpicture}
 \endpgfgraphicnamed
\]

\subsection*{The patches $\Xi_{0,r+1}$} If $r$ is odd, then the patch $\Xi_{0,r+1}$ is part of the patchwork, and it is non-empty if and only if $(r+1)n$ is a vertex of $\Gamma$. In this case, $\Xi_{0,r+1}$ has the following extremal solution that satisfies (PS).
\[
  \beginpgfgraphicnamed{fig77}
  \begin{tikzpicture}[>=latex]
   \matrix (m) [matrix of math nodes, row sep=0.5em, column sep=1em, text height=1.5ex, text depth=0.5ex]
    {  \node[pair](40){};     &                        &                        & 4 \\
       \node[triple](b20){b}; &                        & \node[pair](22){};     & 2 \\
             (r+1)n           &     \           &      (r+1)n+2          &   \\
 };
    \path[-,very thick,font=\scriptsize,gray]
    (b20) edge[->-=0.55] (22)
    (b20) edge[-<-=0.6] node[auto,black] {$-1$} (40)
;
   \end{tikzpicture}
 \endpgfgraphicnamed
\]

This concludes the study of patches w.r.t.\ to the given orientation.

\bigskip

\begin{wrapfigure}{l}{3.0cm}
\vspace{-0,9cm}  
 \beginpgfgraphicnamed{fig78}
  \begin{tikzpicture}[>=latex]
   \matrix (m) [matrix of math nodes, row sep=-1.0em, column sep=1em, text height=2.0ex, text depth=1.0ex]
{ q_a &     &  \   \\
      &     & q_0  \\
   \  & q_b &  \quad   \\
};
     \path[->,font=\footnotesize]
      (m-1-1) edge node[auto] {$a$} (m-2-3)
      (m-2-3) edge node[auto] {$b$} (m-3-2)
;
     \draw[inner sep=10pt,ultra thick,rounded corners]
     (m-1-1.north west) rectangle (m-3-3.south east);
 \end{tikzpicture}
 \endpgfgraphicnamed
\vspace{0,0cm}  
\end{wrapfigure}
\noindent
\begin{minipage}{\minipagewidth}
 \textbf{The patches $\Xi_{k,k}$.} The patches $\Xi_{k,k}$ are the reduced Schubert systems of the full subgraph of $\Gamma$ whose vertices are $\{kn,kn+2,kn+3,kn+4\}$. This subgraph and $\Xi_{k,k}$ take the following shape.
\end{minipage}
\[
 \beginpgfgraphicnamed{fig112}
  \begin{tikzpicture}[>=latex]
   \matrix (m) [matrix of math nodes, row sep=0em, column sep=2em, text height=1ex, text depth=0ex]
{       &      & kn    \\
        & kn+2 &     \\
   kn+3 &      & kn+4    \\
};
     \path[->,font=\footnotesize]
      (m-3-3) edge node[auto,swap] {$b$} (m-2-2)
;
    \path[->,very thick,font=\scriptsize,gray]
      (m-1-3) edge node[auto] {} (m-2-2)
      (m-3-1) edge node[auto,swap,black] {$a$} (m-3-3)
;
\node at (0,-1.8) {};
 \end{tikzpicture}
 \endpgfgraphicnamed
\hspace{3cm} 
  \beginpgfgraphicnamed{fig79}
  \begin{tikzpicture}[>=latex]
   \matrix (m) [matrix of math nodes, row sep=1.0em, column sep=1em, text height=1.5ex, text depth=0.5ex]
    {   \node[const](22){-1}; & \node[triple](b24){b}; & kn+2  \\
       \node[triple](a03){a}; & \node[pair](04){};     & kn    \\
               kn+3           &       kn+4             &       \\
 };
    \path[-,very thick,font=\scriptsize,gray]
    (a03) edge (04)
    (b24) edge node[auto,black] {$-1$} (04)
;
    \path[dotted,very thick]
    (b24) edge (22)
;
   \end{tikzpicture}
 \endpgfgraphicnamed
\]
A subset $\beta$ of $\cB$ is contradictory of the first kind if $kn\in\beta$ and $kn+2\notin\beta$ or if $kn+3\in\beta$ and $kn+4\notin\beta$. It is contradictory of the second kind if is not contradictory of the first kind and if $kn,kn+2\notin\beta$ and $kn+3,kn+4\in\beta$. If $\Sigma_\beta$ is not contradictory, one of the following cases occurs. The beginning of each case is marked by a box that contains the value of $\ev_\beta(kn,kn+4)$. We will refer to these cases in the study of $\Xi_{k,l,\beta}$ with $k<l$.

\noindent\fbox{$0$}\quad If $\Sigma_\beta$ is not contradictory and if $kn\in\beta$, or if $kn+4\notin\beta$, or if $kn\notin\beta$ and $kn+2,kn+3,kn+4\in\beta$, then $\Xi_{k,k,\beta}$ is empty and $\ev_\beta(kn,kn+4)=0$.
 
\noindent\fbox{$-1$}\quad If $\Sigma_\beta$ is not contradictory and if $kn,kn+2,kn+3\notin\beta$ and $kn+4\in\beta$, then $\Xi_{k,k,\beta}$ is empty and $\ev_\beta(kn,kn+4)=-1$.

\noindent\fbox{$\eta$}\quad If $\Sigma_\beta$ is not contradictory and if $kn,kn+3\notin\beta$ and $kn+2,kn+4\in\beta$, then $\Xi_{k,k,\beta}$ consists of the vertex $(kn,kn+4)$ and $\ev_\beta(kn,kn+4)=\eta$.

\subsection*{The patches $\Xi_{k,l}$} We turn to the general case $0<k<l<r+1$. In this situation, the two last coordinates of the vertices of $\Xi_{k,l}$ vary in $\{kn,\dotsc,kn+4\}\times\{ln,\dotsc,ln+4\}$, and $\Xi_{k,l}$ looks as follows. Since the vertex $(kn,kn+4)$ occurs in the quadratic link $\bigl((a,kn,ln+3),\{(kn,kn+4),(kn+3,ln+3)\}\bigr)$, it is a vertex of $\Xi_{k,l}$.
\begin{figure}[h]\label{fig80}
\[
  \beginpgfgraphicnamed{fig80}
  \begin{tikzpicture}[>=latex]
   \matrix (m) [matrix of math nodes, row sep=0.5em, column sep=1em, text height=1.5ex, text depth=0.5ex]
    {              &  &                        &                        &                        & \node[pair](ll){};     & ln   \\
                   &  &                        &                        &                        &                        & \vdots   \\
                   &  & \node[pair](40){};     &                        & \node[triple](a43){a}; & \node[pair](44){};     & kn+4 \\
                   &  &                        &                        & \node[pair](33){};     &                        & kn+3 \\
                   &  & \node[triple](b20){b}; & \node[pair](22){};     &                        & \node[triple](b24){b}; & kn+2 \\
\node[pair](kk){}; &  & \node[pair](00){};     &                        & \node[triple](a03){a}; & \node[pair](04){};     & kn   \\
     kn+4     &\dotsb &         ln             &       ln+2             &        ln+3            &        ln+4            &      \\
 };
    \path[-,very thick,font=\scriptsize,gray]
    (a03) edge (04)
    (a43) edge (44)
    (a43) edge node[auto,black,swap] {$-1$} (33)
    (b20) edge node[auto,swap,black] {$-1$} (00)
    (b20) edge (22)
    (b24) edge node[auto,black] {$-1$} (04)
;
    \path[-,font=\scriptsize]
    (b20) edge node[auto,black] {$-1$} (40)
    (b24) edge node[auto,black,swap] {$-1$} (44)
    (b24) edge[bend left=5] (22)
;
    \path[-,font=\scriptsize]
    (a03) edge node[auto] {} node[pos=0.25] (a03-33) {} (33)
    (a03) edge[bend right=60] node[auto] {} node[pos=0.05] (a03-kk) {} (kk)
    (b24) edge[bend right=5] node[pos=0.15] (b24-22) {} (22)
    (b24) edge[bend left=30]  node[pos=0.1] (b24-ll) {} (ll)
;
    \path[dotted,-,thick,font=\scriptsize]
    (b24-22.center) edge[bend left=45] node[above left=-2pt] {} (b24-ll.center)
    (a03-33.center) edge[bend right=45] node[above left=-2pt] {} (a03-kk.center)
;
   \end{tikzpicture}
 \endpgfgraphicnamed
\]
\end{figure}

Let $\beta$ be a subset of $\cB$ such that $\Sigma_\beta$ is not contradictory. In all cases, we will orientate the edge $\bigl\{(b,kn+2,ln+4),(kn+2,ln+2)\bigr\}$ towards the triple $(b,kn+2,ln+4)$ if $\beta$-relevant. As we did for the previous orientation of the arrows $a$ and $b$, we disregard the vertex $(ln, ln+4)$ and the edge and the quadratic link involving this vertex. The vertex $(kn,kn+4)$ and value $\ev_\beta(kn,kn+4)$ will play, however, an important role for the solutions of the $\beta$-state $\Xi_{k,l,\beta}$. We consider the different outcomes for $\ev_\beta(kn,kn+4)$ in the following.
\noindent\fbox{$0$}\quad In this case, $(kn,kn+4)$ and the quadratic link involving this vertex are $\beta$-trivial. If the triple $(b,kn+2,ln+4)$ is $\beta$-relevant, then $kn+2\notin\beta$. Since $\beta$ is not contradictory of the first kind, also $kn\notin\beta$, and since $\ev_\beta(kn,kn+4)=0$, we conclude that $kn+4\notin\beta$. This means that also the vertex $(kn+4,ln+4)$ and the edge $\big\{(b,kn+2,ln+4),(kn+4,ln+4)\bigl\}$ are $\beta$-relevant. Therefore the following solution, which is extremal except for the edge $\big\{(b,kn+2,ln+4),(kn+4,ln+4)\bigl\}$, will restrict to a solution of $\Xi_{k,l,\beta}$ that satisfies (PS) for all $\beta$ with $\ev_\beta(kn,kn+4)=0$.
\[
  \beginpgfgraphicnamed{fig81}
  \begin{tikzpicture}[>=latex]
   \matrix (m) [matrix of math nodes, row sep=0.5em, column sep=1em, text height=1.5ex, text depth=0.5ex]
    {              &  & \node[pair](40){};     &                        & \node[triple](a43){a}; & \node[pair](44){};     & kn+4 \\
                   &  &                        &                        & \node[pair](33){};     &                        & kn+3 \\
                   &  & \node[triple](b20){b}; & \node[pair](22){};     &                        & \node[triple](b24){b}; & kn+2 \\
                   &  & \node[pair](00){};     &                        & \node[triple](a03){a}; & \node[pair](04){};     & kn   \\
                   &  &         ln             &       ln+2             &        ln+3            &        ln+4            &      \\
 };
    \path[-,very thick,font=\scriptsize,gray]
    (a03) edge[->-=0.6] (04)
    (a43) edge[-<-=0.5] (44)
    (a43) edge[->-=0.7] node[auto,black,swap] {$-1$} (33)
    (b20) edge[-<-=0.65] node[auto,swap,black] {$-1$} (00)
    (b20) edge[->-=0.6] (22)
    (b24) edge[-<-=0.6] node[auto,black] {$-1$} (04)
;
    \path[-,font=\scriptsize]
    (b20) edge[-<-=0.55] node[auto,black] {$-1$} (40)
    (b24) edge[->-=0.6] node[auto,black,swap] {$-1$} (44)
    (b24) edge[-<-=0.5] (22)
;
   \end{tikzpicture}
 \endpgfgraphicnamed
\]

\noindent\fbox{$-1$}\quad In this case, we substitute the quadratic link involving $(kn,kn+4)$ by the linear link $\bigl\{(a,kn,ln+3),(kn+3,ln+3)\bigr\}$ with weight $-1$. Furthermore, we have $kn+4\in\beta$, which implies that all vertices with vertical coordinate $kn+4$ and the connecting edges and links are $\beta$-trivial. Since $kn,kn+3\notin\beta$, the vertex $(kn+3,ln+3)$ and the edge $\bigl\{(a,kn,ln+3),(kn+3,ln+3)\bigr\}$ are $\beta$-relevant if $(a,kn,ln+3)$ is so. Therefore the following solution, which is extremal except for the edge $\bigl\{(a,kn,ln+3),(kn+3,ln+3)\bigr\}$, restricts to a solution of $\Xi_{k,l,\beta}$ that satisfies (PS) for all choices of $\beta$ with $\ev_\beta(kn,kn+4)=-1$.
\[
  \beginpgfgraphicnamed{fig82}
  \begin{tikzpicture}[>=latex]
   \matrix (m) [matrix of math nodes, row sep=0.5em, column sep=1em, text height=1.5ex, text depth=0.5ex]
    {              &  &                        &                        & \node[pair](33){};     &                        & kn+3 \\
                   &  & \node[triple](b20){b}; & \node[pair](22){};     &                        & \node[triple](b24){b}; & kn+2 \\
                   &  & \node[pair](00){};     &                        & \node[triple](a03){a}; & \node[pair](04){};     & kn   \\
                   &  &         ln             &       ln+2             &        ln+3            &        ln+4            &      \\
 };
    \path[-,very thick,font=\scriptsize,gray]
    (a03) edge[-<-=0.5] (04)
    (b20) edge[-<-=0.6] node[auto,swap,black] {$-1$} (00)
    (b20) edge[->-=0.6] (22)
    (b24) edge[->-=0.7] node[auto,black] {$-1$} (04)
;
    \path[-,font=\scriptsize]
    (b24) edge[-<-=0.4] (22)
;
    \path[-,font=\scriptsize]
    (a03) edge[->-=0.4] node[above left] {$-1$} node[pos=0.25] (a03-33) {} (33)
;
   \end{tikzpicture}
 \endpgfgraphicnamed
\]

\noindent\fbox{$\eta$}\quad In this case, all vertices with vertical coordinate $kn+2$ or $kn+4$ are $\beta$-trivial. Therefore the following extremal solution restricts to a solution of $\Xi_{k,l,\beta}$ that satisfies (PS) for all $\beta$ with $\ev_\beta(kn,kn+4)=\eta$.
\[
  \beginpgfgraphicnamed{fig83}
  \begin{tikzpicture}[>=latex]
   \matrix (m) [matrix of math nodes, row sep=0.5em, column sep=1em, text height=1.5ex, text depth=0.5ex]
    {              &  &                        &                        & \node[pair](33){};     &                        & kn+3 \\
           \       &  &                        &                        &                        &                        & kn+2 \\
\node[pair](kk){}; &  & \node[pair](00){};     &                        & \node[triple](a03){a}; & \node[pair](04){};     & kn   \\
     kn+4     &\dotsb &         ln             &       ln+2             &        ln+3            &        ln+4            &      \\
 };
    \path[-,very thick,font=\scriptsize,gray]
    (a03) edge[->-=0.6] (04)
;
    \path[-,font=\scriptsize]
    (a03) edge[-<-=0.6] node[auto] {} node[pos=0.25] (a03-33) {} (33)
    (a03) edge[-<-=0.14,bend right=60] node[auto] {} node[pos=0.05] (a03-kk) {} (kk)
;
    \path[dotted,-,thick,font=\scriptsize]
    (a03-33.center) edge[bend right=45] node[above left=-2pt] {} (a03-kk.center)
;
   \end{tikzpicture}
 \endpgfgraphicnamed
\]

\subsection*{The patches $\Xi_{k,r+1}$} We assume that $k>0$. The patch $\Xi_{k,r+1}$ is non-empty if and only if $(r+1)n\in\beta$. In this case, $(r+1)n+2\in\beta$, but $(r+1)n+3\notin\beta$ by property (max) from section \ref{subsection: bases for indecomposables of small defect} and $(r+1)n+4\notin\beta$ since $F:\Gamma\to Q$ does not ramify in $(r+1)n+2$. Therefore $\Xi_{k,r+1}$ has the following extremal solution that satisfies (PS).
\[
  \beginpgfgraphicnamed{fig84}
  \begin{tikzpicture}[>=latex]
   \matrix (m) [matrix of math nodes, row sep=0.5em, column sep=1em, text height=1.5ex, text depth=0.5ex]
    {  \node[pair](40){};     &                        & kn+4 \\
                              &                        & kn+3 \\
       \node[triple](b20){b}; & \node[pair](22){};     & kn+2 \\
       \node[pair](00){};     &                        & kn   \\
             (r+1)n           &      (r+1)n+2          &      \\
 };
    \path[-,very thick,font=\scriptsize,gray]
    (b20) edge[-<-=0.6] node[auto,swap,black] {$-1$} (00)
    (b20) edge[->-=0.6] (22)
;
    \path[-,font=\scriptsize]
    (b20) edge[-<-=0.6] node[auto,black] {$-1$} (40)
;
   \end{tikzpicture}
 \endpgfgraphicnamed
\]

\subsection*{The patches $\Xi_{0,l}$} We assume that $l\leq r$. From the description of $\Gamma$ in section \ref{subsection: bases for indecomposables of small defect}, it is visible that $2,3,4\in\beta$, but $0\notin\beta$. Therefore $\Xi_{0,l}$ has the following extremal solution that satisfies (PS).
\[
  \beginpgfgraphicnamed{fig85}
  \begin{tikzpicture}[>=latex]
   \matrix (m) [matrix of math nodes, row sep=0.5em, column sep=1em, text height=1.5ex, text depth=0.5ex]
    {                         &                        &                        & \node[pair](ll){};     & ln   \\
                              &                        &                        &                        & \vdots   \\
       \node[pair](40){};     &                        & \node[triple](a43){a}; & \node[pair](44){};     & 4 \\
                              &                        & \node[pair](33){};     &                        & 3 \\
       \node[triple](b20){b}; & \node[pair](22){};     &                        & \node[triple](b24){b}; & 2 \\
               ln             &        ln+2            &         ln+3           &        ln+4            &   \\
 };
    \path[-,very thick,font=\scriptsize,gray]
    (b20) edge[->-=0.55] (22)
    (b20) edge[-<-=0.6] node[auto,black] {$-1$} (40)
    (b24) edge[->-=0.6] node[auto,black,swap] {$-1$} (44)
    (a43) edge[-<-=0.5] (44)
    (a43) edge[->-=0.7] node[auto,black,swap] {$-1$} (33)
;
    \path[-,font=\scriptsize]
    (b24) edge[-<-=0.5] node[pos=0.15] (b24-22) {} (22)
    (b24) edge[-<-=0.35,bend left=30]  node[pos=0.1] (b24-ll) {} (ll)
;
    \path[dotted,-,thick,font=\scriptsize]
    (b24-22.center) edge[bend left=45] node[above left=-2pt] {} (b24-ll.center)
;
   \end{tikzpicture}
 \endpgfgraphicnamed
\]

\subsection*{The patches $\Xi_{0,r+1}$} If $r$ is odd, then the patch $\Xi_{0,r+1}$ is part of the patchwork, and it is non-empty if and only if $(r+1)n$ is a vertex of $\Gamma$. In this case, $\Xi_{0,r+1}$ has the following extremal solution that satisfies (PS).
\[
  \beginpgfgraphicnamed{fig86}
  \begin{tikzpicture}[>=latex]
   \matrix (m) [matrix of math nodes, row sep=0.5em, column sep=1em, text height=1.5ex, text depth=0.5ex]
    {  \node[pair](40){};     &                        &                        & 4 \\
       \node[triple](b20){b}; &                        & \node[pair](22){};     & 2 \\
             (r+1)n           &     \           &      (r+1)n+2          &   \\
 };
    \path[-,very thick,font=\scriptsize,gray]
    (b20) edge[->-=0.55] (22)
    (b20) edge[-<-=0.6] node[auto,black] {$-1$} (40)
;
   \end{tikzpicture}
 \endpgfgraphicnamed
\]
This concludes the study of patches w.r.t.\ to the given orientation.

\bigskip

\bigskip

\begin{wrapfigure}{l}{3.0cm}
\vspace{-0,9cm}  
 \beginpgfgraphicnamed{fig87}
  \begin{tikzpicture}[>=latex]
   \matrix (m) [matrix of math nodes, row sep=-1.0em, column sep=1em, text height=2.0ex, text depth=1.0ex]
{ q_a &     &  \   \\
      &     & q_0  \\
   \  & q_b &  \quad   \\
};
     \path[->,font=\footnotesize]
      (m-2-3) edge node[auto,swap] {$a$} (m-1-1)
      (m-3-2) edge node[auto,swap] {$b$} (m-2-3)
;
     \draw[inner sep=10pt,ultra thick,rounded corners]
     (m-1-1.north west) rectangle (m-3-3.south east);
 \end{tikzpicture}
 \endpgfgraphicnamed
\vspace{-0,0cm}  
\end{wrapfigure}
\noindent 
\begin{minipage}{\minipagewidth}
 \textbf{The patches $\Xi_{k,k}$.} The patches $\Xi_{k,k}$ are the reduced Schubert systems of the full subgraph of $\Gamma$ whose vertices are $\{kn,kn+1,kn+2,kn+4\}$. This subgraph and $\Xi_{k,k}$ take the following shape.
\end{minipage}
\[
 \beginpgfgraphicnamed{fig88}
  \begin{tikzpicture}[>=latex]
   \matrix (m) [matrix of math nodes, row sep=0em, column sep=2em, text height=1ex, text depth=0ex]
{  kn+1 &      & kn    \\
        & kn+2 &     \\
        &      & kn+4    \\
};
     \path[->,font=\footnotesize]
      (m-2-2) edge node[auto] {} (m-1-3)
;
    \path[->,very thick,font=\scriptsize,gray]
      (m-1-3) edge node[auto,swap,black] {$a$} (m-1-1)
      (m-2-2) edge node[auto,black] {$b$} (m-3-3)
;
\node at (0,-1.8) {};
 \end{tikzpicture}
 \endpgfgraphicnamed
\hspace{3cm} 
  \beginpgfgraphicnamed{fig89}
  \begin{tikzpicture}[>=latex]
   \matrix (m) [matrix of math nodes, row sep=1.0em, column sep=1em, text height=1.5ex, text depth=0.5ex]
    {   \node[const](22){-1}; & \node[triple](a14){a}; & kn+1  \\
       \node[triple](b24){b}; & \node[pair](04){};     & kn    \\
               kn+2           &       kn+4             &       \\
 };
    \path[-,very thick,font=\scriptsize,gray]
    (a14) edge node[auto,black] {$-1$} (04)
    (b24) edge node[auto,black] {} (04)
;
    \path[dotted,very thick]
    (b24) edge (22)
;
   \end{tikzpicture}
 \endpgfgraphicnamed
\]
A subset $\beta$ of $\cB$ is contradictory of the first kind if $kn+2\in\beta$ and $kn+4\notin\beta$ or if $kn\in\beta$ and $kn+1\notin\beta$. It is contradictory of the second kind if is not contradictory of the first kind and if $kn,kn+1\notin\beta$ and $kn+2,kn+4\in\beta$. If $\Sigma_\beta$ is not contradictory, one of the following cases occurs. 

\noindent\fbox{$0$}\quad If $\Sigma_\beta$ is not contradictory and if $kn\in\beta$, or if $kn+4\notin\beta$, or if $kn,kn+1,kn+2\notin\beta$ and $kn+4\in\beta$, then $\Xi_{k,k,\beta}$ is empty and $\ev_\beta(kn,kn+4)=0$.
 
\noindent\fbox{$1$}\quad If $\Sigma_\beta$ is not contradictory and if $kn\notin\beta$ and $kn+1,kn+2,kn+4\in\beta$, then $\Xi_{k,k,\beta}$ is empty and $\ev_\beta(kn,kn+4)=1$.

\noindent\fbox{$\eta$}\quad If $\Sigma_\beta$ is not contradictory and if $kn,kn+2\notin\beta$ and $kn+1,kn+4\in\beta$, then $\Xi_{k,k,\beta}$ consists of the vertex $(kn,kn+4)$ and $\ev_\beta(kn,kn+4)=\eta$.

\subsection*{The patches $\Xi_{k,l}$} We turn to the general case $0<k<l<r+1$. In this situation, the two last coordinates of the vertices of $\Xi_{k,l}$ vary in $\{kn,\dotsc,kn+4\}\times\{ln,\dotsc,ln+4\}$, and $\Xi_{k,l}$ looks as follows. 
\[
  \beginpgfgraphicnamed{fig99}
  \begin{tikzpicture}[>=latex]
   \matrix (m) [matrix of math nodes, row sep=0.5em, column sep=1em, text height=1.5ex, text depth=0.5ex]
    {              &  &                        &                        &                        & \node[pair](ll){};     & ln   \\
                   &  &                        &                        &                        &                        & \vdots   \\
                   &  & \node[pair](40){};     &                        & \node[triple](b42){b}; & \node[pair](44){};     & kn+4 \\
                   &  &                        &                        & \node[pair](22){};     &                        & kn+2 \\
                   &  & \node[triple](a10){a}; & \node[pair](11){};     &                        & \node[triple](a14){a}; & kn+1 \\
\node[pair](kk){}; &  & \node[pair](00){};     &                        & \node[triple](b02){b}; & \node[pair](04){};     & kn   \\
     kn+4     &\dotsb &         ln             &       ln+1             &        ln+2            &        ln+4            &      \\
 };
    \path[-,very thick,font=\scriptsize,gray]
    (a10) edge (11)
    (a10) edge node[auto,black,swap] {$-1$} (00)
    (a14) edge node[auto,black,swap] {$-1$} (04)
    (b02) edge (04)
    (b42) edge (44)
    (b42) edge node[auto,swap,black] {$-1$} (22)
;
    \path[-,font=\scriptsize]
    (b02) edge (00)
    (b42) edge (40)
    (a14) edge node[auto] {} node[pos=0.15] (a14-11) {} (11)
    (a14) edge[bend left=30] node[auto] {} node[pos=0.1] (a14-ll) {} (ll)
    (b02) edge[bend right=5] node[above right,swap,black] {$-1$} (22)
    (b02) edge[bend left=5] node[pos=0.3] (b02-22) {} (22)
    (b02) edge[bend right=60]  node[pos=0.06] (b02-kk) {} (kk)
;
    \path[dotted,-,thick,font=\scriptsize]
    (b02-22.center) edge[bend right=45] node[above left=-2pt] {} (b02-kk.center)
    (a14-11.center) edge[bend left=45] node[above left=-2pt] {} (a14-ll.center)
;
   \end{tikzpicture}
 \endpgfgraphicnamed
\]

Let $\beta$ be a subset of $\cB$ such that $\Sigma_\beta$ is not contradictory. In all cases, we will orientate the edge $\bigl\{(a,kn+1,ln+4),(kn+1,ln+1)\bigr\}$ towards the triple $(a,kn+1,ln+4)$ if $\beta$-relevant. As we did for the previous orientations of the arrows $a$ and $b$, we disregard the vertex $(ln, ln+4)$ and the edge and the quadratic link involving this vertex. Again, the vertex $(kn,kn+4)$ is important, and we consider the different outcomes for $\ev_\beta(kn,kn+4)$ in the following.

\noindent\fbox{$0$}\quad In this case, $(kn,kn+4)$ and the quadratic link involving this vertex are $\beta$-trivial, and we can omit it from the illustration below. If the triple $(b,kn,ln+2)$ is $\beta$-relevant, then $kn\notin\beta$ and $ln+2\in\beta$. In case $kn+4\notin\beta$, then $kn+2\notin\beta$ since we assume that $\beta$ is not contradictory of the first kind. In case $kn+4\in\beta$, $\ev_\beta(kn,kn+4)=0$ implies that $kn+2\notin\beta$. We conclude that also the vertex $(kn+2,ln+2)$ and the edge $\bigl\{(b,kn,ln+2),(kn+2,ln+2)\bigr\}$ are $\beta$-relevant. Therefore the following orientation, which is extremal except for the edge $\bigl\{(b,kn,ln+2),(kn+2,ln+2)\bigr\}$, restricts to solution of $\Xi_{k,l,\beta}$ that satisfies (PS) for every $\beta$ with $\ev_\beta(kn,kn+4)=0$.
\[
  \beginpgfgraphicnamed{fig100}
  \begin{tikzpicture}[>=latex]
   \matrix (m) [matrix of math nodes, row sep=0.5em, column sep=1em, text height=1.5ex, text depth=0.5ex]
    {              &  & \node[pair](40){};     &                        & \node[triple](b42){b}; & \node[pair](44){};     & kn+4 \\
                   &  &                        &                        & \node[pair](22){};     &                        & kn+2 \\
                   &  & \node[triple](a10){a}; & \node[pair](11){};     &                        & \node[triple](a14){a}; & kn+1 \\
                   &  & \node[pair](00){};     &                        & \node[triple](b02){b}; & \node[pair](04){};     & kn   \\
                   &  &         ln             &       ln+1             &        ln+2            &        ln+4            &      \\
 };
    \path[-,very thick,font=\scriptsize,gray]
    (a10) edge[->-=0.6] (11)
    (a10) edge[-<-=0.6] node[auto,black,swap] {$-1$} (00)
    (a14) edge[->-=0.7] node[auto,black,swap] {$-1$} (04)
    (b02) edge[-<-=0.6] (04)
    (b42) edge[->-=0.6] (44)
    (b42) edge[-<-=0.6] node[auto,swap,black] {$-1$} (22)
;
    \path[-,font=\scriptsize]
    (b02) edge[-<-=0.5] (00)
    (b42) edge[-<-=0.5] (40)
    (a14) edge[-<-=0.35] (11)
    (b02) edge[->-=0.35] node[above right,swap,black] {$-1$} (22)
;
   \end{tikzpicture}
 \endpgfgraphicnamed
\]

\noindent\fbox{$1$}\quad In this case, $kn+1,kn+2,kn+4\in\beta$, and the following extremal solution restricts to a solution of $\Xi_{k,l,\beta}$ that satisfies (PS) for all choices of $\beta$ with $\ev_\beta(kn,kn+4)=1$.
\[
  \beginpgfgraphicnamed{fig101}
  \begin{tikzpicture}[>=latex]
   \matrix (m) [matrix of math nodes, row sep=0.5em, column sep=1em, text height=1.5ex, text depth=0.5ex]
    {              &  & \node[pair](00){};     &                        & \node[triple](b02){b}; & \node[pair](04){};     & kn   \\
                   &  &         ln             &       ln+1             &        ln+2            &        ln+4            &      \\
 };
    \path[-,very thick,font=\scriptsize,gray]
    (b02) edge[->-=0.6] (04)
;
    \path[-,font=\scriptsize]
    (b02) edge[-<-=0.5] (00)
;
   \end{tikzpicture}
 \endpgfgraphicnamed
\]

\noindent\fbox{$\eta$}\quad In this case, $kn+1,kn+4\in\beta$, and the following extremal solution restricts to a solution of $\Xi_{k,l,\beta}$ that satisfies (PS) for all choices of $\beta$ with $\ev_\beta(kn,kn+4)=\eta$.
\[
  \beginpgfgraphicnamed{fig102}
  \begin{tikzpicture}[>=latex]
   \matrix (m) [matrix of math nodes, row sep=0.5em, column sep=1em, text height=1.5ex, text depth=0.5ex]
    {              &  &                        &                        & \node[pair](22){};     &                        & kn+2 \\
                   &  &                        &                        &                        &                        & kn+1 \\
\node[pair](kk){}; &  & \node[pair](00){};     &                        & \node[triple](b02){b}; & \node[pair](04){};     & kn   \\
       kn+4  & \dotsb &         ln             &       ln+1             &        ln+2            &        ln+4            &      \\
 };
    \path[-,very thick,font=\scriptsize,gray]
    (b02) edge[->-=0.6] (04)
;
    \path[-,font=\scriptsize]
    (b02) edge[-<-=0.5] (00)
    (b02) edge[-<-=0.6,bend right=10] node[above right,swap,black] {$-1$} (22)
    (b02) edge[-<-=0.6,bend left=10] node[pos=0.3] (b02-22) {} (22)
    (b02) edge[-<-=0.2,bend right=30]  node[pos=0.06] (b02-kk) {} (kk)
;
    \path[dotted,-,thick,font=\scriptsize]
    (b02-22.center) edge[bend right=45] node[above left=-2pt] {} (b02-kk.center)
;
   \end{tikzpicture}
 \endpgfgraphicnamed
\]

\subsection*{The patches $\Xi_{k,r+1}$} We assume that $k>0$. The patch $\Xi_{k,r+1}$ is non-empty if and only if $(r+1)n\in\beta$. In this case, $(r+1)n+1\in\beta$, but $(r+1)n+2\notin\beta$ by property (max) from section \ref{subsection: bases for indecomposables of small defect} and $(r+1)n+4\notin\beta$ since $F:\Gamma\to Q$ does not ramify in $(r+1)n+2$. Therefore $\Xi_{k,r+1}$ has the following extremal solution that satisfies (PS).
\[
  \beginpgfgraphicnamed{fig103}
  \begin{tikzpicture}[>=latex]
   \matrix (m) [matrix of math nodes, row sep=0.5em, column sep=1em, text height=1.5ex, text depth=0.5ex]
    {  \node[pair](40){};     &                        & kn+4 \\
                              &                        & kn+2 \\
       \node[triple](a10){a}; & \node[pair](11){};     & kn+1 \\
       \node[pair](00){};     &                        & kn   \\
             (r+1)n           &      (r+1)n+1          &      \\
 };
    \path[-,very thick,font=\scriptsize,gray]
    (a10) edge[-<-=0.6] node[auto,swap,black] {$-1$} (00)
    (a10) edge[->-=0.6] (11)
;
   \end{tikzpicture}
 \endpgfgraphicnamed
\]

\subsection*{The patches $\Xi_{0,l}$} We assume that $l\leq r$. In this case, $4\in\beta$, but $0,1,2\notin\beta$. Therefore $\Xi_{0,l}$ has the following extremal solution that satisfies (PS).
\[
  \beginpgfgraphicnamed{fig104}
  \begin{tikzpicture}[>=latex]
   \matrix (m) [matrix of math nodes, row sep=0.5em, column sep=1em, text height=1.5ex, text depth=0.5ex]
    {  \node[pair](40){};     &                        & \node[triple](b42){b}; & \node[pair](44){};     & 4 \\
               ln             &        ln+1            &         ln+2           &        ln+4            &   \\
 };
    \path[-,very thick,font=\scriptsize,gray]
    (b42) edge[->-=0.6] (44)
;
    \path[-,font=\scriptsize]
    (b42) edge[-<-=0.5] (40)
;
   \end{tikzpicture}
 \endpgfgraphicnamed
\]

\subsection*{The patches $\Xi_{0,r+1}$} If $r$ is odd, then the patch $\Xi_{0,r+1}$ is part of the patchwork, and it is non-empty if and only if $(r+1)n$ is a vertex of $\Gamma$. In this case, $\Xi_{0,r+1}$ consists of the vertex $(2,(r+1)n)$ and is therefore trivially solvable with (PS).

\bigskip

\noindent This concludes the study of patches w.r.t.\ to the given orientation.

\bigskip

\begin{wrapfigure}{l}{3.0cm}
\vspace{-0,9cm}  
 \beginpgfgraphicnamed{fig105}
  \begin{tikzpicture}[>=latex]
   \matrix (m) [matrix of math nodes, row sep=-1.0em, column sep=1em, text height=2.0ex, text depth=1.0ex]
{ q_a &     &  \   \\
      &     & q_0  \\
   \  & q_b &  \quad   \\
};
     \path[->,font=\footnotesize]
      (m-1-1) edge node[auto] {$a$} (m-2-3)
      (m-3-2) edge node[auto,swap] {$b$} (m-2-3)
;
     \draw[inner sep=10pt,ultra thick,rounded corners]
     (m-1-1.north west) rectangle (m-3-3.south east);
 \end{tikzpicture}
 \endpgfgraphicnamed
\vspace{-0,0cm}  
\end{wrapfigure}
\noindent
\begin{minipage}{\minipagewidth}
 \textbf{The patches $\Xi_{k,k}$.} The patches $\Xi_{k,k}$ are the reduced Schubert systems of the full subgraph of $\Gamma$ whose vertices are $\{kn,kn+2,kn+3,kn+4\}$. This subgraph and $\Xi_{k,k}$ take the following shape.
\end{minipage}
\[
 \beginpgfgraphicnamed{fig106}
  \begin{tikzpicture}[>=latex]
   \matrix (m) [matrix of math nodes, row sep=0em, column sep=2em, text height=1ex, text depth=0ex]
{       &      & kn    \\
        & kn+2 &     \\
   kn+3 &      & kn+4    \\
};
     \path[->,font=\footnotesize]
      (m-2-2) edge node[auto] {} (m-1-3)
;
    \path[->,very thick,font=\scriptsize,gray]
      (m-3-1) edge node[auto,swap,black] {$a$} (m-3-3)
      (m-2-2) edge node[auto,black] {$b$} (m-3-3)
;
\node at (0,-1.8) {};
 \end{tikzpicture}
 \endpgfgraphicnamed
\hspace{3cm} 
  \beginpgfgraphicnamed{fig107}
  \begin{tikzpicture}[>=latex]
   \matrix (m) [matrix of math nodes, row sep=1.0em, column sep=1em, text height=1.5ex, text depth=0.5ex]
    {   \node[const](22){-1}; &                        &                        & kn+1  \\
       \node[triple](b24){b}; & \node[triple](a03){a}; & \node[pair](04){};     & kn    \\
              kn+2            &         kn+3           &       kn+4             &       \\
 };
    \path[-,very thick,font=\scriptsize,gray]
    (a03) edge (04)
    (b24) edge[bend right=20] node[auto,black] {} (04)
;
    \path[dotted,very thick]
    (b24) edge (22)
;
   \end{tikzpicture}
 \endpgfgraphicnamed
\]
A subset $\beta$ of $\cB$ is contradictory of the first kind if $kn+2\in\beta$ and $kn+4\notin\beta$ or if $kn+3\in\beta$ and $kn+4\notin\beta$. It is contradictory of the second kind if is not contradictory of the first kind and if $kn\notin\beta$ and $kn+2,kn+3,kn+4\in\beta$. If $\Sigma_\beta$ is not contradictory, one of the following cases occurs.

\noindent\fbox{$0$}\quad If $\Sigma_\beta$ is not contradictory and if $kn\in\beta$, or if $kn+4\notin\beta$, or if $kn,kn+2\notin\beta$ and $kn+3,kn+4\in\beta$, then $\Xi_{k,k,\beta}$ is empty and $\ev_\beta(kn,kn+4)=0$.
 
\noindent\fbox{$1$}\quad If $\Sigma_\beta$ is not contradictory and if $kn,kn+3\notin\beta$ and $kn+2,kn+4\in\beta$, then $\Xi_{k,k,\beta}$ is empty and $\ev_\beta(kn,kn+4)=1$.

\noindent\fbox{$\eta$}\quad If $\Sigma_\beta$ is not contradictory and if $kn,kn+2,kn+3\notin\beta$ and $kn+4\in\beta$, then $\Xi_{k,k,\beta}$ consists of the vertex $(kn,kn+4)$ and $\ev_\beta(kn,kn+4)=\eta$.

\subsection*{The patches $\Xi_{k,l}$} We turn to the general case $0<k<l<r+1$. In this situation, the two last coordinates of the vertices of $\Xi_{k,l}$ vary in $\{kn,\dotsc,kn+4\}\times\{ln,\dotsc,ln+4\}$, and $\Xi_{k,l}$ looks as follows. Note that the vertex $(ln,ln+4)$ is not in $\Xi_{k,l}$ since it is not the base vertex of any quadratic link in this patch.
\[
  \beginpgfgraphicnamed{fig108}
  \begin{tikzpicture}[>=latex]
   \matrix (m) [matrix of math nodes, row sep=0.5em, column sep=1em, text height=1.5ex, text depth=0.5ex]
    {              &  & \node[pair](40){};     & \node[triple](b42){b}; & \node[triple](a43){a}; & \node[pair](44){};     & kn+4 \\
                   &  &                        &                        & \node[pair](33){};     &                        & kn+3 \\
                   &  &                        & \node[pair](22){};     &                        &                        & kn+2 \\
\node[pair](kk){}; &  & \node[pair](00){};     & \node[triple](b02){b}; & \node[triple](a03){a}; & \node[pair](04){};     & kn   \\
     kn+4     &\dotsb &         ln             &       ln+2             &        ln+3            &        ln+4            &      \\
 };
    \path[-,font=\scriptsize]
    (b02) edge (00)
    (b02) edge[bend right=10] node[auto,black,swap] {$-1$} (22)
    (b42) edge (40)
;
    \path[-,font=\scriptsize]
    (a03) edge node[auto] {} node[pos=0.25] (a03-33) {} (33)
    (a03) edge[bend right=50] node[auto] {} node[pos=0.05] (a03-kk) {} (kk)
    (b02) edge[bend left=10] node[pos=0.6] (b02-22) {} (22)
    (b02) edge[bend right=30]  node[pos=0.08] (b02-kk) {} (kk)
;
    \path[dotted,-,thick,font=\scriptsize]
    (b02-22.center) edge[bend right=45] node[above left=-2pt] {} (b02-kk.center)
    (a03-33.center) edge[bend right=45] node[above left=-2pt] {} (a03-kk.center)
;
    \path[-,very thick,font=\scriptsize,gray]
    (a03) edge (04)
    (a43) edge (44)
    (a43) edge node[auto,black,swap] {$-1$} (33)
    (b02) edge[bend left=-20] (04)
    (b42) edge node[auto,swap,black] {$-1$} (22)
    (b42) edge[bend right=-20] node[auto,black] {} (44)
;
   \end{tikzpicture}
 \endpgfgraphicnamed
\]

Let $\beta$ be a subset of $\cB$ such that $\Sigma_\beta$ is not contradictory. We consider the different outcomes for $\ev_\beta(kn,kn+4)$ in the following.

\noindent\fbox{$0$}\quad In this case, $(kn,kn+4)$ and the quadratic links involving this vertex are $\beta$-trivial, and we omit these from the following illustration. If the triple $(b,kn,ln+2)$ is $\beta$-relevant, then $kn\notin\beta$ and $ln+2\in\beta$. In case $kn+4\notin\beta$, then $kn+2\notin\beta$ since we assume that $\beta$ is not contradictory of the first kind. In case $kn+4\in\beta$, $\ev_\beta(kn,kn+4)=0$ implies that $kn+2\notin\beta$. We conclude that also the vertex $(kn+2,ln+2)$ and the edge $\bigl\{(b,kn,ln+2),(kn+2,ln+2)\bigr\}$ are $\beta$-relevant. Therefore the following orientation, which is extremal except for the edge $\bigl\{(b,kn,ln+2),(kn+2,ln+2)\bigr\}$, restricts to solution of $\Xi_{k,l,\beta}$ that satisfies (PS) for every $\beta$ with $\ev_\beta(kn,kn+4)=0$.
\[
  \beginpgfgraphicnamed{fig109}
  \begin{tikzpicture}[>=latex]
   \matrix (m) [matrix of math nodes, row sep=0.5em, column sep=1em, text height=1.5ex, text depth=0.5ex]
    {              &  & \node[pair](40){};     & \node[triple](b42){b}; & \node[triple](a43){a}; & \node[pair](44){};     & kn+4 \\
                   &  &                        &                        & \node[pair](33){};     &                        & kn+3 \\
                   &  &                        & \node[pair](22){};     &                        &                        & kn+2 \\
                   &  & \node[pair](00){};     & \node[triple](b02){b}; & \node[triple](a03){a}; & \node[pair](04){};     & kn   \\
                   &  &         ln             &       ln+2             &        ln+3            &        ln+4            &      \\
 };
    \path[-,font=\scriptsize]
    (b02) edge[-<-=0.5] (00)
    (b02) edge[->-=0.7] node[auto,black,swap] {$-1$} (22)
    (b42) edge[-<-=0.5] (40)
;
    \path[-,very thick,font=\scriptsize,gray]
    (a03) edge[->-=0.6] (04)
    (a43) edge[-<-=0.5] (44)
    (a43) edge[->-=0.7] node[auto,black,swap] {$-1$} (33)
    (b02) edge[-<-=0.5,bend left=-20] (04)
    (b42) edge[-<-=0.5] node[auto,swap,black] {$-1$} (22)
    (b42) edge[->-=0.5,bend right=-20] node[auto,black] {} (44)
;
   \end{tikzpicture}
 \endpgfgraphicnamed
\]

\noindent\fbox{$1$}\quad The vertices with vertical coordinate $kn+2$ or $kn+4$ are $\beta$-trivial. If the triple $(a,kn,ln+3)$ is $\beta$-relevant, then $ln+3\in\beta$ and thus the vertex $(kn+3,ln+3)$ and the edge $\bigl\{(a,kn,ln+3),(kn+3,ln+3)\bigr\}$ are also $\beta$-relevant since $kn+3\notin\beta$ if $\ev_\beta(kn,kn+4)=1$. Therefore the following solution restricts to a solution of $\Xi_{k,l,\beta}$ that satisfies (PS) for all $\beta$ with $\ev_\beta(kn,kn+4)=1$.
\[
  \beginpgfgraphicnamed{fig110}
  \begin{tikzpicture}[>=latex]
   \matrix (m) [matrix of math nodes, row sep=0.5em, column sep=1em, text height=1.5ex, text depth=0.5ex]
    {              &  &                        &                        & \node[pair](33){};     &                        & kn+3 \\
                   &  &                        &                        &                        &                        & kn+2 \\
                   &  & \node[pair](00){};     & \node[triple](b02){b}; & \node[triple](a03){a}; & \node[pair](04){};     & kn   \\
                   &  &         ln             &       ln+2             &        ln+3            &        ln+4            &      \\
 };
    \path[-,very thick,font=\scriptsize,gray]
    (a03) edge[-<-=0.5] (04)
    (b02) edge[->-=0.5,bend left=-20] (04)
;
    \path[-,font=\scriptsize]
    (b02) edge[-<-=0.6] (00)
    (a03) edge[->-=0.6] node[above left] {$-1$} node[pos=0.25] (a03-33) {} (33)
;
   \end{tikzpicture}
 \endpgfgraphicnamed
\]

\noindent\fbox{$\eta$}\quad In this case $kn+4\notin\beta$, and the $\beta$-state can be calculated from
\[
  \beginpgfgraphicnamed{fig111}
  \begin{tikzpicture}[>=latex]
   \matrix (m) [matrix of math nodes, row sep=0.5em, column sep=1em, text height=1.5ex, text depth=0.5ex]
    {              &  &                        &                        & \node[pair](33){};     &                        & kn+3 \\
                   &  &                        & \node[pair](22){};     &                        &                        & kn+2 \\
\node[pair](kk){}; &  & \node[pair](00){};     & \node[triple](b02){b}; & \node[triple](a03){a}; & \node[pair](04){};     & kn   \\
     kn+4     &\dotsb &         ln             &       ln+2             &        ln+3            &        ln+4            &      \\
 };
    \path[-,font=\scriptsize]
    (b02) edge (00)
    (b02) edge[bend right=10] node[auto,black,swap] {$-1$} (22)
;
    \path[-,font=\scriptsize]
    (a03) edge node[auto] {} node[pos=0.25] (a03-33) {} (33)
    (a03) edge[bend right=50] node[auto] {} node[pos=0.05] (a03-kk) {} (kk)
    (b02) edge[bend left=10] node[pos=0.6] (b02-22) {} (22)
    (b02) edge[bend right=30]  node[pos=0.08] (b02-kk) {} (kk)
;
    \path[dotted,-,thick,font=\scriptsize]
    (b02-22.center) edge[bend right=45] node[above left=-2pt] {} (b02-kk.center)
    (a03-33.center) edge[bend right=45] node[above left=-2pt] {} (a03-kk.center)
;
    \path[-,very thick,font=\scriptsize,gray]
    (b02) edge[bend left=-20] (04)
;
   \end{tikzpicture}
 \endpgfgraphicnamed
\]
This system is not solvable, and it occurs indeed as a $\beta$-state $\Xi_{k,l,\beta}$ if $ln,ln+2,ln+3,ln+4\in\beta$. We use the following variable transformation to see that the Schubert cell $C_\beta^M$ is an affine space. If we replace %$w_{kn,ln+4}$ by
\begin{align*}
 w_{kn+2,ln+2} && \text{by} && \tilde w_{kn+2,ln+2} &= w_{kn+2,ln+2} - w_{kn+2,kn+2}         \\
 w_{kn,ln+4}   && \text{by} && \tilde w_{kn,ln+4} &= w_{kn,ln+4} + w_{kn,kn+4}w_{kn+3,ln+3}    
\end{align*}
then the equations $\overline E(b,kn,ln+2)$ and $\overline E(a,kn,ln+3)$ become
\begin{align*}
 \widetilde E(b,kn,ln+2) &= \tilde w_{kn,ln+4} + \text{(quadratic terms)} \\
 \widetilde E(a,kn,ln+3) &= -w_{kn+2,ln+2} + \tilde w_{kn,ln} + \tilde w_{kn,ln+4} - w_{kn,kn+4} \tilde w_{kn+3,ln+3} + \text{(quadratic terms)} 
\end{align*}
where the ``quadratic terms'' refer to quadratic terms in variables $w_{i,j}$ whose index $(i,j)$ is not a vertex of $\Xi_{k,l}$. Therefore these terms do not contribute to the modification of the patch $\Xi_{k,l}$ that results from this variable transformation. Note that since the modified variables $w_{kn+2,ln+2}$ and $w_{kn,ln+4}$ do not occur as the end of an arrow of any solution of any other patch, this variable transformation does not have an influence on these solutions.

The modified patch $\widetilde \Xi_{k,l}$ that we obtain from this variable transformation (with the $\beta$-trivial vertices with vertical coordinate $kn+4$ omitted) is as follows.
\[
  \beginpgfgraphicnamed{fig113}
  \begin{tikzpicture}[>=latex]
   \matrix (m) [matrix of math nodes, row sep=0.5em, column sep=1em, text height=1.5ex, text depth=0.5ex]
    {              &  &                        &                        & \node[pair](33){};     &                        & kn+3 \\
                   &  &                        & \node[pair](22){};     &                        &                        & kn+2 \\
\node[pair](kk){}; &  & \node[pair](00){};     & \node[triple](b02){b}; & \node[triple](a03){a}; & \node[pair](04){};     & kn   \\
     kn+4     &\dotsb &         ln             &       ln+2             &        ln+3            &        ln+4            &      \\
 };
    \path[-,font=\scriptsize]
    (b02) edge[-<-=0.5] (00)
    (b02) edge[->-=0.7] node[auto,black,swap] {$-1$} (22)
;
    \path[-,font=\scriptsize]
    (b02) edge[-<-=0.5,bend left=90] node[auto] {} node[pos=0.25] (b02-33) {} (33)
    (b02) edge[-<-=0.4,bend right=30]  node[pos=0.15] (b02-kk) {} (kk)
;
    \path[dotted,-,thick,font=\scriptsize]
    (b02-33.center) edge[bend right=45] node[above left=-2pt] {$-1$} (b02-kk.center)
;
    \path[-,very thick,font=\scriptsize,gray]
    (a03) edge[->-=0.6] (04)
    (b02) edge[-<-=0.5,bend left=-20] (04)
;
   \end{tikzpicture}
 \endpgfgraphicnamed
\]
The indicated orientation shows that we can solve the modifications equations $\widetilde E(a,kn,ln+3)$ and $\widetilde E(b,kn,ln+2)$ in the linear terms $\tilde w_{kn,ln+4}$ and $w_{kn+2,ln+2}$, respectively. Note that if $(a,kn,ln+3)$ is $\beta$-relevant, then $(kn,ln+4)$ is $\beta$-relevant since the connecting edge $\bigl\{(a,kn,ln+3),(kn,ln+4)\bigr\}$ is extremal. If $(b,kn,ln+2)$ is $\beta$-relevant, then $(kn+2,ln+2)$ is $\beta$-relevant since $kn+2\notin\beta$ if $\ev_\beta(kn,kn+4)=\eta$.

\subsection*{The patches $\Xi_{k,r+1}$} We assume that $k>0$. The patch $\Xi_{k,r+1}$ is non-empty if and only if $(r+1)n\in\beta$. In this case, $(r+1)n+2,(r+1)n+3\notin\beta$ by property (max) from section \ref{subsection: bases for indecomposables of small defect} and $(r+1)n+4\notin\beta$ since $F:\Gamma\to Q$ does not ramify in $(r+1)n+2$. Therefore $\Xi_{k,r+1}$ consists of the two pairs $(kn,ln)$ and $(kn+4,ln)$ and is trivially solvable with (PS).

\subsection*{The patches $\Xi_{0,l}$} We assume that $l\leq r$. We have $3,4\in\beta$, but $0,2\notin\beta$. Therefore $\Xi_{0,l}$ has the following extremal solution that satisfies (PS).
\[
  \beginpgfgraphicnamed{fig114}
  \begin{tikzpicture}[>=latex]
   \matrix (m) [matrix of math nodes, row sep=0.5em, column sep=1em, text height=1.5ex, text depth=0.5ex]
    {  \node[pair](40){};     &                        & \node[triple](a43){a}; & \node[pair](44){};     & 4 \\
                              &                        & \node[pair](33){};     &                        & 3 \\
               ln             &        ln+2            &         ln+3           &        ln+4            &   \\
 };
    \path[-,very thick,font=\scriptsize,gray]
    (a43) edge[-<-=0.5] (44)
    (a43) edge[->-=0.7] node[auto,black,swap] {$-1$} (33)
;
   \end{tikzpicture}
 \endpgfgraphicnamed
\]

\subsection*{The patches $\Xi_{0,r+1}$} If $r$ is odd, then the patch $\Xi_{0,r+1}$ is part of the patchwork, and it is non-empty if and only if $(r+1)n$ is a vertex of $\Gamma$. In this case, $\Xi_{0,r+1}$ consists of the pair $(kn+4,ln)$ and is therefore trivially solvable.

\bigskip

\noindent
This concludes the proof of Theorem \ref{thm: the main theorem} in the case of an indecomposable representation of defect $-1$.

\bigskip

%%%%%%%%%%%%%%%%%%%%%%%%%%%%%%%%%%%%%%%%%%%%%%%%%%%%%%%%%%%%%%%%%%%%%%%%%%%%%%%%%%%%%%%%%%%%%%%%%%%%%%%%%%%%%%%%%%%%%%%%%%%%%%%%%%%%%%%%%%%%%%%%%%%%%%%%%%%%%%%%%%%%%%%%%%%%%%%%%%%%%%%%%%%%%%%%%%%%%%

\subsection*{Tube of rank $n-2$}

The Schubert systems of a representation $M$ in an exceptional tube of rank $n-2$ w.r.t. to the basis $\cB$ as described in section \ref{subsection: bases for indecomposables of small defect} behave very similar to the Schubert system in the defect $-1$ case. The coefficient quiver of $M$ deviates from the coefficient quiver for defect $-1$ only in the vertices $1,\dotsc,n$ and the arrows in between. Therefore we can define a patchwork $\{\Xi_{k,l}\}\cup\{\Pi_{k,l}\}$ analogous to the defect $-1$-case, and only the patches of the form $\Pi_{1,l}$ and $\Xi_{0,l}$ possibly differ. In the following, we will calculate these patches and show that they have extremal solutions satisfying (PS).

A patch $\Pi_{1,l}$ is equal to the corresponding patch for defect $-1$ if and only if $4$ is a vertex of $\Gamma$. Otherwise, $\Pi_{1,l}$ is still a path, and property (min) from section \ref{subsection: bases for indecomposables of small defect} guarantees that the end of this path $\Pi_{1,l}$, i.e.\ its $1$-valent vertex that is not part of any other patch, is a relevant pair and not a relevant triple. This means that $\Pi_{1,l}$ is an extremal path, which can be oriented in the same direction as in the proof for defect $-1$, cf.\ Figure \ref{fig35}.

The patches $\Xi_{0,l}$ are non-empty if and only if $4$ is a vertex of $\Gamma$. In this case $\Xi_{0,r+1}$ consists of the single vertex $(4,(r+1)n)$, provided $r+1$ is even and the patch $\Xi_{0,r+1}$ exists. Therefore $\Xi_{0,r+1}$ is trivially solvable.

We continue to inspect the patches $\Xi_{0,l}$ for $0<l<r+1$. We illustrate $\Xi_{0,l}$ together with an extremal solution satisfying (PS) to the right of a symbol indicating the orientation of $a$ and $b$. 

\enlargethispage{2\baselineskip}
\noindent
\(
\beginpgfgraphicnamed{fig116}
  \begin{tikzpicture}[>=latex]
   \matrix (m) [matrix of math nodes, row sep=-1.0em, column sep=1em, text height=2.0ex, text depth=1.0ex]
{ q_a &     &  \   \\
      &     & q_0  \\
   \  & q_b &  \quad   \\
};
     \path[->,font=\footnotesize]
      (m-2-3) edge node[auto,swap] {$a$} (m-1-1)
      (m-2-3) edge node[auto] {$b$} (m-3-2)
;
     \draw[inner sep=10pt,ultra thick,rounded corners]
     (m-1-1.north west) rectangle (m-3-3.south east);
     \node at (0,-0.0) {};
 \end{tikzpicture}
\endpgfgraphicnamed
\hspace{2cm}
  \beginpgfgraphicnamed{fig117}
  \begin{tikzpicture}[>=latex]
   \matrix (m) [matrix of math nodes, row sep=0.5em, column sep=1em, text height=1.5ex, text depth=0.5ex]
    {  \node[pair](40){};     &                        &                        & \node[pair](44){};     & 4 \\
               ln             &        ln+2            &         ln+3           &        ln+4            &   \\
 };
   \end{tikzpicture}
 \endpgfgraphicnamed
\)
\\
%\noindent
\(
\beginpgfgraphicnamed{fig118}
  \begin{tikzpicture}[>=latex]
   \matrix (m) [matrix of math nodes, row sep=-1.0em, column sep=1em, text height=2.0ex, text depth=1.0ex]
{ q_a &     &  \   \\
      &     & q_0  \\
   \  & q_b &  \quad   \\
};
     \path[->,font=\footnotesize]
      (m-1-1) edge node[auto] {$a$} (m-2-3)
      (m-2-3) edge node[auto] {$b$} (m-3-2)
;
     \draw[inner sep=10pt,ultra thick,rounded corners]
     (m-1-1.north west) rectangle (m-3-3.south east);
     \node at (0,-1.3) {};
 \end{tikzpicture}
\endpgfgraphicnamed
\hspace{2cm}
  \beginpgfgraphicnamed{fig119}
  \begin{tikzpicture}[>=latex]
   \matrix (m) [matrix of math nodes, row sep=0.5em, column sep=1em, text height=1.5ex, text depth=0.5ex]
    {  \node[pair](40){};     &                        & \node[triple](a43){a}; & \node[pair](44){};     & 4 \\
                              &                        & \node[pair](33){};     &                        & 3 \\
               ln             &        ln+2            &         ln+3           &        ln+4            &   \\
 };
    \path[-,very thick,font=\scriptsize,gray]
    (a43) edge[-<-=0.5] (44)
    (a43) edge[->-=0.7] node[auto,black,swap] {$-1$} (33)
;
   \end{tikzpicture}
 \endpgfgraphicnamed
\)
\\
%\noindent
\(
\beginpgfgraphicnamed{fig120}
  \begin{tikzpicture}[>=latex]
   \matrix (m) [matrix of math nodes, row sep=-1.0em, column sep=1em, text height=2.0ex, text depth=1.0ex]
{ q_a &     &  \   \\
      &     & q_0  \\
   \  & q_b &  \quad   \\
};
     \path[->,font=\footnotesize]
      (m-2-3) edge node[auto,swap] {$a$} (m-1-1)
      (m-3-2) edge node[auto,swap] {$b$} (m-2-3)
;
     \draw[inner sep=10pt,ultra thick,rounded corners]
     (m-1-1.north west) rectangle (m-3-3.south east);
     \node at (0,-2.0) {};
 \end{tikzpicture}
\endpgfgraphicnamed
\hspace{2cm}
  \beginpgfgraphicnamed{fig121}
  \begin{tikzpicture}[>=latex]
   \matrix (m) [matrix of math nodes, row sep=0.5em, column sep=1em, text height=1.5ex, text depth=0.5ex]
    {  \node[pair](40){};     & \node[triple](b42){b}; &                        & \node[pair](44){};     & 4 \\
                              &                        &                        &                        & 3 \\
                              & \node[pair](22){};     &                        &                        & 2 \\
               ln             &        ln+2            &         ln+3           &        ln+4            &   \\
 };
    \path[-,very thick,font=\scriptsize,gray]
    (b42) edge[-<-=0.5] (44)
    (b42) edge[->-=0.6] node[auto,black,swap] {$-1$} (22)
;
    \path[-,font=\scriptsize]
    (b42) edge[-<-=0.55] (40)
;
   \end{tikzpicture}
 \endpgfgraphicnamed
\)
\\
%\noindent
\(
\beginpgfgraphicnamed{fig122}
  \begin{tikzpicture}[>=latex]
   \matrix (m) [matrix of math nodes, row sep=-1.0em, column sep=1em, text height=2.0ex, text depth=1.0ex]
{ q_a &     &  \   \\
      &     & q_0  \\
   \  & q_b &  \quad   \\
};
     \path[->,font=\footnotesize]
      (m-1-1) edge node[auto] {$a$} (m-2-3)
      (m-3-2) edge node[auto,swap] {$b$} (m-2-3)
;
     \draw[inner sep=10pt,ultra thick,rounded corners]
     (m-1-1.north west) rectangle (m-3-3.south east);
     \node at (0,-2.0) {};
 \end{tikzpicture}
\endpgfgraphicnamed
\hspace{2cm}
  \beginpgfgraphicnamed{fig123}
  \begin{tikzpicture}[>=latex]
   \matrix (m) [matrix of math nodes, row sep=0.5em, column sep=1em, text height=1.5ex, text depth=0.5ex]
    {  \node[pair](40){};     & \node[triple](b42){b}; & \node[triple](a43){a}; & \node[pair](44){};     & 4 \\
                              &                        & \node[pair](33){};     &                        & 3 \\
                              & \node[pair](22){};     &                        &                        & 2 \\
               ln             &        ln+2            &         ln+3           &        ln+4            &   \\
 };
    \path[-,very thick,font=\scriptsize,gray]
    (a43) edge[-<-=0.5] (44)
    (a43) edge[->-=0.7] node[auto,black,swap] {$-1$} (33)
    (b42) edge[-<-=0.5,bend left=20] (44)
    (b42) edge[->-=0.6] node[auto,black,swap] {$-1$} (22)
;
    \path[-,font=\scriptsize]
    (b42) edge[-<-=0.55] (40)
;
   \end{tikzpicture}
 \endpgfgraphicnamed
\)
\\
This completes the proof for indecomposable representations in a tube of rank $n-2$.

%%%%%%%%%%%%%%%%%%%%%%%%%%%%%%%%%%%%%%%%%%%%%%%%%%%%%%%%%%%%%%%%%%%%%%%%%%%%%%%%%%%%%%%%%%%%%%%%%%%%%%%%%%%%%%%%%%%%%%%%%%%%%%%%%%%%%%%%%%%%%%%%%%%%%%%%%%%%%%%%%%%%%%%%%%%%%%%%%%%%%%%%%%%%%%%%%%%%%%

\subsection*{Tubes of rank $2$}
Let $M$ be an indecomposable representation in an exceptional tube of rank $2$ with a basis $\cB$ as described in section \ref{subsection: bases for indecomposables of small defect}. Though the coefficient quiver $\Gamma$ looks different from the coefficient quiver of the previous representations that we considered, we are able to reduce the present case to the treatment of a representation in an exceptional tube of rank $n-2$ where $n=4$.

This is obvious for $n=4$ since $\Gamma$ coincides with the coefficient quiver of a representation in a rank $n-2$ tube after permuting the arrows $c$ and $d$, i.e.\ after applying an automorphism of the underlying Dynkin diagram of $Q$. Therefore Theorem \ref{thm: the main theorem} holds for $M$ in the case $n=4$.

The basic observation for $n>4$ is that the linear maps $M_{v_r}$ are identity matrices w.r.t.\ to the basis $\cB$ for $r=0,\dotsc,n-5$. The contribution of the corresponding arrows of $\Gamma$ to the Schubert system are essentially extremal paths, which we can contract. The resulting system is equal to a Schubert system for $n=4$, for which we have verified Theorem \ref{thm: the main theorem} already.

We elaborate this argument in more detail. In order to satisfy axiom (EP3), we will define the following patchwork $\{\Xi_{k,l}\}$ for the reduced Schubert system $\overline\Sigma$. The indices range over all $(k,l)$ with $0\leq k\leq l\leq r+1$ where $l-k$ is even and $r$ is the number of non-extremal arrows of $\Gamma$. The indices are ordered as before: $(k,l)<(k',l')$ if and only if $l-k<l'-k'$ or if $l-k=l'-k'$ and $l<l'$.

The patch $\Xi_{k,l}$ is defined as the full subsystem of the reduced Schubert system with vertices $(i,j)$ and $(v,t,s)$ with $i,t\in\{(k-1)n+4,\dotsc,(k+1)n\}$ and $j,s\in\{ln,\dotsc,(l+1)n\}$, together with the vertices $\bigl((k-1)n+4,kn+4\bigr)$ and $\bigl(ln,(l+1)n\bigr)$ provided they are involved in a quadratic link whose tip and whose other base vertex is in $\Xi_{k,l}$.

Let $\Pi_{k,l}$ be the full subsystem whose vertices are all pairs $(i,j)$ and triples $(v,t,s)$ with vertical coordinates $i,t\in\{(k-1)n+4,\dotsc,kn\}$ and $j,s\in\{(l-1)n+4,\dotsc,ln\}$. Then $\Pi_{k,l+1}$ and $\Pi_{k+1,l+1}$ are extremal paths in $\Xi_{k,l}$, provided they are not empty. If we contract these extremal paths, we obtain the system $\Xi'_{k,l}=\bigl(\Xi_{k,l}/\Pi_{k,l+1}\bigr)/\Pi_{k+1,l+1}$, and $\Xi'_{k,l}$ is equal to the corresponding patch of a representation $M'$ in a rank $2$ tube with $n=4$. This might be best understood in a concrete situation, see Example \ref{ex: contracted patch of a rank 2 tube} below.

If we add the trivial patches $\Pi'_{k,l}$ that consist only of the vertex $(kn,ln)$ to the contracted patches $\Xi'_{k,l}$, then we obtain the same patchwork as we considered in the case of a tube of rank $n-2$, where $n=4$. 

Therefore we can use the same case study as for defect $-1$ and the tube of rank $n-2$ to obtain solutions for all $\beta$-states of the contracted patchwork. By Proposition \ref{prop: patchwork solutions}, these solutions extend to solutions of the $\beta$-states of the patches $\Xi_{k,l}$. We note that the coordinate transformation that we considered in the defect $-1$ case behaves well w.r.t.\ to the contractions. This proves Theorem \ref{thm: the main theorem} for $M$ with basis $\cB$.

\begin{ex}\label{ex: contracted patch of a rank 2 tube}
We inspect the patch $\Xi_{k,l}$ and the contraction $\Xi'_{k,l}=\bigl(\Xi_{k,l}/\Pi_{k,l+1}\bigr)/\Pi_{k+1,l+1}$ in the following example. Let $n=6$ and let all arrows of $Q$ be oriented to the right. For $0<k<l<r+1$ with $k$ and $l$ even, the last two coordinates of vertices of $\Xi_{k,l}$ vary through the vertices of the part
 \[
   \beginpgfgraphicnamed{fig124}
   \begin{tikzpicture}[>=latex]
  \matrix (m) [matrix of math nodes, row sep=0em, column sep=3em, text height=1ex, text depth=0ex]
%column 1   2       3          4         5     6    7
   {      &   &   kn-2   &    kn-1  &    kn  &   &      \\ %row 1
          &   &          &          &        &   & kn+2 \\ %row 2
     kn+3 &   &  kn+4    &  kn+5    &  kn+6  &   &      \\ %row 3
          &   &          &          &        &   &      \\ %row 4
          & \ &          &          & \vdots & \ &      \\ %row 5
          &   &          &          &        &   &      \\ %row 6
          &   &          &          &    ln  &   &      \\ %row 7
          &   &          &          &        &   & ln+2 \\ %row 8
     ln+3 &   &  ln+4    &  ln+5    & ln+6   &   &      \\ %row 9
   };
   \path[->,very thick,gray,font=\scriptsize]
   (m-1-3) edge node[auto,black] {$v_0$} (m-1-4)
   (m-1-4) edge node[auto,black] {$v_1$} (m-1-5)
   (m-1-5) edge node[auto,black] {} (m-2-7)
   (m-3-1) edge node[auto,black] {$a$} (m-3-3)
   (m-3-3) edge node[auto,black] {$v_0$} (m-3-4)
   (m-3-4) edge node[auto,black] {$v_1$} (m-3-5)
   (m-9-1) edge node[auto,black] {$a$} (m-9-3)
   (m-7-5) edge node[auto,black] {} (m-8-7)
   (m-9-3) edge node[auto,black] {$v_0$} (m-9-4)
   (m-9-4) edge node[auto,black] {$v_1$} (m-9-5)
   ;
   \path[->,font=\scriptsize]
   (m-3-5) edge node[above left=-1pt,black] {$d$} (m-2-7)
   (m-9-5) edge node[above left=-1pt,black] {$d$} (m-8-7)
   ;
   \end{tikzpicture}
\endpgfgraphicnamed
\]
of the coefficient quiver $\Gamma$ where the extremal arrows are illustrated bold and grey. The patch $\Xi_{k,l}$ looks as follows
 \[
  \beginpgfgraphicnamed{fig125}
  \begin{tikzpicture}[>=latex]
   \matrix (m) [matrix of math nodes, row sep=0.5em, column sep=1em, text height=1.5ex, text depth=0.5ex]
% columns  k                    0                         2                        3                        4                        5                      6    
   {               &  &                        &                        &                        &                        &                        & \node[pair](ll){};     & ln   \\
                   &  &                        &                        &                        &                        &                        &                        & \vdots   \\
                   &  & \node[pair](60){};     &                        &                        &                        &\node[vertex](v65){v_1};& \node[pair](66){};     & kn+6   \\
                   &  &                        &                        &                        &\node[vertex](v54){v_0};& \node[pair](55){};     &                        & kn+5   \\
                   &  &                        &                        & \node[triple](a43){a}; & \node[pair](44){};     &                        &                        & kn+4 \\
                   &  &                        &                        & \node[pair](33){};     &                        &                        &                        & kn+3 \\
                   &  & \node[triple](d20){d}; & \node[pair](22){};     &                        &                        &                        & \node[triple](d26){d}; & kn+2 \\
                   &  & \node[pair](00){};     &                        &                        &                        &\node[vertex](v05){v_1};& \node[pair](06){};     & kn   \\
                   &  &                        &                        &                       &\node[vertex](v-14){v_0};& \node[pair](-15){};    &                        & kn-1   \\
\node[pair](kk){}; &  &                        &                        &\node[triple](a-23){a}; & \node[pair](-24){};    &                        &                        & kn-2   \\
    kn+4      &\dotsb &         ln             &       ln+2             &        ln+3            &        ln+4            &         ln+5           &        ln+6            &      \\
 };
    \path[-,very thick,font=\scriptsize,gray]
    (a-23) edge (-24)
    (a43) edge (44)
    (a43) edge node[auto,swap,black] {$-1$} (33)
    (d20) edge node[auto,swap,black] {$-1$} (00)
    (d20) edge (22)
    (d26) edge node[pos=0.3,auto,black] {$-1$} (06)
    (v-14) edge (-15)
    (v-14) edge node[auto,black] {$-1$} (-24)
    (v05) edge (06)
    (v05) edge node[auto,black] {$-1$} (-15)
    (v54) edge (55)
    (v54) edge node[auto,black] {$-1$} (44)
    (v65) edge (66)
    (v65) edge node[auto,black] {$-1$} (55)
;
    \path[-,font=\scriptsize]
    (d20) edge node[auto,black] {$-1$} (60)
    (d26) edge node[pos=0.25,auto,swap,black] {$-1$} (66)
    (d26) edge[bend left=4] (22)
    (a-23) edge node[auto] {} node[pos=0.1] (a-23-33) {} (33)
    (a-23) edge[bend right=30] node[pos=0.05] (a-23-kk) {} (kk)
    (d26) edge[bend right=4] node[pos=0.05] (d26-22) {} (22)
    (d26) edge[bend left=20]  node[pos=0.05] (d26-ll) {} (ll)
;
    \path[dotted,-,thick,font=\scriptsize]
    (d26-22.center) edge[bend left=45] node[above left=-2pt] {} (d26-ll.center)
    (a-23-33.center) edge[bend right=45] node[above left=-2pt] {} (a-23-kk.center)
;
\begin{pgfonlayer}{patchwork}
 \filldraw[fill=gray!20,rounded corners] 
   (0.75,-1.45) rectangle (4.85,-3.8) %Pi k,l+1
   (0.75,0.55) rectangle (4.85,2.9) %Pi k+1,l+1
;
  \node at (3.0,-3.4) {$\Pi_{k,l+1}$};
  \node at (3.0,1.0) {$\Pi_{k+1,l+1}$};
 \end{pgfonlayer}
   \end{tikzpicture}
 \endpgfgraphicnamed
\] 
where we illustrate extremal edges bold and grey, with the extremal paths $\Pi_{k,l+1}$ and $\Pi_{k+1,l+1}$ highlighted. The following is an illustration of $\Xi'_{k,l}$ where we highlight the contractions $\Pi'_{k,l+1}$ and $\Pi'_{k+1,l+1}$ of the extremal paths $\Pi_{k,l+1}$ and $\Pi_{k+1,l+1}$, respectively. Up to exchanging $d$ by $b$, this is identical with the illustration of $\Xi_{k,l}$ on page~\pageref{fig80}.
\[
  \beginpgfgraphicnamed{fig126}
  \begin{tikzpicture}[>=latex]
   \matrix (m) [matrix of math nodes, row sep=0.5em, column sep=1em, text height=1.5ex, text depth=0.5ex]
    {              &  &                        &                        &                        & \node[pair](ll){};     & ln   \\
                   &  &                        &                        &                        &                        & \vdots   \\
                   &  & \node[pair](40){};     &                        & \node[triple](a43){a}; & \node[pair](44){};     &      \\
                   &  &                        &                        & \node[pair](33){};     &                        & kn+3 \\
                   &  & \node[triple](b20){d}; & \node[pair](22){};     &                        & \node[triple](b24){d}; & kn+2 \\
\node[pair](kk){}; &  & \node[pair](00){};     &                        & \node[triple](a03){a}; & \node[pair](04){};     & kn   \\
     kn+4     &\dotsb &         ln             &       ln+2             &        ln+3            &    \hspace{3em}        &      \\
 };
    \path[-,very thick,font=\scriptsize,gray]
    (a03) edge (04)
    (a43) edge (44)
    (a43) edge node[auto,black,swap] {$-1$} (33)
    (b20) edge node[auto,swap,black] {$-1$} (00)
    (b20) edge (22)
    (b24) edge node[pos=0.3,auto,black] {$-1$} (04)
;
    \path[-,font=\scriptsize]
    (b20) edge node[auto,black] {$-1$} (40)
    (b24) edge node[auto,black,swap] {$-1$} (44)
    (b24) edge[bend left=5] (22)
;
    \path[-,font=\scriptsize]
    (a03) edge node[auto] {} node[pos=0.25] (a03-33) {} (33)
    (a03) edge[bend right=60] node[auto] {} node[pos=0.05] (a03-kk) {} (kk)
    (b24) edge[bend right=5] node[pos=0.15] (b24-22) {} (22)
    (b24) edge[bend left=30]  node[pos=0.1] (b24-ll) {} (ll)
;
    \path[dotted,-,thick,font=\scriptsize]
    (b24-22.center) edge[bend left=45] node[above left=-2pt] {} (b24-ll.center)
    (a03-33.center) edge[bend right=45] node[above left=-2pt] {} (a03-kk.center)
;
\begin{pgfonlayer}{patchwork}
 \filldraw[fill=gray!20,rounded corners] 
   (2.45,-1.35) rectangle (3.15,-2) %Pi k,l+1
   (2.45,0.55) rectangle (3.15,1.2) %Pi k+1,l+1
;
 \end{pgfonlayer}
   \end{tikzpicture}
 \endpgfgraphicnamed
\]

\end{ex}

%%%%%%%%%%%%%%%%%%%%%%%%%%%%%%%%%%%%%%%%%%%%%%%%%%%%%%%%%%%%%%%%%%%%%%%%%%%%%%%%%%%%%%%%%%%%%%%%%%%%%%%%%%%%%%%%%%%%%%%%%%%%%%%%%%%%%%%%%%%%%%%%%%%%%%%%%%%%%%%%%%%%%%%%%%%%%%%%%%%%%%%%%%%%%%%%%%%%%%

\subsection*{Homogeneous tubes}

Let $M$ be a Schurian representation whose isomorphism class is contained in a homogeneous tube, and $\cB$ the basis from section \ref{subsection: bases for indecomposables of small defect}. Then the 
reduced Schubert system is of the following form
\[
  \beginpgfgraphicnamed{fig54}
  \begin{tikzpicture}[>=latex]
   \matrix (m) [matrix of math nodes, row sep=2em, column sep=3em, text height=1.5ex, text depth=0.5ex]
    { \node[const](b0){-1}; &                   &                         &        &                               & \node[triple](d){d}; & \node[const](c0){-\mu_0/-\mu_1};   \\
      \node[triple](b){b};  & \node[pair](0){}; & \node[vertex](v0){v_0}; & \dotsb & \node[vertex](vn-4){v_{n-4}}; & \node[pair](n-4){};  & \node[triple](c){c}; \\
 };
    \path[-,very thick,font=\scriptsize,gray]
    (b) edge node[auto,black] {$\mp 1$} (0)
    (v0) edge node[auto,swap,black] {$\pm 1$} (0)
    (v0) edge node[auto,black] {$\mp 1$} (m-2-4)
    (vn-4) edge node[auto,swap,black] {$\pm 1$} (m-2-4)
    (vn-4) edge node[auto,black] {$\mp 1$} (n-4)
    (d) edge node[auto,swap,black] {$\pm 1$} (n-4)
    (c) edge node[auto,swap,black] {$\mu_1/-\mu_0$} (n-4)
;
    \path[dotted,very thick,font=\scriptsize]
    (b) edge (b0)
    (c) edge (c0)
;
   \end{tikzpicture}
 \endpgfgraphicnamed
\]
where we omit the last two coordinates of the vertices, which depend on the orientation of the arrow, and where the weight have to be understood as follows: if the arrow that appears as the label of a triple is oriented to the left, then the weight of the adjacent edges is annotated to the left / has the top sign; if the arrow is oriented to the right, then the weight of the adjacent edges is annotated to the right / has the bottom sign. Note that all edges are extremal.

Consider a subset $\beta$ of $\cB$ that is not contradictory of the first or second kind. The following happens if we calculate the $\beta$-state. In the initial step, certain vertices, edges and links are marked as $\beta$-trivial. We denote the system of all vertices, edges, links that are not marked as $\beta$-trivial by $\Sigma_\beta'$. We see from the characterization of the second kind contradictory subsets of $\cB$ in section \ref{subsection: bases for indecomposables of small defect} that for each path from (the triple with first coordinate) $b$ to $c$ and from $b$ to $d$ and from $c$ to $d$, there must be a vertex on this path that is marked as $\beta$-trivial. 

Since all edges of $\Sigma_\beta'$ are extremal, a path originating from one of the triples $b$, $c$ or $d$ ends in a pair. Therefore, we can apply step \ref{step7} successively to all triples of such a path and see that all vertices, edges and links of this path are marked as $\beta$-trivial without deriving a contradiction. If we have done this for all paths originating from the triples $b$, $c$ and $d$---as far as these triples are not $\beta$-trivial---, we have calculated the $\beta$-state $\Sigma_\beta$.

We conclude that $\Sigma_\beta$ consists of a disjoint union of extremal paths, which are solvable by Corollary \ref{cor: extremal solution for extremal paths}. 

\enlargethispage{2\baselineskip}
\bigskip

\noindent This concludes the proof of Theorem \ref{thm: the main theorem}.\qed

%%%%%%%%%%%%%%%%%%%%%%%%%%%%%%%%%%%%%%%%%%%%%%%%%%%%%%%%%%%%%%%%%%%%%%%%%%%%%%%%%%%%%%%%%%%%%%%%%%%%%%%%%%%%%%%%%%%%%%%%%%%%%%%%%%%%%%%%%%%%%%%%%%%%%%%%%%%%%%%%%%%%%%%%%%%%%%%%%%%%%%%%%%%%%%%%%%%%%%
%%%%%%%%%%%%%%%%%%%%%%%%%%%%%%%%%%%%%%%%%%%%%%%%%%%%%%%%%%%%%%%%%%%%%%%%%%%%%%%%%%%%%%%%%%%%%%%%%%%%%%%%%%%%%%%%%%%%%%%%%%%%%%%%%%%%%%%%%%%%%%%%%%%%%%%%%%%%%%%%%%%%%%%%%%%%%%%%%%%%%%%%%%%%%%%%%%%%%%
\appendix
%%%%%%%%%%%%%%%%%%%%%%%%%%%%%%%%%%%%%%%%%%%%%%%%%%%%%%%%%%%%%%%%%%%%%%%%%%%%%%%%%%%%%%%%%%%%%%%%%%%%%%%%%%%%%%%%%%%%%%%%%%%%%%%%%%%%%%%%%%%%%%%%%%%%%%%%%%%%%%%%%%%%%%%%%%%%%%%%%%%%%%%%%%%%%%%%%%%%%%
%%%%%%%%%%%%%%%%%%%%%%%%%%%%%%%%%%%%%%%%%%%%%%%%%%%%%%%%%%%%%%%%%%%%%%%%%%%%%%%%%%%%%%%%%%%%%%%%%%%%%%%%%%%%%%%%%%%%%%%%%%%%%%%%%%%%%%%%%%%%%%%%%%%%%%%%%%%%%%%%%%%%%%%%%%%%%%%%%%%%%%%%%%%%%%%%%%%%%%

%%%%%%%%%%%%%%%%%%%%%%%%%%%%%%%%%%%%%%%%%%%%%%%%%%%%%%%%%%%%%%%%%%%%%%%%%%%%%%%%%%%%%%%%%%%%%%%%%%%%%%%%%%%%%%%%%%%%%%%%%%%%%%%%%%%%%%%%%%%%%%%%%%%%%%%%%%%%%%%%%%%%%%%%%%%%%%%%%%%%%%%%%%%%%%%%%%%%%%
%%%%%%%%%%%%%%%%%%%%%%%%%%%%%%%%%%%%%%%%%%%%%%%%%%%%%%%%%%%%%%%%%%%%%%%%%%%%%%%%%%%%%%%%%%%%%%%%%%%%%%%%%%%%%%%%%%%%%%%%%%%%%%%%%%%%%%%%%%%%%%%%%%%%%%%%%%%%%%%%%%%%%%%%%%%%%%%%%%%%%%%%%%%%%%%%%%%%%%

\section{Representations for quivers of type $\widetilde D_n$}
\label{appendix: indecomposable representations of type D_n-tilde}

\noindent
In this section, we recall facts from the representation theory for quivers $Q$ of extended Dynkin type $\widetilde D_n$, as illustrated below. For more details on the following exposition, see \cite{Crawley-Boevey92} or \cite{Dlab-Ringel76}.
\[
   \beginpgfgraphicnamed{fig2}
   \begin{tikzpicture}[>=latex]
  \matrix (m) [matrix of math nodes, row sep=-0.2em, column sep=2.5em, text height=1ex, text depth=0ex]
   {      & q_b &     &     &      &         &         &  q_c & \\
          &     & q_0 & q_1 & {\dotsb } & q_{n-5} & q_{n-4} &      & \\
      q_a &     &     &     &      &         &         &      &  q_d \\};
   \path[-,font=\scriptsize]
   (m-3-1) edge node[auto,swap] {$a$} (m-2-3)
   (m-1-2) edge node[auto] {$b$} (m-2-3)
   (m-2-3) edge node[auto] {$v_0$} (m-2-4)
   (m-2-6) edge node[auto] {$v_{n-5}$} (m-2-7)
   (m-2-7) edge node[auto] {$c$} (m-1-8)
   (m-2-7) edge node[auto,swap] {$d$} (m-3-9)
   (m-2-4) edge node[auto] {} (m-2-5)
   (m-2-5) edge node[auto] {} (m-2-6);
  \end{tikzpicture}
\endpgfgraphicnamed
\]

\subsection{Reflections and Auslander-Reiten translates} 
\label{subsection: reflections and auslander-reiten translates}

Let $Q$ be a quiver of extended Dynkin type $\widetilde D_n$. In order to describe a complete list of indecomposable representations of $Q$, we recall some bits of the representation theory of $Q$. The \emph{reflection $s_p$ at a sink $p$} of $Q$ is defined as follows. The quiver $Q'=s_pQ$ is obtained from $Q$ by inverting all arrows pointing towards $p$. Therefore $p$ is a source of $Q'$. For a representation $M$ of $Q$, we define $M'=s_pM$ as the representation of $Q'$ with $M'_q=M_q$ for $q\neq p$, and
\[
 M'_p \ = \ \ker\big( \, \bigoplus_{v:q\to p} M_q \longrightarrow M_p \, \bigr),
\]
together with the natural maps $M'_v:M'_p\to M'_q$ for $v:p\to q$ in $Q'$. All other maps $M_v:M_q\to M_{q'}$, which do not involve $p$, are unchanged. Note that $s_pS_p=0$ for the simple representation $S_p$ with support $p$.

We define the \emph{reflection $s_p$ at a source $p$} of $Q$ in analogy: $Q'=s_pQ$ is obtained from $Q$ by inverting all arrows pointing away from $p$. Thus $p$ is a sink of $Q'$. For a representation $M$ of $Q$, we define $M'=s_pM$ as the representation of $Q'$ such that $M'_q=M_q$ for $q\neq p$, and
\[
 M'_p \ = \ \coker\big( \, M_p \longrightarrow \bigoplus_{v:p\to q} M_q  \, \bigr),
\]
together with the natural maps $M'_v:M'_q\to M'_p$ for $v:q\to p$ in $Q'$. Again, we have $s_p S_p=0$.

In fact, the reflections at sinks and sources define functors $s_p:\Rep Q\to \Rep s_pQ$. The Auslander-Reiten translate $\tau:\Rep Q\to \Rep Q$ is equal to the product $s_{p_{n+1}}\dotsb s_{p_{1}}$ of reflections where $p_1,\dotsc,p_{n+1}$ range through the $n+1$ vertices of $Q$ in an admissible order, i.e. when reflecting at $p_i$, the vertex $p_i$ is a sink.  The inverse $\tau^{-1}$ is obtained when reflecting only at sources.

See appendix \ref{appendix: bases for representations of type D_n-tilde} for an explicit application of reflection functors and Auslander-Reiten translates.

\subsection{Indecomposable and exceptional representations}

The \emph{support of $M$} is the full subgraph $Q'$ of $Q$ with vertices $p$ such that the dimension of $M_p$ is positive. For a vertex $p$ in $Q$, we define the representation $P=P(p)$ as follows: $P_q=\C$ if there is an oriented path from $p$ to $q$ and $P_q=0$ otherwise; if $v:q\to q'$ is in the support of $P$, then $P_v:P_q=\C\to \C=P_{q'}$ is the identity map; all other maps are trivial. A representation $M$ is indecomposable projective if and only if it is isomorphic to $P(p)$ for some vertex $p$ of $Q$. 

Analogously, we define the representation $I=I(p)$ as the thin representation whose support consists of all vertices $q$ such that there is an oriented path from $q$ to $p$, and such that for an arrow $v:q\to q'$ in the support of $I$, the map $I_v:I_q=\C\to \C=I_{q'}$ is the identity map. A representation is indecomposable injective if and only if it is isomorphic to $I(p)$ for some vertex $p$ of $Q$.

The indecomposable representations of $Q$ are either \emph{preprojective}, which are isomorphic to Auslander-Reiten translates of indecomposable projective representations, \emph{preinjective}, which are isomorphic to Auslander-Reiten translates of indecomposable injective representations, or \emph{regular}, which are the indecomposables $M$ such that $\tau^i(M)\neq0$ for all $i\in \Z$.

We write the coefficients of a dimension vector $\alpha=(\alpha_a,\alpha_b,\alpha_0,\dotsc,\alpha_{n-4},\alpha_c,\alpha_d)$ from left to right w.r.t.\ to the illustration of $Q$ as at the beginning of section \ref{appendix: indecomposable representations of type D_n-tilde}.

A representation $M$ of $Q$ is \emph{exceptional} if it is indecomposable and $\Ext^1(M,M)=0$. Up to the $(n-2)(n-3)+4$ regular representations $M$ with $\udim M\leq\delta$, the exceptional representations of $Q$ are precisely the preprojective and the preinjective representations.

In appendix \ref{appendix: bases for representations of type D_n-tilde}, we will give explicit descriptions of the indecomposable representations of $Q$ in terms of their coefficient quivers (w.r.t.\ to a certain basis).

\subsection{The Auslander-Reiten quiver}
\label{subsection: the auslander-reiten quiver}

Let $M$ and $N$ be two representations of $Q$. A homomorphism $f:M\to N$ is \emph{irreducible} if it not an isomorphism and if it does not factor into two homomorphisms that are not isomorphisms. The Auslander-Reiten quiver $Q$ is the quiver whose vertices are the isomorphism classes $[M]$ of indecomposable representations $M$ of $Q$ and which has an arrow $[M]\to[N]$ if there exists an irreducible homomorphism $f:M\to N$.

The connected components of the Auslander-Reiten quiver of $Q$ are as follows. The \emph{preprojective component} consists of the isomorphism classes of all preprojective representations of $Q$. The \emph{preinjective component} consists of the isomorphism classes of all preinjective representations of $Q$. The regular indecomposable representations form the \emph{tubes}, which are infinitely many connected components, parametrized by $\P^1(\C)$. This parametrizations is explicitly visible from the bases that we describe in Appendix \ref{appendix: bases for representations of type D_n-tilde}.

\subsection{The tubes}
\label{subsection: the tubes}

For a regular indecomposable representation $M$, there is a positive number $n$ such that $M$ is isomorphic to its $n$-th Auslander-Reiten translate. The smallest $k>0$ with $\tau^k M\simeq M$ is equal for all representations $M$ whose isomorphism class is contained in a fixed tube, and this number $k$ is called the \emph{rank of the tube}. Except for three tubes, all tubes have rank $1$ and are called \emph{homogeneous tubes}. There are two \emph{exceptional tubes} of rank $2$ and one exceptional tube of rank $n-2$. 

Note that in the case $n=4$, all three exceptional tubes have rank $n-2=2$. Also confer section \ref{subsection: automorphisms of the dynkin diagram} where we discuss that the Dynkin diagram of $Q$ has more automorphisms if $n=4$. In Part 2 of this series of papers, we will describe the connected components of the Auslander-Reiten quiver in more detail.

\subsection{Roots}
\label{subsection: roots}

Let $\Lambda=\Z^{Q_0}$ be the root lattice of $Q$. An element $\alpha=(\alpha_p)$ of $\Lambda$ is \emph{positive} if it is non-zero and all its coefficients $\alpha_p$ are non-negative. It is \emph{negative} if $-\alpha=(-\alpha_p)$ is positive. 

The \emph{Euler form} $\langle -,-\rangle:\Z Q_0\times\Z Q_0\to\Z$ is defined as
\[
 \langle \alpha,\alpha' \rangle \ = \ \sum_{p\in Q_0} \alpha_p\alpha'_p - \sum_{(v:p\to q)\in Q_1} \alpha_p\alpha'_q.
\]
The function $q(\alpha)=\langle \alpha,\alpha\rangle$ defines a positive semi-definite quadratic form $q:\Z Q_0\to \Z$.

A \emph{root of $Q$} is a non-zero element $\alpha\in\Lambda$ with $q(\alpha)\leq 1$. A root $\alpha$ is \emph{real} if $q(\alpha)=1$ and it is \emph{imaginary} is $q(\alpha)=0$. The imaginary roots together with the zero element of $\Lambda$ form the subgroup $\{l\delta\}_{l\in\Z}$ of $\Lambda$ where $\delta=(1,1,2,\dotsc,2,1,1)$ is the minimal positive imaginary root of $Q$.

Every root is either positive or negative, and the positive roots are precisely the dimension vectors of the indecomposable representations of $Q$. We say that a representation or its isomorphism class is a \emph{real (imaginary) root representation} if its dimension vector is a real (imaginary) root. The isomorphism class of an indecomposable real root representation $M$ is uniquely determined by its dimension vector $\udim M$. All preprojective and preinjective representations are real root representations and the regular real root representations are contained in the three exceptional tubes. For a fixed $l>0$, every tube contains $k$ isomorphism classes of imaginary root representations $M$ with $\udim M=l\delta$ where $k$ is the rank of the tube.

A regular indecomposable representation is called \emph{quasi-simple} if it does not contain any proper regular subrepresentation. In particular, each homogeneous tube contains a unique isomorphism class of quasi-simple representations, which has dimension vector $\delta$.

A representation is called \emph{Schurian} if its endomorphism ring is $\C$. The associated dimension vector is called a \emph{Schur root}.

\subsection{The defect}
\label{subsection: the defect}
The \emph{defect} of a representation $M$ of $Q$ is defined as the Euler form $\langle\delta,\udim M\rangle$. We have the following classification of the indecomposable representations $M$ of $Q$ (\cite[$\SS$7, Lemma 2]{Crawley-Boevey92} or \cite{Dlab-Ringel76}):
\begin{enumerate}
 \item $M$ is preprojective if and only if its defect is negative;
 \item $M$ is regular if and only if its defect is $0$;
 \item $M$ is preinjective if and only if its defect is positive;
\end{enumerate}

By the definition of a preprojective representation, we obtain every preprojective representation of $Q$ by starting with a simple projective representation $S_p$ of $Q'$, which is a quiver of type $\widetilde D_n$ with a possibly different orientation than $Q$, and applying a finite number of reflections at sources to $M$. The same is true for a preinjective representation $M$: it is obtained from a simple injective $S_p$ by applying a finite number of reflections at sinks. Since reflections leave the defect invariant, the defect of $M$ is equal to the defect of $S_p$. Calculating the defect for simple projectives and simple injectives yields the following list.
\begin{enumerate}
 \item[\textbf{Defect  2:}\hspace{-1cm}] \hspace{1cm} $S_p$ is simple injective and $p=q_r$ for $r\in\{0,\dotsc,n-4\}$.
 \item[\textbf{Defect  1:}\hspace{-1cm}] \hspace{1cm} $S_p$ is simple injective and $p\in\{q_a,q_b,q_c,q_d\}$.
 \item[\textbf{Defect -1:}\hspace{-1cm}] \hspace{1cm} $S_p$ is simple projective and $p\in\{q_a,q_b,q_c,q_d\}$.
 \item[\textbf{Defect -2:}\hspace{-1cm}] \hspace{1cm} $S_p$ is simple projective and $p=q_r$ for $r\in\{0,\dotsc,n-4\}$.
\end{enumerate}

%%%%%%%%%%%%%%%%%%%%%%%%%%%%%%%%%%%%%%%%%%%%%%%%%%%%%%%%%%%%%%%%%%%%%%%%%%%%%%%%%%%%%%%%%%%%%%%%%%%%%%%%%%%%%%%%%%%%%%%%%%%%%%%%%%%%%%%%%%%%%%%%%%%%%%%%%%%%%%%%%%%%%%%%%%%%%%%%%%%%%%%%%%%%%%%%%%%%%%
%%%%%%%%%%%%%%%%%%%%%%%%%%%%%%%%%%%%%%%%%%%%%%%%%%%%%%%%%%%%%%%%%%%%%%%%%%%%%%%%%%%%%%%%%%%%%%%%%%%%%%%%%%%%%%%%%%%%%%%%%%%%%%%%%%%%%%%%%%%%%%%%%%%%%%%%%%%%%%%%%%%%%%%%%%%%%%%%%%%%%%%%%%%%%%%%%%%%%%

\section{Bases for representations of type $\widetilde D_n$}
\label{appendix: bases for representations of type D_n-tilde}

\noindent
Let $Q$ be a quiver of extended Dynkin type $\widetilde D_n$. In this section, we construct bases for indecomposable representations of $Q$. We provide explicit descriptions of the coefficient quivers that we need in this paper, and indicate how to derive bases for all other isomorphism classes of indecomposable representations of $Q$.

Note that Kussin and Meltzer construct bases for preprojective and preinjective indecomposable representations of $Q$ in \cite{Kussin-Meltzer06}, which differ from our choice of bases. Examples make clear that the property that the Schubert decomposition $\Gr_\ue(M)=\coprod C_\beta^M$ is a decomposition into affine spaces is not stable under disturbing the chosen ordered basis $\cB$. Therefore it is unlikely that the bases of \cite{Kussin-Meltzer06} yield decompositions into affine spaces.

\subsection*{Remark on illustrations:} In the illustrations of coefficient quivers $\Gamma$, we will use the following conventions. We will draw the vertices of $\Gamma$ as bullets. If an illustration is valid for either orientation of an arrow, we omit the head of the arrow and just draw a line. A dashed arrow together with its grey end vertex is part of $\Gamma$ if and only if the corresponding arrow of $Q$ is oriented in the indicated direction. A crossed dotted arrow indicates the orientation of the corresponding arrow of $Q$, but this arrow does not belong to $\Gamma$. Circles at the ends of crossed arrows refer to the corresponding vertices of $Q$, but they are not part of $\Gamma$. 

The ordering of the basis $\cB$ that we use in section \ref{section: Schubert decompositions for type D_n} can be constructed by the following rule: if a vertex $i$ is drawn on top of another vertex $j$, then $i<j$. Since the Schubert decomposition $\Gr_\ue(M)=\coprod C_\beta^M$ depends only on the ordering of each fibre of $F:\Gamma\to Q$, we can extend this partial order arbitrarily to a linear order of $\cB$.

%%%%%%%%%%%%%%%%%%%%%%%%%%%%%%%%%%%%%%%%%%%%%%%%%%%%%%%%%%%%%%%%%%%%%%%%%%%%%%%%%%%%%%%%%%%%%%%%%%%%%%%%%%%%%%%%%%%%%%%%%%%%%%%%%%%%%%%%%%%%%%%%%%%%%%%%%%%%%%%%%%%%%%%%%%%%%%%%%%%%%%%%%%%%%%%%%%%%%%

\subsection{Defect $-1$}
\label{subsection: bases for defect -1}

Let $M$ be an indecomposable representation of $Q$ of defect $-1$. Recall the notion of an automorphism of the underlying Dynkin diagram $\widetilde D_n$ of $Q$ from section \ref{subsection: automorphisms of the dynkin diagram}.

\begin{thm}
 Up to an automorphism of $\widetilde D_n$, the representation $M$ has an ordered basis such that the coefficient quiver $\Gamma=\Gamma(M,\cB)$ is
\[
   \beginpgfgraphicnamed{fig19}
   \begin{tikzpicture}[>=latex]
  \matrix (m) [matrix of math nodes, row sep=0em, column sep=2.9em, text height=1ex, text depth=0ex]
%column  1     2     3     4     5      6     7     8     9  
   {          &\ocirc &       &       &        &       &       &       &       \\   % row 1
      \ocirc  &       &\bullet&\bullet&\dotsb  &\bullet&\bullet&       &\ocirc \\   % row 2
              &       &       &       &        &       &       &\bullet&       \\   % row 3
      \ocirc  &       &\bullet&\bullet&\dotsb  &\bullet&\bullet&       &\ocirc \\   % row 4
              &\bullet&       &       &        &       &       &       &       \\   % row 5
      \ocirc  &       &\bullet&\bullet&\dotsb  &\bullet&\bullet&       &\ocirc \\   % row 6
              &       &       &       &        &       &       &\bullet&       \\   % row 7
              &       &       &       &\dotsb  &\bullet&\bullet&       &\ocirc \\   % row 8 
};
   \path[-,font=\scriptsize]
   (m-2-3) edge node[auto] {$v_{0}$} (m-2-4)
   (m-2-6) edge node[auto] {$v_{n-5}$} (m-2-7)
   (m-2-7) edge node[auto,swap] {} (m-3-8)
   (m-3-8) edge node[auto,swap] {$c$} (m-4-7)
   (m-4-3) edge node[auto] {$v_0$} (m-4-4)
   (m-4-6) edge node[auto] {$v_{n-5}$} (m-4-7)
   (m-4-3) edge node[auto] {} (m-5-2)
   (m-5-2) edge node[auto] {$b$} (m-6-3)
   (m-6-3) edge node[auto] {$v_0$} (m-6-4)
   (m-6-6) edge node[auto] {$v_{n-5}$} (m-6-7)
   (m-6-7) edge node[auto,swap] {} (m-7-8)
   (m-7-8) edge node[auto,swap] {$c$} (m-8-7)
   (m-8-6) edge node[auto] {$v_{n-5}$} (m-8-7);
   \path[->,dashed,font=\scriptsize]
   (m-2-3) edge node[auto,swap] {$b$} (m-1-2)
   (m-2-1) edge node[auto,swap] {$a$} (m-2-3)
   (m-2-7) edge node[auto] {$d$} (m-2-9)
   (m-4-9) edge node[auto] {$d$} (m-4-7)
   (m-4-3) edge node[auto,swap] {$a$} (m-4-1)
   (m-6-1) edge node[auto,swap] {$a$} (m-6-3)
   (m-6-7) edge node[auto] {$d$} (m-6-9)
   (m-8-9) edge node[auto] {$d$} (m-8-7);
   \path[-,font=\scriptsize]
   (m-2-4) edge node[auto] {} (m-2-5)
   (m-2-5) edge node[auto,swap] {} (m-2-6)
   (m-4-4) edge node[auto] {} (m-4-5)
   (m-4-5) edge node[auto] {} (m-4-6)
   (m-6-4) edge node[auto,swap] {} (m-6-5)
   (m-6-5) edge node[auto] {} (m-6-6)
   (m-8-5) edge node[auto] {} (m-8-6);
  \end{tikzpicture}
\endpgfgraphicnamed
\]
where the bottom row ends in one of the situations
\[
\beginpgfgraphicnamed{fig21}
   \begin{tikzpicture}[>=latex]
  \matrix (m) [matrix of math nodes, row sep=0em, column sep=2.9em, text height=1ex, text depth=0ex]
%column  1        2       3        4       5         6       7     8    
   {          &\circ  &\bullet&\bullet&         &\bullet&\bullet& \circ &       \\   % row 1
              &  \    &  \    &       &         &       &  \    &  \    &       \\   % row 2
              &  \    &  \    &       &         &       &  \    &  \    &       \\   % row 3
     \ocirc   &       &\bullet&\bullet&         &\bullet&\bullet&       &\ocirc \\   % row 4
              &\ocirc &       &       &   \     &       &       &\ocirc &       \\   % row 5
   };
   \path[->,dotted,font=\scriptsize]
   (m-1-2) edge[-x-=0.5] node[above=1pt] {$v$} (m-1-3)
   (m-1-8) edge[-x-=0.5] node[above=1pt] {$v$}  (m-1-7)
   ;
   \path[-,font=\scriptsize]
   (m-1-3) edge node[auto] {$v'$} (m-1-4)
   (m-1-6) edge node[auto] {$v'$} (m-1-7)
   ;
   \path[->,dashed,font=\scriptsize]
   (m-4-3) edge node[auto,swap] {$a$} (m-4-1)
   (m-4-3) edge node[auto] {$b$} (m-5-2)
   (m-4-7) edge node[auto] {$d$} (m-4-9)
   (m-4-7) edge node[auto,swap] {$c$} (m-5-8)
   ;
   \path[-,font=\scriptsize]
   (m-4-3) edge node[auto] {$v'$} (m-4-4)
   (m-4-6) edge node[auto] {$v'$} (m-4-7)
   ;
  \end{tikzpicture}
\endpgfgraphicnamed
\]
where $v\in\{v_0,\dotsc,v_{n-5}\}$ and $v'\in\{b,v_0,\dotsc,v_{n-5},c\}$.
\end{thm}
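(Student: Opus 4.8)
The plan is to construct the claimed basis by an explicit induction on the number of reflection functors needed to produce $M$ from a simple projective representation, in the spirit of Appendix~\ref{subsection: reflections and auslander-reiten translates}. Since $M$ has defect $-1$, it is preprojective, hence $M \simeq \tau^{-m} P(q_x)$ for some vertex $q_x \in \{q_a,q_b,q_c,q_d\}$ and some $m \geq 0$ (using the defect classification recalled in Appendix~\ref{subsection: the defect}); after applying a suitable automorphism $\sigma$ of $\widetilde D_n$ (whose effect on quiver Grassmannians and bases is controlled by Proposition~\ref{prop: automorphism of the Dynkin diagram}), we may assume $q_x$ is one of the outer vertices in the ``left'' fork, say $q_a$ or $q_b$. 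First I would record the base case: for $M = P(q_a)$ (or $P(q_b)$, or a short-support indecomposable), the coefficient quiver is visibly of the claimed shape with $r=0$, and condition (max) holds trivially because the largest vertex in the central fibre carries the identity maps of the ``thin tail''. The ordering of $\cB$ is dictated, fibre by fibre, by the rule from the remark on illustrations in Appendix~\ref{appendix: bases for representations of type D_n-tilde}: a vertex drawn above another is smaller.

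Next I would carry out the inductive step. Suppose $M' = \sigma M$ has the claimed basis $\cB'$ with coefficient quiver $\Gamma'$ of the displayed form, and let $Q'' = s_p Q'$ be a reflection at a sink $p$; I want to produce a basis for $M'' = s_p M'$ realizing the new coefficient quiver. There are three kinds of reflections to treat: at one of the outer vertices $q_a,q_b,q_c,q_d$, at the branch vertex $q_0$ or $q_{n-4}$, and at an interior vertex $q_x$ with $1 \leq x \leq n-5$. In each case, $M''_p = \ker\!\bigl(\bigoplus_{v:q\to p} M'_q \to M'_p\bigr)$ (for a sink) or the cokernel (for a source); I would write out a basis of this kernel/cokernel in terms of the given basis of the $M'_q$ — concretely, the new basis vectors are either the ``outgoing'' tails that get truncated or the relation vectors $\sum \mu_{v,i,j} j$ — and check that appending them with the ordering rule produces exactly the displayed $\Gamma$, with $r$ unchanged or increased by one depending on whether the reflection creates a new non-extremal edge. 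The dashed-arrow bookkeeping (an arrow of $\Gamma$ is present iff the arrow of $Q$ points the indicated way) is exactly the statement that reflection at a source \emph{adds} such a dashed arrow while reflection at a sink \emph{removes} it, so this part is really a direct translation of the definition of $s_p$.

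Then I would verify condition (max). Let $i = i_{\max}$ be the largest vertex with $F(i) \in \{q_0,\dots,q_{n-4}\}$. Because the basis is built up by reflections and the ``last'' block of basis vectors introduced is precisely a thin tail attached at $i_{\max}$, the only arrow of $Q$ that can fail to have a preimage in $\Gamma$ ending at $i_{\max}$ is the one pointing \emph{into} $F(i_{\max})$ from the side where the tail was not extended; this is exactly what (max) asserts, and the two pictures with the crossed arrow $v$ and the three-dashed-arrow fork pictures are the case analysis $F(i_{\max}) = q_x$ ($1 \leq x \leq n-5$) versus $F(i_{\max}) \in \{q_0, q_{n-4}\}$. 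Concretely I would track, through the reflection step, which arrow of $Q$ is ``unsaturated'' at the top vertex and confirm it must be oriented towards $F(i_{\max})$ — this follows because if it were oriented away, the corresponding map would already have been applied to $i_{\max}$ in forming $M''_p$. I expect the main obstacle to be precisely the combinatorial bookkeeping of the ordering across the branch point: when a reflection at $q_0$ or $q_{n-4}$ reorganizes the three fibres $F^{-1}(q_a), F^{-1}(q_b), F^{-1}(q_0)$ (or the $c,d$ analogue), one must check that the new vectors slot in at the right place so that both the ``$i_k < j_k$'' interleaving in the paired rows and condition (max) survive; this requires being careful that the kernel of $M'_{q_a} \oplus M'_{q_b} \to M'_{q_0}$ is spanned by vectors whose supports are the matched pairs of rows, which is where a small explicit linear-algebra computation is unavoidable. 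Everything else is a routine, if lengthy, unwinding of the reflection functor against the displayed pictures.
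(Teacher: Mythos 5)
Your proposal takes essentially the same route as the paper's proof in Appendix B, which builds the basis by applying reflection functors successively to the simple projective $S_{q_a}$ (after reducing to this case by an automorphism of $\widetilde D_n$) and runs an exhaustive local case analysis of how each reflection transforms the coefficient quiver — including the explicit $2\times 3$ matrix representing the projection onto the two-dimensional cokernel at the branch vertex, which is precisely the linear-algebra step you flag as unavoidable. Note only that since preprojectives arise from simple projectives via $\tau^{-1}$, the inductive step is a reflection at a \emph{source} (a cokernel computation), not at a sink as you first write.
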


\begin{proof}
The strategy of the proof is to exhibit bases for preprojective representations of defect $-1$ step by step along the successive application of reflections at sources to a simple projective representation. In the following, we consider all possible reflections and their effect on the coefficient quiver. It will be clear from these considerations that we can compute the effect of reflections locally in the coefficient quiver. 

Preprojective representations of defect $-1$ result from applying reflections to the simple projectives $S_p$ where $p$ is one of the vertices $q_a$, $q_b$, $q_c$ or $q_d$. Since the automorphism group of the underlying Dynkin diagram acts transitively on $q_a$, $q_b$, $q_c$ and $q_d$, we can assume that $p=q_a$.

The simple projective $S_{q_a}$ consists of a $1$-dimensional vector space at $q_a$ and trivial vector spaces at all other vertices of $Q$. Therefore the coefficient quiver of $S_{q_a}$ consists of one vertex and no arrows (w.r.t.\ to any basis $\cB$ of $S_{q_a}$). The arrow $a$ of $Q$ is oriented towards $q_a$, and all other arrows can have an arbitrary orientation.

Since the reflection $s_p$ affects a representation $M$ of $Q$ only at the vertex $p$ and at the adjacent arrows, it is enough to describe it in the \emph{neighbourhood of $p$}, by which we mean the subgraph $U_p$ of $Q$ that consists of $p$ and all its adjacent vertices together with the connecting arrows. If the restriction $M\vert_{U_p}$ to $U_p$ decomposes into a direct sum $N\oplus N'$, then we have $s_p(N\oplus N')=s_pN\oplus s_pN'$. This means that it suffices to describe the effect of an reflection $s_p$ \emph{locally in the coefficient quiver} to understand the global effect of $s_p$ on $M$. Therefore we can restrict ourselves to study the effect of $s_p$ on connected components of the fibres $F^{-1}(U_p)$ where $F:\Gamma\to Q$ is the canonical map.

In the following, we will describe all situations that occur if we apply the reflections $s_p$ successively to $S_{q_a}$. To be precise, we will choose bases for $\coker\big( M_p \to \bigoplus_{v:p\to q} M_q \bigr)$ when reflecting at a source $p$.

Though we will keep track of the values $\mu_{v,i,j}$ for our basis choices (as long as they differ from $1$), we remark already that none of the below steps creates a cycle in the coefficient quiver. This means that the constructed coefficient quivers $\Gamma$ will be trees for all successive reflections of $S_{q_a}$ w.r.t.\ to the chosen bases. Therefore, we can scale the basis elements after each reflection to renormalize all $\mu_{v,i,j}$ to $1$.

Note further that $s_p^2M=M$ unless $M$ contains $S_p$ as a direct summand. Therefore all the below steps are reversible. For brevity, we will write $s_x=s_{q_x}$ for the reflection at $q_x\in Q_0$ where $x\in\{a,b,c,d,0,\dotsc,n-4\}$.

In the following illustrations, we will assume that $n\geq 5$, i.e.\ that $Q$ contains vertices $q_0$ and $q_1$ and a connecting arrow $v_0$. It is easily seen that the case $n=4$ is leading to analogous cases as below, but with different labels for vertices and arrows.

We begin with the description of some initial reflections applied to $S_{q_a}$. 

\subsection*{Reflection at $q_0$}
\[
   \beginpgfgraphicnamed{fig4}
   \begin{tikzpicture}[>=latex]
  \matrix (m) [matrix of math nodes, row sep=0em, column sep=2.9em, text height=1ex, text depth=0ex]
%column  1     2     3     4     5     6     7     8     9      10      11 
   {        &\circ&     &     &          &     &     &\circ \\   % row 1
     \bullet&  \  &\circ&\circ&  \       &  \  &\bullet&    &\bullet&\circ  \\}; % row 2
   \path[thick,<->]
   (m-2-5) edge node[auto] {$s_0$} (m-2-6);
   \path[dotted,<-,font=\scriptsize]
   (m-2-1) edge[-x-=0.5] node[below=1pt] {$a$} (m-2-3)
   (m-1-2) edge[-x-=0.5] node[above=1pt] {$b$} (m-2-3)
   (m-2-4) edge[-x-=0.5] node[above=1pt] {$v_0$} (m-2-3)
   (m-2-9) edge[-x-=0.5] node[above=1pt] {$b$} (m-1-8)
   (m-2-9) edge[-x-=0.5] node[above=1pt] {$v_0$} (m-2-10);
   \path[->,font=\scriptsize]
   (m-2-7) edge node[below=1pt] {$a$} (m-2-9);
  \end{tikzpicture}
\endpgfgraphicnamed
\]
Note that after the reflection, the arrows $b$ and $v_0$ are pointing towards $q_0$, in agreement with the coefficient quiver of the theorem.

\subsection*{Reflection at $q_b$}
\[
   \beginpgfgraphicnamed{fig6}
   \begin{tikzpicture}[>=latex]
  \matrix (m) [matrix of math nodes, row sep=0em, column sep=2.9em, text height=1ex, text depth=0ex]
%column  1     2     3     4     5     6     7     8     9  
   {        &\circ&     &     &     &     &\bullet&   \\   % row 1
     \ocirc &  \  &\bullet&\  &  \  &\ocirc&   &\bullet\\}; % row 2
   \path[thick,<->]
   (m-2-4) edge node[auto] {$s_b$} (m-2-5);
   \path[->,font=\scriptsize]
   (m-2-8) edge node[auto,swap] {$b$} (m-1-7);
   \path[dotted,->,font=\scriptsize]
   (m-1-2) edge[-x-=0.5] node[above=1pt] {$b$} (m-2-3);
   \path[dashed,->,font=\scriptsize]
   (m-2-1) edge node[auto,swap] {$a$} (m-2-3)
   (m-2-6) edge node[auto,swap] {$a$} (m-2-8);
  \end{tikzpicture}
\endpgfgraphicnamed
\]

\subsection*{Reflection at $q_a$} 
\[
   \beginpgfgraphicnamed{fig5}
   \begin{tikzpicture}[>=latex]
  \matrix (m) [matrix of math nodes, row sep=0em, column sep=2.9em, text height=1ex, text depth=0ex]
%column  1     2     3     4     5     6     7     8     9  
   {        &\ocirc&    &     &     &     &\ocirc&     \\   % row 1
     \bullet&    &\bullet&\  &  \   &\circ&     &\bullet  \\}; % row 9 
   \path[thick,<->]
   (m-2-4) edge node[auto] {$s_a$} (m-2-5);
   \path[dotted,->,font=\scriptsize]
   (m-2-8) edge[-x-=0.5] node[below=1pt] {$a$} (m-2-6);
   \path[->,font=\scriptsize]
   (m-2-1) edge node[auto,swap] {$a$} (m-2-3);
   \path[dashed,->,font=\scriptsize]
   (m-2-3) edge node[auto,swap] {$b$} (m-1-2)
   (m-2-8) edge node[auto,swap] {$b$} (m-1-7);
  \end{tikzpicture}
\endpgfgraphicnamed
\]

\subsection*{Reflection at $q_0$} After reflecting at $q_1$ (see below), we are led to the following situation.
\[
   \beginpgfgraphicnamed{fig8}
   \begin{tikzpicture}[>=latex]
  \matrix (m) [matrix of math nodes, row sep=0em, column sep=2.9em, text height=1ex, text depth=0ex]
%column  1     2     3     4     5     6     7     8     9  
   {       &\bullet&    &     &     &     &     &\bullet&   \\   % row 1
      \circ&      &\bullet&\bullet&\ & \  &\circ&     &\bullet&\bullet& \\}; % row 2
   \path[thick,<->]
   (m-2-5) edge node[auto] {$s_0$} (m-2-6);
   \path[dotted,->,font=\scriptsize]
   (m-2-3) edge[-x-=0.5] node[below=1pt] {$a$} (m-2-1)
   (m-2-7) edge[-x-=0.5] node[below=1pt] {$a$} (m-2-9);
   \path[<-,font=\scriptsize]
   (m-1-2) edge node[auto] {$b$} (m-2-3)
   (m-2-4) edge node[auto,swap] {$v_0$} (m-2-3)
   (m-2-9) edge node[auto,swap] {$b,-1$} (m-1-8)
   (m-2-9) edge node[auto] {$v_0$} (m-2-10);
  \end{tikzpicture}
\endpgfgraphicnamed
\]
We explain this in detail: let $M$ be the representation whose coefficient quiver is illustrated on the left hand side. Then $M_{q_0}\simeq M_{q_1}\simeq M_{q_b}\simeq\C$ and therefore $(s_{0}M)_{q_0}=\coker(M_{q_o}\to M_{q_1}\oplus M_{q_b})\simeq\C$. The coefficient $=-1$ appears w.r.t.\ to the following choice of a matrix representation for the projection to the cokernel:
\[
 \xymatrix@C=6pc{ M_{q_0} \ar[r]^(0.4){\left[\begin{smallmatrix}1\\1\end{smallmatrix}\right]} & M_{q_1}\oplus M_{q_b} \ar[r]^(0.52){\left[\begin{smallmatrix}1&-1\end{smallmatrix}\right]} & (s_{0}M)_{q_0} }.   
\]

After rescaling the basis elements, we can assume that all coefficients are $1$. Since $Q$ is symmetric in the vertices $q_a$ and $q_b$, we can exchange a posteriori these vertices and obtain
\[
   \beginpgfgraphicnamed{fig9}
   \begin{tikzpicture}[>=latex]
  \matrix (m) [matrix of math nodes, row sep=0em, column sep=2.9em, text height=1ex, text depth=0ex]
%column  1     2     3     4     5     6     7     8     9    
   {       &     &     &     &     &     &     &\circ&     &        \\
      \    &\    &\    &\    &\    &\quad&\bullet&   &\bullet&\bullet\\
           &     &     &     &     &     &     &      &     &       \\};
   \path[->,font=\scriptsize]
   (m-2-7) edge node[auto,swap] {$a$} (m-2-9)
   (m-2-10) edge node[auto,swap] {$v_0$} (m-2-9);
   \path[dotted,->,font=\scriptsize]
   (m-1-8) edge[-x-=0.5] node[above=1pt] {$b$} (m-2-9);
  \end{tikzpicture}
\endpgfgraphicnamed
\]
which agrees with the coefficient quiver of the theorem.

\subsection*{Reflection at $q_{x}$ for $x=1,\dotsc,n-5$} The following situations occur if we continue to apply reflections at the vertices $q_1,\dotsc,q_{n-5}$.
\[
   \beginpgfgraphicnamed{fig10}
   \begin{tikzpicture}[>=latex]
  \matrix (m) [matrix of math nodes, row sep=0em, column sep=2.9em, text height=1ex, text depth=0ex]
%column  1     2     3     4     5     6     7     8     9  
   {        &     &     &     &     &     &     &          \\  % row 1
     \bullet&\circ&\circ&  \  & \ &\bullet&\bullet&\circ   \\}; % row 2
   \path[thick,<->]
   (m-2-4) edge node[auto] {$s_{x}$} (m-2-5);
   \path[->,font=\scriptsize]
   (m-2-6) edge node[auto] {$v_{x-1}$} (m-2-7);
   \path[dotted,->,font=\scriptsize]
   (m-2-2) edge[-x-=0.5] node[above=1pt] {$v_{x-1}$} (m-2-1)
   (m-2-2) edge[-x-=0.5] node[above=1pt] {$v_{x}$} (m-2-3)
   (m-2-8) edge[-x-=0.5] node[above=1pt] {$v_{x}$} (m-2-7);
  \end{tikzpicture}
\endpgfgraphicnamed
\]
Note that after the reflection, the arrow $v_x$ is pointing towards $q_x$, in agreement with the coefficient quiver of the theorem.

\subsection*{Reflection at $q_{x}$ for $x=1,\dotsc,n-5$}
\[
   \beginpgfgraphicnamed{fig11}
   \begin{tikzpicture}[>=latex]
  \matrix (m) [matrix of math nodes, row sep=0em, column sep=2.9em, text height=1ex, text depth=0ex]
%column   1       2      3    4   5     6     7       8     
   {        &     &     &     &     &     &     &          \\   % row 1
     \bullet&\bullet&\bullet& \ & \ &\bullet&\bullet&\bullet \\}; % row 2
   \path[thick,<->]
   (m-2-4) edge node[auto] {$s_{x}$} (m-2-5);
   \path[->,font=\scriptsize]
   (m-2-2) edge node[auto,swap] {$v_{x-1}$} (m-2-1)
   (m-2-2) edge node[auto] {$v_{x}$} (m-2-3)
   (m-2-6) edge node[auto] {$v_{x-1}$} (m-2-7)
   (m-2-8) edge node[auto,swap] {$v_{x},-1$} (m-2-7);
  \end{tikzpicture}
\endpgfgraphicnamed
\]
Note that we can scale the basis elements to renormalize $\mu_{v_r,k,j}$ to $1$.

\subsection*{Reflection at $q_{x}$ for $x=1,\dotsc,n-5$}
\[
   \beginpgfgraphicnamed{fig20}
   \begin{tikzpicture}[>=latex]
  \matrix (m) [matrix of math nodes, row sep=0em, column sep=2.9em, text height=1ex, text depth=0ex]
%column  1     2     3     4     5     6     7     8     9  
   {        &     &     &     &     &     &     &          \\  % row 1
     \circ&\circ&\bullet&  \  & \ &\circ&\bullet&\bullet   \\}; % row 2
   \path[thick,<->]
   (m-2-4) edge node[auto] {$s_{x}$} (m-2-5);
   \path[->,font=\scriptsize]
   (m-2-8) edge node[above=1pt] {$v_{x}$} (m-2-7);
   \path[dotted,->,font=\scriptsize]
   (m-2-2) edge[-x-=0.5] node[above=1pt] {$v_{x-1}$} (m-2-1)
   (m-2-2) edge[-x-=0.5] node[above=1pt] {$v_{x}$} (m-2-3)
   (m-2-6) edge[-x-=0.5] node[above=1pt] {$v_{x-1}$} (m-2-7);
  \end{tikzpicture}
\endpgfgraphicnamed
\]
Note that after the reflection, the arrow $v_{x-1}$ is pointing towards $q_x$, in agreement with the coefficient quiver of the theorem.

\subsection*{Reflection at $q_{n-4}$} The following reflections occur at the vertices $q_{n-4}$, $q_c$ and $q_d$.
\[
   \beginpgfgraphicnamed{fig12}
   \begin{tikzpicture}[>=latex]
  \matrix (m) [matrix of math nodes, row sep=0em, column sep=2.9em, text height=1ex, text depth=0ex]
%column  1     2     3     4     5     6     7     8     9  
   {        &     &     &     &     &     &     &     &    &      \\   % row 1
     \bullet&\circ&     &\circ&  \  &  \ &\bullet&\bullet&  &\circ\\   % row 2
            &     &\circ&     &     &     &     &     &\circ&     \\};% row 3
   \path[thick,<->]
   (m-2-5) edge node[auto] {$s_{n-4}$} (m-2-6);
   \path[->,font=\scriptsize]
   (m-2-7) edge node[auto] {$v_{n-5}$} (m-2-8);
   \path[dotted,->,font=\scriptsize]
   (m-2-2) edge[-x-=0.5] node[above=1pt] {$v_{n-5}$} (m-2-1)
   (m-2-2) edge[-x-=0.5] node[below=1pt] {$c$} (m-3-3)
   (m-2-2) edge[-x-=0.5] node[above=1pt] {$d$} (m-2-4)
   (m-3-9) edge[-x-=0.5] node[below=1pt] {$c$} (m-2-8)
   (m-2-10) edge[-x-=0.5] node[above=1pt] {$d$} (m-2-8);
  \end{tikzpicture}
\endpgfgraphicnamed
\]

\subsection*{Reflection at $q_{c}$}
\[
   \beginpgfgraphicnamed{fig13}
   \begin{tikzpicture}[>=latex]
  \matrix (m) [matrix of math nodes, row sep=0em, column sep=2.9em, text height=1ex, text depth=0ex]
%column  1     2     3     4     5     6     7     8     9  
   { \bullet&\bullet&  &\ocirc& \ &  \ &\bullet&\bullet&  &\ocirc&  \\
            &     &\circ&     &     &     &     &     &\bullet&     \\};
   \path[thick,<->]
   (m-1-5) edge node[auto] {$s_{c}$} (m-1-6);
   \path[->,font=\scriptsize]
   (m-1-8) edge node[auto,swap] {$c$} (m-2-9);
   \path[dotted,->,font=\scriptsize]
   (m-2-3) edge[-x-=0.5] node[below=1pt] {$c$} (m-1-2);
   \path[-,font=\scriptsize]
   (m-1-1) edge node[auto] {$v_{n-5}$} (m-1-2)
   (m-1-7) edge node[auto] {$v_{n-5}$} (m-1-8);
   \path[->,dashed,font=\scriptsize]
   (m-1-8) edge node[auto] {$d$} (m-1-10)
   (m-1-2) edge node[auto] {$d$} (m-1-4);
  \end{tikzpicture}
\endpgfgraphicnamed
\]

\subsection*{Reflection at $q_{d}$}
\[
   \beginpgfgraphicnamed{fig14}
   \begin{tikzpicture}[>=latex]
  \matrix (m) [matrix of math nodes, row sep=0em, column sep=2.9em, text height=1ex, text depth=0ex]
%column  1     2     3     4     5     6     7     8     9  
   { \bullet&\bullet&  &\circ& \ & \ &\bullet&\bullet& &\bullet \\   % row 1
            &    &\ocirc&   &   &     &     &     &\ocirc&      \\}; % row 7 
   \path[thick,<->]
   (m-1-5) edge node[auto] {$s_{d}$} (m-1-6);
   \path[->,font=\scriptsize]
   (m-1-8) edge node[auto] {$d$} (m-1-10);
   \path[dotted,->,font=\scriptsize]
   (m-1-4) edge[-x-=0.5] node[above=1pt] {$d$} (m-1-2);
   \path[-,font=\scriptsize]
   (m-1-1) edge node[auto] {$v_{n-5}$} (m-1-2)
   (m-1-7) edge node[auto] {$v_{n-5}$} (m-1-8);
   \path[->,dashed,font=\scriptsize]
   (m-1-8) edge node[auto,swap] {$c$} (m-2-9)
   (m-1-2) edge node[auto,swap] {$c$} (m-2-3);
  \end{tikzpicture}
\endpgfgraphicnamed
\]

\subsection*{Reflection at $q_{n-4}$}
\[
   \beginpgfgraphicnamed{fig15}
   \begin{tikzpicture}[>=latex]
  \matrix (m) [matrix of math nodes, row sep=0em, column sep=2.9em, text height=1ex, text depth=0ex]
%column  1     2     3     4     5     6     7     8     9    10
   { \bullet&\bullet& &\bullet&     &    &\bullet&\bullet&  &     \\   % row 1
            &    &\bullet&    &  \  &  \  &     &    &\bullet&    \\   % row 2
            &     &     &     &     &    &     &\bullet&   &\bullet \\}; % row 3
   \path[thick,<->]
   (m-2-5) edge node[auto] {$s_{n-4}$} (m-2-6);
   \path[->,font=\scriptsize]
   (m-1-2) edge node[auto,swap] {$v_{n-5}$} (m-1-1)
   (m-1-2) edge node[auto] {$d$} (m-1-4)
   (m-1-2) edge node[auto,swap] {$c$} (m-2-3)
   (m-1-7) edge node[auto] {$v_{n-5}$} (m-1-8)
   (m-2-9) edge node[auto,swap] {$c,-1$} (m-1-8)
   (m-2-9) edge node[auto,swap] {$c$} (m-3-8)
   (m-3-10) edge node[auto] {$d,-1$} (m-3-8);
  \end{tikzpicture}
\endpgfgraphicnamed
\]
We explain this in detail: let $M$ be the representation on the left hand side in the above illustration. In contrast to all the previous cases, the cokernel of $M_{n-4}\to M_{n-5}\oplus M_c\oplus M_d$ is $2$-dimensional since $M_{n-4}= M_{n-5}=M_c=M_d=\C$. The following choice of a matrix representation for the projection to the cokernel yields the coefficient quiver on the right hand side of the above illustration:
\[
 \xymatrix@C=6pc{ M_{n-4} \ar[r]^(0.4){\left[\begin{smallmatrix}1\\1\\1\end{smallmatrix}\right]} & M_{n-5}\oplus M_c\oplus M_d \ar[r]^(0.52){\left[\begin{smallmatrix}1&-1&\ \ 0\\0&\ \ 1&-1\end{smallmatrix}\right]} & (s_{n-4}M)_{n-4} }.   
\]
Note that after the reflection, $v_{n-5}$ points towards $q_{n-4}$, in agreement with the coefficient quiver of the theorem.
%\bigskip\noindent 

\subsection*{Reflection at $q_d$} After the last reflection at $q_{n-4}$, reflecting at $q_d$ has the following effect where $c$ can have either orientation.
\[
   \beginpgfgraphicnamed{fig24}
   \begin{tikzpicture}[>=latex]
  \matrix (m) [matrix of math nodes, row sep=0em, column sep=2.9em, text height=1ex, text depth=0ex]
%column  1     2     3     4     5     6     7     8     9  
   { \bullet&     &     &    &    &\bullet&   &\bullet\\  % row 1
           &\bullet&    &  \ &  \  &    &\bullet&    \\   % row 2
     \bullet&    &\bullet&   &    &\bullet&    &     \\}; % row 3
   \path[thick,<->]
   (m-2-4) edge node[auto] {$s_d$} (m-2-5);
   \path[-,font=\scriptsize]
   (m-1-1) edge node[auto] {} (m-2-2)
   (m-3-1) edge node[auto] {$c$} (m-2-2)
   (m-1-6) edge node[auto,swap] {$c$} (m-2-7)
   (m-3-6) edge node[auto,swap] {} (m-2-7);
   \path[->,font=\scriptsize]
   (m-3-3) edge node[auto] {$d$} (m-3-1)
   (m-1-6) edge node[auto] {$d$} (m-1-8);
  \end{tikzpicture}
\endpgfgraphicnamed
\]

\subsection*{Reflection at $q_c$} 
\[
   \beginpgfgraphicnamed{fig25}
   \begin{tikzpicture}[>=latex]
  \matrix (m) [matrix of math nodes, row sep=0em, column sep=2.9em, text height=1ex, text depth=0ex]
%column  1     2     3     4   5     6     7     8     
   { \bullet&    &\bullet&   &   &\bullet&   &\bullet\\    % row 1
           &\bullet&    &  \ & \  &    &\bullet&     \\   % row 2
     \bullet&    &\bullet&   &   &\bullet&   &\bullet\\}; % row 3
   \path[thick,<->]
   (m-2-4) edge node[auto] {$s_c$} (m-2-5);
   \path[->,font=\scriptsize]
   (m-2-2) edge node[auto,swap] {} (m-1-1)
   (m-2-2) edge node[auto,swap] {$c$} (m-3-1)
   (m-1-6) edge node[auto,swap] {$c$} (m-2-7)
   (m-3-6) edge node[auto,swap] {} (m-2-7);
   \path[dashed,->,font=\scriptsize]
   (m-3-3) edge node[auto] {$d$} (m-3-1)
   (m-1-1) edge node[auto] {$d$} (m-1-3)
   (m-3-8) edge node[auto] {$d$} (m-3-6)
   (m-1-6) edge node[auto] {$d$} (m-1-8);
  \end{tikzpicture}
\endpgfgraphicnamed
\]
Note that, a priori, the reflection at $q_c$ causes a coefficient that is not equal to one. However, by scaling the basis elements we can renormalize all coefficients to $1$, which justifies the illustration as above.

\subsection*{Reflection at $q_{n-4}$} After a reflection at $q_{n-5}$ , we are led to the following situation.
\[
   \beginpgfgraphicnamed{fig16}
   \begin{tikzpicture}[>=latex]
  \matrix (m) [matrix of math nodes, row sep=0em, column sep=2.9em, text height=1ex, text depth=0ex]
%column  1       2       3      4     5   6     7       8      9        10
   { \bullet&\bullet&       &\bullet&   &   &\bullet&\bullet&       &       \\   % row 1
            &       &\bullet&       & \ & \ &       &       &       &\bullet\\% row 2
     \bullet&\bullet&       &       &   &   &\bullet&\bullet&\bullet&       \\}; % row 7 
   \path[thick,<->]
   (m-2-5) edge node[auto] {$s_{n-4}$} (m-2-6);
   \path[->,font=\scriptsize]
   (m-1-2) edge node[auto,swap] {$v_{n-5}$} (m-1-1)
   (m-1-2) edge node[auto] {$d$} (m-1-4)
   (m-1-2) edge node[auto,swap] {$c$} (m-2-3)
   (m-3-2) edge node[auto,swap] {$c$} (m-2-3)
   (m-3-2) edge node[auto,swap] {$v_{n-5}$} (m-3-1)
   (m-1-7) edge node[auto] {$v_{n-5}$} (m-1-8)
   (m-2-10) edge node[auto,swap] {$d,-1$} (m-1-8)
   (m-2-10) edge node[auto,swap] {$d$} (m-3-8)
   (m-3-7) edge node[auto] {$v_{n-5}$} (m-3-8)
   (m-3-9) edge node[auto] {$c,-1$} (m-3-8);
  \end{tikzpicture}
\endpgfgraphicnamed
\]
We can scale the basis elements to normalize all coefficients to $1$. A posteriori, we can use the symmetry of $Q$ in $c$ and $d$ to exchange the role of these edges, which yields the coefficient quiver
\[
   \beginpgfgraphicnamed{fig17}
   \begin{tikzpicture}[>=latex]
  \matrix (m) [matrix of math nodes, row sep=0em, column sep=2.9em, text height=1ex, text depth=0ex]
%column  1     2     3     4     5     6         7      8       9  
   {        &     &     &     &     &\bullet&\bullet&       &        \\   % row 1
      \qquad&\quad&\quad&\quad&\quad&       &       &\bullet&        \\   % row 2
            &     &     &     &     &\bullet&\bullet&       &\bullet \\}; % row 3 
   \path[->,font=\scriptsize]
   (m-1-6) edge node[auto] {$v_{n-5}$} (m-1-7)
   (m-2-8) edge node[auto,swap] {$c$} (m-1-7)
   (m-2-8) edge node[auto,swap] {$c$} (m-3-7)
   (m-3-6) edge node[auto] {$v_{n-5}$} (m-3-7)
   (m-3-9) edge node[auto] {$d$} (m-3-7);
  \end{tikzpicture}
\endpgfgraphicnamed
\]
which agrees with the coefficient quiver of the theorem.

\bigskip\noindent There are analogous cases of reflections at the vertices $q_0$, $q_a$ and $q_b$, which can be deduced from the previous cases by using the symmetry 
\[
   \beginpgfgraphicnamed{fig18}
   \begin{tikzpicture}[>=latex]
  \matrix (m) [matrix of math nodes, row sep=-0.2em, column sep=1em, text height=1ex, text depth=0ex]
%Column 1    2     3     4      5      6     7     8    9     10   11    12     13     14     15    
   {      & q_b &     &     &       & q_c &      &   &     & q_c &       &     &       & q_b &       \\
          &     & q_0 &  \  &q_{n-4}&     &  \   &   &  \  &     &q_{n-4}&  \  &q_{0}  &     &  \    \\
      q_a &     &     &     &       &     &  q_d &   & q_d &     &       &     &       &     &  q_a  \\};
   \path[thick,<->]
   (m-2-7) edge node[auto] {} (m-2-9);
   \path[-,font=\scriptsize]
   (m-3-1) edge node[auto,swap] {$a$} (m-2-3)
   (m-1-2) edge node[auto] {$b$} (m-2-3)
   (m-2-5) edge node[auto] {$c$} (m-1-6)
   (m-2-5) edge node[auto,swap] {$d$} (m-3-7)
   (m-3-9) edge node[auto,swap] {$d$} (m-2-11)
   (m-1-10) edge node[auto] {$c$} (m-2-11)
   (m-2-13) edge node[auto] {$b$} (m-1-14)
   (m-2-13) edge node[auto,swap] {$a$} (m-3-15);
   \path[dotted,font=\scriptsize]
   (m-2-3) edge node[auto] {} (m-2-4)
   (m-2-4) edge node[auto] {} (m-2-5)
   (m-2-11) edge node[auto] {} (m-2-12)
   (m-2-12) edge node[auto] {} (m-2-13);
  \end{tikzpicture}
\endpgfgraphicnamed
\]

These steps cover all situations that occur when we apply successively reflections $s_{p_1},\dotsc, s_{p_r}$ to the simple projective $S_{q_a}$. We conclude that, up to an automorphism of the underlying Dynkin diagram, every preprojective representation of $Q$ of defect $-1$ has a basis of the described form, and the coefficient quiver of the theorem indeed determines a preprojective representation of defect $-1$. This finishes the proof of the theorem.
\end{proof}

%%%%%%%%%%%%%%%%%%%%%%%%%%%%%%%%%%%%%%%%%%%%%%%%%%%%%%%%%%%%%%%%%%%%%%%%%%%%%%%%%%%%%%%%%%%%%%%%%%%%%%%%%%%%%%%%%%%%%%%%%%%%%%%%%%%%%%%%%%%%%%%%%%%%%%%%%%%%%%%%%%%%%%%%%%%%%%%%%%%%%%%%%%%%%%%%%%%%%%

\subsection{Defect $-2$} 
\label{subsection: defect -2}

A coefficient quiver for indecomposable representations of defect $-2$ can be constructed by using reflections at sources to simple projective representations whose support is one of the vertices $q_0, \dotsc,q_{n-4}$. Since we do not need to know the explicit form of these coefficient quiver, we forgo to give a description. We refer to section 1.6 of the sequel \cite{LW15} of this paper for more information about representations of defect $-2$. The reader will, in particular, find information about how to glue together two coefficient quivers of defect $-1$-representations to yield a coefficient quiver for a defect $-2$-representation.

%%%%%%%%%%%%%%%%%%%%%%%%%%%%%%%%%%%%%%%%%%%%%%%%%%%%%%%%%%%%%%%%%%%%%%%%%%%%%%%%%%%%%%%%%%%%%%%%%%%%%%%%%%%%%%%%%%%%%%%%%%%%%%%%%%%%%%%%%%%%%%%%%%%%%%%%%%%%%%%%%%%%%%%%%%%%%%%%%%%%%%%%%%%%%%%%%%%%%%

\subsection{Positive defect} 
\label{subsection: positive defect}

The dual representation of a representation $M$ with coefficient quiver $\Gamma$ comes with a dual coefficient quiver $\Gamma^\op$, which is obtained from $\Gamma$ by reversing all arrows. Therefore, coefficient quivers for representations of positive defect are obtained from the coefficient quivers of their duals, which are representations of negative defect. We refer to section 1.8 of the sequel \cite{LW15} to this paper for more details.

%%%%%%%%%%%%%%%%%%%%%%%%%%%%%%%%%%%%%%%%%%%%%%%%%%%%%%%%%%%%%%%%%%%%%%%%%%%%%%%%%%%%%%%%%%%%%%%%%%%%%%%%%%%%%%%%%%%%%%%%%%%%%%%%%%%%%%%%%%%%%%%%%%%%%%%%%%%%%%%%%%%%%%%%%%%%%%%%%%%%%%%%%%%%%%%%%%%%%%

\subsection{Exceptional tubes of rank $2$} 
\label{subsection: bases for exceptional tubes of rank 2}

If $n=4$, then all of the three exceptional tubes are of rank $2$. If $n\geq5$, then only two of them are of rank $2$. All of the rank $2$-tubes behave completely analogous. Each of the tubes contains two quasi-simple representations, and every other representation of a tube contains a unique quasi-simple subrepresentation, which is contained in the same tube.

The coefficient quivers of the quasi-simples are of the following shape. We apply the same conventions for our illustrations as in the previous section, i.e.\ the dashed arrows are part of the coefficient quiver if and only if the corresponding arrow of $Q$ is oriented in the indicated direction.
\[
   \beginpgfgraphicnamed{fig3}
   \begin{tikzpicture}[>=latex]
  \matrix (m) [matrix of math nodes, row sep=0em, column sep=2.9em, text height=1ex, text depth=0ex]
%column  1     2     3     4     5      6     7     8     9  
   {          &\ocirc &       &       &        &       &       &\ocirc &       \\   % row 1
      \ocirc &       &\bullet&\bullet&\dotsb  &\bullet&\bullet&       &\ocirc \\   % row 2
};
   \path[-,font=\scriptsize]
   (m-2-3) edge node[auto] {$v_{0}$} (m-2-4)
   (m-2-6) edge node[auto] {$v_{n-5}$} (m-2-7)
   ;
   \path[->,dashed,font=\scriptsize]
   (m-2-3) edge node[auto,swap] {$b$} (m-1-2)
   (m-2-1) edge node[auto,swap] {$a$} (m-2-3)
   (m-2-9) edge node[auto] {$d$} (m-2-7)
   (m-2-7) edge node[auto] {$c$} (m-1-8)
   ;
   \path[-,font=\scriptsize]
   (m-2-4) edge node[auto] {} (m-2-5)
   (m-2-5) edge node[auto,swap] {} (m-2-6)
   ;
  \end{tikzpicture}
\endpgfgraphicnamed
\]
The corresponding illustration of its Auslander-Reiten translate, which is the other quasi-simple representation of the same tube, can be illustrated in the same way, but with all dashed arrows reversed. 

In the following we assume that the vertices $q_a,q_b,q_c$ and $q_d$ are sources of $Q$. The coefficient quivers for all other orientations of $Q$ are easily deduced from this case by applying reflections to $q_a,q_b,q_c$ and $q_d$.

Let $\delta(m,n)$ be the dimension vector with $\delta(m,n)_{q_i}=m+n$ for $i=0,\ldots,n-4$, $\delta(m,n)_{q_{a}}=\delta(m,n)_{q_c}=m$ and $\delta(m,n)_{q_b}=\delta(m,n)_{q_d}=n$. 
Then $\delta(0,1)$ and $\delta(1,0)$ are quasi-simple and $\delta(n-1,n)=\delta(0,1)^{n}+\delta(1,0)^{n-1}$ and $\delta(n,n-1)=\delta(0,1)^{n-1}+\delta(1,0)^{n}$ are real roots. Note that we have $\ext(\delta(0,1),\delta(1,0))=\ext(\delta(1,0),\delta(0,1))=1$. Moreover, if $n\geq 2$ the corresponding roots are not Schurian. Finally, the imaginary roots are $\delta(n,n)=\delta(0,1)^n+\delta(1,0)^n$. In the following, we use the notation $\delta(n,n)^i$ with $i\in\{1,2\}$ to distinguish between different representations of this dimension. The tube of rank $2$ looks like
\[
\xymatrix@R0pt@C20pt{
                  &\delta(1,1)^2               &                         &\delta(2,2)^2              &                         &\delta(3,3)^2       &     \\
\delta(1,0)\ar[ru]&                            &\delta(2,1)\ar[ru]\ar[lu]&                           &\delta(3,2)\ar[lu]\ar[ru]&                    &\dotsb\ar[lu]\\
                  &\delta(1,1)^1\ar[ru]\ar[lu] &                         &\delta(2,2)^1\ar[ru]\ar[lu]&                         &\delta(3,3)^1\ar[lu]\ar[ru]&     \\
\delta(0,1)\ar[ru]&                            &\delta(1,2)\ar[ru]\ar[lu]&                           &\delta(2,3)\ar[ru]\ar[lu]&                    &\dotsb\ar[lu]\\
                  &\delta(1,1)^2\ar[ru]\ar[lu] &                         &\delta(2,2)^2\ar[ru]\ar[lu]&                         &\delta(3,3)^2\ar[lu]\ar[ru]&        }
\]
where the top and bottom row are identified.

We obtain a coefficient quiver of an arbitrary representation in this tube by glueing the coefficient quivers of the two quasi-simples in an appropriate way. We order the basis elements of the coefficient quiver in such a way that we can find the quasi-simple subrepresentation at the top and the factor representation at the bottom. 

The quasi-simple factor representation of an imaginary representations $\delta(1,1)^i$ differs from its quasi-simple subrepresentation. Up to a permutation of $a$, $b$, $c$ and $d$, $\delta(1,1)^i$ has the coefficient quiver
\[
   \beginpgfgraphicnamed{fig7}
   \begin{tikzpicture}[>=latex]
  \matrix (m) [matrix of math nodes, row sep=0em, column sep=2.9em, text height=1ex, text depth=0ex]
%column  1     2     3     4     5      6     7     8     9  
   {         &\ocirc &       &       &        &       &       &       &       \\   % row 1
      \ocirc &       &\bullet&\bullet&\dotsb  &\bullet&\bullet&       &\ocirc \\   % row 2
             &\ocirc &       &       &        &       &       &\bullet&       \\   % row 3
      \ocirc &       &\bullet&\bullet&\dotsb  &\bullet&\bullet&       &\ocirc \\   % row 4
};
   \path[-,font=\scriptsize]
   (m-2-3) edge node[auto] {$v_{0}$} (m-2-4)
   (m-2-4) edge node[auto] {} (m-2-5)
   (m-2-5) edge node[auto,swap] {} (m-2-6)
   (m-2-6) edge node[auto] {$v_{n-5}$} (m-2-7)
   (m-3-8) edge node[auto] {} (m-2-7)
   (m-3-8) edge node[auto,swap] {$c$} (m-4-7)
   (m-4-3) edge node[auto] {$v_{0}$} (m-4-4)
   (m-4-4) edge node[auto] {} (m-4-5)
   (m-4-5) edge node[auto,swap] {} (m-4-6)
   (m-4-6) edge node[auto] {$v_{n-5}$} (m-4-7)
   ;
   \path[->,dashed,font=\scriptsize]
   (m-2-3) edge node[auto,swap] {$b$} (m-1-2)
   (m-2-1) edge node[above=-2pt] {$a$} (m-2-3)
   (m-2-9) edge node[auto,swap] {$d$} (m-2-7)
   (m-4-7) edge node[auto,swap] {$d$} (m-4-9)
   (m-3-2) edge node[auto] {$b$} (m-4-3)
   (m-4-3) edge node[above=-2pt] {$a$} (m-4-1)
   ;
  \end{tikzpicture}
\endpgfgraphicnamed
\]
The representations $\delta(1,2)$ and $\delta(2,1)$ have the same quasi-simple representation as a factor and subrepresentation, and their coefficient quivers are of the form
\[
   \beginpgfgraphicnamed{fig22}
   \begin{tikzpicture}[>=latex]
  \matrix (m) [matrix of math nodes, row sep=0em, column sep=2.9em, text height=1ex, text depth=0ex]
%column  1       2       3       4       5        6       7       8       9  
   {         &\ocirc &       &       &        &       &       &       &       \\   % row 1
      \ocirc &       &\bullet&\bullet&\dotsb  &\bullet&\bullet&       &\ocirc \\   % row 2
             &\ocirc &       &       &        &       &       &\bullet&       \\   % row 3
      \ocirc &       &\bullet&\bullet&\dotsb  &\bullet&\bullet&       &       \\   % row 4
             &\ocirc &       &       &        &       &       &       &\bullet\\   % row 5
      \ocirc &       &\bullet&\bullet&\dotsb  &\bullet&\bullet&\ocirc &       \\   % row 5
};
   \path[-,font=\scriptsize]
   (m-2-3) edge node[auto] {$v_{0}$} (m-2-4)
   (m-2-4) edge node[auto] {} (m-2-5)
   (m-2-5) edge node[auto,swap] {} (m-2-6)
   (m-2-6) edge node[auto] {$v_{n-5}$} (m-2-7)
   (m-3-8) edge node[auto] {} (m-2-7)
   (m-3-8) edge node[auto,swap] {$c$} (m-4-7)
   (m-4-3) edge node[auto] {$v_{0}$} (m-4-4)
   (m-4-4) edge node[auto] {} (m-4-5)
   (m-4-5) edge node[auto,swap] {} (m-4-6)
   (m-4-6) edge node[auto] {$v_{n-5}$} (m-4-7)
   (m-5-9) edge node[auto] {} (m-4-7)
   (m-5-9) edge node[auto,swap] {$d$} (m-6-7)
   (m-6-3) edge node[auto] {$v_{0}$} (m-6-4)
   (m-6-4) edge node[auto] {} (m-6-5)
   (m-6-5) edge node[auto,swap] {} (m-6-6)
   (m-6-6) edge node[auto] {$v_{n-5}$} (m-6-7)
   ;
   \path[->,dashed,font=\scriptsize]
   (m-2-3) edge node[auto,swap] {$b$} (m-1-2)
   (m-2-1) edge node[above=-2pt] {$a$} (m-2-3)
   (m-2-9) edge node[auto,swap] {$d$} (m-2-7)
   (m-3-2) edge node[auto] {$b$} (m-4-3)
   (m-4-3) edge node[above=-2pt] {$a$} (m-4-1)
   (m-6-3) edge node[auto,swap] {$b$} (m-5-2)
   (m-6-1) edge node[above=-2pt] {$a$} (m-6-3)
   (m-6-7) edge node[auto,swap] {$c$} (m-6-8)
   ;
  \end{tikzpicture}
\endpgfgraphicnamed
\]
up to a permutation of $a$, $b$, $c$ and $d$. We obtain coefficient quivers for larger roots in a rank $2$-tube by inserting an appropriate number of copies of the quiver
\[
   \beginpgfgraphicnamed{fig23}
   \begin{tikzpicture}[>=latex]
  \matrix (m) [matrix of math nodes, row sep=0em, column sep=2.9em, text height=1ex, text depth=0ex]
%column  1       2       3       4       5        6       7       8       9  
   {         &       &       &       &        &       &       &       &       \\   % row 1
             &       &       &       &        &       &\circ  &       &       \\   % row 2
             &\ocirc &       &       &        &       &       &\bullet&       \\   % row 3
      \ocirc &       &\bullet&\bullet&\dotsb  &\bullet&\bullet&       &       \\   % row 4
             &\ocirc &       &       &        &       &       &       &\bullet\\   % row 5
      \ocirc &       &\bullet&\bullet&\dotsb  &\bullet&\bullet&       &       \\   % row 5
};
   \path[-,font=\scriptsize]
   (m-3-8) edge node[auto] {} (m-2-7)
   (m-3-8) edge node[auto,swap] {$c$} (m-4-7)
   (m-4-3) edge node[auto] {$v_{0}$} (m-4-4)
   (m-4-4) edge node[auto] {} (m-4-5)
   (m-4-5) edge node[auto,swap] {} (m-4-6)
   (m-4-6) edge node[auto] {$v_{n-5}$} (m-4-7)
   (m-5-9) edge node[auto] {} (m-4-7)
   (m-5-9) edge node[auto,swap] {$d$} (m-6-7)
   (m-6-3) edge node[auto] {$v_{0}$} (m-6-4)
   (m-6-4) edge node[auto] {} (m-6-5)
   (m-6-5) edge node[auto,swap] {} (m-6-6)
   (m-6-6) edge node[auto] {$v_{n-5}$} (m-6-7)
   ;
   \path[->,dashed,font=\scriptsize]
   (m-3-2) edge node[auto] {$b$} (m-4-3)
   (m-4-3) edge node[above=-2pt] {$a$} (m-4-1)
   (m-6-3) edge node[auto,swap] {$b$} (m-5-2)
   (m-6-1) edge node[above=-2pt] {$a$} (m-6-3)
   ;
  \end{tikzpicture}
\endpgfgraphicnamed
\]
in the middle of one of the above quivers.

%%%%%%%%%%%%%%%%%%%%%%%%%%%%%%%%%%%%%%%%%%%%%%%%%%%%%%%%%%%%%%%%%%%%%%%%%%%%%%%%%%%%%%%%%%%%%%%%%%%%%%%%%%%%%%%%%%%%%%%%%%%%%%%%%%%%%%%%%%%%%%%%%%%%%%%%%%%%%%%%%%%%%%%%%%%%%%%%%%%%%%%%%%%%%%%%%%%%%%

\subsection{Exceptional tubes of rank $n-2$} 
\label{subsection: bases for exceptional tubes of rank n-2}

If $n=4$, then the following construction is valid for each of the three tubes of rank $2=4-2$, up to a permutation of $a$, $b$, $c$ and $d$. If $n\geq5$, then there is a unique tube of rank $n-2$. We begin with a description of the $n-2$ quasi-simple representations of a tube of rank $n-2$. 

Roughly speaking, the quasi-simple representations are the thin representations whose support correspond to the maximal equioriented-oriented subgraphs of $Q$ w.r.t.\ to our way of ordering the vertices from left to right. More explicitly, the coefficient quivers of the quasi-simple representations are of the forms
\[
   \beginpgfgraphicnamed{fig40}
   \begin{tikzpicture}[>=latex]
  \matrix (m) [matrix of math nodes, row sep=0em, column sep=2.9em, text height=1ex, text depth=0ex]
%column  1      2       3       4       5       6      7
   {        &\circ  &\bullet& \dotsb&\bullet&\circ  &      \\   % row 1
       \    &       &       &       &       &       &      \\   % row 2
       \    &       &       &       &       &       &      \\   % row 3
            &\ocirc &       &       &       &       &      \\   % row 4
     \ocirc &       &\bullet&\dotsb &\bullet&\circ  &      \\   % row 5
       \    &       &       &       &       &       &      \\   % row 6
       \    &       &       &       &       &       &      \\   % row 7
            &       &       &       &       &\ocirc &      \\   % row 8
            & \circ &\bullet&\dotsb &\bullet&       &\ocirc\\   % row 9
};
   \path[->,font=\scriptsize]
   (m-1-3) edge node[auto] {$v_i$} (m-1-4)
   (m-1-4) edge node[auto] {$v_j$} (m-1-5)
   ;
   \path[->,dotted,font=\scriptsize]
   (m-1-3) edge[-x-=0.5] node[auto,swap] {} (m-1-2)
   (m-1-6) edge[-x-=0.5] node[auto,swap] {} (m-1-5)
   ;
   \path[->,font=\scriptsize]
   (m-5-3) edge node[auto] {$v_0$} (m-5-4)
   (m-5-4) edge node[auto] {$v_j$} (m-5-5)
   ;
   \path[->,dashed,font=\scriptsize]
   (m-5-1) edge node[auto,swap] {$a$} (m-5-3)
   (m-4-2) edge node[auto] {$b$} (m-5-3)
   ;
   \path[->,dotted,font=\scriptsize]
   (m-5-6) edge[-x-=0.5] node[auto,swap] {} (m-5-5)
   ;
   \path[->,font=\scriptsize]
   (m-9-3) edge node[auto] {$v_i$} (m-9-4)
   (m-9-4) edge node[auto] {$v_{n-5}$} (m-9-5)
   ;
   \path[->,dashed,font=\scriptsize]
   (m-9-5) edge node[auto,swap] {$d$} (m-9-7)
   (m-9-5) edge node[auto] {$c$} (m-8-6)
   ;
   \path[->,dotted,font=\scriptsize]
   (m-9-3) edge[-x-=0.5] node[auto,swap] {} (m-9-2)
   ;
  \end{tikzpicture}
\endpgfgraphicnamed
\]
up to reversing all arrows, where $0\leq i\leq j\leq n-4$. Note that this includes the following extremal cases (up to reversing all arrows and interchanging $a,b,v_0$ with $d,c,v_{n-4}$).
\[
   \beginpgfgraphicnamed{fig127}
   \begin{tikzpicture}[>=latex]
  \matrix (m) [matrix of math nodes, row sep=0em, column sep=3em, text height=1ex, text depth=0ex]
%column  1      2       3       4       5       6      7        8       9
   {   \    &\ocirc &       &       &       &       &       &       &      \\   % row 1
      \ocirc&       &\bullet&\circ  &       &\circ  &\bullet&\circ  &\quad \\   % row 2
       \    &       &       &       &       &       &       &       &      \\   % row 3
       \    &       &       &       &       &       &       &       &      \\   % row 3
            &       &\ocirc &       &       &       &\ocirc &       &      \\   % row 4
            &\ocirc &       &\bullet&\dotsb &\bullet&       &\ocirc &      \\   % row 5
};
   \path[->,font=\scriptsize]
   (m-6-4) edge node[auto] {$v_0$} (m-6-5)
   (m-6-5) edge node[auto] {$v_{n-4}$} (m-6-6)
   ;
   \path[->,dashed,font=\scriptsize]
   (m-2-1) edge node[auto,swap] {$a$} (m-2-3)
   (m-1-2) edge node[auto] {$b$} (m-2-3)
   (m-6-2) edge node[auto,swap] {$a$} (m-6-4)
   (m-5-3) edge node[auto] {$b$} (m-6-4)
   (m-6-6) edge node[auto,swap] {$d$} (m-6-8)
   (m-6-6) edge node[auto] {$c$} (m-5-7)
   ;
   \path[->,dotted,font=\scriptsize]
   (m-2-4) edge[-x-=0.5] node[auto,swap] {} (m-2-3)
   (m-2-7) edge[-x-=0.5] node[auto,swap] {} (m-2-6)
   (m-2-8) edge[-x-=0.5] node[auto,swap] {} (m-2-7)
   ;
  \end{tikzpicture}
\endpgfgraphicnamed
\]

We continue with a description of the tube of rank $n-2$. The roots of the tube depend on the orientation of $Q$. In the following, we will describe the exceptional roots for $\widetilde D_n$ in subspace orientation, i.e.\ the arrows $a$ and $b$ are orientated to the right and all the other arrows of $Q$ are oriented to the left, according to our conventions for illustrations. 

For $1\leq i\leq j\leq n-4$, let $d^{i,j}$ and $\hat d^{i,j}$ be the dimension vectors with 
\[
 d^{i,j}_q \ = \ \begin{cases} 1 \quad \text{if }q\in\{q_a,q_b,q_c,q_d,q_i,\ldots,q_j\}, \\
                               2 \quad \text{otherwise},                                      \end{cases}  
 \quad \text{and} \quad
 \hat d^{i,j}_q \ = \ \begin{cases} 1 \quad \text{if }q\in\{q_i,,\ldots,q_j\}, \\
                                    0 \quad \text{otherwise}.                        \end{cases}
\]
For $1\leq j\leq n-4$, let $d^{0,j}$ and $\hat d^{0,j}$ be the dimension vectors with 
\[
 d^{0,j}_q=\begin{cases}1 \quad \text{if } q\in\{q_c,q_d,q_0,\ldots,q_j\},       \\
                        0 \quad \text{if } q\in\{q_a,q_b\},\\
                        2 \quad \text{otherwise},        \end{cases}
 \quad \text{and} \quad
 \hat d^{0,j}_q=\begin{cases}1 \quad \text{if }q\in\{q_a,q_b,q_0,\ldots,q_j\}, \\
                             0 \quad \text{otherwise}.                             \end{cases}
\]
Then we have $\ext(d^{i,j},\hat d^{i,j})=\ext(\hat d^{i,j},d^{i,j})=1$. These are the exceptional roots of the tube, and the corresponding roots for other orientations of $Q$ can be derived by applying an appropriate sequence of reflection functors.

Independent of the orientation of $Q$, the vectors $\delta^{i,j}(s,t):=(d^{i,j})^{s}+(\hat d^{i,j})^{t}$ are roots if $s\in\{t-1,t,t+1\}$. If we have $(s,t)\in\{(1,0),(0,1)\}$, then $\delta^{i,j}(s,t)$ are Schur roots. There are $n-2$ chains of irreducible inclusions, which are Auslander-Reiten translates of the chain 
\[
 M^{0,0}(0,1) \ \longrightarrow \ M^{0,1}(0,1) \ \longrightarrow \ \dotsb \ \longrightarrow \ M^{0,n-4}(0,1) \ \longrightarrow \ M(1,1) \ \longrightarrow \ M^{0,0}(1,2) \ \longrightarrow \ \dotsb
\]
where $M^{i,j}(s,s+1)$ is the unique indecomposable representation with dimension vector $\delta^{i,j}(s,s+1)$ and $M(s,s)$ is the unique imaginary root representation that fits into the chain. Since every indecomposable representation of the tube belongs to a unique chain, the chains determine the shape of the tube. For instance for $n=6$, the tube looks like
\[
\begin{xy}
\xymatrix@R0pt@C13pt{
&\delta^{0,1}(0,1)\ar[ld]\ar[rd]&&\delta(1,1)^1\ar[ld]\ar[rd]&&\delta^{0,1}(2,1)\ar[ld]\ar[rd]&\\
\delta^{1,1}(0,1)\ar[rd]&&\delta^{0,2}(0,1)\ar[ld]\ar[rd]&&\delta^{0,2}(2,1)\ar[ld]\ar[rd]&&\dotsb\ar[ld]\\
&\delta^{1,2}(0,1)\ar[ld]\ar[rd]&&\delta(1,1)^2\ar[ld]\ar[rd]&&\delta^{1,2}(2,1)\ar[ld]\ar[rd]&\\
\delta^{2,2}(0,1)\ar[rd]&&\delta^{0,0}(1,0)\ar[ld]\ar[rd]&&\delta^{0,0}(1,2)\ar[ld]\ar[rd]&&\dotsb\ar[ld]\\
&\delta^{0,1}(1,0)\ar[ld]\ar[rd]&&\delta(1,1)^3\ar[ld]\ar[rd]&&\delta^{0,1}(1,2)\ar[ld]\ar[rd]&\\
\delta^{0,2}(1,0)\ar[rd]&&\delta^{1,1}(1,0)\ar[ld]\ar[rd]&&\delta^{1,1}(1,2)\ar[ld]\ar[rd]&&\dotsb\ar[ld]\\
&\delta^{1,2}(1,0)\ar[ld]\ar[rd]&&\delta(1,1)^4\ar[ld]\ar[rd]&&\delta^{1,2}(1,2)\ar[ld]\ar[rd]&\\
\delta^{0,0}(0,1)\ar[rd]&&\delta^{2,2}(1,0)\ar[ld]\ar[rd]&&\delta^{2,2}(1,2)\ar[ld]\ar[rd]&&\dotsb\ar[ld]\\
&\delta^{0,1}(0,1)&&\delta(1,1)^1&&\delta^{0,1}(2,1)&\\}
\end{xy}
\]
where the bottom and the top-row are identified. From this, it is clear that each representation in this tube contains a unique quasi-simple subrepresentation and a unique quasi-simple factor representation. We construct coefficient quivers for these representations by the following recursive procedure. For the quasi-simples, we use the coefficient quivers as described above. Let $M$ be any representation in the tube, let
\[
 N_0 \hookrightarrow N_1 \hookrightarrow\ldots\hookrightarrow N_n=M 
\]
the (unique) chain of irreducible inclusions such that $N_0$ is quasi-simple and let $N'$ be the (unique) quasi-simple factor of $M$. Assume that we have constructed the coefficient quiver for $N_{n-1}$. In order to construct the coefficient quiver for $M$, we glue $N'$ to $N_{n-1}$ by inserting an arrow that represents a non-trivial element of $\Ext(N',N_{n-1})$. The start of this arrow is a vertex of $N'$ and its target is a vertex of $N_{n-1}$. 

Using the appropriate choices for these arrows yields a snake shaped coefficient quivers of the form
\[
 \beginpgfgraphicnamed{fig29}
 \begin{tikzpicture}[>=latex]
  \matrix (m) [matrix of math nodes, row sep=0em, column sep=2.9em, text height=1ex, text depth=0ex]
%column   1       2       3       4       5        6       7       8       9  
   {          &       &       &       &        &       &       &       &       \\   % row 1
              &       &       &       &\dotsb  &\bullet&\bullet&       &\ocirc \\   % row 2
              &       &       &       &        &       &       &\bullet&       \\   % row 3
      \ocirc  &       &\bullet&\bullet&\dotsb  &\bullet&\bullet&       &\ocirc \\   % row 4
              &\bullet&       &       &        &       &       &       &       \\   % row 5
      \ocirc  &       &\bullet&\bullet&\dotsb  &       &       &       &       \\   % row 6
};
   \path[-,font=\scriptsize]
   (m-2-6) edge node[auto] {$v_{n-5}$} (m-2-7)
   (m-2-7) edge node[auto,swap] {} (m-3-8)
   (m-3-8) edge node[auto,swap] {$c$} (m-4-7)
   (m-4-3) edge node[auto] {$v_0$} (m-4-4)
   (m-4-6) edge node[auto] {$v_{n-5}$} (m-4-7)
   (m-4-3) edge node[auto] {} (m-5-2)
   (m-5-2) edge node[auto] {$b$} (m-6-3)
   (m-6-3) edge node[auto] {$v_0$} (m-6-4)
   ;
   \path[->,dashed,font=\scriptsize]
   (m-2-7) edge node[auto] {$d$} (m-2-9)
   (m-4-9) edge node[auto] {$d$} (m-4-7)
   (m-4-3) edge node[auto,swap] {$a$} (m-4-1)
   (m-6-1) edge node[auto,swap] {$a$} (m-6-3)
   ;
   \path[-,font=\scriptsize]
   (m-2-5) edge node[auto,swap] {} (m-2-6)
   (m-4-4) edge node[auto] {} (m-4-5)
   (m-4-5) edge node[auto] {} (m-4-6)
   (m-6-4) edge node[auto,swap] {} (m-6-5)
   ;
  \end{tikzpicture}
\endpgfgraphicnamed
\]
where the top row ends in one of the situations
\[
\beginpgfgraphicnamed{fig28}
   \begin{tikzpicture}[>=latex]
  \matrix (m) [matrix of math nodes, row sep=0em, column sep=2.9em, text height=1ex, text depth=0ex]
%column  1        2       3        4       5         6       7     8    
   {          &\circ  &\bullet&\bullet&         &\bullet&\bullet& \circ &       \\   % row 1
              &  \    &  \    &       &         &       &  \    &  \    &       \\   % row 2
              &  \    &  \    &       &         &       &  \    &  \    &       \\   % row 3
     \ocirc   &       &\bullet&\bullet&         &\bullet&\bullet&       &\ocirc \\   % row 4
              &\ocirc &       &       &   \     &       &       &\ocirc &       \\   % row 5
   };
   \path[<-,dotted,font=\scriptsize]
   (m-1-2) edge[-x-=0.5] node[above=1pt] {$v$} (m-1-3)
   (m-1-8) edge[-x-=0.5] node[above=1pt] {$v$}  (m-1-7)
   ;
   \path[-,font=\scriptsize]
   (m-1-3) edge node[auto] {$v'$} (m-1-4)
   (m-1-6) edge node[auto] {$v'$} (m-1-7)
   ;
   \path[<-,dashed,font=\scriptsize]
   (m-4-3) edge node[auto,swap] {$a$} (m-4-1)
   (m-4-3) edge node[auto] {$b$} (m-5-2)
   (m-4-7) edge node[auto] {$d$} (m-4-9)
   (m-4-7) edge node[auto,swap] {$c$} (m-5-8)
   ;
   \path[-,font=\scriptsize]
   (m-4-3) edge node[auto] {$v'$} (m-4-4)
   (m-4-6) edge node[auto] {$v'$} (m-4-7)
   ;
  \end{tikzpicture}
\endpgfgraphicnamed
\]
and the bottom row ends in one of the situations
\[
\beginpgfgraphicnamed{fig27}
   \begin{tikzpicture}[>=latex]
  \matrix (m) [matrix of math nodes, row sep=0em, column sep=2.9em, text height=1ex, text depth=0ex]
%column  1        2       3        4       5         6       7     8    
   {          &\circ  &\bullet&\bullet&         &\bullet&\bullet& \circ &       \\   % row 1
              &  \    &  \    &       &         &       &  \    &  \    &       \\   % row 2
              &  \    &  \    &       &         &       &  \    &  \    &       \\   % row 3
     \ocirc   &       &\bullet&\bullet&         &\bullet&\bullet&       &\ocirc \\   % row 4
              &\ocirc &       &       &   \     &       &       &\ocirc &       \\   % row 5
   };
   \path[->,dotted,font=\scriptsize]
   (m-1-2) edge[-x-=0.5] node[above=1pt] {$v$} (m-1-3)
   (m-1-8) edge[-x-=0.5] node[above=1pt] {$v$}  (m-1-7)
   ;
   \path[-,font=\scriptsize]
   (m-1-3) edge node[auto] {$v'$} (m-1-4)
   (m-1-6) edge node[auto] {$v'$} (m-1-7)
   ;
   \path[->,dashed,font=\scriptsize]
   (m-4-3) edge node[auto,swap] {$a$} (m-4-1)
   (m-4-3) edge node[auto] {$b$} (m-5-2)
   (m-4-7) edge node[auto] {$d$} (m-4-9)
   (m-4-7) edge node[auto,swap] {$c$} (m-5-8)
   ;
   \path[-,font=\scriptsize]
   (m-4-3) edge node[auto] {$v'$} (m-4-4)
   (m-4-6) edge node[auto] {$v'$} (m-4-7)
   ;
  \end{tikzpicture}
\endpgfgraphicnamed
\]
where $v\in\{v_0,\dotsc,v_{n-5}\}$ and $v'\in\{b,v_0,\dotsc,v_{n-5},c\}$. Conversely, every coefficient quiver of this shape determines an indecomposable representation in the tube of rank $n-2$.

%%%%%%%%%%%%%%%%%%%%%%%%%%%%%%%%%%%%%%%%%%%%%%%%%%%%%%%%%%%%%%%%%%%%%%%%%%%%%%%%%%%%%%%%%%%%%%%%%%%%%%%%%%%%%%%%%%%%%%%%%%%%%%%%%%%%%%%%%%%%%%%%%%%%%%%%%%%%%%%%%%%%%%%%%%%%%%%%%%%%%%%%%%%%%%%%%%%%%%

\subsection{Homogeneous tubes} 
\label{subsection: bases for homogeneous tubes}

The homogeneous tubes are all of rank $1$, which means that they contain a unique quasi-simple representation. Its dimension vector is $\delta$ and its coefficient quiver is 
\[
 \beginpgfgraphicnamed{fig26}
 \begin{tikzpicture}[>=latex]
  \matrix (m) [matrix of math nodes, row sep=0.3em, column sep=2.9em, text height=1ex, text depth=0ex]
%column   1       2       3       4       5        6       7       8       9  
   {  \ocirc  &       &\bullet&\bullet&\dotsb  &\bullet&\bullet&       &\ocirc \\   % row 1
              &\bullet&       &       &        &       &       &\bullet&       \\   % row 2
      \ocirc  &       &\bullet&\bullet&\dotsb  &\bullet&\bullet&       &\ocirc \\   % row 3
};
   \path[-,font=\scriptsize]
   (m-1-3) edge node[auto] {$v_0$} (m-1-4)
   (m-1-4) edge node[auto] {} (m-1-5)
   (m-1-5) edge node[auto] {} (m-1-6)
   (m-1-6) edge node[auto] {$v_{n-4}$} (m-1-7)
   (m-2-2) edge node[below=1pt] {$b$} (m-1-3)
   (m-2-2) edge node[auto] {} (m-3-3)
   (m-2-8) edge node[below left=-4pt] {$c,\mu_0$} (m-1-7)
   (m-2-8) edge node[above left=-4pt] {$c,\mu_1$} (m-3-7)
   (m-3-3) edge node[auto,swap] {$v_0$} (m-3-4)
   (m-3-4) edge node[auto] {} (m-3-5)
   (m-3-5) edge node[auto] {} (m-3-6)
   (m-3-6) edge node[auto,swap] {$v_{n-4}$} (m-3-7)
   ;
   \path[->,dashed,font=\scriptsize]
   (m-1-1) edge node[auto] {$a$} (m-1-3)
   (m-3-3) edge node[auto] {$a$} (m-3-1)
   (m-1-7) edge node[auto] {$d$} (m-1-9)
   (m-3-9) edge node[auto] {$d$} (m-3-7)
   ;
  \end{tikzpicture}
\endpgfgraphicnamed
\]
for some non-zero complex weights $\mu_0$ and $\mu_1$ such that $\mu_0\neq\mu_1$ in case that the vertices $q_b$ and $q_c$ of $Q$ are both sinks or are both sources and $\mu_0\neq-\mu_1$ in case that one of $q_b$ and $q_c$ is a source and the other vertex is a sink.

Note that two pairs of weights $(\mu_0,\mu_1)$ and $(\mu_0',\mu_1')$ yield isomorphic representations if and only if $[\mu_0:\mu_1]=[\mu_0':\mu_1']$ is the same point in $\P^1(\C)$. In fact, the three ``forbidden'' values $[0:1]$, $[1:0]$ and $[1:\pm 1]$ determine representations in the three exceptional tubes; this yields a parametrization of all tubes by $\P^1(\C)$.

A coefficient quiver for an arbitrary representation in a homogeneous tube, which has dimension vector $r\delta$ for some $r\geq1$, is obtained by glueing together $r$ copies of the coefficient quiver for the quasi-simple in the same tube with $r-1$ arrows with label $a$, which connect to the obvious positions. For more details, cf.\ section 1.7 in the sequel \cite{LW15} to this paper.

%%%%%%%%%%%%%%%%%%%%%%%%%%%%%%%%%%%%%%%%%%%%%%%%%%%%%%%%%%%%%%%%%%%%%%%%%%%%%%%%%%%%%%%%%%%%%%%%%%%%%%%%%%%%%%%%%%%%%%%%%%%%%%%%%%%%%%%%%%%%%%%%%%%%%%%%%%%%%%%%%%%%%%%%%%%%%%%%%%%%%%%%%%%%%%%%%%%%%%
%%%%%%%%%%%%%%%%%%%%%%%%%%%%%%%%%%%%%%%%%%%%%%%%%%%%%%%%%%%%%%%%%%%%%%%%%%%%%%%%%%%%%%%%%%%%%%%%%%%%%%%%%%%%%%%%%%%%%%%%%%%%%%%%%%%%%%%%%%%%%%%%%%%%%%%%%%%%%%%%%%%%%%%%%%%%%%%%%%%%%%%%%%%%%%%%%%%%%%

\pagebreak

\begin{small}
\bibliographystyle{plain}

\def\cprime{$'$}

\end{small}

\end{document}